\documentclass[12pt, reqno]{amsart}

\usepackage[margin=1.1in]{geometry}

\usepackage[utf8]{inputenc}
\usepackage[T1]{fontenc}
\usepackage[english]{babel}

\usepackage{graphicx}
\usepackage{tikz-cd}
\usepackage{import}
\usepackage{wrapfig}
\usepackage{tabularx}


\usepackage{etoolbox}

\makeatletter
\let\old@tocline\@tocline
\let\section@tocline\@tocline
\newcommand{\subsection@dotsep}{4.5}
\newcommand{\subsubsection@dotsep}{4.5}
\patchcmd{\@tocline}
  {\hfil}
  {\nobreak
     \leaders\hbox{$\m@th
        \mkern \subsection@dotsep mu\hbox{.}\mkern \subsection@dotsep mu$}\hfill
     \nobreak}{}{}
\let\subsection@tocline\@tocline
\let\@tocline\old@tocline

\patchcmd{\@tocline}
  {\hfil}
  {\nobreak
     \leaders\hbox{$\m@th
        \mkern \subsubsection@dotsep mu\hbox{.}\mkern \subsubsection@dotsep mu$}\hfill
     \nobreak}{}{}
\let\subsubsection@tocline\@tocline
\let\@tocline\old@tocline

\let\old@l@subsection\l@subsection
\let\old@l@subsubsection\l@subsubsection

\def\@tocwriteb#1#2#3{%
  \begingroup
    \@xp\def\csname #2@tocline\endcsname##1##2##3##4##5##6{%
      \ifnum##1>\c@tocdepth
      \else \sbox\z@{##5\let\indentlabel\@tochangmeasure##6}\fi}%
    \csname l@#2\endcsname{#1{\csname#2name\endcsname}{\@secnumber}{}}%
  \endgroup
  \addcontentsline{toc}{#2}%
    {\protect#1{\csname#2name\endcsname}{\@secnumber}{#3}}}%

\newlength{\@tocsectionindent}
\newlength{\@tocsubsectionindent}
\newlength{\@tocsubsubsectionindent}
\newlength{\@tocsectionnumwidth}
\newlength{\@tocsubsectionnumwidth}
\newlength{\@tocsubsubsectionnumwidth}
\newcommand{\settocsectionnumwidth}[1]{\setlength{\@tocsectionnumwidth}{#1}}
\newcommand{\settocsubsectionnumwidth}[1]{\setlength{\@tocsubsectionnumwidth}{#1}}
\newcommand{\settocsubsubsectionnumwidth}[1]{\setlength{\@tocsubsubsectionnumwidth}{#1}}
\newcommand{\settocsectionindent}[1]{\setlength{\@tocsectionindent}{#1}}
\newcommand{\settocsubsectionindent}[1]{\setlength{\@tocsubsectionindent}{#1}}
\newcommand{\settocsubsubsectionindent}[1]{\setlength{\@tocsubsubsectionindent}{#1}}

\renewcommand{\l@section}{\section@tocline{1}{\@tocsectionvskip}{\@tocsectionindent}{}{\@tocsectionformat}}%
\renewcommand{\l@subsection}{\subsection@tocline{2}{\@tocsubsectionvskip}{\@tocsubsectionindent}{}{\@tocsubsectionformat}}%
\renewcommand{\l@subsubsection}{\subsubsection@tocline{3}{\@tocsubsubsectionvskip}{\@tocsubsubsectionindent}{}{\@tocsubsubsectionformat}}%
\newcommand{\@tocsectionformat}{}
\newcommand{\@tocsubsectionformat}{}
\newcommand{\@tocsubsubsectionformat}{}
\expandafter\def\csname toc@1format\endcsname{\@tocsectionformat}
\expandafter\def\csname toc@2format\endcsname{\@tocsubsectionformat}
\expandafter\def\csname toc@3format\endcsname{\@tocsubsubsectionformat}
\newcommand{\settocsectionformat}[1]{\renewcommand{\@tocsectionformat}{#1}}
\newcommand{\settocsubsectionformat}[1]{\renewcommand{\@tocsubsectionformat}{#1}}
\newcommand{\settocsubsubsectionformat}[1]{\renewcommand{\@tocsubsubsectionformat}{#1}}
\newlength{\@tocsectionvskip}
\newcommand{\settocsectionvskip}[1]{\setlength{\@tocsectionvskip}{#1}}
\newlength{\@tocsubsectionvskip}
\newcommand{\settocsubsectionvskip}[1]{\setlength{\@tocsubsectionvskip}{#1}}
\newlength{\@tocsubsubsectionvskip}
\newcommand{\settocsubsubsectionvskip}[1]{\setlength{\@tocsubsubsectionvskip}{#1}}

\patchcmd{\tocsection}{\indentlabel}{\makebox[\@tocsectionnumwidth][l]}{}{}
\patchcmd{\tocsubsection}{\indentlabel}{\makebox[\@tocsubsectionnumwidth][l]}{}{}
\patchcmd{\tocsubsubsection}{\indentlabel}{\makebox[\@tocsubsubsectionnumwidth][l]}{}{}

\newcommand{\@sectypepnumformat}{}
\renewcommand{\contentsline}[1]{%
  \expandafter\let\expandafter\@sectypepnumformat\csname @toc#1pnumformat\endcsname%
  \csname l@#1\endcsname}
\newcommand{\@tocsectionpnumformat}{}
\newcommand{\@tocsubsectionpnumformat}{}
\newcommand{\@tocsubsubsectionpnumformat}{}
\newcommand{\setsectionpnumformat}[1]{\renewcommand{\@tocsectionpnumformat}{#1}}
\newcommand{\setsubsectionpnumformat}[1]{\renewcommand{\@tocsubsectionpnumformat}{#1}}
\newcommand{\setsubsubsectionpnumformat}[1]{\renewcommand{\@tocsubsubsectionpnumformat}{#1}}
\renewcommand{\@tocpagenum}[1]{%
  \hfill {\mdseries\@sectypepnumformat #1}}

\let\oldappendix\appendix
\renewcommand{\appendix}{%
  \leavevmode\oldappendix%
  \addtocontents{toc}{%
    \protect\settowidth{\protect\@tocsectionnumwidth}{\protect\@tocsectionformat\sectionname\space}%
    \protect\addtolength{\protect\@tocsectionnumwidth}{2em}}%
}
\makeatother



\makeatletter
\settocsectionnumwidth{2em}
\settocsubsectionnumwidth{2.5em}
\settocsubsubsectionnumwidth{3em}
\settocsectionindent{1pc}%
\settocsubsectionindent{\dimexpr\@tocsectionindent+\@tocsectionnumwidth}%
\settocsubsubsectionindent{\dimexpr\@tocsubsectionindent+\@tocsubsectionnumwidth}%
\makeatother

\settocsectionvskip{5pt}
\settocsubsectionvskip{0pt}
\settocsubsubsectionvskip{0pt}
    


\settocsectionformat{\bfseries}
\settocsubsectionformat{\mdseries}
\settocsubsubsectionformat{\mdseries}
\setsectionpnumformat{\bfseries}
\setsubsectionpnumformat{\mdseries}
\setsubsubsectionpnumformat{\mdseries}


\let\oldtableofcontents\tableofcontents
\renewcommand{\tableofcontents}{%
  \vspace*{-\linespacing}
  \oldtableofcontents}

\setcounter{tocdepth}{2}

\usepackage{amsmath, amsthm, amssymb, amsfonts}
\usepackage{enumitem}
\usepackage{mathrsfs}
\usepackage{nicefrac}

\usepackage{xcolor}
\definecolor{antiquefuchsia}{rgb}{0.57, 0.36, 0.51}
\definecolor{azure}{rgb}{0.0, 0.5, 1.0}
\definecolor{blue(ncs)}{rgb}{0.0, 0.53, 0.74}

\usepackage[colorlinks = true, linkcolor = blue(ncs), citecolor = antiquefuchsia]{hyperref}

\usepackage{comment}


\theoremstyle{plain}

\newtheorem{thm}{Theorem}[section]

\newtheorem{lem}[thm]{Lemma}

\newtheorem{prop}[thm]{Proposition}

\newtheorem{cond}[thm]{Condition}

\theoremstyle{definition}
\newtheorem{definition}[thm]{Definition}

\newtheorem{rem}[thm]{Remark}

\theoremstyle{remark}

\numberwithin{equation}{section}


\renewcommand{\u}{\mathfrak{u}} 
\newcommand{\so}{\mathfrak{so}} 
\renewcommand{\d}{\mathrm{d}} 
\newcommand{\DD}{\mathbb{D}} 
\DeclareMathOperator{\End}{End} 

\DeclareMathOperator{\supp}{supp}

\DeclareMathOperator{\range}{range}
\DeclareMathOperator{\dom}{dom}

\newcommand{\Z}{\mathbb{Z}} 

\newcommand{\X}{\mathcal{X}}
\newcommand{\II}{\mathbb{I}}
\newcommand{\B}{\mathcal{B}}

\newcommand{\I}{I}
\renewcommand{\L}{\mathcal{L}}

\newcommand{\D}{\mathscr{D}} 


\renewcommand{\min}{\mathrm{min}}
\renewcommand{\max}{\mathrm{max}}
\newcommand{\true}{\star}
\newcommand{\truep}{\star}
\newcommand{\iid}{\mathrm{iid}}

\DeclareMathOperator{\ord}{ord}


\DeclareMathOperator{\spn}{span}
\newcommand{\G}{\mathscr G}
\newcommand{\C}{\mathbb C}

\newcommand{\N}{\mathbb N}
\newcommand{\init}{\mathrm{init}}
\newcommand{\MAP}{\mathrm{MAP}}
\newcommand{\mix}{\mathrm{mix}}

\newcommand{\id}{\mathrm{id}} 

\newcommand{\R}{\mathbb R}

\title[Log-concave approximations]{On log-concave approximations of high-dimensional posterior measures and stability properties in non-linear inverse problems}
\date{\today,~~\textit{Department of Pure Mathematics and Mathematical Statistics, University of Cambridge}, email: nickl@maths.cam.ac.uk,  bohr@math.uni-bonn.de}

	\author[J. Bohr \& R. Nickl]{Jan Bohr ~~~~~~~Richard Nickl \\ ~~ \\ University of Cambridge ~\today }

\usepackage{physics}
\usepackage[parfill]{parskip}
\setlength{\parskip}{1pt}

\begin{document}

\maketitle

\begin{abstract}
The problem of efficiently generating random samples from high-dimensional and non-log-concave posterior measures arising in nonlinear regression problems is considered.  Extending investigations from \cite{NW20}, local and global stability properties of the model are identified under which such posterior distributions can be approximated in Wasserstein distance by suitable log-concave measures.  This allows the use of fast gradient based sampling algorithms, for which convergence guarantees are established that scale polynomially in all relevant quantities (assuming `warm' initialisation). The scope of the general theory is illustrated in a non-linear inverse problem from integral geometry for which new stability results are derived.
\end{abstract}

\vspace{2em}

\tableofcontents

\section{Introduction}\label{introduction}

The circle of problems we study here evolves around the question of how to efficiently generate random samples from high-dimensional Gibbs-type probability measures 
\begin{equation}\label{gibbs}
\d\Pi(\theta|Z^{(N)}) \propto e^{\ell(\theta\vert Z^{(N)})} \d \Pi ( \theta),~~\theta \in \R^D, \quad D \in \N,
\end{equation}
with \textit{non-concave} `energies' $\theta\mapsto \ell_N(\theta) \equiv \ell(\theta \vert Z^{(N)})$. The maps $\ell_N$ can be thought of as log-likelihood functions from a non-linear regression equation generating a random data vector $Z^{(N)}$ of length $N \in \mathbb N$, and the reference (or \textit{prior}) measure $\Pi$ is assumed to be Gaussian. Sampling algorithms play a fundamental role in scientific applications such as Bayesian inference and inverse problems \cite{CR04, S10, CRSW13, DS16}, data assimilation \cite{CMR05, MH12, RC15} and probabilistic numerics \cite{D88, BOGOS19} and have close connections to computational complexity theory and statistical physics. Since direct computation of the `partition function' (normalising factor) in (\ref{gibbs}) is typically not viable, the sampling task can be computationally hard if not impossible when $D$ and $N$ are large. See, for instance, \cite{RvH15} for the discussion of a prototypical `curse of dimensionality' in a filtering problem, or \cite{BWZ20} in a sampling problem involving sparse PCA. 

\smallskip

In applications, the non-concavity of $\theta \mapsto \ell_N(\theta)$ may arise from a partial differential equation (PDE) underlying the data generating process for $Z^{(N)}$, specifically from the non-linearity of the `parameter to solution' map. In the present article we continue investigations from \cite{NW20} and demonstrate that if the PDE has certain local and global `stability' properties, then `polynomial time' sampling from $\Pi(\cdot|Z^{(N)})$ is possible even in high-dimensional ($D \to \infty$), spiked ($N \to \infty$), non-log-concave situations. More precisely, we devise a general set of analytical hypotheses on the underlying PDE model such that the measure (\ref{gibbs}) can be sampled from within precision $\epsilon$ after $O(N^{b_1} D^{b_2} \epsilon^{-b_3})$ algorithmic iterations (each iteration requiring one evaluation of $\ell_N$, and assuming appropriate initialisation). Our theory applies to the elliptic example with a Schr\"odinger equation considered in \cite{NW20}, but we further demonstrate its applicability in a more demanding transport PDE setting arising with non-Abelian X-ray transforms which have been studied recently in \cite{PSU12, MNP21, PaSa20} (see \cite{nature2, nature1, Nov19} for applications and \cite{Ver91, Nov02, Esk04} for earlier references).  Our main contribution, next to novel stability estimates for X-ray transforms described below, is a \textit{log-concave approximation theorem} which ensures that $\Pi(\cdot|Z^{(N)})$ from \eqref{gibbs} is very close to a log-concave measure in Wasserstein distance, with high probability under the law of the data $Z^{(N)}$.

\smallskip

The most widely used methods to generate samples for \eqref{gibbs} are Markov chain Monte Carlo (MCMC) algorithms, where one runs a Markov chain $(\vartheta_k:k\in \N)$ on $\R^D$ that has $\Pi(\cdot\vert Z^{(N)})$ as invariant measure.  Recent years have seen great progress on computational performance guarantees of such algorithms, of which we want to highlight two advances that are relevant in our PDE setting: The first is the work of Hairer, Stuart and Vollmer \cite{HSV14}, who prove a $D$-independent spectral gap for the so called {\it preconditioned Crank-Nicolson} (pCN) algorithm in settings where $\ell_N$ is uniformly bounded and Lipschitz in $\theta$. While their results imply $D$-independent convergence guarantees, they remain non-quantitative in the magnitude (or `temperature') of the energy $\ell_N$ and hence in $N$ (cf.~the discussion in \textsection 1.1.2 in \cite{NW20}). The second line of advances concerns \textit{log-concave} Gibbs-measures (\ref{gibbs}) and was initiated by Dalalyan \cite{Dal17} and Durmus and Moulines \cite{DM17, DuMo19}. Their work on gradient based {\it (unadjusted) Langevin algorithms} (ULA) implies mixing time bounds of the law of the iterates $\vartheta_k$ towards $\Pi(\cdot|Z^{(N)})$ which scale \textit{polynomially} in both $D, N$. This exploits fundamental ideas of hyper-contractivity of Langevin diffusions and Wasserstein gradient flows going back to Bakry and \'Emery \cite{BE84} and Jordan, Kinderlehrer and Otto \cite{JKO98}, see also the monographs \cite{V09, BGL14}. While these techniques do not apply directly to our non-log-concave setting, the general log-concave approximation theorems proved below provide the crucial link to transfer the fast mixing properties of such gradient based methods to the more complex PDE-driven models considered here, at least if a reasonable initialiser is available. We discuss the difference between our results and traditional Laplace approximations of posterior measures after Theorem \ref{onetrickpony} below. Note that in general, without exploiting  log-concavity, the runtime of sampling algorithms for high-dimensional (even unimodal) posterior distributions scales exponentially in dimension, see the very recent  \cite{BMNW23}.

\medskip

\textbf{Non-linear inverse problems and PDE stability.} We now discuss informally the notions of `local and global stability' of the underlying model that we will employ. It is common (e.g., in statistical inverse problems) to encode PDE structure in a `forward map'
\begin{equation}\label{forward}
\G: \R^D \rightarrow L^2_\lambda(\mathcal{X},V),\quad \theta\mapsto \G(\theta),
\end{equation}
describing vector space $V$-valued regression fields $\G(\theta): \mathcal X \to V$ defined on some probability space $(\mathcal X, \lambda)$. The data vector $Z^{(N)}=((Y_i, X_i) \in  V \times \X, i=1, \dots, N)$ is generated via the regression equation $Y_i = \G(\theta)(X_i) + \varepsilon_i,$ with $\varepsilon_i$ standard i.i.d.~Gaussian noise in $V$ and $X_i \sim^{\iid} \lambda$ independently of the $\varepsilon_i$'s. This induces an `energy' (log-likelihood) associated to $\G$ by
\begin{equation}\label{likely}
\ell_N(\theta)=\ell_N(\theta\vert Z^{(N)}) = -\frac 12 \sum_{i=1}^N \left \vert Y_i -\G(\theta)(X_i) \right\vert_V^2,~~ \theta \in \R^D.
\end{equation}
The non-concavity of $\ell_N(\cdot|Z^{(N)})$ is due to the map $\G$ which we think of here as the composition of a non-linear `parameter to solution map' of some PDE (such as \eqref{pde} below) with a discretisation step that represents an infinite-dimensional parameter $\Phi$ by its approximation $\Phi_\theta$ from a finite-dimensional approximation space $E_D$ with (`Fourier-') coefficients $\theta \in \R^D$.

\smallskip

With $\ell_N$ from (\ref{likely}), the Gibbs-type measure \eqref{gibbs} is the Bayesian \textit{posterior distribution} for a Gaussian prior $\Pi$ arising from the data. Suppose $Z^{(N)}$ is generated from `ground truth' $\Phi_\true$ and let $\Phi_{\theta_{\true, D}}$ with $\theta_{\true,D} \in \R^D$ denote the best approximation  of $\Phi_\true$ from $E_D \cong \R^D$. Then properties of $\G$ near $\theta_{\true,D}$ will play a crucial role in our analysis. Very informally speaking our conditions on $\G$ will comprise the following:

\smallskip

\begin{enumerate}[label=(\alph*)]
\item \label{assumption1} (Global stability) $\G$ is uniformly bounded and Lipschitz on $\R^D$ and satisfies a stability estimate $\|\theta - \theta'\| \lesssim \|\G(\theta) - \G(\theta')\|^\gamma, \gamma>0,$ on bounded sets in $\R^D$.

\smallskip

\item \label{assumption2} (Local regularity) $\G$ is $C^3$ in a neighbourhood of $\theta_{\true,D}$, with $C^{3}$-norms of $\G$ growing at most polynomially in $D$.

\smallskip

\item \label{assumption3} (Local gradient stability) For all $\theta$ near $\theta_{\true,D}$, the gradient $\nabla \G(\theta)$ of $\G$ satisfies
\begin{equation}\label{linearstability}
\Vert \nabla \G(\theta)[h] \Vert_{L^2_\lambda}^2 \gtrsim D^{-\kappa_0} \Vert h \Vert_{\R^D}^2, ~~ h \in \R^D,~~\text{for some } \kappa_0 \ge 0.
\end{equation}
\end{enumerate}

\smallskip

The global requirements in \ref{assumption1} pose strong restrictions on $\G$ but  fall far short of assuming global log-concavity of the Gibbs measure (\ref{gibbs}). Such conditions can often be verified in concrete PDE settings, in particular global inverse continuity (or `stability') estimates for $\G$ have recently emerged as drivers of posterior consistency theorems in non-linear inverse problems see \cite{V13, NS17, N17, NVW18, AbNi19, MNP21, GN20, K21}, \textsection \ref{posthorn} below, and also \cite{N22} for a general account of the theory.  The regularity condition \ref{assumption2} provides sufficient smoothness of the local parameterisation of $\theta$ and is typically satisfied in PDE settings too. 

\smallskip

Assumption \ref{assumption3} ensures that the energy $\theta 
\mapsto \ell_N(\theta\vert Z^{(N)})$ is locally concave near $\theta_{\true,D}$ \textit{on average} under the distribution of $Z^{(N)}$ (see Condition \ref{goldfisch} below). While there is no general recipe to obtain such a stability estimate, it was demonstrated in \cite{NW20} how to use elliptic PDE theory to verify (\ref{linearstability}) in a representative example with a Schr\"odinger equation. Our main example to be introduced below shares some conceptual similarities and we explain these now in more detail, thereby also clarifying the role of the approximation spaces $E_D\cong \R^D$. Consider PDEs of the form
\begin{equation}\label{pde}
(\mathscr D + \Phi_\theta)u = 0 \text{ on } \mathcal{M},
\end{equation}
where $\mathcal{M}$ is a smooth manifold, $\mathscr D$ is a {\it known linear} differential operator and $\Phi_\theta$  is a (possibly matrix valued) potential defined on $\mathcal M$, indexed by the parameter $\theta \in \R^D$ to be determined. Subject to suitable boundary conditions under which  \eqref{pde} admits a unique solution $u=u_\theta$,  it is assumed that one can measure $u_\theta$  on a subset $\mathcal{X}\subseteq \mathcal{M}$, giving the forward map
$ \G(\theta) = u_\theta\vert_\X$. For instance we can consider

\smallskip

\begin{itemize}
\item A {\it Schr{\"o}dinger equation}: Here $\mathcal{M}$ is a bounded domain in $\R^d$ and $\mathscr D= \Delta$ is the Laplace-operator, say with Dirichlet boundary conditions.
\item A {\it transport equation}: Here $\mathcal{M}=\{(x,v)\in \R^2\times \R^2: \vert x \vert \le \vert v \vert =1\}$ (the unit sphere bundle over a disk) and $\mathscr D=v\cdot \nabla_x$ is the \textit{geodesic vector field}. 
\end{itemize}

\smallskip

Now, if we consider a variation $\theta + t h$ ($t\in \R$) in direction $h \in \R^D$ and denote $\dot u_{\theta} = \left( \d/\d t \vert_{t=0}\right)u_{\theta+t h}$ the resulting variation of the solution to \eqref{pde},  then {\it formally}
\begin{equation}\label{variatio}
 \nabla \G(\theta)[h] = \dot u_\theta = - ( \mathscr D+\Phi_\theta)^{-1} \big[ \dot \Phi_\theta[h] u_\theta \big], 
\end{equation}
and a stability estimate as in \eqref{linearstability}  can be expected to follow from {\it isomorphim properties} of $(\mathscr D+\Phi_\theta)^{-1}$ between suitable function spaces. 

\smallskip

In case of the Schr{\"o}dinger equation the powerful machinery of elliptic PDE's implies that $(\Delta + \Phi_\theta)^{-1}$ is an isomorphism from $L^2(\mathcal{M})$ onto the Sobolev space $H_0^2(\mathcal{M})=\{u \in H^2(\mathcal{M}): u \vert_{\partial \mathcal{M}} = 0 \}$ for (non-positive) potentials $\Phi_\theta$. Choosing the approximation space $E_D$ to equal the span of the first $D$ eigenfunctions of the Dirichlet-Laplacian (i.e.\,the eigen-basis corresponding to $\Phi_\theta=0$), the usual Weyl asymptotics imply that on $E_D$ the $H_0^2(\mathcal{M})$-norm is equivalent to $D^{-2/d} \Vert \cdot \Vert_{E_D}$. A duality argument then yields a stability estimate as in \eqref{linearstability} with $\kappa_0 = 4/d$; see Lemma 4.7 in \cite{NW20} for more details.

\smallskip

For the transport equation the operator $\II_\Phi \equiv (\mathscr D+\Phi)^{-1}$ can be interpreted as a linear X-ray transform on the disk $\DD=\{x\in \R^2:\vert x \vert <1\}$ that is `attenuated' by a potential $\Phi=\Phi_\theta$.  Transforms of this kind appear frequently as linearisations for non-linear geometric inverse problems \cite{IM20, PSU22} and are subject of current research.  Mapping properties of $\II_0$ acting on compactly supported functions are well known (see \cite[Thm.\,5.1]{Nat01}) and generally the operator $\mathbb I_0$ is understood to be $1/2$-regularising.  Taking into account the behaviour near $\partial \DD$, it was proved in \cite{Mon20} that $\II_0$ is an isomorphism from a Sobolev-type space $\tilde H^{-1/2}(\DD)$, defined in \eqref{zscaledef} below, onto $L^2_\lambda(\mathcal X)$. Extending recent developments in \cite{MNP20}, we will prove here that this isomorphism property extends to $\II_\Phi$ for a large class of $\Phi \neq 0$ (see Theorem \ref{zernikestability}). The natural discretisation spaces $E_D$ for which we can expect curvature as in (\ref{linearstability}) are defined in terms of the singular value decomposition (SVD) of $\II_0$, and we obtain in Theorem \ref{findimstability} a stability estimate as in \eqref{linearstability} with $\kappa_0=1/2$.

\smallskip

While the precise analysis in the transport case is very different from the elliptic techniques used for Schr\"odinger type equations, we want to emphasise the unifying aspects: The discretisation spaces $E_D$ are based on the SVD  of the linearisation of $\G$ at $\Phi_\theta=0$; then one uses an adequate isomorphism theorem for $\Phi_\theta\neq 0$ to prove the linear stability estimate in \eqref{linearstability} with $\kappa_0$ related to the `local' ill-posedness of the underlying inverse problem.

\smallskip

In Section \ref{genth} we state our general log-concave approximation theorem, while Section \ref{naxray} contains the application to $X$-ray transforms. Proofs and further technical developments are given in later sections.

\section{A general log-concave approximation theorem}\label{genth}

\subsection{Functional setting, spaces, and forward map}

Consider a measurable space $\mathcal Z = (\mathcal Z, \mathscr Z)$ equipped with a measure $\zeta$ on $\mathscr Z$, and a finite-dimensional vector space $W \simeq \mathbb R^{d_W}$ of dimension $d_W$. We denote the resulting $L^p$ spaces of $W$-valued (Borel) vector fields $f=(f_1, \dots, f_{d_W}), f_i \in L_\zeta^p(\mathcal Z, \mathbb R)$, by $L_\zeta^p(\mathcal Z, W) \equiv  \times_{i =1}^{d_W} L_\zeta^p(\mathcal Z, \mathbb R), 1 \le p<\infty,$  while $L^\infty(\mathcal Z, \R), L^\infty(\mathcal Z, W)$ denote the bounded measurable maps on $\mathcal Z$ equipped with the supremum norm $\|\cdot\|_\infty$.  On each coordinate space $L^2_{\zeta}(\mathcal Z, \mathbb R)$, we consider an $L^2_\zeta$-orthonormal system $$\{e_{n}: n \in \mathbb N _0\} \subset L^2_\zeta(\mathcal Z, \mathbb R) \cap L^\infty(\mathcal Z),~~\mathbb N_0 = \mathbb N \cup \{0\}.$$ On the product space $L_\zeta^2(\mathcal Z, W)$ we obtain a corresponding system $\{(e_{n, 1}, \dots, e_{n, d_W}), ~e_{n,i}=e_n ~\forall i, ~n \in \mathbb N_0\}$ that is orthonormal for the Hilbert space inner product $$\langle \cdot, \cdot \rangle_{L^2_\zeta(\mathcal Z, W)} = \sum_{i=1}^{d_W} \langle \cdot_i, \cdot_i \rangle_{L^2_\zeta(\mathcal Z, \mathbb R)},$$ where $\cdot_i$ selects the $i$-th coordinate of the vector field. Next, for a sequence of positive scalars $(\lambda_n : n \in \mathbb N_0)$ such that $0 < \lambda_n \le \lambda_{n+1} \to_{N \to \infty} \infty$, Sobolev-type spaces of real-valued functions for the system $(e_n: n \in \mathbb N_0)$ are defined as
\begin{equation}\label{scale}
\tilde H^s = \tilde H^s (\mathcal Z, \mathbb R) = \Big\{f \in L^2(\mathcal Z, \R): \|f\|_{\tilde H^s(\mathcal Z, \R)}^2 := \sum_{n \in \mathbb N_0} \lambda_{n}^{s} \langle f, e_{n} \rangle_{L_\zeta^2(\mathcal Z, \R)}^2 < \infty \Big\},~s \ge 0.
\end{equation}
Clearly  the imbedding $\tilde H^s \subset L^2(\mathcal Z, \mathbb R)$ is continuous for any $s \ge 0$. The spaces $\tilde H^s (\mathcal Z, W) = \times_{i=1}^{d_W} \tilde H^s (\mathcal Z, \mathbb R)$ of vector fields whose coordinate functions lie in $\tilde H^s(\mathcal Z, \mathbb R)$ are equipped with the natural (Hilbert-space) norm $\|\cdot\|_{\tilde H^s(\mathcal Z, W)}$ arising from inner product $$\langle f, g \rangle_{\tilde H^s} =\sum_{i=1}^{d_W} \sum_n \lambda_n^s \langle f_i, e_n \rangle_{L_\zeta^2} \langle g_i, e_n \rangle_{L_\zeta^2}.$$ We will sometimes just use the notation $\tilde H^s$ when the range $W$ or $\R$ is clear from the context. We note that when $\mathcal Z$ is a bounded domain in $\R^d$, these Sobolev `type' spaces can be `non-standard' depending on the boundary behaviour of the basis $e_n$. The theory that follows encodes all results in terms of regularity of the $\tilde H^s$ scale. 

\bigskip

Finite-dimensional subspaces $E_D \subset \tilde H^s(\mathcal Z, W)$ arise as the linear span of vector fields \begin{equation} \label{eddef3}
E_D = \spn \{(e_{n, 1}, \dots, e_{n, d_W}): n\le D_W \equiv D/d_W\}, 
\end{equation}
isomorphic (by Parseval's theorem) to an Euclidean space of dimension $D  = d_W \sum_{n \le D/d_W}1$. In what follows we assume for notational simplicity that $D$ is a positive integer multiple of $d_W$. The `Euclidean norm' on $E_D$, equal to the $L^2_\zeta$-norm, will be denoted by $\|\cdot\|_{E_D}$.

\begin{cond}\label{linsp} Consider an orthonormal system $(e_n: n \in \mathbb N_0)$ in $L_\zeta^2(\mathcal Z)$ such that
\begin{equation} \label{unifef}
\max_{0 \le n \le D_W}\|e_n\|_\infty \lesssim D^{\tau}, ~~\text{ some } \tau \ge 0, ~ D \in \mathbb N,
\end{equation}
and for some $d \in \mathbb N$ and constants $b_1, b_2$,
\begin{equation} \label{specdim}
b_1 n^{2/d} \le \lambda_n \le b_2 n^{2/d},~ n \in \mathbb N.
\end{equation}
Let $\Theta$ be a (measurable) linear subspace of $L^2_\zeta(\mathcal Z, W)$ such that $\cup_{D \in \mathbb N} E_D \subset \Theta \subset L^\infty(\mathcal Z, W)$.
\end{cond}

While in principle the constant $d$ is arbitrary in the preceding condition, in applications here it will model the dimension of $\mathcal Z$ and (\ref{specdim}) then corresponds to the asymptotic eigenvalue distribution of an elliptic second order differential operator on $\mathcal Z$. Alternatively the `eigen-pairs' $(e_n, \lambda_n)_{n \in \N}$ could arise from the Karhunen-Lo\'eve expansion of an infinite-dimensional Gaussian process prior over $\mathcal Z$ (cf.~also before (\ref{gprior}) below).

\smallskip
 
 When a suitable Sobolev imbedding is available we can take $\Theta= \tilde H^s(\mathcal Z, W)$ for appropriate $s >0$.   On $\Theta$ we consider a measurable `forward' map
\begin{equation} \label{fwdmap}
\mathscr G : \Theta \to L_\lambda^2(\mathcal X, V)
\end{equation}
 into another $L^2$-type space of vector fields; specifically, for an arbitrary \textit{probability} space $(\mathcal X, \mathscr X, \lambda)$ and a finite-dimensional normed vector space $(V, |\cdot|_V)$ with norm $|\cdot|_V$ arising from an inner product $\langle \cdot, \cdot \rangle_V$, we define as before by $L_\lambda^2(\mathcal X, V) \equiv \times_{i=1}^{d_V} L_\lambda^2(\mathcal X, \mathbb R)$ the resulting Hilbert spaces of $V$-valued vector fields, where $d_V=\dim(V)$. 

 \subsection{Gaussian regression model and Gaussian process priors}\label{bayesset}

We now consider a Gaussian regression model with regression maps $\{\mathscr G(\theta): \theta \in \Theta\}$, specifically we observe $(Y_i, X_i)_{i=1}^N$ arising from the equation,
\begin{equation}\label{model}
Y_i = \mathscr G(\theta)(X_i) + \varepsilon_i,~~~ \varepsilon_{i} \sim^{\iid} \mathcal{N}(0, I_V),~i=1, \dots, N,
\end{equation}
where the $X_i$ are i.i.d.~drawn from $\lambda$, and with random noise vectors $\varepsilon_i \sim \mathcal{N}(0,I_V)$ where $I_V$ is the identity matrix on $V$. The joint product law of $(Y_i, X_i)_{i=1}^N$  on $(V \times \mathcal X)^N$ will be denoted by $P_\theta^N=\otimes_{i=1}^N P_\theta$ where $P_\theta$ is the law of a generic observation $(Y,X)=(Y_1,X_1)$. The corresponding expectation operators are denoted by $E_\theta^N$ and $E_\theta$, respectively. The log-likelihood function for given data vector is then (up to additive constants) given 
\begin{equation} \label{llemp}
\ell_N: \Theta \to \R,~~\ell_N(\theta) = \ell_N(\theta, (Y_i, X_i)_{i=1}^N) =  -\frac{1}{2} \sum_{i=1}^N |Y_i - \mathscr G(\theta)(X_i)|_V^2,
\end{equation}
and we also define the log-likelihood `per observation'
\begin{equation} \label{llgen}
\ell: \Theta \to \R, ~~\ell(\theta)= \ell(\theta, (Y,X))= -\frac{1}{2} |Y- \mathscr G(\theta)(X)|_V^2.
\end{equation}
To construct an `$\alpha$-regular' high-dimensional Bayesian model, we consider random Gaussian series
$$\theta = N^{-d/(4\alpha+2d)} \theta', ~\text{ where }~\theta' = \sum_{i \le d_W} \sum_{n \le D_W} \lambda_{n}^{-\alpha/2} g_{n,i} e_{n,i} ,~~ g_{n,i} \sim^\iid\mathcal{N}(0,1),~\alpha>0,$$ and with $d$ from (\ref{specdim}). We denote by $\Pi$ the prior law $\mathcal L(\theta)$ on $\Theta$, i.e., the normal distribution 
\begin{equation}\label{gprior}
\Pi = \mathcal N\big(0, (N\delta_N^2)^{-1}\Lambda_{\alpha}\big) \text{ on } \R^D \simeq E_D,~~\text{where } \delta_N=N^{-\alpha/(2\alpha+d)},
\end{equation}
with covariance matrix $\Lambda_{\alpha} = \mathrm{diag}(\lambda_n^{-\alpha})$ and probability density $\pi=\d \Pi$. Assuming joint measurability of $(x,\theta) \to \G(\theta)(x)$ (implied by Condition \ref{ganzwien}b) below), the posterior distribution on $\Theta$ that arises from this prior is then given by 
\begin{equation}\label{postbus}
\Pi(B|(Y_i, X_i)_{i=1}^N) = \frac{\int_B e^{\ell_N(\theta)} d\Pi(\theta)}{\int_{E_D} e^{\ell_N(\theta)} d\Pi(\theta)},~~ B \subseteq E_D ~\text{ Borel},
\end{equation}
and has a Lebesgue-probability density function on $\R^D \simeq E_D$ equal to
$$\pi(\theta|(Y_i, X_i)_{i=1}^N) \propto \exp \Big\{ -\frac{1}{2} \sum_{i=1}^N |Y_i - \mathscr G(\theta)(X_i)|_V^2 - \frac{1}{2} N \delta_N^2\|\theta\|^2_{\tilde H^\alpha} \Big\},~~\theta \in E_D.$$

\subsection{Analytic hypotheses}\label{vonhintenwievonvorne}

We now formulate some general conditions on uniformly bounded forward maps $\mathscr G$ that will allow us to formulate our main theorems. These conditions are split into `global' and `local' types and we start with the global ones.

\begin{cond}\label{ganzwien}
For $\mathscr G: \Theta \to L^2_\lambda(\mathcal X,V)$ from (\ref{fwdmap}), assume the following:

a) [uniform boundedness]  There exists a constant $U_\mathscr G \ge 1$ such that 
$$\sup_{\theta \in \Theta, x \in \mathcal X}|\mathscr G(\theta)(x)|_V \le U_\mathscr G.$$

\smallskip

b) [global Lipschitz] There exists a fixed constant $C_\mathscr G \ge 1$ s.t.~for all $\theta, \theta' \in \Theta$
$$\|\mathscr G(\theta) - \mathscr G(\theta')\|_* \le C_\mathscr G \|\theta-\theta'\|_+,$$ for norms $\|\cdot\|_* = \|\cdot\|_{L^2_\lambda}, \|\cdot\|_\infty$, $\|\cdot\|_+ = \|\cdot\|_{L^2_\zeta}, \|\cdot\|_\infty$, respectively.

\smallskip

c) [inverse continuity modulus] For every $M$ there exists a constant $L'$ (that may depend on $M$) and $0 <\gamma \le 1$ s.t.~for all $D \in \mathbb N$, all $\delta>0$ small enough and the given $\alpha>0$,  $$\sup \Big\{\|\theta-\theta'\|_{L^2_\zeta}:~ \theta, \theta' \in E_D, \|\theta\|_{\tilde H^{\alpha}} + \|\theta'\|_{\tilde H^{\alpha}} \le M,  \|\mathscr G(\theta)-\mathscr G(\theta')\|_{L^2_\lambda} \le \delta \Big \}  \le L' \delta^\gamma.$$
\end{cond}

The first condition is often satisfied in view of `compactifying' or `energy-preserving' nature of the solution map $\mathscr G$ of an underlying PDE. Condition b) can be expected to hold for PDEs such as those discussed \textsection \ref{introduction} as long as $(\D + \Phi_\theta)^{-1}$ from (\ref{variatio}) has operator norms $\|\cdot\|_{L^p \to L^p}, p=2,\infty,$ uniformly bounded in $\Phi_\theta$. The third condition requires a `stability (inverse continuity) estimate' for the map $\theta \mapsto \mathscr G(\theta)$ on a bounded subset of $\tilde H^\alpha$. Condition \ref{ganzwien} is sufficient to prove  global posterior consistency properties (see \textsection \ref{posthorn}).

\medskip

We now turn to the `local' conditions near a hypothetical `ground truth' $\theta_\true \in \Theta$, in fact near its $L^2_\zeta$-projection $\theta_{\true,D}$ onto $E_D$; for radius $\eta>0$ to be chosen, set
\begin{equation}\label{eq:B:ass}
\mathcal B:= \big\{ \theta\in E_D: \|\theta-\theta_{\true,D}\|_{E_D}< \eta \big \}.
\end{equation}

The following condition is a `forward' regularity condition on the map $\mathscr G$. It is satisfied, for instance, as soon as $\mathscr G$ is $C^3$ on $\mathcal B$ with local $C^3$-norm constants growing at most \textit{polynomially} in dimension $D$. To formulate it let us define the following local $C^{2,1}$-norm for maps $F: \mathcal B \to V$, $F(\theta)=(F_1(\theta), \dots, F_{d_V}(\theta))$,
\begin{align}\label{c2lip}
\|F\|_{C^{2,1}(\mathcal B, V)} :=  \max_{1 \le k \le d_V} \Big\{&\|F_k\|_{\infty} + \|\nabla F_k\|_{L^\infty(\mathcal B, \R^D)} + \|\nabla^2 F_k\|_{L^\infty(\mathcal B, \R^{D\times D})} \\
&~+ \sup_{\theta, \theta' \in \mathcal B, \theta \neq \theta'}\frac{|\nabla^2 F_k(\theta) - \nabla^2 F_k(\theta')|_{\mathrm{op}}}{\|\theta - \theta'\|_{E_D}} \Big\},
\end{align}
where $\|\cdot\|_{L^\infty ( \mathcal B, U)}$ are the supremum norms of maps defined on $\mathcal B$ taking values in a normed vector space $U$, the space $\R^{D \times D}$ is equipped with the usual operator norm $|\cdot|_\mathrm{op}$, and $\nabla=\nabla_\theta$ and $\nabla^2$ denote the gradient and `Hessian' operator with respect to $\theta \in \R^D \simeq E_D$, respectively.

\begin{cond}[Local regularity]\label{u4} Let $\mathcal B$ be given in (\ref{eq:B:ass}) for some $\eta>0$ and suppose that for all $x\in\mathcal X$, the map $\theta \mapsto \mathscr G(\theta)(x)$ from (\ref{fwdmap}) is in $C^{2,1}(\mathcal B, V)$ and satisfies $\sup_{x \in \mathcal X}\big\|\mathscr G(\cdot)(x)\big\|_{C^{2,1}(\mathcal B, V)} \le c_2 D^{\kappa_2}$ for some $c_2 \ge 1$ and $\kappa_2 \ge 0$.
\end{cond}

For the next condition recall the real-valued map $\ell: E_D \to \mathbb R$ from (\ref{llgen}) and consider again the gradient operator $\nabla =\nabla_\theta$ w.r.t.~$\theta \in E_D$. The $D\times D$ Hessian matrix $E_{\theta_\true}[-\nabla^2\ell (\theta, (Y,X))]$ is symmetric and $\lambda_{\min}(A)$ will denote the smallest eigenvalue of a symmetric matrix $A$.

\begin{cond}[Local mean curvature]\label{goldfisch}
	Let $\mathcal B$ be given in (\ref{eq:B:ass}) for some $\eta>0$ and let $\ell: E_D \to \R$ be as in (\ref{llgen}). Assume that for some $c_0>0, c_1 \ge 1$, $\kappa_0, \kappa_1 \ge 0$ and all $D \in \mathbb N$,
			\begin{equation}\label{eq:ass:lb}
		\inf_{\theta\in \mathcal B} \lambda_{\min}\Big(E_{\theta_\true}[-\nabla^2\ell (\theta,(Y,X)))]\Big)\ge c_0 D^{-\kappa_0}~ \text{ and }
		\end{equation} 
		\begin{equation}\label{eq:ass:ub}
		\sup_{\theta\in \mathcal B}\Big[ |E_{\theta_\true}\ell(\theta, (Y,X))|+\|E_{\theta_\true}[\nabla \ell(\theta, (Y,X)) ] \|_{\R^D}+\|E_{\theta_\true}[\nabla^2\ell (\theta, (Y,X))]\|_\mathrm{op} \Big] \le c_1 D^{\kappa_1}.
		\end{equation}
\end{cond}

As (\ref{eq:ass:ub}) is an `average' version of Condition \ref{u4}, it will be automatically satisfied for some $\kappa_1$, but we still list it as a separate condition to allow for $\kappa_1<\kappa_2$. The condition (\ref{eq:ass:lb}) is closely related (see also \eqref{seealsothis}) to a `gradient' stability estimate 
\begin{equation}\label{zauberstab}
\|\nabla \mathscr G(\theta_\true)^Tv\|_{L^2_\zeta}^2 \gtrsim D^{-\kappa_0} \|v\|_{E_D}^2,
\end{equation}
and in fact requires that this estimate propagates to a neighbourhood $\mathcal B$ of $\theta_\true$.  Under the above conditions it was shown in \cite{NW20} (see Lemma \ref{youtube} below) that the negative log-likelihood $-\ell_N(\theta)$ is strongly convex on $\mathcal B$ with $P_{\theta_\true}^N$-probability of order $1-O(\exp\{-CND^{-2\kappa_0 - 4 \kappa_2}\})$,  a key starting point of the analysis. We will require that the preceding conditions hold for radius $\eta$ of $\mathcal B$ of the following order of magnitude.

\begin{cond}\label{bock} Suppose Conditions \ref{u4} and \ref{goldfisch} hold for a choice of radius $\eta$ that satisfies 
\begin{equation} \label{laboum}
\eta \ge (\log N) \tilde \delta_N, ~\text{ where } ~\tilde \delta_N \equiv (\log N)\max\big(\delta_N^\gamma,  D^{\kappa_0/2} \delta_N \big)
\end{equation}
where $\delta_N$ is as in (\ref{gprior}), $\gamma$ is from Condition \ref{ganzwien}c), and $\kappa_0$ from (\ref{eq:ass:lb}).
\end{cond}

The radius $\eta$ required in our main theorem below is thus determined by the `local' and `global' ill-posedness of the map $\G$, measured by the parameters $\kappa_0$ and $\gamma$, respectively.

\subsection{The log-concave approximation theorem}
\label{genapprox}
\smallskip

Broadly speaking the results in this section are modelled after the proof of Theorem 4.14 in \cite{NW20} where the forward map $\G$ arises from a steady state Schr\"odinger equation. The main additional feature accounted for here is to deal with the quantitative interaction of the `local' and `global' ill-posedness of a general map $\G$ described by the parameters $\kappa_0, \gamma$ from (\ref{bock}) and the radius $\eta$ of the region where average curvature holds. In the Schr\"odinger model the `local' ill-posedness dominates which simplifies the proof, but for other inverse problems the various high-dimensional regimes occurring can lead to different outcomes. We note that while \cite{NW20} do provide a concentration result for the empirical Hessian of $\ell_N$ for general $\G$, their log-concave approximation theorem is restricted to the particular Schr\"odinger forward map they consider --  Theorem \ref{onetrickpony} below is thus the first result of this kind for general $\G$. 

\subsubsection{The surrogate posterior measure}\label{star}

\smallskip

Let $\theta_\init$ be any fixed vector in $E_D$ such that 
\begin{equation}\label{initioadabsurdum}
\|\theta_\init-\theta_{\true,D}\|_{E_D} \le \eta/8
\end{equation}
for some $\eta>0$ from Condition \ref{bock}. For the purposes of log-concave approximations one may well take $\theta_\init=\theta_{\true,D}$ but for applications to bounds on mixing times of `feasible' MCMC type algorithms, an explicit initialiser $\theta_\init$ has to be constructed. 

\smallskip

We follow Definition 3.5 in \cite{NW20} and require two auxiliary functions, $g_\eta$ (globally convex) and $\alpha_\eta$ (cut-off function): For some smooth and symmetric (about $0$) function
$\varphi:\R\to [0,\infty)$ satisfying $\text{supp}(\varphi)\subseteq [-1,1]$ and $\int_{\R}\varphi(x)dx=1$, let us define the mollifiers $\varphi_h(x):=h^{-1}\varphi(x/h), h>0$. Then, we define the functions $\tilde \gamma_\eta, \gamma_\eta: \R \to \R$ by
\begin{equation}\label{eq:gamma:tilde}
\begin{split}
\tilde \gamma_\eta (t)~&:=\begin{cases}
0 ~~&\text{if}~~~ t<5\eta/8,\\
(t-5\eta/8)^2 ~~~&\text{if}~~~ t \ge 5\eta/8,
\end{cases}\\
\gamma_\eta (t)~&:=\big[\varphi_{\eta/8}\ast \tilde \gamma_\eta\big](t),
\end{split}
\end{equation}
where $\ast$ denotes convolution. Further, let  $\alpha: [0,\infty)\to [0,1]$ be smooth and satisfy $\alpha (t)=1$ for $t\in [0,3/4]$ and $\alpha(t)=0$ for $t\in [7/8,\infty)$. We then define functions $g_\eta: E_D \to [0,\infty)$ and $\alpha_\eta: E_D \to [0,1]$ as
\begin{equation}\label{eq:alphaeta:def}
g_\eta(\theta):=\gamma_\eta(\|\theta-\theta_\init\|_{E_D}),~~\alpha_\eta(\theta)=\alpha\big(\|\theta-\theta_\init\|_{E_D}/\eta \big).
\end{equation}
Now for $\ell_N$ as in (\ref{llemp}) and $K$ to be specified, define the `surrogate' likelihood function 
\begin{equation} \label{surrogate}
\tilde \ell_N(\theta) = \alpha_\eta (\theta) \ell_N(\theta) - K g_\eta(\theta),~ \theta \in E_D,
\end{equation}
which `convexifies' $-\ell_N$ in the `tails'. The resulting renormalised Gibbs-type probability measure $\tilde \Pi(\cdot|(Y_i, X_i)_{i=1}^N)$ then has probability density 
\begin{equation}\label{surro}
\tilde \pi(\theta|(Y_i, X_i)_{i=1}^N) = \frac{e^{\tilde \ell_N(\theta)}\pi(\theta)}{\int_{E_D}e^{\tilde \ell_N(\theta)}\pi(\theta)},~~\theta \in E_D.
\end{equation}

\subsubsection{Main result}

\smallskip

The following theorem establishes that $\tilde \Pi(\cdot|(Y_i, X_i)_{i=1}^N)$ is strongly log-concave with high probability, and that moreover the posterior measure $\Pi(\cdot|(Y_i, X_i)_{i=1}^N)$ is very well approximated in Wasserstein distance from (\ref{waterstone}) by $\tilde \Pi(\cdot|(Y_i, X_i)_{i=1}^N)$. `In probability' here means for $(Y_i, X_i)_{i=1}^N \sim P_{\theta_\true}^N$ where  $\theta_\true \in \Theta$ is well approximated by its $L^2_\zeta$-projection $\theta_{\true,D}$ onto $E_D$;
\begin{equation}\label{bias}
\|\mathscr G(\theta_\true)- \mathscr G(\theta_{\true,D})\|_{L^2_\lambda} \le \delta_N/2u,
\end{equation}
where $u=2(U_\mathscr G +1)$. The last condition is a natural `bias requirement' and for ground truths $\theta_\true \in \tilde H^\alpha(\mathcal Z)$ will require `high dimensions', i.e., that $D \simeq N^\beta$ diverges, where $\beta=\beta(\alpha,d)>0$.  Also, as the nature of the following theorem is `local' it is natural to assume $\eta \le 1$, which also simplifies several technical bounds in the proofs. 

\smallskip

The Wasserstein distance $W_2$ between probability measures $\mu$ and $\nu$ on $\R^D$ is 
\begin{equation}\label{waterstone}
W_2^2(\mu,\nu) = \inf_\pi \int_{\R^D\times \R^D} \vert \theta - \theta' \vert^2 \d \pi(\theta,\theta'),
\end{equation}
where the infimum is taken over all couplings $\pi$ of $\mu$ and $\nu$.

\begin{thm} [Log-concave Wasserstein-approximation] \label{onetrickpony}
Let $(e_n: n \in \mathbb N_0), \tau, d$ and $\Theta$ be as in Condition \ref{linsp}. Let the posterior distribution $\Pi(\cdot|(Y_i, X_i)_{i=1}^N)$ from (\ref{postbus}) arise from the Gaussian process prior $\Pi$ in (\ref{gprior}) and data $(Y_i, X_i)_{i=1}^N \sim P_{\theta}^N$ from (\ref{model}). Suppose $\mathscr G$ from (\ref{fwdmap}) satisfies Condition \ref{ganzwien} for some $0 < \gamma \le 1$ as well as Condition \ref{bock} for some $\eta \le 1$.  Assume further that $D \le A_1 N^{d/(2\alpha+d)}$ for some $A_1 < \infty$ and that $\theta_\true \in \tilde H^\alpha \cap \Theta$ satisfies (\ref{bias}), for
\begin{equation} \label{towardsinfinity}
\alpha > d, ~\alpha \ge d\big(\tau+\frac{1}{2}\big).
\end{equation}
Let the surrogate posterior measure $\tilde \Pi(\cdot|(Y_i, X_i)_{i=1}^N)$ be as in (\ref{surro}) for the given value of $\eta$ and any $K \ge N (\log N) D^{\kappa_1}/\eta^2$. We then have the following:

\medskip

A) For all $\theta  \in \{\theta \in E_D: \|\theta-\theta_{\true,D}\|_{E_D} \le 3\eta/8\}$ we have $\tilde \ell_N(\theta) =\ell_N(\theta)$ and hence also $$\nabla \log \tilde \pi(\cdot|(Y_i, X_i)_{i=1}^N)(\theta)=\nabla \log \pi(\cdot|(Y_i, X_i)_{i=1}^N)(\theta).$$ 

\medskip

B) On an event $\mathcal E_N \subset (V \times \mathcal X)^N$ of $P_{\theta_\true}^N$-probability at least $1 - O(e^{-CND^{-2\kappa_0 - 4 \kappa_2}})$ for some $C>0$, the probability measure $\tilde \Pi(\cdot|(Y_i, X_i)_{i=1}^N)$ is strongly log-concave on $E_D$, specifically $$\inf_{\theta \in E_D} \lambda_\min(-\nabla^2 \log \tilde \pi(\theta|(Y_i, X_i)_{i=1}^N)) \ge \frac{c_0}{2} N D^{-\kappa_0},$$ and moreover $\nabla \tilde \ell_N$ is globally Lipschitz on $E_D$ with Lipschitz constant $7K$. 

\medskip

C) On an event $\mathcal E_N \subset (V \times \mathcal X)^N$ of $P_{\theta_\true}^N$-probability at least $1 - O(e^{-bN\delta_N^2} - e^{-CND^{-2\kappa_0 - 4 \kappa_2}})$ for some $b>0,C>0$, we have the Wasserstein approximation $$W^2_2\big(\Pi(\cdot|(Y_i, X_i)_{i=1}^N), \tilde \Pi(\cdot|(Y_i, X_i)_{i=1}^N) \big) \leq e^{-N^{d/(2\alpha+d)}}.$$
\end{thm}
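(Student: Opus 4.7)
\emph{Part A} is a direct calculation. The point of the construction of $g_\eta, \alpha_\eta$ is that they are trivial on a small concentric ball around $\theta_\init$: the cut-off $\alpha$ equals $1$ on $[0, 3/4]$, and the mollified function $\tilde\gamma_\eta$ vanishes on $[0, 5\eta/8)$ so that after convolution with $\varphi_{\eta/8}$ (supported in $[-\eta/8, \eta/8]$) the function $\gamma_\eta$ still vanishes on $[0, \eta/2]$. For $\theta$ with $\|\theta-\theta_{\true,D}\|_{E_D}\le 3\eta/8$, the triangle inequality and \eqref{initioadabsurdum} give $\|\theta-\theta_\init\|_{E_D}\le \eta/2$, which places $\theta$ in the region where $\alpha_\eta=1$ and $g_\eta=0$, so $\tilde\ell_N(\theta)=\ell_N(\theta)$ and the log-density gradients coincide.

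\emph{Part B.} The key input is Lemma \ref{youtube} (the concentration bound for the empirical Hessian from \cite{NW20}), which combines Condition \ref{u4} and \ref{goldfisch} with a matrix-Bernstein inequality to give, on an event of $P_{\theta_\true}^N$-probability at least $1-O(e^{-CND^{-2\kappa_0-4\kappa_2}})$, the uniform lower bound $\inf_{\theta\in \mathcal B}\lambda_{\min}(-\nabla^2\ell_N(\theta))\ge \tfrac{c_0}{2}ND^{-\kappa_0}$. On the slightly smaller ball where Part A applies this gives log-concavity of $\tilde\pi$ directly (with the Gaussian prior adding a non-negative Hessian). To propagate log-concavity to all of $E_D$ one writes $-\nabla^2\tilde\ell_N = -\nabla^2(\alpha_\eta\ell_N)+ K\nabla^2 g_\eta$ and uses that $g_\eta$ is a mollified quadratic whose Hessian is $\succeq 2I$ once one is sufficiently outside the threshold radius $5\eta/8$; the terms generated by $\alpha_\eta\ell_N$ involve $\|\ell_N\|_\infty$, $\|\nabla\ell_N\|$, $\|\nabla^2\ell_N\|_\mathrm{op}$ on the annulus $\{3\eta/4 \le \|\theta-\theta_\init\|\le 7\eta/8\}$, which by Condition \ref{u4} and an elementary empirical-process bound are $O(N(\log N)D^{\kappa_1})$ on the same high-probability event. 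The choice $K\ge N(\log N)D^{\kappa_1}/\eta^2$ is precisely what is needed for $K\nabla^2 g_\eta$ to dominate these contributions. The same bounds show $\|\nabla^2\tilde\ell_N\|_\mathrm{op}\le 7K$, giving the Lipschitz property of $\nabla\tilde\ell_N$.

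\emph{Part C} is the substantive step. Set $A=\{\theta\in E_D:\|\theta-\theta_{\true,D}\|_{E_D}\le 3\eta/8\}$; by Part A the unnormalised densities of $\Pi(\cdot|\text{data})$ and $\tilde\Pi(\cdot|\text{data})$ coincide on $A$. The plan is to show that both measures put mass $1-O(e^{-bN\delta_N^2})$ on $A$ and then convert this into a Wasserstein bound. (i) \emph{Posterior contraction}: using the small-ball estimate for the Gaussian prior \eqref{gprior}, the testing inequality associated with the squared-Hellinger/$L^2_\lambda$ distance in the regression model, and Condition \ref{ganzwien}c) (which transports contraction at rate $\delta_N$ in prediction to rate $\delta_N^\gamma$ in $L^2_\zeta$), one obtains $\Pi(A^c|\text{data}) \le e^{-bN\delta_N^2}$ on an event of the required probability — this is where the radius assumption $\eta\ge(\log N)^2\delta_N^\gamma$ from Condition \ref{bock} is used. (ii) \emph{Surrogate concentration}: the log-concavity bound from Part B implies a Gaussian-type concentration inequality for $\tilde\Pi$ around its mode $\hat\theta$ with scale $\sqrt{D^{\kappa_0}/N}$; the mode lies inside $\mathcal B$ because outside $\mathcal B$ the convex penalty $Kg_\eta$ forces the log-density strictly below its value at $\theta_\init$. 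The second clause of Condition \ref{bock}, $\eta\ge (\log N)^2 D^{\kappa_0/2}\delta_N$, then yields $\tilde\Pi(A^c|\text{data})\le e^{-bN\delta_N^2}$. (iii) \emph{Wasserstein control}: couple $\Pi$ and $\tilde\Pi$ by transporting the shared mass on $A$ via the diagonal (it has the same density up to a normalising ratio $\tilde Z/Z=\Pi(A)/\tilde\Pi(A)=1+O(e^{-bN\delta_N^2})$) and use an arbitrary coupling on the residual mass; bounding the residual quadratic cost by Cauchy–Schwarz against the uniform second moment estimates (from Gaussian tails of the prior for $\Pi$, and from log-concave concentration for $\tilde\Pi$) converts the exponentially small mass into the stated bound $W_2^2\le e^{-N^{d/(2\alpha+d)}}=e^{-N\delta_N^2}$.

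The main obstacle I expect is step (i): obtaining a posterior concentration inequality with genuinely \emph{exponential} rate $e^{-bN\delta_N^2}$ rather than the standard `in probability' statement. This requires chaining the Gaussian small-ball probability through the evidence lower bound and an exponential deviation of the likelihood tests, while carefully tracking the $\log N$ factors absorbed into $\tilde\delta_N$ to ensure that the contraction radius $\tilde\delta_N$ fits strictly inside $3\eta/8$. Once that quantitative contraction is in hand, the log-concave concentration in (ii) and the coupling in (iii) are comparatively mechanical.
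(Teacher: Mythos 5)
Parts A and B are essentially the paper's argument (Part A is exactly the computation with $\gamma_\eta$, $\alpha_\eta$ and \eqref{initioadabsurdum}; Part B is Lemma \ref{youtube} plus the convexification bound, which the paper delegates to Proposition 3.6/Remark 3.11 of \cite{NW20}), and your overall architecture for Part C — exponential-rate contraction, concentration of the surrogate via strong log-concavity, and a total-variation-weighted coupling to control $W_2^2$ — is the same as the paper's. However, your Part C has two genuine gaps. First, the localisation of the surrogate's mode: arguing that the penalty $Kg_\eta$ "forces the mode inside $\mathcal B$" only places the maximiser of $\log\tilde\pi$ within distance of order $\eta$ of $\theta_{\true,D}$, and since the concentration scale $\tilde\delta_N$ is much smaller than $\eta$ (Condition \ref{bock}), concentration of $\tilde\Pi$ at scale $\tilde\delta_N$ around a mode that is only known to lie somewhere in a ball of radius $\sim\eta$ does \emph{not} imply that $\tilde\Pi$ charges the set $A=\{\|\theta-\theta_{\true,D}\|\le 3\eta/8\}$ where the two densities agree. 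The paper closes this by a separate ingredient you never use: the MAP estimate $\hat\theta_\MAP$ of the true posterior is consistent at rate $\delta_N^\gamma$ (Theorem \ref{recycle}, \eqref{mapinvrate}), it satisfies the score equation $\nabla\log\tilde\pi(\hat\theta_\MAP|\cdot)=\nabla\log\pi(\hat\theta_\MAP|\cdot)=0$ because it lies in the region of Part A, and strong concavity then identifies it as the \emph{unique} mode of the surrogate — this pins the surrogate's mass at scale $\tilde\delta_N$ around a point within $\bar C\delta_N^\gamma$ of $\theta_{\true,D}$, which is what the coupling needs. (Alternatively one must supply some other argument of comparable strength; penalty monotonicity alone does not.)

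Second, the tail second moments. For the true posterior, "Gaussian tails of the prior" do not bound $\int_{A^c}\|\theta-\hat\theta\|^2\,d\Pi(\theta|\text{data})$: the posterior density relative to the prior is $e^{\ell_N}/\int e^{\ell_N}d\Pi$, and the normalising constant can be exponentially small in $N$, so the prior tail alone is useless. The paper handles this via the evidence-lower-bound events $\mathcal C_{N,K}$ (Lemma \ref{expsmall}), Cauchy--Schwarz against posterior fourth moments, the identity $E^N_{\theta_\true}e^{\ell_N(\theta)-\ell_N(\theta_\true)}=1$ together with Markov, and — crucially — the freedom in Theorem \ref{recycle} to take the contraction exponent $a$ (your $b$) as large as needed to beat the $e^{(K+\bar d)N\delta_N^2}$ loss; this also fixes the constant-$1$ exponent in the final bound $e^{-N\delta_N^2}$, which your residual mass $e^{-bN\delta_N^2}$ with unspecified $b$ does not yet give. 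For the surrogate tail, the analogous step requires a lower bound on $\int e^{\tilde\ell_N-\tilde\ell_N(\hat\theta_\MAP)}d\Pi$ (Lemma \ref{expsmall} with the random centring $\hat\theta_\MAP$, itself needing an extra empirical-process bound) and the prior determinant factor $V_N\le e^{c'N\delta_N^2\log N}$, which is beaten precisely because Condition \ref{bock} builds $\log N$ factors into $\tilde\delta_N$ so that $ND^{-\kappa_0}\tilde\delta_N^2\ge(\log N)N\delta_N^2$. These are the places where the real work of Part C sits; labelling them "comparatively mechanical" and attributing the moment control to prior tails is where your proposal, as written, would fail.
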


\smallskip

The theorem is `non-asymptotic' in the sense that whenever the hypotheses hold for pairs $(D,N) \in \mathbb N^2$, then so do the conclusions (which are informative only when $N \to \infty$).

\smallskip

Some main ideas underpinning the proof of this theorem are as follows: By Condition \ref{goldfisch} and tools from concentration of measure, the function $\ell_N$ and hence also the posterior distribution are \textit{locally} log-concave near $\theta_{\true,D}$, with high probability. Then our proof shows and exploits the fact that the surrogate posterior density $\tilde \pi(\cdot |(Y_i, X_i)_{i=1}^N)$ has as unique maximiser the \textit{maximum a posteriori (MAP) estimate} $\hat \theta_{\MAP}$, again with high probability -- see (\ref{score}). Next we use that under Condition \ref{ganzwien} the posterior measure is statistically consistent under $P_{\theta_\true}^N$ (see \textsection \ref{posthorn}) and hence charges most of its mass precisely to the neighbourhood of $\theta_{\true,D}$ where $\Pi(\cdot|(Y_i, X_i)_{i=1}^N)$ is log-concave. Making the last statement quantitative in Part C) is the main challenge in the proof.

\smallskip

The log-concave approximations provided by the previous theorem are qualitatively different from commonly used Gaussian `Laplace approximations' for numerical integration of Gibbs measures (\ref{gibbs})  (see \cite{W01} and also \cite{BC09, LSW17, SSW20, HK20} for recent references), which are related to Bernstein-von Mises theorems (\cite{BC09, CN13, CN14, R17, N17, NS19, NR20, MNP20} and references therein). In high-dimensional situations these hold only under restrictive analytical conditions (e.g., \cite{NP22}) and in particular with slower convergence rates than the approximations obtained in Theorem \ref{onetrickpony}. The added flexibility of being able to approximate from the (infinite-dimensional) class of \textit{log-concave measures} instead of just from normal distributions is essential in our context. See also Remark 5.1.4 in \cite{N22} for more discussion.

\smallskip

While the surrogate posterior $\tilde \Pi(\cdot|D_N)$ proposed here is constructive (up to selection of $\theta_{\init}$), it involves choices of constants that may not be practical. However, for the key applications of Langevin-type MCMC methods, this is immaterial, as the gradients of the log-posterior and the surrogate log-posterior coincide near $\theta_{\init}$ by Part A) of the preceding theorem. If a Markov chain such as the ones to be discussed below is started in this region, it will rarely (if ever) exit the region where the bulk of the posterior mass lies and hence not enter the region where the surrogate construction and the choice of its constants become relevant. While the last argument can be made rigorous via Markov chain hitting times, we abstain from giving the details here.

\subsection{Applications to Langevin mixing times}\label{mixingsection}

Part A) of Theorem \ref{onetrickpony} implies that queries of any gradient based algorithm exploring the posterior measure $\Pi(\cdot|(Y_i, X_i)_{i=1}^N)$ will remain unchanged under the surrogate approximation as long as $\theta$ remains near the initialiser $\theta_\init$. Part B) says that the surrogate is strongly log-concave and we can hence expect (following work by \cite{Dal17, DM17, DuMo19}) gradient flow based MCMC algorithms to mix very rapidly towards $\tilde \Pi(\cdot|(Y_i, X_i)_{i=1}^N)$. Specifically one can combine Theorem \ref{onetrickpony} with results in \textsection 3 in \cite{NW20} (cf.~also Remark 3.11 in that paper) to give explicit bounds on the Wasserstein-mixing time of the following Langevin scheme: Let $\tilde \pi_N \equiv \tilde \pi(\cdot|(Y_i,X_i)_{i=1}^{N})$ denote the probability density of $\tilde \Pi(\cdot|(Y_i,X_i)_{i=1}^{N})$. For auxiliary variables $\xi_k \sim^{\iid} \mathcal{N}(0, I_{D \times D})$ and step size $\gamma$ to be chosen, define a Markov chain $(\vartheta_k) \subset\R^D$ as
\begin{align}\label{langos}
\vartheta_0 & = \theta_{\init} \notag \\
\vartheta_{k+1} & = \vartheta_k + \gamma \nabla \log \tilde \pi_N (\vartheta_k)  + \sqrt{2 \gamma} \xi_k,~~k=0,1,2,\dots,
\end{align}
initialised at $\theta_\init$ from (\ref{initioadabsurdum}). If we set 
\begin{equation}
\bar m = ND^{-\kappa_0} + D^{2\alpha/d} N \delta_N^2, ~~\Lambda = \frac{N (\log N) D^{\kappa_1} }{\eta^2} + D^{2\alpha/d} N \delta_N^2
\end{equation}
 and $\epsilon>0$ is given, then a choice of \begin{equation}\label{stepsize}
\gamma \simeq \min\Big(\frac{\epsilon^2 \bar m^2}{D \Lambda^2}, \frac{\epsilon \bar m^{3/2}}{D^{1/2} \Lambda^2} \Big)
\end{equation}
allows to obtain, from Theorem 3.7 in \cite{NW20}, on the event $\mathcal E_N$ from (and under the conditions of) Part C) of Theorem \ref{onetrickpony}, the bound
\begin{equation}
W_2^2(\mathcal L(\vartheta_k), \Pi(\cdot|(Y_i, X_i)_{i=1}^N) \le e^{-N^{d/(2\alpha+d)}} + \epsilon
\end{equation}
after a polynomial runtime $k \approx O(N^{\xi_1} D^{\xi_2}\epsilon^{-\xi_3})$ for some $\xi_1, \xi_2, \xi_3>0$.

\section{Main results for non-Abelian X-ray transforms}\label{naxray}

We now describe here the concrete results we obtain when the map $\G$ arises from a \textit{non-Abelian X-ray transform}. While we review the essential details, we refer to \cite{PSU12, MNP21} for general background and references, and adopt their general geometric notation.

\smallskip

Throughout, for $m \ge 2$ we denote by $SO(m) = \{A\in \R^{m\times m}: A^T A = \id, \det A =1 \}$ the special orthogonal group, with Lie-algebra 
$\so(m)=\{A\in \R^{m\times m}: A^T=-A\}$ consisting of skew-symmetric $m\times m$-matrices. We equip the vector spaces $\so(m)$ and $\R^{m\times m}$ with the Frobenius-norm $\vert A \vert_F = \trace(A^TA)^{1/2}$.

\smallskip

Let $\DD\subset \R^2$ be the open unit disk with  boundary  $\partial \DD$ and so called `influx and outflux' boundaries $\partial_+ S\bar \DD$ and $\partial_-S\bar \DD$, defined  by
\begin{equation}
\partial_\pm S\bar \DD = \{(x,v)\in \partial \DD\times S^1: \mp x\cdot v \ge 0\},
\end{equation} 
where $S^1=\{v\in \R^2:\vert v \vert=1\}$ is the unit circle.
Then $\partial_+S\bar \DD$ parametrises the set of line segments $\gamma_{x,v}(t)=x+tv$ in $\DD$, $0\le t \le \tau(x,v)$, where $\tau(x,v)= -2 x\cdot v$ is the hitting time with $\partial \DD$. The space $\partial_+S\bar \DD$ is diffeomorphic to a cylinder and will be equipped with its normalised area measure $\lambda$, which is given in fan-beam coordinates $(\alpha,\beta)\in [-\pi/2,\pi/2]\times \R/ 2\pi \Z$ (described in \textsection \ref{fanbeam}) by
\begin{equation}\label{measurelambda}
\lambda =(2\pi^2)^{-1} \d \alpha \d \beta.
\end{equation}
Let $C(\bar \DD,\so(m))$ be the space of $\so(m)$-valued maps on $\bar \DD$ that are continuous up to the boundary. The \textit{non-Abelian X-ray transform} is the non-linear map
\begin{equation}\label{yetagain}
\Phi\mapsto C_\Phi,~~~C(\bar \DD,\so(m)) \rightarrow L^2_\lambda(\partial_+S\bar \DD,\R^{m\times m}),
\end{equation}
which associates to a matrix field $\Phi$ its {\it scattering data} $C_\Phi$, defined as follows: for every $(x,v)\in \partial_+S\bar \DD$ we can solve a matrix ODE along the line segment $\gamma_{x,v}$,
\begin{equation}
\frac{\d}{\d t} u(t) + \Phi(\gamma_{x,v}(t)) u(t) = 0,~~~~u(\tau(x,v)) = \id,
\end{equation}
formulated as end-value problem by convention; we then define
\begin{equation}
C_\Phi(x,v):=u(0)\in SO(m)
\end{equation}
as the resulting initial value. By standard ODE theory, $C_\Phi$ is a continuous function of $(x,v)\in \partial_+S\bar \DD$ and thus can be viewed as element in the Hilbert space $L^2_\lambda(\partial_+S\bar \DD,\R^{m\times m})$.

\smallskip

Finite dimensional approximation spaces $E_D\subset C(\bar \DD,\so(m))$  are spanned by the classical {\it Zernike polynomials}, which are part of an SVD of the derivative of $\Phi\mapsto C_\Phi$ at $\Phi=0$ \cite{KaBu04}. Postponing a full definition of $E_D$ to (\ref{eddef2}), we mention here that if $D\in \N$ is of the form $D= \dim \so(m) \cdot (D'+1)(D'+2)/2$ for some $D'\in \N_0$, then
\begin{equation}\label{eddef}
E_D=E_D(\DD,\so(m))=\left\{ \begin{array}{ll}
\Phi:\DD\rightarrow \so(m): \text{Each matrix entry } \Phi_{ij}(x_1,x_2)  \\
 \text{is a polynomial in } x_1,x_2 \text{ of order } \le D'
\end{array}
\right \}.
\end{equation}
The space $E_D$ is a $D$-dimensional subspace of  $L^2(\DD, \R^{m \times m})$ and (after choosing an orthonormal basis of $\so(m)$)  can be naturally identified with $\R^D$; we sometimes make this identification explicit via the map
\begin{equation}
\R^D\rightarrow E_D,\quad \theta \mapsto \Phi_\theta
\end{equation}
and refer to $\theta$ as the `Fourier coefficients' of $\Phi_\theta$. 

\smallskip

What precedes  fits into the framework of the \textsection \ref{genth} by making the following choices:  $\mathcal Z = \DD$ and $\zeta$ is the Lebesgue measure; $\mathcal X = \partial_+S\bar \DD$ and  $\lambda$ is the measure from (\ref{measurelambda});
\begin{equation}\label{thetaxray}
\Theta = C(\bar \DD,\so(m)),  \quad W= \so(m), \quad V= \R^{m\times m};
\end{equation}
the approximation spaces $E_D$ from (\ref{eddef3}) are as in (\ref{eddef}), with spanning set $(e_n: n \in \mathbb N_0)$ given by  Zernike polynomials (see \textsection \ref{realval} for details); finally, identifying matrix fields $\Phi=\Phi_\theta \in E_D$ with the parameter $\theta \in \R^D$, the forward map $\G$ is defined as
\begin{equation}
\G(\theta) = C_{\Phi_\theta},\quad \theta\in \R^D \simeq E_D.
\end{equation}

\smallskip

{\bf Local gradient stability for the X-ray transform.} In order to apply the general theory from \textsection \ref{genth}, the analytical hypotheses from \textsection \ref{vonhintenwievonvorne} need to be verified. In the present setting, the main non-trivial properties are a {\it global stability estimate} (Condition \ref{ganzwien}c)), as well as a {\it local gradient stability estimate} (Condition \ref{goldfisch} and equation \eqref{zauberstab}). The former has already been obtained in \cite{MNP21} and we now discuss the latter, local condition.

\smallskip

The Gateaux derivative of the non-Abelian X-ray transform  $\Phi\mapsto C_\Phi$ in \eqref{yetagain} at some $\Phi\in C(\bar \DD,\so(m))$ and in direction  $h\in C(\bar \DD,\so(m))$ is denoted by
\begin{equation}\label{gateaux}
\II_\Phi h := \frac{\d}{\d t}\Big \vert_{t=0} C_{\Phi+t h} \in L^2_\lambda(\partial_+S\bar \DD,\so(m)).
\end{equation}
It can be shown to extend as bounded linear map between $L^2$-spaces,
\begin{equation*}
\II_\Phi\colon L^2(\DD,\so(m))\rightarrow L^2_\lambda(\partial_+S\bar \DD),
\end{equation*}
which can be regarded as linear X-ray/Radon transform with a $\Phi$-dependent weight -- see \textsection \ref{analysissection} for details.  If $\Phi=\Phi_\theta\in E_D$, then $\II_\Phi\vert_{E_D}$ can be viewed as gradient $\nabla \G(\theta)$ of the forward map  $\G$ at $\theta$ -- in particular, the gradient stability condition (\ref{zauberstab}) is verified for the non-Abelian X-ray transform by the following theorem. 
\begin{thm}\label{findimstability} Suppose $\Phi:\DD\rightarrow \so(m)$ is supported in a compact set $K\subset \DD$ and has Sobolev regularity $H^\alpha( \DD)$ for some $\alpha>5$. Then
\begin{equation}\label{weylz}
\Vert \II_\Phi h \Vert_{L^2_\lambda(\partial_+S\bar \DD)}^2 \ge   c D^{-1/2} \cdot  \Vert h \Vert_{L^2(\DD)}^2 \quad  \text{ for all } h\in E_D,
\end{equation}
for a constant $c=c(m, K, M )>0$, where $M$ is any upper bound $M \ge \Vert \Phi \Vert_{H^\alpha(\DD)}$.
\end{thm}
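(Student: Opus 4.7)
The plan is to combine two ingredients: a dimension-independent $\tilde H^{-1/2}$-stability estimate for the attenuated transform $\II_\Phi$, and a Weyl-type norm comparison between $\tilde H^{-1/2}$ and $L^2$ on the finite-dimensional subspace $E_D$.

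First I would invoke Theorem \ref{zernikestability}, announced in the introduction as the extension of the unweighted isomorphism $\II_0 \colon \tilde H^{-1/2}(\DD) \to L^2_\lambda(\partial_+ S\bar\DD)$ of \cite{Mon20} to attenuated transforms $\II_\Phi$ with $\Phi \in H^\alpha$ ($\alpha > 5$) compactly supported in $K \subset \DD$. Taking uniformity of the isomorphism constant over the class $\{\Phi : \Vert \Phi \Vert_{H^\alpha(\DD)}\le M,\ \supp \Phi \subset K\}$ to be part of its statement, one obtains
\[
\Vert \II_\Phi h \Vert^2_{L^2_\lambda(\partial_+ S\bar\DD)} \;\ge\; c_1(m,K,M) \, \Vert h \Vert^2_{\tilde H^{-1/2}(\DD)}
\]
for every $h \in L^2(\DD,\so(m))$, and in particular for every $h \in E_D$.

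Second, I would translate the right-hand side into an $L^2$-bound using the eigenvalue asymptotics of the Zernike-polynomial scale. In the present setting $\mathcal Z = \DD \subset \R^2$, so the parameter $d$ in Condition \ref{linsp} equals $2$ and $\lambda_n \asymp n$. For $h = \sum_i \sum_{n \le D_W} \hat h_{n,i}\, e_{n,i} \in E_D$ with $D_W = D/d_W$, orthonormality gives
\[
\Vert h \Vert^2_{\tilde H^{-1/2}(\DD)} \;=\; \sum_{i,\, n\le D_W}\! \lambda_n^{-1/2}|\hat h_{n,i}|^2 \;\ge\; \lambda_{D_W}^{-1/2}\, \Vert h\Vert^2_{L^2(\DD)} \;\gtrsim\; D^{-1/2}\, \Vert h\Vert^2_{L^2(\DD)}.
\]
Chaining this with the previous display then yields \eqref{weylz}, with a constant of the form $c = c_1(m,K,M)\cdot c_W$, where $c_W$ comes from the Weyl asymptotics and is independent of $\Phi$.

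The essential work is therefore buried in Theorem \ref{zernikestability}, which is where I would expect the main obstacle to lie. For $\Phi=0$, the isomorphism of \cite{Mon20} already rests on a delicate analysis of the boundary behaviour of the normal operator $\II_0^\ast \II_0$ on $\DD$ and of the associated Zernike-type SVD. For $\Phi\neq 0$ a direct perturbative argument fails because $\Phi$ is not assumed small; the expected route, in the spirit of \cite{MNP20}, is to combine a Pestov-type energy identity (exploiting the antisymmetry $\Phi^T = -\Phi$) with microlocal analysis of the attenuated normal operator and a Fredholm argument that propagates the isomorphism from $\Phi=0$, with the hypothesis $\alpha>5$ presumably ensuring enough regularity to run this machinery. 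Conditional on that theorem, however, the proof of Theorem \ref{findimstability} reduces to the two-line computation above.
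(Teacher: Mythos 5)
Your proposal matches the paper's own argument: Theorem \ref{zernikestability} combined with the $\tilde H^{-1/2}$-versus-$L^2$ comparison on $E_D$ (Lemma \ref{normcomparison} in the paper, i.e.\ your Weyl-type bound $\Vert h \Vert_{\tilde H^{-1/2}}^2 \gtrsim D^{-1/2}\Vert h\Vert_{L^2}^2$) is exactly how Theorem \ref{findimstability} is deduced there (see Remark \ref{toolate}). The only two points you gloss over are handled the same way in the paper: the stability theorem is stated for $I_\Phi$ with a constant continuous in the $C^4$-topology, so one passes to $\II_\Phi h = \bigl(I_{[\Phi,\cdot]}h\bigr)C_\Phi$ using that $C_\Phi\in SO(m)$ is norm-preserving, and the uniformity of the constant over $\{\Vert\Phi\Vert_{H^\alpha}\le M,\ \supp\Phi\subset K\}$ follows from the compact embedding $H^\alpha(\DD)\subset C^4(\bar\DD)$ for $\alpha>5$ rather than being part of Theorem \ref{zernikestability} itself.
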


For $\Phi=0$ the inequality (\ref{weylz}) follows directly from the distribution of the singular values of $\mathbb I_0$ from \cite{KaBu04}, but remarkably this stability property extends to all sufficiently regular $\Phi$ (where the Zernike polynomials do \textit{not} constitute the singular vectors for $\II_\Phi$).

\smallskip

The proof of the theorem (see Remark \ref{toolate}) relies on recent developments in \cite{Mon20} and \cite{MNP20} and follows from the infinite dimensional Theorem \ref{zernikestability}, which constitutes our main analytical contribution.   As already discussed after \eqref{variatio}, the underlying isomorphism property can be regarded as X-ray version of the well known result from elliptic PDE theory that, for natural boundary conditions, inverse Schr\"odinger operators $(\Delta + \Phi)^{-1}$ have mapping properties comparable to those of the standard inverse Laplacian $\Delta^{-1}$.

\begin{rem}\label{nonsharp}
We emphasise here that the exponent $1/2$ ($=\kappa_0$ in Condition \ref{goldfisch}) in the preceding theorem is sharp in that it matches exactly the ill-posedness of the problem (e.g., \cite[Theorem 5.1]{Nat01}). A non-sharp result  (with $\kappa_0=1+\epsilon$ for arbitrary $\epsilon>0$) can be obtained directly from the global stability estimate in \cite{MNP21} (and without appealing to Theorem \ref{zernikestability})---see Remark \ref{nonsharp2} below.

\end{rem}

In  \textsection \ref{finalsection} we show that Conditions \ref{ganzwien} up to \ref{bock} are indeed satisfied for the non-Abelian X-ray transform, with precise parameters exhibited at the start of that section.

\smallskip

\textbf{Computation time bounds and approximation of the Gibbs measure.} We now state our main results on log-concave approximation and polynomial time sampling of posterior measures in the context of the non-Abelian X-ray transform.

\smallskip

Analogous to \textsection \ref{bayesset} and with the choices made surrounding (\ref{thetaxray}), we observe data $Z^{(N)}=\left((Y_i, X_i,V_i)\in \R^{m \times m} \times \partial_+S\bar \DD:i=1,\dots, N\right)$ from the regression model
\begin{equation}\label{model0}
Y_i=C_\Phi(X_i,V_i)+\varepsilon_i,  \qquad i=1,\dots, N,
\end{equation}
where $(X_i,V_i)\sim \lambda$ are independently drawn directions in  $\partial_+S\bar \DD$ (corresponding to random line segments in $\DD$) and $\varepsilon_i=(\varepsilon_{ijk}:1\le j,k\le m)$ with $\varepsilon_{ijk}\sim^\iid \mathcal{N}(0,1)$---see also \cite{MNP19a}, where this noise model for the non-Abelian X-ray transform is discussed in detail. The law of $Z^{(N)}$ is now denoted $P_\Phi^N$ (or simply $P_\Phi$ if $N=1$).
The prior $\Pi$ on $\Theta=C(\bar \DD,\so(m))$ is chosen as in \eqref{gprior}, with covariance matrix $\Lambda_\alpha$ determined by the singular values $\lambda_0\le \lambda_1\le \dots$ of $\II_0$ (cf.~\eqref{singularvalues})
The bias condition  \eqref{bias} now takes the form
\begin{equation} \label{phi0cond2}
\Vert C_{\Phi_\truep} -C_{\Phi_{\truep,D}} \Vert_{L^2_\lambda(\partial_+S\bar \DD)} \le \frac{\delta_N}{2 \sqrt{m}+ 1}, \quad \delta_N=N^{-\alpha/(2\alpha+2)}.
\end{equation}

\smallskip

The general Wasserstein approximation result from Theorem \ref{onetrickpony}, applied to the present setting, yields the following theorem.

\begin{thm}\label{xraypony} Let $\alpha >5$ and suppose that $D\le A_1 N^{1/(\alpha+1)}$ for some $A_1>0$. Let $\Pi(\cdot \vert Z^{(N)})$ be the posterior distribution on $\R^D$ described in \textsection\ref{bayesset}, with
$Z^{(N)}$ following model \eqref{model0}. Suppose the true potential $\Phi_\truep \in H^\alpha(\DD, \so(m))$ has compact support in $\DD$ and obeys \eqref{phi0cond2}. Then there exists a log-concave `surrogate posterior' $\tilde \Pi(\cdot \vert Z^{(N)})$ on $\R^D$ which approximates $\Pi(\cdot\vert Z^{(N)})$ in the sense that 
\begin{equation}
W_2^2\big(\tilde \Pi(\cdot\vert Z^{(N)}),\Pi(\cdot\vert Z^{(N)})\big) \le \exp\left(- N^{1/(\alpha +1)}\right)
\end{equation}
on an event $\mathcal{E}_N\subset\left(\R^{m \times m} \times \partial_+S\bar \DD\right)^N$ of probability at least
\begin{equation}\label{event0}
P_{\Phi_\truep}^N(\mathcal{E}_N) \ge 1 - O\big(\exp(-b N^{1/(\alpha+1)}) - \exp(-CND^{-7})\big),
\end{equation}
where $b>0$ and $C>0$.
\end{thm}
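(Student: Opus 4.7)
The plan is to invoke the general log-concave approximation Theorem \ref{onetrickpony} with the dictionary already set up just before the theorem statement: $\mathcal Z=\DD$ carries Lebesgue measure with ``dimension'' $d=2$; $\mathcal X=\partial_+ S\bar\DD$ with the probability measure $\lambda$ from (\ref{measurelambda}); $W=\so(m)$, $V=\R^{m\times m}$; the orthonormal system on $L^2(\DD,\R)$ is a (real, scalar) Zernike basis, with the weights $\lambda_n$ chosen so that the $\tilde H^s$-scale matches the one associated with the SVD of $\II_0$, which forces $\lambda_n\asymp n^{2/d}=n$ (reflecting that $\II_0$ is $1/2$-regularising); finally $\G(\theta)=C_{\Phi_\theta}$ and $\Theta=C(\bar\DD,\so(m))$. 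Classical size estimates on Zernike polynomials give the $L^\infty$-bound (\ref{unifef}) with some small $\tau$, and the hypothesis $\alpha>5$ is what is required both for the condition $\alpha\ge d(\tau+1/2)$ in (\ref{towardsinfinity}) and for the Sobolev embedding $\tilde H^\alpha\hookrightarrow\Theta$ used below.

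Next, the global Condition \ref{ganzwien} is quick: (a) $C_\Phi(x,v)\in SO(m)$ so $|C_\Phi(x,v)|_F=\sqrt m$ uniformly, yielding $U_\G=\sqrt m$; (b) Gr\"onwall applied to the defining ODE $\dot u+\Phi u=0$ along each line segment converts $L^\infty$- and $L^2_\lambda$-differences of potentials directly into the corresponding differences of boundary data, giving the Lipschitz bounds for both pairs of norms; (c) the inverse continuity modulus with some $0<\gamma\le 1$ is exactly the global stability theorem of \cite{MNP21}. Combined with the bias assumption (\ref{phi0cond2}), which is (\ref{bias}) in the present setting, this supplies all the ``outer'' ingredients of Theorem \ref{onetrickpony}.

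The delicate step is the local Condition \ref{goldfisch}. One identifies the pointwise expected Hessian as $\E_{\theta_\star}[-\nabla^2\ell(\theta,(Y,X))]=\nabla\G(\theta)^T\nabla\G(\theta)$, whose smallest eigenvalue on $E_D$ equals $\inf_{\|h\|_{E_D}=1}\|\II_{\Phi_\theta}h\|_{L^2_\lambda}^2$. Theorem \ref{findimstability} supplies the bound $cD^{-1/2}$ for any single potential with $\|\Phi\|_{H^\alpha(\DD)}\le M$, giving $\kappa_0=1/2$; to extend this uniformly over $\theta\in\mathcal B$ one uses that the Sobolev embedding $\tilde H^\alpha\hookrightarrow C(\bar\DD)$ for $\alpha>d/2$, combined with $\eta\le 1$ and the prior scaling, keeps $\|\Phi_\theta\|_{H^\alpha(\DD)}$ dominated by a universal multiple of $\|\Phi_{\star,D}\|_{H^\alpha(\DD)}$, so the constant in Theorem \ref{findimstability} does not degrade. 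The matching upper bound in (\ref{eq:ass:ub}) together with the $C^{2,1}$-regularity of Condition \ref{u4} are obtained by differentiating the matrix ODE for $C_\Phi$ up to three times in $\theta$: each derivative becomes a multilinear expression in basis elements $e_{n,i}$ integrated along $\gamma_{x,v}$, and converting these to $\R^D$-gradient/Hessian norms via the $\ell^2\to L^\infty$ embedding constant of $E_D$ yields polynomial growth with $\kappa_2=3/2$ (and some $\kappa_1\le\kappa_2$). Condition \ref{bock} then reduces to choosing $\eta\simeq(\log N)^2\max(\delta_N^\gamma,D^{1/4}\delta_N)$, which remains $\le 1$ for all large $N$ under $D\le A_1 N^{1/(\alpha+1)}$.

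With every hypothesis verified, Part C) of Theorem \ref{onetrickpony} applies directly and yields a surrogate $\tilde\Pi(\cdot\vert Z^{(N)})$ obeying
\[
W_2^2\big(\tilde\Pi(\cdot\vert Z^{(N)}),\Pi(\cdot\vert Z^{(N)})\big)\le e^{-N^{d/(2\alpha+d)}}=e^{-N^{1/(\alpha+1)}}
\]
on an event of $P_{\Phi_\star}^N$-probability at least $1-O(e^{-bN\delta_N^2}-e^{-CND^{-2\kappa_0-4\kappa_2}})$. With $\kappa_0=1/2$, $\kappa_2=3/2$ and $N\delta_N^2=N^{1/(\alpha+1)}$, the exponent $2\kappa_0+4\kappa_2=7$ matches (\ref{event0}) exactly. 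The main obstacle throughout is the uniform propagation of the gradient stability estimate from $\Phi_{\star,D}$ to the whole neighbourhood $\mathcal B$---i.e., the statement that $\II_\Phi$ remains uniformly invertible when $\Phi$ is perturbed within $E_D$---which is precisely the content of the infinite-dimensional Theorem \ref{zernikestability}, the one genuinely new analytic input beyond \cite{MNP21}.
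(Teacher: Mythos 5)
Your overall route is the same as the paper's: identify the dictionary $(\mathcal Z,\mathcal X,W,V)$, verify Conditions \ref{linsp}--\ref{bock} for $\G(\theta)=C_{\Phi_\theta}$ with $d=2$, $\lambda_n\asymp n$, $\tau=1/4$, $\kappa_0=1/2$, $\kappa_2=3/2$, and then invoke Part C) of Theorem \ref{onetrickpony} with $\theta_\init=\theta_{\true,D}$; the exponent bookkeeping $2\kappa_0+4\kappa_2=7$ and $N\delta_N^2=N^{1/(\alpha+1)}$ is also as in the paper. The global Condition \ref{ganzwien}a),b) is handled essentially as you say.

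The gap is in the local step, i.e.\ the verification of Condition \ref{goldfisch} on the ball $\mathcal B$, and in the resulting choice of $\eta$. First, $E_{\Phi_\true}[-\nabla^2\ell(\theta)]$ is \emph{not} $\nabla\G(\theta)^T\nabla\G(\theta)$: by \eqref{seealsothis} there is the extra term $-\langle C_{\Phi_\true}-C_{\Phi_\theta},\ddot C_{\Phi_\theta}[h]\rangle_{L^2_\lambda}$, which has to be beaten by the gradient term. Second, your propagation of Theorem \ref{findimstability} from $\Phi_{\true,D}$ to all $\Phi_\theta$ with $\theta\in\mathcal B$ ("the $H^\alpha$-norm stays dominated, so the constant does not degrade") does not work: for $h\in E_D$ with $\|h\|_{E_D}\le\eta$ one only has $\|h\|_{\tilde H^\alpha}\lesssim D^{\alpha/2}\eta$ (Lemma \ref{normcomparison}), and $\Phi_\theta$ is a polynomial, hence not compactly supported in $\DD$, so Theorem \ref{findimstability} simply does not apply at $\Phi_\theta\neq\Phi_{\true,D}$. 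The paper instead argues perturbatively (Lemma \ref{xraylocalcurvature}): it compares $I_\Xi$ with $I_{\Xi_\true}$ and bounds the error terms, which scale like $\eta\,D^{3/2}$ and $\|C_{\Phi_\true}-C_{\Phi_{\true,D}}\|_{L^2_\lambda}D^{3/2}$; this forces the radius $\eta\le\epsilon D^{-2}$ and the bias level $\epsilon D^{-2}$, not merely $\eta\le 1$. Consequently Condition \ref{bock} is not the triviality "$\eta\le1$" you check, but the two-sided constraint $(\log N)\tilde\delta_N\le\eta\le\epsilon D^{-2}$, i.e.\ $(\log N)^3\max(\delta_N^\gamma,D^{1/4}\delta_N)\lesssim D^{-2}$, and this is precisely where $\alpha>5$, $D\le A_1N^{1/(\alpha+1)}$ and the \emph{sharp} modulus $\gamma=1-1/\alpha$ are used (Remark \ref{ponytamer}). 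Relatedly, Condition \ref{ganzwien}c) with $\gamma=1-1/\alpha$ is not "exactly" the stability theorem of \cite{MNP21}: that result is an $L^2$--$H^1$ estimate, and turning it into an inverse continuity modulus in terms of $\|C_\Phi-C_\Psi\|_{L^2_\lambda}$ requires interpolation together with the new Zernike-scale forward estimate, Theorem \ref{zernikeforward} (Lemma \ref{inversecm}); with a weaker $\gamma$ the arithmetic above would demand $\alpha$ strictly larger than $5$. (Minor: $\alpha>5$ has nothing to do with \eqref{towardsinfinity}, which for $d=2$, $\tau=1/4$ only asks $\alpha>2$; its role is the $H^\alpha\subset C^4$ embedding behind Theorem \ref{findimstability} and the exponent count just described.)
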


Note that for the maximal permitted dimension $D \simeq N^{1/(\alpha+1)}$, the stronger condition $\alpha>6$ is required for the last term in (\ref{event0}) to converge to zero. 

\smallskip

For an explicit construction of the `surrogate posterior' we refer to \textsection \ref{genapprox} above, with `base point' $\theta_{\init}=\theta_{\star, D}$ and choices of constants exhibited in the proof of Theorem \ref{xraypony} (see also  \cite[\textsection 3.6]{NW20}). In fact we obtain quantitative bounds on the log-concavity of $\tilde \Pi(\cdot|Z^{(N)})$ (as in Thm.~\ref{onetrickpony}B) with $\kappa_0=1/2$).


\smallskip

If a feasible initialiser $\theta_{\init}$ (in place of the infeasible $\theta_{\init}=\theta_{\true, D}$) is available, then we can use the gradient-based Langevin Markov chain $(\vartheta_k) \subset\R^D$ from \textsection \ref{mixingsection} to efficiently sample from the posterior $\Pi(\cdot\vert Z^{(N)})$; precisely, we obtain:

\begin{thm}\label{xraypony2} 
Suppose the hypotheses of the previous theorem hold and that there exists an initialiser $\theta_\init$ with $\Vert \theta_\init - \theta_{\true,D} \Vert_{\R^D} \le D^{-2}/\log N$.
Given any precision level $\epsilon>0$, there is an appropriate step size $\gamma=\gamma(\epsilon,D,N)$  such that the law of the iterates $\vartheta_k$ from (\ref{langos}) satisfy, on the events $\mathcal E_N$ from (\ref{event0})
\begin{equation}
W_2^2(\mathcal L(\vartheta_k), \Pi(\cdot|(Z^{(N)})) \le e^{-N^{1/(\alpha+1)}} + \epsilon,\qquad k\ge k_\mix(\epsilon,D,N),
\end{equation}
with mixing time of order  $ k_\mix(\epsilon,D,N)=O\left((1/\epsilon)^{b_1}D^{b_2}N^{b_3}\right)$ for some $b_1, b_2, b_3 > 0$.
\end{thm}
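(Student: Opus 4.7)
The plan is to combine the log-concave Wasserstein approximation of Theorem~\ref{xraypony} with the Langevin mixing bound recalled in \textsection \ref{mixingsection} (stemming from Theorem~3.7 in \cite{NW20}), via the triangle inequality for $W_2$.

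First, I would verify that the hypothesis $\|\theta_\init - \theta_{\true,D}\|_{\R^D} \le D^{-2}/\log N$ is more than enough to ensure \eqref{initioadabsurdum} for the radius $\eta$ used in Theorem~\ref{xraypony}. In the X-ray setting we have $\eta \ge (\log N)\tilde \delta_N$ with $\tilde \delta_N$ only polynomially small (in particular much larger than $D^{-2}/\log N$ in the permitted regime $D \lesssim N^{1/(\alpha+1)}$), so this holds for $N$ large. The surrogate $\tilde \Pi(\cdot|Z^{(N)})$ from \textsection \ref{star} is thus well defined at this feasible base point $\theta_\init$, and the proof of Theorem~\ref{xraypony} applies verbatim (its statement only invokes \eqref{initioadabsurdum}, not $\theta_\init = \theta_{\true,D}$), yielding $W_2^2(\tilde \Pi(\cdot|Z^{(N)}), \Pi(\cdot|Z^{(N)})) \le e^{-N^{1/(\alpha+1)}}$ on the event $\mathcal E_N$.

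Next, by Part~B of Theorem~\ref{onetrickpony} (specialised via Theorem~\ref{findimstability} to $\kappa_0 = 1/2$), the density $\tilde \pi_N$ is strongly log-concave on $E_D$ with convexity parameter $\bar m$ of order $ND^{-1/2} + D^{\alpha} N \delta_N^2$, and $\nabla \log \tilde \pi_N$ is globally Lipschitz with constant $\Lambda$ of order $(N\log N)D^{\kappa_1}/\eta^2 + D^{\alpha} N \delta_N^2$. Using $\delta_N = N^{-\alpha/(2\alpha+2)}$ and the polynomial-in-$D$ values of $\kappa_1, \kappa_2, \gamma$ verified in \textsection \ref{finalsection}, both $\bar m$ and $\Lambda$ are polynomially bounded in $N, D$. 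Theorem~3.7 of \cite{NW20} then applies to the chain $(\vartheta_k)$ of \eqref{langos}, with step size chosen as in \eqref{stepsize}: there exist exponents $b_1', b_2', b_3' > 0$ such that
\begin{equation*}
W_2^2\big(\mathcal L(\vartheta_k), \tilde \Pi(\cdot|Z^{(N)})\big) \le \epsilon/2 \qquad \text{for all } k \ge O\big((1/\epsilon)^{b_1'} D^{b_2'} N^{b_3'}\big),
\end{equation*}
on the event $\mathcal E_N$. The initial Wasserstein distance $W_2^2(\delta_{\theta_\init}, \tilde \Pi(\cdot|Z^{(N)}))$ entering the bound is controlled via the strong log-concavity of $\tilde \Pi$ together with the posterior contraction from Theorem~\ref{xraypony}, which places the bulk of $\tilde \Pi$'s mass in an $O(\tilde \delta_N)$-neighbourhood of $\theta_{\true,D}$.

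Finally, combining via $W_2^2(\mu, \rho) \le 2W_2^2(\mu, \nu) + 2W_2^2(\nu, \rho)$ with $\nu = \tilde \Pi(\cdot|Z^{(N)})$ and rescaling $\epsilon$ by a constant factor gives the claimed bound (with possibly larger exponents $b_i$). The main obstacle is bookkeeping: verifying that all of $\bar m, \Lambda, 1/\eta, 1/\gamma$ remain polynomial in $N, D$ uniformly in $D \le A_1 N^{1/(\alpha+1)}$, and that the warm-start quality of $\theta_\init$ propagates through the non-asymptotic Langevin bound of \cite{NW20} without introducing super-polynomial factors. Once this is confirmed, the rest of the argument is a direct application of the triangle inequality.
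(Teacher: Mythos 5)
Your route is the same as the paper's: verify Conditions \ref{linsp}--\ref{bock} for the X-ray forward map (done in \textsection \ref{finalsection}), apply Theorem \ref{onetrickpony} with base point $\theta_\init$ rather than $\theta_{\true,D}$ (the theorem only uses (\ref{initioadabsurdum})), feed the resulting strong log-concavity and gradient-Lipschitz constants $\bar m, \Lambda$ into Theorem 3.7 of \cite{NW20} with step size (\ref{stepsize}), and combine with the Wasserstein approximation of the surrogate by a triangle-inequality argument, exactly as sketched in \textsection \ref{mixingsection}.

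One step of your justification is backwards, however. You argue that the initialiser hypothesis is ``more than enough'' because $\eta \ge (\log N)\tilde\delta_N$ is ``much larger than $D^{-2}/\log N$''. In the X-ray application the opposite holds: the radius $\eta$ is constrained from \emph{above} as well, since the local curvature Lemma \ref{xraylocalcurvature} (which delivers Condition \ref{goldfisch} with $\kappa_0=1/2$) is only valid on balls of radius $\eta \le \epsilon D^{-2}$, and the paper takes $\eta = D^{-2}/\log N$ exactly (see (\ref{stepsizex}) and Remark \ref{ponytamer}). The compatibility $(\log N)\tilde\delta_N \le D^{-2}/\log N$ is precisely the computation where $\alpha>5$ enters; $\tilde\delta_N$ is \emph{smaller} than $D^{-2}/\log N$, not larger. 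Consequently the hypothesis $\|\theta_\init - \theta_{\true,D}\|_{\R^D} \le D^{-2}/\log N$ places $\theta_\init$ within distance $\eta$, not comfortably within $\eta/8$ as (\ref{initioadabsurdum}) asks; this is harmless but must be repaired by a constant-factor rescaling (e.g.\ run the construction with radius $8D^{-2}/\log N \le \epsilon D^{-2}$ for $N$ large, which Remark \ref{ponytamer} still accommodates), not by the claim that $\eta$ can be taken large --- if you enlarged $\eta$ beyond $\simeq D^{-2}$ the curvature condition, and with it Part B of Theorem \ref{onetrickpony}, would no longer be verified. With that correction, the remaining bookkeeping you describe (polynomial bounds on $\bar m, \Lambda, 1/\eta$ and the warm-start distance) goes through as in the paper.
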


A further consequence is that posterior mean vectors $E[\theta|Z^{(N)}] \in \R^D$ (which are the subject of the statistical consistency results in \cite{MNP21}) can be computed in polynomial time by ergodic MCMC averages along the chain $\vartheta_k$, see Corollary 3.9 in \cite{NW20} for details. Computation time bounds for optimisation based `MAP'-estimates (see Section \ref{posthorn}) could also be obtained from our proofs (as in \cite{NW20}, Thm.2.8).

\smallskip

The question of existence of a suitable initialiser $\theta_\init$ in the X-ray problem (with $D \to \infty$) poses an intriguing challenge that is beyond the scope of the present paper. We note that in high-dimensional settings as considered here, `worst case initialised' MCMC algorithms can in general not be expected to achieve polynomial mixing times, see, e.g., \cite{BWZ20} for such a result, albeit in a different setting.

\section{Mapping properties of X-ray transforms in Zernike scales}\label{analysissection}

The results on the non-Abelian X-ray transform from \textsection \ref{naxray} rely on novel forward- and stability estimates for its linearisation $\II_\Phi$. These are of independent interest and will be discussed and proved in this section; here we pass to a complex valued setting, noting that the group $SO(m)$ in Section \ref{naxray} can at any point be replaced by the unitary group $U(m)$ which has Lie algebra $\u(m)=\{A\in \C^{m\times m}: A^* = -A\}$; moreover, within this section, function spaces with suppressed codomain are understood to contain  $\C$-valued functions (that is, $L^2(\DD)=L^2(\DD,\C)$ etc.).


\smallskip

The linearisation $\II_\Phi$ from \eqref{gateaux} can be expressed in terms of the {\it attenuated X-ray transform}, which is a linear map
 \begin{equation}\label{attenuatedxray}
I_\Phi :L^2(\DD,\C^m)\rightarrow L^2_\lambda(\partial_+S\bar \DD,\C^{m}),
\end{equation} 
defined for $\Phi \in C(\bar \DD,\u(m))$  in the following manner: Consider the \textit{phase space} $S\DD=\DD\times S^1=\{(x,v)\in \DD\times \R^2: \vert v \vert =1\}$, whose elements are tuples $(x,v)$ of points and directions.  Its closure $S\bar \DD$ has as boundary $\partial S\bar \DD = \partial_+S\bar \DD\cup \partial_-S\bar \DD$.
Given a continuous function $f:\bar \DD\rightarrow \C^m$ we define \begin{equation}
I_\Phi f = u_\Phi \vert_{\partial_+S\bar \DD},
\end{equation}
where $u=u_\Phi:S\bar \DD\rightarrow \C^{m}$ is the unique continuous solution of the transport problem
\begin{equation}\label{te}
(X+ \Phi) u = - f\text{ on } S\DD\quad\text{and } \quad  u = 0 \text{ on } \partial_-S\bar \DD.
\end{equation}
Here $X=v\cdot \nabla_x$ is the \textit{geodesic vector field} and $\Phi$ and  $f$ are viewed as functions on $S\DD$ (constant in the $v$-variable). Note that \eqref{te} is equivalent to solving an ODE  along every line $\gamma_{x,v}(t)=x+tv$ and existence of $u$ follows from standard arguments. 
On $L^2(\DD,\C^m)$ the transform $I_\Phi$ is defined by continuous extension,  using well known forward estimates (e.g., Theorem \ref{zernikeforward} for $s=0$).

\smallskip 

The non-Abelian X-ray transform $C_\Phi$ from (\ref{yetagain}) is related to this linear transform via a {\it pseudolinearisation identity} from \cite{PSU12}, see also \cite[Lem.\,5.5]{MNP21}. This takes the form
\begin{equation}\label{pseudolinearisation}
C_\Phi C_\Psi^{-1} - \id = I_{\Xi(\Phi,\Psi)}(\Phi-\Psi), \quad \Phi,\Psi \in C(\bar \DD,\u(m))
\end{equation}
where $\Xi(\Phi,\Psi):\DD\rightarrow \u(m^2)$ is defined as follows: For $x\in \DD$, let $\Xi(\Phi,\Psi)(x):\C^{m\times m}\rightarrow \C^{m\times m}$ be the linear map $\Xi(\Phi,\Psi)(x) A=\Phi(x) A - A \Psi(x)$.  Then $\Xi(\Phi,\Psi)(x)$ is skew-hermitian with respect to the Frobenius inner product and thus may be viewed as element in $\u(m^2)$; the attenuated X-ray transform in the previous display is then to be understood `one level higher', acting on $\C^{m\times m}\cong \C^{m^2}$-valued functions.  By virtue of  \eqref{pseudolinearisation},  injectivity and stability questions for $\Phi\mapsto C_\Phi$ reduce to  linear problems; moreover one can show (cf.\,Lemma \ref{derivatives} below) that  the Gateaux derivative $\II_\Phi$ can be expressed as
\begin{equation}
\II_\Phi h \equiv \frac{\d}{\d t} C_{\Phi+t h} = \big ( I_{\Xi(\Phi,\Phi)} h \big) C_\Phi\in L^2_\lambda(\partial_+S\bar \DD,\u(m)),\quad \Phi,h\in C(\bar \DD,\u(m)).
\end{equation}
Whenever it is convenient, we will think of $I_\Phi$ as the linearisation of $\Phi \mapsto C_\Phi$, bearing in mind that eventually one has to pass to $\II_\Phi=I_\Xi \cdot C_\Phi$.

\subsection{Analytical results} Our results are formulated in a non-standard scale $\tilde H^s(\DD,\C^m)$ of Sobolev-type spaces studied systematically in \cite{Mon20}.  This so called {\it Zernike scale} is defined in terms of the (normalised) {\it Zernike-polynomials} $\hat Z_{nk} : \DD\rightarrow \C$ ($k,n\in \N_0,0\le k \le n$),  defined in \eqref{zdef} below. Precisely, for $s\ge 0$ we have
\begin{equation}\label{zscaledef}
\tilde H^s(\DD,\C^m) = \left\{f\in L^2(\DD,\C^m): \Vert f \Vert_{\tilde H^s(\DD)}^2 = \sum_{n\in \N_0, 0\le k \le n } (1+n)^{2s}\vert f_{n,k} \vert^2 <\infty \right\},
\end{equation}
where $f_{n,k}=\langle f, \hat Z_{n,k}\rangle_{L^2(\DD)}$ are the coefficients of $f$ in the Zernike basis.
The definition can be extended to $s<0$ by duality, or equivalently by relaxing the requirement that $f\in L^2(\DD,\C^m)$ to $f$ being a  {\it supported distribution} in $\dot C^{-\infty}(\bar \DD,\C^{m})\supset L^2(\DD,\C^m)$, as described in \cite[\textsection 6.4.2]{MNP20}.  Further, we write
$
C^k(\bar \DD)
$
$(k\in \N_0)$ for the Banach space of functions on $\DD$, which are $k$-times continuously differentiable up to the boundary.

\begin{thm}[Stability estimate]\label{zernikestability}
Suppose $\Phi:\DD\rightarrow \so(m)$ is of regularity $C^4(\bar\DD)$ and its support is contained in a compact set $K\subset \DD$. Then
\begin{equation*}
\Vert f \Vert_{\tilde H^{-1/2}(\DD)}^2 \le C(\Phi,K ) \cdot \Vert I_\Phi f \Vert^2_{L^2_\lambda(\partial_+S\bar  \DD)}
\end{equation*}
for all $f\in L^2(\DD,\C^m)$ and a constant $C(\Phi,K)>0$ which (for fixed $K$) depends continuously on $\Phi$ in the $C^4$-topology.
\end{thm}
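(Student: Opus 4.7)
The strategy is to treat $I_\Phi$ as a compact perturbation of $I_0$ and bootstrap from the $\Phi = 0$ isomorphism of Monard \cite{Mon20}. The latter provides, via the explicit Kazantsev--Bukhgeim SVD \cite{KaBu04}, the baseline estimate $\Vert g \Vert_{\tilde H^{-1/2}(\DD)}^2 \lesssim \Vert I_0 g \Vert_{L^2_\lambda}^2$, equivalently that the normal operator $N_0 := I_0^* I_0$ is a topological isomorphism $\tilde H^{-1/2}(\DD, \C^m) \to \tilde H^{1/2}(\DD, \C^m)$. The goal is then to upgrade this to $N_\Phi := I_\Phi^* I_\Phi$.

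The key analytic step is to show that $N_\Phi - N_0$ is compact as an operator $\tilde H^{-1/2} \to \tilde H^{1/2}$. I would introduce the matrix-valued integrating factor $U_\Phi : S\bar \DD \to SO(m)$ defined by $(X + \Phi) U_\Phi = 0$ with $U_\Phi|_{\partial_- S\bar \DD} = \id$, which is $C^4$ in the interior of phase space and unitary thanks to the skew-symmetry of $\Phi$. Substituting $u = U_\Phi v$ in the transport equation \eqref{te} rewrites $I_\Phi f$ as an attenuated $X$-ray transform with a smooth matrix weight equal to $\id$ outside $K$, so that $I_\Phi - I_0$ takes the schematic form ``$I_0$ composed with multiplication by a $C^4$ kernel supported in $K$'' plus a remainder of the same type. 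Combining this with the forward estimate Theorem \ref{zernikeforward} (which gains $1/2$ derivative on the Zernike scale), one obtains that $N_\Phi - N_0 : \tilde H^{-1/2} \to \tilde H^{1/2}$ factors through a strictly more regular space and is therefore compact.

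Once compactness is established, $N_\Phi$ is Fredholm of index zero as a compact perturbation of the invertible $N_0$. Injectivity of $I_\Phi$ on $L^2(\DD, \C^m)$, and hence of $N_\Phi$ on $\tilde H^{-1/2}$, follows from established results on attenuated/non-Abelian $X$-ray transforms with skew-symmetric weights (cf.\ \cite{PSU12, MNP20}), optionally routed through the pseudolinearisation identity \eqref{pseudolinearisation}. Fredholm plus injectivity yields that $N_\Phi$ is an isomorphism, which is equivalent to the claimed stability estimate with $C(\Phi, K) = \Vert N_\Phi^{-1} \Vert_{\tilde H^{1/2} \to \tilde H^{-1/2}}$. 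Continuity of this constant in the $C^4$-topology of $\Phi$ follows from the continuous dependence $\Phi \mapsto U_\Phi$, hence $\Phi \mapsto N_\Phi$ in operator norm, together with continuity of operator inversion on the open set of invertibles.

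The main obstacle is the compactness step: one must control the smoothing action of the perturbation in the \emph{Zernike} scale $\tilde H^s$, not the standard Sobolev scale. The Zernike scale is designed to absorb the boundary degeneracy of $N_0$ at $\partial \DD$, and the assumption $\supp \Phi \subset K \subset \DD$ is precisely what prevents the perturbative terms from interacting with this boundary behaviour -- the difference kernel vanishes near $\partial \DD$ and thus genuinely regularises. Making this bookkeeping precise, along the lines of the framework developed in \cite[\S 6]{MNP20}, is the technical heart of the argument.
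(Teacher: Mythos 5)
Your high-level skeleton --- view $N_\Phi$ as a perturbation of $N_0$, prove compactness of the difference, conclude via Fredholm theory, injectivity and continuity of inversion --- is essentially the architecture of the paper's Theorem \ref{isoprop} (there on the grading $L^2\to\tilde H^1$, with Lemma \ref{heinz} converting the isomorphism into the quadratic-form bound; your symmetric grading $\tilde H^{-1/2}\to\tilde H^{1/2}$ would indeed let you finish by a direct duality/Cauchy--Schwarz argument, and that part of the plan is fine). The problem is that the step you compress into one sentence is the whole difficulty, and the mechanism you offer for it fails. Theorem \ref{zernikeforward} does \emph{not} gain half a derivative: it maps $\tilde H^s(\DD)\to H^s(\partial_+S\bar\DD)$, and the paper states explicitly that a $\tfrac12$-smoothing forward estimate for $\Phi\neq 0$ is not known; the gain you invoke holds only for $I_0$ acting on functions of $x$. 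Moreover $I_\Phi-I_0$ is not ``$I_0$ composed with multiplication by a $C^4$ function on $\DD$'': the integrating factor $R_\Phi(x,v)$ depends on the direction $v$, so the difference is expressed through the phase-space transform $\I$ of Lemma \ref{lem_phaseforward}, which has no smoothing at all. Hence ``factors through a strictly more regular space'' does not follow from anything you cite. The smoothing is a property of the \emph{normal} operator $K_\Phi=N_\Phi-N_0$, i.e.\ it only appears after composing with the backprojection, and establishing it in the Zernike scale for $\Phi$ of only $C^4$ regularity is exactly the content of Proposition \ref{lemeco} and Proposition \ref{lipschitz} (Calder\'on--Zygmund-type bounds requiring four derivatives of the kernel, plus a localisation in which the difference kernel does \emph{not} vanish for all boundary configurations: it vanishes only for nearby boundary pairs, while distant pairs whose chord meets $K$ contribute and are handled through smooth Schwartz-kernel estimates). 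Deferring this to \cite[\S 6]{MNP20} does not close the gap, since those arguments are microlocal and require smooth $\Phi$; bypassing them at finite regularity is precisely the paper's main analytic contribution.

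Two further points remain open even if compactness were granted. First, injectivity must hold for $N_\Phi$ on $\tilde H^{-1/2}$ (and for the cokernel on the dual side), not merely on $L^2$: the results you cite concern $L^2$ or $H^1$ functions --- and for non-smooth $\Phi$ the usable input is the stability estimate of \cite{MNP21} rather than \cite{PSU12}, which assumes smoothness --- so kernel elements in $\tilde H^{-1/2}$ must first be bootstrapped up in regularity, which again uses exactly the fine mapping properties of $K_\Phi$ you have not established (this is the paper's bootstrap with $\L^{1/2}K_\Phi$ in the proof of Theorem \ref{isoprop}). Second, sup-norm continuity of $\Phi\mapsto R_\Phi$ only gives continuity of $\Phi\mapsto N_\Phi$ in $\B(L^2(\DD))$ (cf.\ the proof of Lemma \ref{ncor}); to conclude that $N_\Phi^{-1}$, and hence the constant $C(\Phi,K)$, depends continuously on $\Phi$ in the $C^4$-topology you need quantitative Lipschitz estimates in the operator norm between the Zernike spaces, i.e.\ the content of Proposition \ref{lipschitz}, not just continuity of the integrating factors.
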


The uniform constant $c>0$ in Theorem \ref{findimstability} is obtained from the continuity statement in Theorem \ref{zernikestability} and the compact embedding $H^\alpha(\DD)\subset C^4(\bar \DD)$ ($\alpha >5$, see  Remark \ref{toolate}).  

\smallskip

The preceding Theorem  is sharp in that it captures the $\frac12$-ill-posedness of the attenuated X-ray transform. For smooth fields $\Phi$ the stability estimate can be seen to follow from the {\it normal operator} $N_\Phi=I_\Phi^* I_\Phi$ being a Hilbert space isomorphism from $ L^2(\DD,\C^m)$ onto  $ \tilde H^1(\DD,\C^m)$, which is the content of \cite[Thm.\,6.18]{MNP20}. 
Our contribution here lies in extending the result to lower regularity $\Phi$, and in the associated continuity statement.  The challenge is to bypass the (smooth) microlocal analysis methods that underpin the results in \cite{MNP20} to obtain mapping properties of $N_\Phi$ for $\Phi$ being of \textit{finite} regularity as is relevant in the statistical results from \textsection \ref{naxray}.  We achieve this by analysing certain singular integral operators `by hand' (requiring {\it four derivatives} on $\Phi$ in order to regularise) and establish Lipschitz type bounds for the maps $\Phi\mapsto N_\Phi$. This allows to pass to a finite regularity setting by means of an approximation argument. 

\smallskip

We emphasise here that the codomain of $I_\Phi$ (relevant to define $N_\Phi$) is the $L^2$-space with respect to the canonical area form $\lambda$, \eqref{measurelambda}.  Mapping properties of $I_\Phi$ are also frequently studied with respect to the `symplectic measure' on $\partial_+S\bar \DD$ (e.g.\,in \cite{MNP19a}) and we refer to  \cite[Remark 2.3]{MNP20} for a discussion of the choice of measure in the present setting.

\smallskip

The stability result is complemented by a fitting forward estimate in the Zernike scale over $\DD$.   Note that $\partial_+S\bar \DD$ is a compact manifold with boundary, hence there is a standard scale of Sobolev spaces $H^s(\partial_+S\bar \DD)$ ($s\in \R$, see e.g.\,\cite[\textsection 4.4]{Tay11}).

\begin{thm}[Forward estimate]\label{zernikeforward}Let $s\ge 0$ and suppose $\Phi$ is of regularity $C^k(\bar \DD)$, where $k=2\lceil s/2\rceil$ (the smallest even integer $\ge s$). Then for all $f\in \tilde H^{s}(\DD,\C^m)$ we have
\begin{equation*}
\Vert I_\Phi f \Vert_{H^s(\partial_+S\bar \DD)} \le C \cdot \big(1 + \Vert \Phi \Vert_{C^k(\bar \DD)}^2 \big)^k\cdot \Vert f \Vert_{\tilde H^{s}(\DD)}
\end{equation*}
for a constant $C=C(s)>0$. For $s=1$ one may also take $k=1$.
\end{thm}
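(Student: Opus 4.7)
My plan is to establish the forward estimate by induction on $s$, starting from $s = 0$ and iterating a commutator identity that relates tangential derivatives on $\partial_+ S\bar \DD$ to derivatives of $f$ via the transport equation \eqref{te}. The jump of $k$ by two at each odd value of $s$ reflects the use of the second-order operator $\L$ on $\DD$ (whose Zernike polynomials $\hat Z_{nk}$ are eigenfunctions with eigenvalues $\asymp (1+n)^2$) to characterise the even-regularity endpoints $\tilde H^{2\ell}(\DD)=\{f: \L^\ell f\in L^2\}$, while the improved statement at $s=1$ will come from an auxiliary first-order commutation.

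\textbf{Base case $s=0$.} For $f\in L^2(\DD,\C^m)$ and $u_\Phi$ solving \eqref{te}, skew-hermiticity of $\Phi\in \u(m)$ forces $\mathrm{Re}\langle \Phi u_\Phi,u_\Phi\rangle\equiv 0$, and hence $X|u_\Phi|^2=-2\,\mathrm{Re}\langle f,u_\Phi\rangle$ along every line. Integrating from the influx boundary (where $u_\Phi=0$), applying Cauchy--Schwarz, and invoking Santal\'o's formula to convert the boundary integral into one over $S\DD$ produces $\Vert I_\Phi f\Vert_{L^2_\lambda}^2\le C\Vert f\Vert_{L^2(\DD)}^2$ with $C$ independent of $\Phi$, which is the case $s=k=0$.

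\textbf{The case $s=1$ with $k=1$.} Let $V$ denote the generator of simultaneous rotations of position and velocity on $S\bar\DD$; this is a Killing field satisfying $[X,V]=0$, so $Vu_\Phi$ solves $(X+\Phi)(Vu_\Phi)=-Vf-(V\Phi) u_\Phi$ with zero influx data. Applying the base case to this transport problem yields
$$\Vert V\,I_\Phi f\Vert_{L^2_\lambda}\le C\big(\Vert Vf\Vert_{L^2(\DD)}+\Vert V\Phi\Vert_\infty\Vert u_\Phi\Vert_{L^2(S\DD)}\big)\le C(1+\Vert\Phi\Vert_{C^1(\bar\DD)})\Vert f\Vert_{\tilde H^1(\DD)},$$
where $\Vert V f\Vert_{L^2(\DD)}\le C\Vert f\Vert_{\tilde H^1(\DD)}$ follows from $V\hat Z_{n,k}=ik\hat Z_{n,k}$ with $|k|\le n$, and $\Vert u_\Phi\Vert_{L^2(S\DD)}$ is controlled by a further application of the base case. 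The remaining tangential direction on $\partial_+S\bar\DD$ (the fan-beam angle $\partial_\alpha$) is handled by an analogous construction using a further smooth field whose commutator with $X$ lies in the span of already-controlled fields, with error terms absorbed into sources.

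\textbf{Induction for even $s=2\ell$, interpolation, and main obstacle.} For $\ell\ge 1$ I plan to upgrade the commutation to a second-order intertwining
$$P\,I_\Phi f = I_\Phi(\L f) + R_\Phi f,$$
where $P$ is a second-order tangential operator on $\partial_+S\bar\DD$ whose $\ell$-fold iteration characterises $H^{2\ell}$-norms there, and $R_\Phi$ is a remainder of order at most one in $f$ whose coefficients depend on up to two derivatives of $\Phi$. For $\Phi=0$ the identity is exact and encodes the Kazantsev--Bukhgeim SVD of $I_0$; for $\Phi\neq 0$ it is obtained by commuting $P$ through the Volterra representation $u_\Phi=u_0-\int\Phi u_\Phi$ and tracking Leibniz terms. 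Iterating $\ell$ times, applying the base case to each arising source, and invoking the equivalence $\Vert \L^\ell f\Vert_{L^2}\asymp\Vert f\Vert_{\tilde H^{2\ell}}$ closes the induction with polynomial dependence of the form $(1+\Vert\Phi\Vert_{C^{2\ell}}^2)^{2\ell}$; non-integer and odd $s$ then follow by complex interpolation between the neighbouring even endpoints, yielding exactly $k=2\lceil s/2\rceil$ in the bound. The main technical obstacle is the derivation of $PI_\Phi=I_\Phi\L+R_\Phi$ with quantitative control of $R_\Phi$: the geometric structure of $P$ on the cylinder must be matched to $\L$ on the disk through the non-commutative perturbation by $\Phi$, and it is in this matching and its iteration that both the required regularity $k$ on $\Phi$ and the stated polynomial growth are accumulated.
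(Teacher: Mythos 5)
Your outline defers exactly the step that constitutes the theorem. The inductive engine you propose, an intertwining $P\,I_\Phi f=I_\Phi(\L f)+R_\Phi f$ with $R_\Phi$ ``of order at most one'' and controlled coefficients, is asserted rather than derived, and it is not a routine computation. Already at $\Phi=0$ the identity as written is false: from the SVD one has $I_0\hat Z_{nk}\propto \hat\psi^+_{nk}$, $P\hat\psi^+_{nk}=\big((1+n)^2+(n-2k)^2\big)\hat\psi^+_{nk}$ and $\L Z_{nk}=\big(\tfrac{1+n}{4\pi}\big)^2Z_{nk}$, so the correct exact relation is $P\,I_0=I_0\big(16\pi^2\L-W^2\big)$ with $W$ the generator of simultaneous rotations (whose eigenvalue on $Z_{nk}$ is $i(n-2k)$, not $ik$ as you state); the rotational correction is genuinely second order. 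For $\Phi\neq 0$ the real obstruction is the scale: $\tilde H^{s}(\DD)$ only controls the boundary-degenerate derivatives $(1-|z|^2)\partial_z$, $(1-|z|^2)\partial_{\bar z}$ and $z\partial_z-\bar z\partial_{\bar z}$ (cf.\ \eqref{identity1}--\eqref{identity3}); a plain derivative does not map $\tilde H^{1}(\DD)\to L^2(\DD)$, since $\tilde H^s$ is strictly weaker than $H^s$ near $\partial\DD$. Any commutator scheme must therefore produce, after hitting $u_\Phi$ with boundary-tangential fields, only these degenerate combinations of derivatives of $f$ and of $\Phi$, with weights that stay bounded at the glancing region. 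Your $s=1$ step illustrates the problem: the rotation field handles $\partial_\beta$, but for $\partial_\alpha$ there is no Killing-type partner; the natural candidates ($X_\perp$, the vertical field) do not commute with $X$ and generate precisely the non-degenerate derivatives and glancing-region weights that $\tilde H^1$ does not see, so ``handled by an analogous construction with error terms absorbed into sources'' is not a proof but a restatement of the difficulty you flag as your ``main technical obstacle''. There is also a measure gap in your base case: the energy identity plus Santal\'o controls $\int_{\partial_+S\bar\DD}|u_\Phi|^2\cos\alpha\,\d\alpha\,\d\beta$, i.e.\ the symplectic measure, which is not comparable to the measure $\lambda\propto\d\alpha\,\d\beta$ used here near the glancing set; to get the $L^2_\lambda$ bound you should instead dominate $|u_\Phi|$ by the unattenuated solution with source $|f|$ and invoke the known $L^2(\DD)\to L^2_\lambda$ bound for $I_0$ coming from the SVD.

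For comparison, the paper avoids all commutator analysis with $\Phi$-dependent remainders by conjugating with an integrating factor, $I_\Phi f=R_\Phi\,\I(R_\Phi^{-1}f)$, and then splitting the work into two $\Phi$-free ingredients: a forward estimate for the unattenuated phase-space transform $\I:\tilde H^s(S\DD)\to H^s(\partial_+S\bar\DD)$ (Lemma \ref{lem_phaseforward}), proved by the explicit fan-beam eigenbasis of $P$, the identity \eqref{almostsvd} and Jensen's inequality, and a product estimate $C^k(\bar\DD)\times\tilde H^s\to\tilde H^s$ (Lemma \ref{lem_product}), proved by induction on $\L$ using \eqref{identity1}--\eqref{identity3}. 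The entire dependence on $\Phi$ is then confined to $C^k$ bounds on $R_\Phi^{\pm1}$ via \eqref{horse2}, which is where the factor $(1+\Vert\Phi\Vert_{C^k(\bar\DD)}^2)^k$ and the $s=1$, $k=1$ improvement come from. If you want to salvage your route, you would essentially have to reprove these two lemmas in disguise; as it stands, the proposal has a genuine gap at its central step.
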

For zero attenuation this can be strenghtened to the mapping property $I_0:\tilde H^{s}(\DD)\rightarrow H^{s+1/2}(\partial_+S\bar \DD)$ \cite[Prop.\,16]{Mon20}.  At present it is not clear how to obtain $\frac12$-smoothing forward estimates for general potentials $\Phi$.



\subsection{Preliminaries} Below we use the usual notation $ A\lesssim B$
as shorthand for `$A \le C B$, with a constant $C>0$ independent of the relevant quantities'. If we want to emphasise a certain dependence $C=C(a,b,\dots)$, we write $A\lesssim_{a,b,\dots} B$.

\subsubsection{Zernike scale and the operator $\L$}\label{zernikefacts} The Zernike polynomials (and their normalisation) are defined in a complex coordinate $z\in \DD$ by 
 \begin{equation}\label{zdef}
Z_{nk}(z) = \frac{1}{k!} \partial_{z}^k\left(z^n(\frac1z-\bar z)^k\right)\quad \text{ and } \quad \hat Z_{nk}(z) = \frac{Z_{nk}(z)}{\Vert Z_{nk} \Vert_{L^2(\DD)}},
\end{equation}
where $n\in \N_0$ and $0\le k \le n$ and we follow the labelling convention from \cite{KaBu04}. They form a complete orthogonal system of $L^2(\DD)$ and are part of a well known SVD of the unattenuated X-ray transform $I_0$ \cite{KaBu04}.  More recently it was observed that Zernike polynomials form an eigenbasis of a certain $2$nd order differential operator $\L$ on $\DD$ \cite{Mon20}, given by
\begin{equation}\label{xieta}
\L = (4\pi^2)^{-1}\left( \xi^* \xi + \eta^*\eta +1 \right)
\end{equation}
in terms of the complex vector fields
$
\xi = \sqrt{\vert z \vert^{-2}-1}\cdot (z\partial_z + \bar z \partial_{\bar z})$ and 
$\eta = i \vert z \vert^{-1}\cdot (z \partial_z - \bar z \partial_{\bar z})
$
as well as their $L^2$-adjoints $\xi^*$ and $\eta^*$.  We use the same prefactor as in \cite{MNP20} and refer to there for a description of $\L$ in polar coordinates. A computation shows that indeed $\L Z_{nk} = \left(\frac{1+n}{4\pi}\right)^2 Z_{nk}
$, such that alternatively the Zernike scale is defined by
\begin{equation}
\tilde H^s(\DD,\C) = \dom \L^{s/2},\quad s\ge 0
\end{equation}
where $\L$ is viewed as self-adjoint operator in $L^2(\DD,\C)$ with domain $\dom \L = \tilde H^2(\DD,\C)$ and $\L^{s/2}$, together with its natural domain, are defined via the spectral theorem. 

\smallskip

The operator $\L$ is elliptic in $\DD$, but degenerates at $\partial \DD$ as $\xi$ vanishes at the boundary. As consequence the Zernike scale  is non-standard near $\partial \DD$. Nevertheless it shares many features with usual Sobolev spaces. We refer to \cite[Lem.\,6.14]{MNP20} for a Rellich compactness theorem and interpolation properties. For the comparison with the standard Sobolev scale see \cite[Lem.\,6.15]{MNP20} (stating that $\tilde H^s(\DD)$ and $H^s(\DD)$ are equivalent on compactly supported functions) and \cite[Lem.\,14]{Mon20},  which implies the embeddings (strict for $s>0$)
\begin{equation}\label{zernikeembedding}
 \tilde H^{2s+\epsilon}(\DD,\mathbb{C}) \subset  H^s(\DD,\mathbb{C}) \subset \tilde H^{s}(\DD,\mathbb{C}),\quad s\ge 0, \epsilon >0.
\end{equation} 

For future reference we record that
\begin{equation} \label{znorms}
\Vert  Z_{nk} \Vert_{L^2(\DD)}=\left(\pi/(n+1)\right)^{1/2}\quad \text{ and } \quad \Vert Z_{nk} \Vert_{L^\infty(\DD)} =1,
\end{equation}
(see e.g.\,\cite[Appendix A.2]{Mon20}) and, as can easily be observed from \eqref{zdef},
\begin{equation}\label{zsymmetry}
\overline{Z_{nk}}=(-1)^nZ_{n,n-k}\quad \text{ and }\quad Z_{nk}(uz)=u^{n-2k}Z_{nk}(z)~(u\in S^1,z\in \DD),
\end{equation}
where $u$ is viewed as complex number to make sense of its exponents.

\subsubsection{Bounds on integrating factors}\label{boif}
Given an attenuation $\Phi\in C(\bar \DD,\u(m))$, we write $R_\Phi:S\DD\rightarrow U(m)$ for the unique continuous solution (differentiable along $X$) of
\begin{equation}
(X+\Phi) R_\Phi =0\text{ on }S\bar \DD\quad \text { and } \quad R_\Phi = \id \text{ on } \partial_-S\bar \DD.
\end{equation}
This is an example of an {\it integrating factor} for $\Phi$ and enters several computations below; we thus collect here some useful bounds. Using a similar notation for a second attenuation $\Psi$ and writing $R^{\pm 1}$ for $R$ or its inverse $R^{-1}$, we have
\begin{equation}\label{horse1}
\Vert R^{\pm 1}_\Phi \Vert_{L^\infty(S \DD)} \lesssim_m 1 \quad \text{ and } \quad \Vert R^{\pm 1}_\Phi - R^{\pm 1}_\Psi \Vert_{L^\infty(S \DD)} \lesssim_m \Vert \Phi - \Psi \Vert_{L^\infty(\DD)}.
\end{equation}
Further, assuming that $\Phi$ and $\Psi$ are of regularity $C^k$ ($k\ge 1$) and have their support contained in a compact set $K\subset \DD$, also $R_\Phi,R_\Psi$ are of regularity $C^k$ and obey 
\begin{equation}\label{horse2}
\begin{split}
\Vert R^{\pm 1}_\Phi \Vert_{C^k(S\bar \DD)} &\lesssim_{m,k,K} (1+\Vert \Phi \Vert_{C^k(\bar \DD)})^k \\
\Vert R^{\pm 1}_\Phi - R^{\pm 1}_\Psi \Vert_{C^k(S\bar \DD)} &\lesssim_{m,k,K} (1 +\Vert \Phi \Vert_{C^k(\bar \DD)}+ \Vert \Psi \Vert_{C^k(\bar \DD)} )^k\cdot  \Vert \Phi - \Psi \Vert_{C^k(\bar  \DD)}.
\end{split}
\end{equation}
Here $C^k(S\bar \DD)$ ($k\in \N_0$) denotes the Banach space of functions on $S\DD$ which are $k$-times continuosly differentiable up to the boundary. 
In the compactly supported case, the previous two displays are stated as equation (5.17) in \cite{MNP21} and Proposition 2.2\,in \cite{Boh20} and are both consequences of Lemma 5.2 of the first cited paper; one  easily observes that the support condition can be dropped for $k=0$.

\begin{rem}\label{fullsupport}
In general, one can avoid compact support conditions on $\Phi$ and $\Psi$  by dropping the requirement that $R_\Phi,R_\Psi = \id$ on $\partial_-S\bar \DD$. Indeed, one can extend the matrix fields to compactly supported fields in a slightly larger disc $\DD'$ and impose the corresponding boundary condition at $\partial_-S\overline{\DD'}$ instead.  
\end{rem}

Further, any pair of maps $V,W:\DD\rightarrow \C^{m\times m}$ satisfying
\begin{equation}
(X+\Phi)V = (X+\Psi) W(=: -2h) \text{ on } S\bar \DD\quad \text{ and } \quad V = W = 0 \text{ on } \partial_-S\bar \DD
\end{equation}
(for $\Phi$,$\Psi$ assumed to be continuous and of full support), also obeys
\begin{equation}\label{horse3}
\Vert V \Vert_{L^\infty(\DD)}\lesssim_m \Vert h \Vert_{L^\infty(\DD)}, \quad \Vert V- W \Vert_{L^\infty(S \DD)} \lesssim_m \Vert \Phi - \Psi \Vert_{L^\infty( \DD)} \cdot \Vert h \Vert_{L^\infty(\DD)}.
\end{equation}
This can be seen by writing $V-W = R_\Phi \cdot \left(  (R_\Phi^{-1}V-R_\Psi^{-1} W) + (R_{\Psi}^{-1} - R_\Phi^{-1}) W) \right)$ and using 
\eqref{horse1} together with Lemma 5.2 in \cite{MNP21} for $G=R_\Phi^{-1}V-R_\Psi^{-1} W$ and $G'=W$, which obey the equations $XG = - 2(R_\Phi^{-1} - R_\Psi^{-1}) h $ and $(X+\Psi)G'=-2h$ respectively.

\begin{rem}
The estimates above will often be used `one level higher' without further mention. This means that $\Phi:\DD\rightarrow \u(m)$ is replaced by an attenuation $\Xi:\DD\rightarrow \u(m^2)\subset \End(\C^{m\times m})$, typically given as commutator $\Xi=[\Phi,\cdot]$.
\end{rem}

\subsubsection{Derivatives of the non-Abelian X-ray transform}
\begin{lem} \label{derivatives} Let $\Phi, h\in C(\bar \DD,\u(m))$. Then the non-Abelian X-ray transform, viewed as map into $L^2_\lambda(\partial_+S\bar \DD)$ or $L^\infty(\partial_+S\bar \DD)$,  is twice Gateaux differentiable at $\Phi$ in direction $h$, with derivatives equal to
\begin{eqnarray}
\dot C_\Phi[h] &\equiv (\d/\d t)_{t=0} C_{\Phi + t h}& = I_\Xi h \cdot C_\Phi \label{dot} \\
\ddot C_\Phi[h] &\equiv (\d/\d t)^2_{t=0} C_{\Phi + t h } &= I_{\Xi}[ hV_h]  \cdot C_\Phi. \label{ddot}
\end{eqnarray}
Here $\Xi :\DD\rightarrow \u(m^2)\subset \End(\C^{m\times m})$ is defined pointwise by $\Xi(x)=[\Phi(x),\cdot]$ (= the commutator of matrices in $\C^{m\times m}$) and $V_h:S\bar \DD\rightarrow \C^{m\times m}$ is the unique solution to the problem $(X+\Xi)V = -2 h$ on $S\bar \DD$ with $V = 0 $ on $\partial_-S\bar \DD$. 
\end{lem}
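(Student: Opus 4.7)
My approach is to differentiate the defining ODE for $C_\Phi$ pointwise along each segment $\gamma_{x,v}$, then use an algebraic change of variables to rewrite the derivatives in terms of $I_\Xi$, and finally justify that the resulting formal Taylor expansion is a genuine Gateaux expansion in $L^\infty$ (and hence $L^2_\lambda$) via Gronwall remainder estimates.

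Observe first that $C_\Phi = R_\Phi|_{\partial_+S\bar\DD}$, where $R_\Phi$ is the integrating factor from \textsection\ref{boif}. Substituting $\Phi+th$ into $(X+\Phi)R_\Phi=0$, $R_\Phi|_{\partial_-S\bar\DD}=\id$, and formally expanding $R_{\Phi+th}$ in powers of $t$, the first and second variations $\dot R, \ddot R$ should satisfy
\begin{equation*}
(X+\Phi)\dot R=-hR_\Phi,\qquad (X+\Phi)\ddot R=-2h\dot R,
\end{equation*}
both with zero boundary data on $\partial_-S\bar\DD$. The key algebraic step is the change of variables $U \mapsto UR_\Phi^{-1}$: using $XR_\Phi^{-1}=R_\Phi^{-1}\Phi$ one checks that if $(X+\Phi)U=-f$ with $U|_{\partial_-}=0$, then $\hat U := UR_\Phi^{-1}$ solves $(X+\Xi)\hat U = -fR_\Phi^{-1}$ with $\hat U|_{\partial_-}=0$. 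Applied to $\dot R$ (where $fR_\Phi^{-1}=h$), this yields $(X+\Xi)(\dot R R_\Phi^{-1}) = -h$, which is half of the defining problem for $V_h$; hence $\dot R R_\Phi^{-1} = V_h/2$ in the interior and $\dot R|_{\partial_+} = I_\Xi h\cdot C_\Phi$. Applied to $\ddot R$ (where $fR_\Phi^{-1} = 2h\cdot \dot R R_\Phi^{-1} = hV_h$), it gives $(X+\Xi)(\ddot R R_\Phi^{-1}) = -hV_h$ and hence $\ddot R|_{\partial_+} = I_\Xi(hV_h)\cdot C_\Phi$. This establishes \eqref{dot} and \eqref{ddot} at the level of the formal expansion.

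To upgrade this to a genuine Gateaux expansion, set $E_t := R_{\Phi+th}-R_\Phi-t\dot R$ and $F_t := E_t - (t^2/2)\ddot R$. A direct calculation using the ODEs above shows $(X+\Phi+th)E_t = -t^2 h\dot R$ and $(X+\Phi+th)F_t = -(t^3/2)h\ddot R$, both with vanishing data on $\partial_-S\bar\DD$. A Gronwall-type argument based on Lemma 5.2 of \cite{MNP21}, combined with the uniform $L^\infty$ bounds from \eqref{horse1}, yields $\|E_t\|_{L^\infty(S\bar\DD)} = O(t^2)$ and $\|F_t\|_{L^\infty(S\bar\DD)} = O(t^3)$. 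Restricting to $\partial_+S\bar\DD$ and using the continuous embedding $L^\infty(\partial_+S\bar\DD)\hookrightarrow L^2_\lambda(\partial_+S\bar\DD)$ then gives the claimed Gateaux differentiability in both topologies. The only subtlety I foresee is keeping the Gronwall constants depending only on $\|\Phi\|_{L^\infty(\bar\DD)}$ and $\|h\|_{L^\infty(\bar\DD)}$, with no higher regularity required; this is precisely the content of the cited lemma, so no new analytic input is needed beyond what is already assembled in \textsection\ref{boif}.
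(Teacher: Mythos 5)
Your argument is correct, but it takes a different route from the paper's. The paper starts from the pseudolinearisation identity \eqref{pseudolinearisation}, which gives the \emph{exact} formula $C_{\Phi+th}=C_\Phi+t\,I_{\Xi+th\,\cdot}\,h\cdot C_\Phi$; all that remains is to Taylor-expand the transport solution $u_t$ in the parameter $t$ appearing linearly in the attenuation, and to bound the remainder $v_t=u_t-u_0-t\dot u_0$ via the transport estimate of Lemma 5.2 in \cite{MNP21}. You instead perturb the defining problem for the integrating factor $R_{\Phi+th}$ directly (using $C_\Phi=R_\Phi|_{\partial_+S\bar\DD}$), read off the transport equations for the formal variations $\dot R,\ddot R$, and then use the conjugation $U\mapsto UR_\Phi^{-1}$, which converts the left-multiplication attenuation $\Phi$ into the commutator attenuation $\Xi$; this is in effect a re-derivation of the special case of the pseudolinearisation mechanism that the lemma needs, and your identifications $\dot R R_\Phi^{-1}=V_h/2$ and $\ddot R R_\Phi^{-1}|_{\partial_+}=I_\Xi(hV_h)$ check out, as do the equations $(X+\Phi+th)E_t=-t^2h\dot R$ and $(X+\Phi+th)F_t=-(t^3/2)h\ddot R$ with vanishing data on $\partial_-S\bar\DD$. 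Your closing estimates are also on the same footing as the paper's: both proofs produce a second-order expansion with $O(t^3)$ remainder in $L^\infty$ (hence $L^2_\lambda$), using the same transport bounds (\eqref{horse1}, \eqref{horse3}, Lemma 5.2 of \cite{MNP21}); note that because all attenuations here are skew-Hermitian, the constants depend only on $m$ and $\|h\|_{L^\infty}$, with no dependence on $\|\Phi\|_{L^\infty}$, so the "subtlety" you flag is even milder than you feared. What the paper's route buys is brevity, since \eqref{pseudolinearisation} is already available and the expansion is then a one-variable computation in the attenuation parameter (with smoothness of $u(t,\cdot)$ in $t$ giving genuine twice differentiability rather than just the Peano-type expansion); what your route buys is self-containedness, as it never invokes the pseudolinearisation identity, at the cost of redoing the algebra that underlies it. If you want the second-derivative statement in the literal sense at every $t$, simply observe that your expansion can be run at any base point $\Phi+t_0h$ in place of $\Phi$.
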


\begin{proof}
The pseudolinearisation formula \eqref{pseudolinearisation} immediately yields  \begin{equation}\label{pfdot}
C_{\Phi + t h} = C_\Phi + t I_{\Xi + t h } h \cdot C_\Phi.
\end{equation}
We note that $I_{\Xi + th} h = u_t\vert_{\partial_+S\bar \DD}$, where $u_t=u(t,\cdot)\in C^\infty(S\DD,\C^{m\times m})$ solves
\begin{equation} \label{pfdot2}
X u_t + [\Phi,u_t] + th u_t = - h \text{ on } S \DD\quad \text{ and } \quad u_t = 0\text{ on } \partial_-S\bar\DD.
\end{equation}
In fact, the function $u(t,x,v)$ is in $C^\infty(\R\times S \DD,\C^{m\times m})$ and, taking $t$-derivatives in the previous display, we find that $\partial_t u(t,\cdot) = \dot u_t$ satisfies
\begin{equation} \label{pfdot3}
X\dot u_t + [\Phi,\dot u_t] + t h \dot u_t=  - h u_t \text{ on } S \DD\quad \text{ and } \quad \dot u_t = 0 \text{ on } \partial_-S\bar \DD.
\end{equation}
Write
$
I_{\Phi + t h }h =( u_0 + t \dot u_0 +  v_t)\vert_{\partial_+S\bar \DD},$ where $v_t = u_t - u_0 - t\dot u_0 $; next note that the previous display implies $\dot u_0 \vert_{\partial_+S\bar \DD} = I_{\Xi}(hu_0)=\frac 12 I_\Xi (h V_h)$, with $V_h$ as in the statement of the lemma. We can thus further expand \eqref{pfdot} to 
\begin{equation}
C_{\Phi+th} = C_\Phi + t I_\Xi h\cdot C_\Phi + \frac12 t^2 I_{\Xi}(h V_h)\cdot C_\Phi + t (v_t\vert_{\partial_+S\bar \DD})\cdot C_\Phi.
\end{equation}
In order to prove \eqref{dot} and \eqref{ddot}, it remains to show that for $p=2$ and $p=\infty$ we have $\Vert v_t\vert _{\partial_+S\bar \DD} \Vert_{L^p_\lambda(\partial_+S\bar \DD)} = o(t)$ as $t\rightarrow 0$ (the additional factor $C_\Phi$ in the remainder can be estimated in $L^\infty$-norm). Equations \eqref{pfdot2} and \eqref{pfdot3} imply that
\begin{equation}
(X+\Xi)v_t = -th(u_t-u_0)\text{ on } S \DD,\quad \text{ and } \quad v_t = 0 \text{ on } \partial_-S\bar \DD
\end{equation}
and thus \cite[Lem.\,5.2]{MNP21} yields $\Vert v_t\vert_{\partial_+S\bar \DD} \Vert_{L^p_\lambda(\partial_+S\bar \DD)}\le C t \Vert u_t - u_0 \Vert_{L^p(S\DD)}\Vert h \Vert_{L^\infty}$ for an absolute constant $C>0$. Similarly $(X+\Xi)(u_t - u_0)= t u_t h$ on $S\DD$, whence another application of the just cited lemma yields $\Vert u_t - u_0 \Vert_{L^p(S\DD)} \le C t \Vert u_t h \Vert_{L^p(S \DD)} \le C t \Vert u_t \Vert_{L^2(S\DD)} \Vert h \Vert_{L^\infty(S\DD)}$. Finally, 
\begin{equation}\label{pfdot4}
\Vert u _t \Vert_{L^p(S\DD)} \le C \Vert h \Vert_{L^p(\DD)},
\end{equation}
which follows from \cite[Lem.\,5.2]{MNP21}, this time applied to $u_t$,  satisfying \eqref{pfdot2}.\end{proof}

\subsection{Proof of forward estimate} 

The attenuated X-ray transform can be written as
\begin{equation}
I_\Phi f =R_\Phi \I(R_\Phi^{-1}f),\quad \text{ on } \partial_+S\bar \DD
\end{equation}
where $\I$ here denotes the general, unattenuated X-ray transform on $S\DD$
(i.e.\,$
Iu(x,v) = \int_{0}^{\tau(x,v)} u(x+tv,v) \d t$ for $u\in C(S\bar \DD,\C^m)
$)
and $R_\Phi:S\bar\DD\rightarrow U(m)$ is an integrating factor for $\Phi$, that is, a solution to $(X+\Phi)R_\Phi =0$ on $S\bar \DD$. In our context, when $\Phi$ has regularity $C^k(\bar \DD)$,  the integrating factor can be chosen of regularity $C^k(S\bar \DD)$ (cf.\,Remark \ref{fullsupport}). 

On $S \DD$ we define a scale, which captures `Zernike-regularity' in the spatial variable and standard Sobolev regularity in the phase variable. Precisely, for $s\ge 0$ let
\begin{equation*}
\tilde H^s(S \DD,\C) 
=\Big \{u\in L^2(S\DD,\C): \Vert u \Vert_{\tilde H^s(S \DD)}^2 = \sum_{\ell\in \Z} \langle \ell\rangle^{2s} \Vert u_\ell \Vert_{L^2(\DD)}^2 + \Vert u_\ell \Vert_{\tilde H^s(\DD)}^2 <\infty \Big\}.
\end{equation*}
where $u_\ell:\DD\rightarrow \C$ is the $\ell$th vertical Fourier mode, given by 
\begin{equation*}
u_\ell(x) = \int_{0}^{2\pi} e^{-i\ell t} u\left(x,(\cos t,\sin t)\right) \d t,\quad u\in L^2(S\bar \DD,\C)
\end{equation*}
 and $\langle \ell \rangle = \left( 1 + \ell^2\right)^{1/2}$. Here $\tilde H^0(S\DD) = L^2(S\DD)$ is the $L^2$-space with respect to the natural Lebesgue measure on $S\DD = \DD\times S^1$. As usual, one defines a version of $\C^m$ and $\C^{m\times m}$-valued spaces.


\smallskip

Theorem \ref{zernikeforward} follows from forward estimates of the general transform $I$ on $\tilde H^s(S\bar \DD)$, together with operator norm bounds for $R_\Phi^{\pm 1}$, acting via multiplication. These assertions are the content of the next two lemmas, formulated in the scalar case for convenience.

\begin{lem}\label{lem_phaseforward} For all $s\ge 0$ and $u\in \tilde H^s(S\DD)$ we have
\begin{equation*}
\Vert \I u \Vert_{ H^s(\partial_+S\bar \DD)} \lesssim_s  \Vert u \Vert_{\tilde H^s(S\DD)}.
\end{equation*}
\end{lem}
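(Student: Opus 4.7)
The strategy is to reduce the mixed-norm forward estimate to the scalar one of \cite[Prop.\,16]{Mon20}, namely $I_0 : \tilde H^s(\DD)\rightarrow H^{s+1/2}(\partial_+S\bar\DD)$, via the vertical Fourier decomposition on $S\DD$.

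First I would decompose $u(x,v) = (2\pi)^{-1}\sum_{\ell\in \Z} u_\ell(x)\, e^{i\ell\varphi}$ into its vertical Fourier modes, with $\varphi$ the angle of $v\in S^1$. Since $\I$ integrates along line segments of constant direction $v$,
\begin{equation*}
\I u(\alpha,\beta) = (2\pi)^{-1} \sum_{\ell}  e^{i\ell \varphi(\alpha,\beta)}\, I_0 u_\ell(\alpha,\beta),
\end{equation*}
where $\varphi(\alpha,\beta)$ is an affine function in fan-beam coordinates (so $e^{i\ell\varphi}$ factors as $e^{i\ell c\alpha}\cdot e^{i\ell \beta}$ up to a unimodular constant). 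This converts the problem into controlling a series of scalar X-ray transforms.

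Second, for integer $s$ I would argue by induction. The base case is $\|\I u\|_{L^2_\lambda}\lesssim \|u\|_{L^2(S\DD)}$; here a naive Santal\'o-type change of variables loses a factor $(\cos\alpha)^{-1}$ (unbounded as $\alpha \to \pm\pi/2$), so one must exploit the half-derivative gain of $I_0$. Applying Parseval in $\beta$ to $\sum_\ell e^{i\ell\beta} I_0 u_\ell$ and re-indexing via $(\ell,m)\mapsto (\ell,n-\ell)$ reduces matters to estimating $\sum_n \big|\sum_\ell \widehat{I_0 u_\ell}(\alpha,n-\ell)\big|^2$, and the summability $\sum_m \langle m\rangle|\widehat{I_0 u_\ell}(\alpha,m)|^2 \lesssim \|u_\ell\|_{L^2}^2$ from Monard's bound is just enough to absorb the loss in a weighted Cauchy--Schwarz.

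Third, for the inductive step $s\mapsto s+1$ I would compute tangential derivatives. A change-of-variables argument using the rotational symmetry of $\DD$ gives $\partial_\beta I_0 u_\ell = -I_0(R u_\ell)$, with $R=-y\partial_x+x\partial_y$, and hence
\begin{equation*}
\partial_\beta \I u = \I\big(\partial_\varphi u - R u\big)=\I(Y_\beta u),
\end{equation*}
identifying $\partial_\beta$ on $\partial_+S\bar\DD$ with the lifted vector field $Y_\beta := \partial_\varphi - R$ on $S\bar\DD$. The derivative $\partial_\alpha$ is treated similarly via a Jacobi-field lift $Y_\alpha = -\partial_\varphi + t\, v^\perp\cdot \nabla_x$, with $t$ the geodesic time from the boundary; here $t\le 2$ so the weight is harmless. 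Iterating yields $\partial^\gamma \I u = \I(Y^\gamma u)$ modulo lower-order commutators, so the inductive step reduces to bounding $\|Y^\gamma u\|_{L^2(S\DD)}\lesssim \|u\|_{\tilde H^{|\gamma|}(S\DD)}$ and then invoking the base case. This bound holds because $\partial_\varphi$ multiplies the $\ell$-th mode by $i\ell$ (absorbing a $\langle\ell\rangle$), while $R$ is diagonal in the Zernike basis with eigenvalues $i(n-2k)$ of modulus $\le n$ (from \eqref{zsymmetry}) and commutes with $\L$, so $\|Ru_\ell\|_{\tilde H^{s-1}(\DD)}\lesssim \|u_\ell\|_{\tilde H^s(\DD)}$; the Jacobi term $v^\perp\cdot\nabla_x$ mixes adjacent vertical modes $u_{\ell\pm 1}$ through bounded first-order operators on $\DD$ and is handled analogously.

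Finally, for non-integer $s\ge 0$ I would obtain the estimate by real interpolation between consecutive integer orders (both $\tilde H^s(S\DD)$ and $H^s(\partial_+S\bar\DD)$ being interpolation scales). The main obstacle is the sharp base case: upgrading the Santal\'o-weighted bound to an unweighted $L^2_\lambda$-bound genuinely requires Monard's $\tfrac12$-smoothing, together with the orthogonality of the exponentials $\{e^{i\ell\beta}\}_\ell$ to prevent the $\ell$-sum from destroying the estimate.
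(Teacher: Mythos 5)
Your starting point (vertical Fourier modes and $\I u=\sum_\ell v^\ell I_0u_\ell$, which is also how the paper begins) is fine, but the base case $s=0$ — which is the heart of the lemma — has a genuine gap. After Parseval in $\beta$ you must control $\sum_n\big|\sum_\ell \widehat{I_0u_\ell}(\alpha,n-\ell)\big|^2$, an \emph{infinite} convolution-type sum over $\ell$, and you propose to close it by weighted Cauchy--Schwarz using only $\sum_m\langle m\rangle\,|\widehat{I_0u_\ell}(\cdot,m)|^2\lesssim\|u_\ell\|_{L^2(\DD)}^2$. That cannot work: the dual sum $\sum_m\langle m\rangle^{-1}$ diverges, and in fact the abstract inequality you would need is false. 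Take $c_\ell(m)=a_\ell b(m)$ with $a_\ell=L^{-1/2}\mathbf{1}_{|\ell|\le L}$ and $b(m)=\langle m\rangle^{-1}(\log L)^{-1/2}\mathbf{1}_{2\le|m|\le L}$: then $\sum_m\langle m\rangle b(m)^2\lesssim 1$ and $\sum_\ell a_\ell^2\lesssim1$, yet $\sum_n\big|\sum_\ell c_\ell(n-\ell)\big|^2\gtrsim\log L$. So the $\tfrac12$-smoothing of $I_0$, fed in only through a $\beta$-frequency weight, falls short by a logarithm; moreover the pointwise-in-$\alpha$ form of your bound does not even follow from $I_0:L^2\to H^{1/2}(\partial_+S\bar\DD)$. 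What actually saves the estimate is orthogonality jointly in $(\alpha,\beta)$, not a weight: since $v=e^{i(\alpha+\beta+\pi)}$ in fan-beam coordinates, multiplication by $v^\ell$ permutes the boundary basis, $v^\ell\psi^+_{nk}=\psi^{\pm}_{n,k-j}$, so a fixed output mode $\hat\psi^\bullet_{nk}$ receives contributions from exactly $n+1$ vertical modes, and the singular value $\sqrt{4\pi/(n+1)}$ of $I_0$ exactly compensates the Jensen/Cauchy--Schwarz loss over those $n+1$ terms (this is \eqref{almostsvd} plus the Jensen step in the paper). Your reduction to the $\beta$-exponentials alone discards the $\alpha$-profiles $e^{\pm i(n+1)\alpha}$ that encode this finite-overlap structure, and with it the mechanism that makes the lemma true.

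The inductive step is also shakier than you acknowledge: differentiating under the integral in $\alpha$ moves the exit point, producing a boundary term $\partial_\alpha\tau\cdot u(\text{exit point})$ whose trace is not controlled by $\|u\|_{\tilde H^{1}(S\DD)}$ (the Zernike scale is weak at $\partial\DD$), and the commutator bookkeeping for the lifted fields would have to be done carefully. The paper sidesteps both issues by measuring $H^s(\partial_+S\bar\DD)$ through the boundary-elliptic operator $P=-\big((\partial_\alpha-\partial_\beta)^2+\partial_\beta^2\big)$, whose eigenfunctions are exactly the $\psi^\pm_{nk}$, vanish at $\alpha=\pm\pi/2$ (so integration by parts has no boundary terms), and have explicit eigenvalues $(1+n)^2+(n-2k+\epsilon_\pm)^2$; everything then reduces to the same coefficient identity, with interpolation for non-even $s$. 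If you want to rescue your outline, replace the $\beta$-Fourier reduction by an expansion in the full basis $\{\hat\psi^\pm_{nk}\}$.
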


Note that the transform $\I:L^2(S\bar \DD)\rightarrow L^2_\lambda(\partial_+S\bar \DD)$, acting on functions on $S\DD$ (rather than $\DD$),
is {\it not} $1/2$-regularising, resulting in the mapping property in Theorem \ref{zernikeforward}.


\begin{lem}\label{lem_product}
Let $s\ge 0$,  and $k=2\lceil s/2 \rceil\in 2 \mathbb{N}_0$ (if $s=1$ also $k=1$ is allowed). Then multiplication is continuous in the following settings:
\begin{align}
C^k(\bar \DD) \times \tilde H^{s}(\DD) &\rightarrow \tilde H^s(\DD)\qquad && \label{product1} \\
C^k(S \bar \DD) \times \tilde H^{s}(S \DD) &\rightarrow \tilde H^s(S\DD)\qquad && \label{product2}
\end{align}
\end{lem}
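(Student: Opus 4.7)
I would prove \eqref{product1} first, then deduce \eqref{product2} from it by a vertical Fourier decomposition on $S\DD$. The characterisation $\tilde H^{2j}(\DD) = \dom \L^j$ recalled in \textsection\ref{zernikefacts} suggests treating \eqref{product1} for even integer $s=2j$ via explicit commutator calculations, and then obtaining the remaining non-integer and non-even integer exponents by interpolation between the endpoints $\tilde H^{0}$ and $\tilde H^{2j}$ (Lemma 6.14 of \cite{MNP20} provides the interpolation properties of the scale).

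\textbf{Step 1: multiplication on $\tilde H^{2j}(\DD)$.} The case $s=0$ is clear since $C^{0}\cdot L^2\subset L^2$. For $s=2j$ with $j\ge 1$, the plan is to show that $M_\phi$ (multiplication by $\phi\in C^{2j}(\bar\DD)$) maps $\dom\L^j$ into $\dom\L^j$ with norm $\lesssim \|\phi\|_{C^{2j}}$. Using $\L=(4\pi^2)^{-1}(\xi^*\xi+\eta^*\eta+1)$ and the elementary commutator identities $[\xi,M_\phi]=M_{\xi\phi}$, $[\eta,M_\phi]=M_{\eta\phi}$ (and their adjoint versions), one gets
\[
[\L,M_\phi]=M_{\xi\phi}^\sharp\xi+M_{\eta\phi}^\sharp\eta+\xi^*M_{\xi\phi}+\eta^*M_{\eta\phi}+M_{\L_0\phi},
\]
i.e.\ a first order operator in $\xi,\eta$ whose coefficients are first derivatives of $\phi$, plus a bounded multiplication term. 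Iterating this identity $j$ times, $\L^j M_\phi$ expands as a finite sum of terms of the form $A_\beta(\phi)\cdot\L^\gamma$ where $\gamma\le j$, each $A_\beta(\phi)$ is a polynomial in $\xi,\eta$ and $\xi^*,\eta^*$ of order $\le 2(j-\gamma)$ with coefficients that are derivatives of $\phi$ of order $\le 2j$. Because the coefficients $\sqrt{1-r^2}$ and $r^{-1}$ in $\xi=\sqrt{1-r^2}\partial_r$ and $\eta=r^{-1}\partial_\theta$ actually give bounded operators $\xi,\eta:\tilde H^t\to \tilde H^{t-1}$ for $t\ge 1$ (any apparent singularity at $r=0$ cancels when acting on smooth functions), each such term is bounded by $\|\phi\|_{C^{2j}}\|\L^\gamma f\|_{\tilde H^{2(j-\gamma)}}\lesssim \|\phi\|_{C^{2j}}\|f\|_{\tilde H^{2j}}$. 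The special case $s=1$, $k=1$ is handled separately and more directly from the equivalence $\|f\|_{\tilde H^1}^2\sim\|f\|_{L^2}^2+\|\xi f\|_{L^2}^2+\|\eta f\|_{L^2}^2$ and one-line Leibniz: $\xi(\phi f)=(\xi\phi)f+\phi\xi f$, same for $\eta$, with $\xi\phi,\eta\phi\in L^\infty(\DD)$ when $\phi\in C^1(\bar\DD)$.

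\textbf{Step 2: interpolation to obtain \eqref{product1} for all $s$.} With the endpoint bounds $M_\phi:\tilde H^0\to\tilde H^0$ and $M_\phi:\tilde H^{2j}\to\tilde H^{2j}$, complex interpolation (guaranteed by the spectral description of the scale) yields $M_\phi:\tilde H^s\to\tilde H^s$ for all $s\in[0,2j]$ with operator norm $\lesssim \|\phi\|_{C^{2j}}$. Choosing $j=\lceil s/2\rceil$ gives exactly $k=2\lceil s/2\rceil$, matching the statement.

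\textbf{Step 3: the phase space version \eqref{product2}.} Using the vertical Fourier decomposition $\phi=\sum_{\ell\in\Z}\phi_\ell(x)e^{i\ell\theta}$, $u=\sum_{\ell\in\Z}u_\ell(x)e^{i\ell\theta}$, the $\ell$th mode of the product is a convolution $(\phi u)_\ell=\sum_{\ell'\in\Z}\phi_{\ell-\ell'}u_{\ell'}$. The spatial norm $\|(\phi u)_\ell\|_{\tilde H^s(\DD)}$ is controlled term-by-term by \eqref{product1}: $\|\phi_{\ell-\ell'}u_{\ell'}\|_{\tilde H^s(\DD)}\lesssim\|\phi_{\ell-\ell'}\|_{C^k(\bar\DD)}\|u_{\ell'}\|_{\tilde H^s(\DD)}$. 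Integration by parts in $\theta$ gives the vertical decay $\|\phi_m\|_{C^k(\bar\DD)}\lesssim\langle m\rangle^{-k}\|\phi\|_{C^k(S\bar\DD)}$, and since $k\ge s$ (which is where the precise choice of $k$ is used) a weighted Young-type inequality on the sequences yields $\sum_\ell\langle\ell\rangle^{2s}\|(\phi u)_\ell\|_{\tilde H^s}^2\lesssim\|\phi\|_{C^k(S\bar\DD)}^2\|u\|_{\tilde H^s(S\DD)}^2$. Combining with the analogous estimate for the $L^2(\DD)$-norm component of $\|\cdot\|_{\tilde H^s(S\DD)}$ (obtained the same way) completes the proof.

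\textbf{Main obstacle.} The bookkeeping in Step 1, namely verifying that all the commutator residues produced by pushing $\L^j$ past $M_\phi$ remain bounded despite the boundary-degenerate structure of $\xi,\eta$ and the apparent singularity of $\eta$ at $r=0$. Getting the count of derivatives on $\phi$ right (precisely $2j$, not more) is the crucial point that makes $k=2\lceil s/2\rceil$ sharp for the scheme, and also the ingredient that justifies the special looser requirement at $s=1$.
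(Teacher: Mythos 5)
Your Steps 1--2 share the paper's overall architecture (even-integer induction through $\L$, the same direct Leibniz argument for $s=1$, then interpolation), but the pivotal claim in your Step 1 is exactly the point the paper's proof is built to avoid, and as stated it does not hold. You assert that $\xi,\eta$ map $\tilde H^{t}(\DD)\rightarrow \tilde H^{t-1}(\DD)$ boundedly for all $t\ge 1$ and that the iterated commutator coefficients are controlled by $\Vert \phi \Vert_{C^{2j}}$ because the singularity at $r=0$ ``cancels''. Neither is available: the scale is defined spectrally through $\L$, and since $\xi,\eta$ do not commute with $\L$, no mapping property between levels of the scale beyond $\tilde H^1\rightarrow L^2$ (the quadratic-form identity) comes for free; the operators whose mapping properties can actually be read off from the Zernike identities \eqref{identity1}--\eqref{identity3} are the model operators \eqref{modelops}, i.e. $(1-\vert z\vert^2)\partial_z$, $(1-\vert z \vert^2)\partial_{\bar z}$ and $z\partial_z-\bar z\partial_{\bar z}$, not $\xi,\eta$ themselves (the coefficient of $\xi$ degenerates at $\partial \DD$, that of $\eta$ blows up at $0$). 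Moreover the cancellation at the origin does not survive iteration: for smooth $\phi$ the function $\eta\phi$ is bounded but in general discontinuous at $z=0$, and second-generation coefficients such as $\xi\eta\phi$ or $\eta^2\phi$ blow up like $\vert z \vert^{-1}$ (note also $[\xi,\eta]=-\tfrac{\sqrt{1-r^2}}{r}\,\eta$), so the bookkeeping you flag as the ``main obstacle'' genuinely fails from the second iteration on. The paper's proof circumvents this by expanding $\L(uv)$ as in \eqref{product3} under the extra hypothesis that one factor vanishes near $z=0$, rewriting $\xi u\cdot\xi v$ and $\eta u \cdot \eta v$ so that only the model operators \eqref{modelops} act on $v$ with bounded coefficients, and then removing the support restriction by a cutoff, using that $\tilde H^s$ and $H^s$ agree on compactly supported functions. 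Without an ingredient of this type your induction does not close.

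Step 3 also has a genuine flaw. The decay estimate $\Vert \phi_m \Vert_{C^k(\bar \DD)}\lesssim \langle m\rangle^{-k}\Vert \phi \Vert_{C^k(S\bar\DD)}$ requires mixed derivatives $\partial_x^k\partial_\theta^k\phi$, i.e. roughly $\phi\in C^{2k}$; from $\phi\in C^k(S\bar\DD)$ one only gets the trade-off $\Vert \phi_m\Vert_{C^j(\bar\DD)}\lesssim\langle m\rangle^{-(k-j)}\Vert\phi\Vert_{C^k}$, so spending the decay destroys the spatial regularity you need for \eqref{product1} on each mode. Even granting decay of order $k$, the weighted Young-type summation needs summability of $\langle m\rangle^{s-k}$ (or $k>s+\tfrac12$ in the $\ell^2$ variant), which fails precisely in the main case $s$ an even integer, where $k=s$. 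The paper sidesteps the vertical Fourier route entirely: the scale $\tilde H^s(S\DD)$ is generated by $\L_V=\L-V^2+1$, and the same induction as for \eqref{product1} runs verbatim, the extra vertical derivatives being harmless via the Leibniz rule. Your interpolation step is fine and matches the paper, but Steps 1 and 3 need to be replaced along these lines.
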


The lemmata are proved below, first we complete the proof of Theorem \ref{zernikeforward}. 

\begin{proof}[Proof of Theorem \ref{zernikeforward}] Let $f\in \tilde H^s(\DD,\C^m)$ ($s\ge 0$) and $k$ as in Lemma \ref{lem_product}. Then 
\begin{equation}
\begin{split}
\Vert I_\Phi f \Vert_{H^s(\partial_+S\bar \DD)}& \le \Vert R_\Phi \Vert_{C^k(\partial_+S\bar \DD)} \Vert \I (R_\Phi^{-1} f ) \Vert_{H^s(\partial_+S\bar \DD)}\\
& \lesssim \Vert R_\Phi \Vert_{C^k(\partial_+S\bar \DD)} \cdot \Vert R_\Phi^{-1} f \Vert_{\tilde H^s(S\DD)}\\
&\lesssim\Vert R_\Phi \Vert_{C^k(\partial_+S\bar \DD)}  \cdot  \Vert R_\Phi^{-1} \Vert_{C^k(S\bar \DD)} \cdot \Vert f \Vert_{\tilde H^s(S\DD)},
\end{split}
\end{equation}
where we used that $C^k$-functions act continuously (via multiplication) on $H^s(\partial_+S\bar \DD)$ and then applied Lemma \ref{lem_phaseforward} and Lemma \ref{lem_product}. Now, as $f$ is a function on $\DD$, its Fourier modes $f_\ell = 0$ for $\ell\neq 0$, such that the Zernike-norm on $S\DD$ turns into $\Vert f \Vert_{\tilde H^s(\DD)}$. Finally, we have
\begin{equation}
\Vert R_\Phi^{\pm  1} \Vert_{C^k(\partial_+S\bar \DD)} \le \Vert R_\Phi^{\pm  1} \Vert_{C^k(S \DD)} \lesssim (1+\Vert \Phi \Vert_{C^k(\bar \DD)})^k,
\end{equation}
by \eqref{horse2}, which completes the proof.
\end{proof}

\subsubsection{Forward estimate on phase space}\label{fanbeam} In order to prove Lemma \ref{lem_phaseforward} we recall that $\partial_+S\bar \DD$ can be represented in fan-beam coordinates as $[-\pi/2,\pi/2]_\alpha\times (\R/ 2\pi \Z)_\beta$ via the map
\begin{equation}
(\alpha,\beta) \mapsto (e^{i\beta},e^{i(\alpha + \beta + \pi)}) = (x,v)\in\partial_+S\bar \DD.
\end{equation}
Here $(x,v)\in \partial_+S\bar \DD$ is viewed as pair of complex numbers with $\vert x \vert, \vert v \vert = 1$ and $\Re(\bar x v) \le 0$. In these coordinates we define a collection of functions, indexed by $n\in \N_0$ and $k\in \Z$:
\begin{align}
\psi_{nk}^+(\alpha,\beta) &= \frac{(-1)^{n}}{4\pi }e^{i(n-2k)(\alpha+\beta)}\left(e^{i(n+1)\alpha} + (-1)^{n} e^{-i(n+1)\alpha}\right)\\
\psi^{-}_{nk}(\alpha,\beta) &= e^{i(\alpha+\beta+\pi)} \psi^+_{nk}(\alpha,\beta).
\end{align}
If we normalise them to have $L^2$-norm equal to $1$, we obtain an orthonormal basis $(\hat \psi^\bullet_{nk}:n\in \N_0,k\in \Z,\bullet = \pm)$ of $L^2_\lambda(\partial_+S\bar \DD)$.  This basis was constructed  in \cite{Mon16}\footnote{The positive version $\psi^+_{nk}$ corresponds to $\psi_{nk}$ in  \cite{Mon20} (defined in equation (24)). Up to a multiplicative factor and after a change of indices $p=n-2k$,$q=n-k$ it equals $u'_{pq}$ in \cite{Mon16} defined after equation (17).
The negative version $\psi^{-}_{nk}$ equals, up to a multiplicative factor and a change of indices $p=n-2k+1$, $q=n-k+1$,  the functions $v_{pq}$ in \cite{Mon16}. }, where the classification into $+$ and $-$ corresponds to a natural splitting $L^2_\lambda(\partial_+S\bar \DD)=L^2_+(\partial_+S\bar\DD)\oplus L^2_-(\partial_+S\bar\DD)$. In particular, the $+$ version is part 
of an SVD of the unattenuated transform $I_0:L^2(\DD)\rightarrow L^2_+(\partial_+S\bar \DD)$ (cf.\,\cite{KaBu04}\cite[Thm.\,10]{Mon20}), which is crucially used below.

\begin{proof}[Proof of Lemma \ref{lem_phaseforward}] Note that $L^2(S\DD)$ has as orthormal basis the collection of functions $(x,v)\mapsto \frac{1}{\sqrt {2\pi}}v^\ell \hat Z_{nk}(x)$ ($0\le k \le n$, $\ell\in \Z$), where we view $v\in S^1$ as complex number to make sense of the powers $v^\ell$.  To find the action of $\I$ on such a basis element note that \begin{equation}
v^\ell\psi^+_{nk} = 
\begin{cases}
\psi^+_{n,k-j} &\ell=2j \in 2\Z\\
\psi^{-}_{n,k-j} &\ell=2j+1\in 2\Z+1,
\end{cases}
\end{equation}
 as  $v=e^{i(\alpha+\beta+\pi)}$ in fan-beam coordinates.  In particular, using the SVD of $I_0$,
\begin{equation}
\I(v^\ell \hat Z_{nk}) = v^{\ell} I_0(\hat Z_{nk}) = \sqrt{\frac{4 \pi}{n+1 }} \times
\begin{cases}
 \hat \psi^+_{n,k-j} & \ell=2j\in 2\Z\\
\hat \psi^{-}_{n,k-j}& \ell=2j+1 \in 2\Z+1,
\end{cases}
\end{equation}
such that for all
$
u \in L^2(S\bar \DD)
$ with coefficients $u_{nk\ell}=(2\pi)^{-1/2}\langle u, v^\ell \hat Z_{nk}\rangle_{L^2(S\bar \DD)}$ and all indices $n\ge 0,k\in \Z$ the following holds true:
\begin{equation}\label{almostsvd}
\langle \I u, \hat \psi^\bullet_{nk}\rangle_{L^2_\lambda(\partial_+S\bar \DD)} = \sqrt{\frac{2}{n+1}} \times 
\begin{cases}
	\sum_{j=0}^n u_{n,j,2(j-k)} & \bullet = +\\
		\sum_{j=0}^n u_{n,j,2(j-k)+1} & \bullet = -\\
\end{cases}.
\end{equation}
Next, consider on $\partial_+S\bar \DD$ the differential operator $P= -\left((\partial_\alpha-\partial_\beta)^2+\partial_\beta^2\right)$, which is elliptic up to the boundary and thus generates the standard Sobolev scale.  A computation shows that $P$  has $\psi^\pm_{nk}$ as eigenbasis:
\begin{equation}\label{eigenp}
P\psi^\pm_{nk} = (a_{nk}^\pm)^2 \cdot \psi^{\pm}_{nk},\quad a_{nk}^\pm=\left((1+n)^2+(n-2k+\epsilon_\pm)^2\right)^{1/2},
\end{equation}
where we use the notation $\epsilon_+=0$ and $\epsilon_-=1$. Thus for $s\in 2 \mathbb{N}_{0}$ we have
\begin{equation}
\Vert \I u \Vert_{H^s(\partial_+S\bar \DD)}^2 = \Vert P^{s/2} \I u\Vert_{L^2_\lambda(\partial_+S\bar \DD)}^2 = \sum_{\substack{n\ge 0, j\in \Z\\\bullet=\pm}} \vert \langle P^{s/2}\I u ,\psi^\bullet_{nj}\rangle_{L^2_\lambda(\partial_+S\bar \DD)}\vert^2.
\end{equation}
We can integrate by parts without collecting boundary terms, as all eigenfunctions  $\psi^\bullet_{nj}$ vanish identically on the boundary $\partial_0S\bar \DD \equiv\{\alpha=\pm\pi/2\}$ of $\partial_+S\bar \DD$.  Further using \eqref{eigenp}, \eqref{almostsvd}  and Jensen's inequality, the previous display continuous with
\begin{equation}\label{jensen}
\begin{split}
=  \sum_{\substack{n\ge 0, j\in \Z\\\bullet=\pm}} (a_{nj}^\bullet)^{2s} \vert \langle \I f ,\psi_{nj}^\bullet\rangle \vert^2 
&= 2  \sum_{\substack{n\ge 0, j\in \Z\\\bullet=\pm}} (n+1)(a_{nj}^\bullet)^{2s} \big \vert\frac{1}{n+1} \sum_{k=0}^n u_{n,k,2(k-j)+\epsilon_\bullet}\big \vert^2\\
& \le 2  \sum_{\substack{0\le k \le n\\j\in \Z, \bullet=\pm}} (a_{nj}^\bullet)^{2s} \vert u_{n,k,2(k-j)+\epsilon_\bullet} \vert^2.
\end{split}
\end{equation}
We now relabel $2(k-j)+\epsilon_\pm \leadsto \ell\in \Z$, noting that $+$ and $-$ corresponds to even and odd values for $\ell$. Thus, writing
\begin{equation}
b_{nk\ell}  = \left((n+1)^2+(n-2k+\ell)^2\right)^{1/2}=\begin{cases}
a^+_{n,k-i}& \ell=2i\in 2\Z\\
a^{-}_{n,k-i}& \ell= 2i+1\in 2\Z+1 
\end{cases}
\end{equation}
we have
$
b_{nk\ell}^{2s} \lesssim_s (n+1)^{2s} + \langle \ell \rangle^{2s}
$
and thus
\begin{equation}
\Vert \I u \Vert_{H^s(\partial_+S\bar \DD)}^2 \le2 \sum_{\substack{0\le k \le n\\\ell\in \Z}} b_{nk\ell}^{2s} \vert u_{nk\ell}\vert^2 \lesssim_s \sum_{\substack{0\le k \le n\\\ell\in \Z}} \vert (n+1)^s u_{nk\ell}\vert^2 + \langle \ell \rangle^{2s} \vert u_{nk\ell}\vert^2,
\end{equation}
which gives the desired estimate, at least for $s\in 2\N_0$. The general case $s\in [0,\infty)$ follows by interpolation.
\end{proof}

\subsubsection{Estimate on products} The proof of Lemma \ref{lem_product} relies on the following three identities for Zernike polynomials:
\begin{align}\label{identity1}
zZ_{nk}&=\left(1-\nicefrac{k}{n+1}\right) Z_{n+1,k} - (\nicefrac{k}{n+1}) Z_{n-1,k-1}\\
(1- z\bar z) \partial_{\bar z} Z_{nk} &= (n-k+1)(z Z_{nk} - Z_{n+1,k}) \label{identity2} \\
(z\partial_z - \bar z \partial_{\bar z})  Z_{nk}  & =  i (n-2k) Z_{nk}  \label{identity3},
\end{align}
where we set $Z_{nk}=0$ if $n< 0$ or $\vert n-2k \vert > n$. The second and third identity were kindly communicated to us by François Monard; all three can be verified by a straightforward computation that we omit here.

\smallskip 
The virtue of the identities is that they immediately give mapping properties for certain model operators in the Zernike scale. For example,  \eqref{identity1}  implies that multiplication by $z$ gives a continuous map $\tilde H^s(\DD)\rightarrow \tilde H^s(\DD)$.  Using this together with the remaining two identities, we see that all of the operators 
\begin{equation}\label{modelops}
(1-\vert z \vert^2)\partial_{\bar z}, \quad (1-\vert z \vert^2)\partial_z,  \quad \text{ and }\quad  z\partial_z - \bar z \partial_{\bar z}
\end{equation}
map $\tilde H^{s+1}(\DD)\rightarrow \tilde H^s(\DD)$ ($s\ge 0$) continuously. Here, the assertion for the second operator, which is the  conjugate of the first one, follows from symmetry property \eqref{zsymmetry}.

\begin{proof}[Proof of Lemma \ref{lem_product}] The proof of \eqref{product1} will be explained in detail, the one for \eqref{product2} is very similar and the necessary changes are discussed at the very end.
Recall from \eqref{xieta} the representation of the operator $\L$ in terms of the first order operators
\[
\xi = \sqrt{\vert z \vert^{-2}-1}\cdot (z\partial_z + \bar z \partial_{\bar z})  \quad \text{ and } \quad 
\eta = i \vert z \vert^{-1}\cdot (z \partial_z - \bar z \partial_{\bar z})
\]
and their $L^2$-adjoints $\xi^*,\eta^*$. Let $u,v\in C^\infty(\DD)$ and assume for now that $u$ vanishes for $\vert z \vert \le \delta$. Then $\xi^*(uv) = (\xi^*u)v - u\xi v$ (similarly for $\eta$), such that
\begin{equation}\label{product3}
\begin{split}
\L(uv) =&~ (\L u) v + u \L v - \frac{1}{4\pi^2}\left[uv + 2 \xi u \cdot \xi v  +2  \eta u \cdot \eta v\right].
\end{split}
\end{equation}
We want to show by induction over $s\in 2\N_0$, that
\begin{equation}\label{product4}
\Vert uv \Vert_{\tilde H^s(\DD)} \lesssim_s \Vert u \Vert_{C^s(\bar\DD)} \cdot \Vert v \Vert_{\tilde H^s(\DD)}.
\end{equation}
This is trivially true for $s = 0$,  so we may assume that it holds for some $s\in 2\N_0$ and prove it for $s+2$.  By \eqref{product3} we have
\begin{equation*}
\begin{split}
\Vert uv \Vert_{\tilde H^{s+2}(\DD)} = \Vert  \L(uv)\Vert_{\tilde H^s(\DD)} \le &~ \Vert (\L u)v \Vert_{\tilde H^s(\DD)} +  \Vert u( \L v) \Vert_{\tilde H^s(\DD)} +  \Vert uv \Vert_{\tilde H^s(\DD)}\\
& + \Vert \xi u \cdot \xi v \Vert_{\tilde H^s(\DD)} +  \Vert \eta u \cdot  \eta v \Vert_{\tilde H^s(\DD)}
\end{split}
\end{equation*}
and by the induction hypothesis the first three terms on the right hand side can be bounded by (a constant times) $\Vert u \Vert_{C^{s+2}(\bar \DD)} \cdot \Vert v\Vert_{\tilde H^{s+2}(\DD)}$, as desired. To bound the remaining terms note that
\begin{equation}
\begin{split}
\xi u \cdot \xi v& = (\vert z \vert^{-2} -1) \cdot \left(z\partial_z u + \bar z \partial_{\bar z} u\right)\left(z\partial_z v + \bar z \partial_{\bar z} v\right) \\
& = \left[\frac{z}{\bar z} \partial_z u + \partial_{\bar z} u \right]\cdot (1-\vert z \vert^2)\partial_z v + \left[ \partial_z u + \frac{\bar z}{ z}\partial_{\bar z} u \right]\cdot (1-\vert z \vert^2)\partial_{\bar z} v,
\end{split}
\end{equation}
whence the mapping properties discussed after \eqref{modelops} (together with the induction hypothesis) can be used to obtain
\begin{equation}\label{product5}
\Vert \xi u \cdot \xi v \Vert_{\tilde H^s(\DD)} \lesssim_{s,\delta} \Vert u \Vert_{C^{s+1}(\bar \DD)} \cdot \Vert v \Vert_{\tilde H^{s+1}(\DD)}.
\end{equation}
Finally, $\eta u \cdot \eta v =-  \vert z \vert^{-2} (z \partial_z - \bar z \partial_{\bar z})u \cdot (z \partial_z - \bar z \partial_{\bar z}) v $ and since we have also derived mapping properties for $(z \partial_z - \bar z \partial_{\bar z})$,  \eqref{product5} remains true with $\xi$ replaced by $\eta$. We have thus established \eqref{product4} for $u$ vanishing in a disk $\vert z\vert \le \delta$.\\
The restriction that $u$  shall vanish in a neighbourhood of zero is easily removed by a cutoff procedure, as the scales $\tilde H^s$ and $H^s$ are equivalent on compactly supported functions \cite[Lemma 6.15]{MNP20} and the analogue of \eqref{product4} for the standard scale is well known.\\
Mapping property \eqref{product1} for $s\in [0,\infty)$ follows from an interpolation argument, noting that the Zernike scale satisfies the expected interpolation property \cite[Lemma 6.14]{MNP20}.  The remark about the special case $s=1$ easily follows from
\begin{equation}
\Vert uv \Vert_{\tilde H^1(\DD)}^2 = \Vert \xi(uv) \Vert_{L^2(\DD)}^2 + \Vert \eta(u v) \Vert_{L^2(\DD)}^2 + \Vert uv \Vert_{L^2(\DD)}^2
\end{equation}
together with the Leibniz rule and the obvious estimates, noting that both $\xi$ and $\eta$ have coefficients which are bounded away from a neighbourhood of zero.\\
Finally, we discuss how the argument can be adapted to phase space $S \DD$. The scale $\tilde H^s(S \DD)$ is generated by the operator $\L_V=\L - V^2+1$, where $V\equiv \d/\d \theta$ is the vertical derivative (i.e.\, $\tilde H^s(S\bar \DD)$ is the natural domain of $\L_V^{s/2}$, defined via the spectral theorem). The argument from above can thus be applied almost verbatim, with the only difference that expansion \eqref{product3} also contains vertical derivatives which are easily bounded.
\end{proof}

\subsection{Proof of stability estimate} \label{s_pfstability}
The proof of Theorem \ref{zernikestability} is based on a careful analysis of the normal operator associated to $I_\Phi$. Recall that this is defined by
\begin{equation}
N_\Phi = I_\Phi^*I_\Phi : L^2(\DD,\C^m)\rightarrow L^2(\DD,\C^m),
\end{equation}
where $I_\Phi^*$ is the $L^2$-adjoint of $I_\Phi:L^2(\DD,\C^m)\rightarrow L^2_\lambda(\partial_+S\bar \DD,\C^m)$. 

\smallskip

We introduce the following notation:  For $K\subset \DD$ compact and $k\in \N_0$, we write $C_K^k(\DD)$ for the space of $k$-times continuously differentiable functions on $\DD$ with support contained in $K$ and equip it with the obvious Banach space norm.  For two Hilbert spaces $H_1$ and $H_2$ we write $\B(H_1,H_2)$ for the space of continuous linear operators between them, equipped with the operator norm; $\B^\times(H_1,H_2)$ denotes the open subset of invertible operators.

\begin{thm}[Mapping properties of $N_\Phi$] \label{isoprop} Let $K\subset \DD$ be compact.
\begin{enumerate}[label=(\roman*)]
\item \label{isoprop1} For $\Phi\in C^2_K(\DD,\u(m))$ the normal operator $N_\Phi$ maps $L^2(\DD,\C^m)$ into $\tilde H^1(\DD,\C^m)$. The following map is Lipschitz continuous on bounded sets:
\begin{equation}
\Phi \mapsto N_\Phi, \text{ as map }  C_K^2(\DD,\u(m))\rightarrow \B(L^2(\DD),\tilde H^1(\DD))
\end{equation}
\item \label{isoprop2} For $\Phi\in C_K^4(\DD,\u(m))$ the normal operator $N_\Phi$ is an isomorphism from $L^2(\DD,\C^m)$ onto $\tilde H^1(\DD,\C^m)$.  The following map is continuous:
\begin{equation}
\Phi \mapsto N_\Phi^{-1} \text{ as map } C_K^4(\DD,\u(m))\rightarrow \B^\times(\tilde H^1(\DD),L^2(\DD)).
\end{equation}
\end{enumerate}
\end{thm}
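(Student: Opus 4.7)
My starting point is to make the normal operator explicit. Using the integrating-factor representation $I_\Phi f = R_\Phi \cdot \mathcal I(R_\Phi^{-1} f)$ from \textsection\ref{boif} and computing the $L^2_\lambda$-adjoint, $N_\Phi$ is recast as a singular integral operator on $\DD$,
\begin{equation*}
N_\Phi f(x) \;=\; \int_\DD \frac{M_\Phi(x,y)}{|x-y|}\, f(y)\, dy,
\end{equation*}
with $\C^{m\times m}$-valued amplitude $M_\Phi(x,y)$ built from products of $R_\Phi^{\pm 1}$ evaluated along the chord through $x$ and $y$. The case $\Phi = 0$ reduces $N_0$ to a Riesz-type potential whose isomorphism property $N_0 : L^2(\DD) \to \tilde H^1(\DD)$ is already encoded in the Kazantsev--Bukhgeim SVD $I_0 \hat Z_{nk} \propto (n+1)^{-1/2} \hat\psi^{+}_{nk}$ combined with $\L \hat Z_{nk} \propto (n+1)^2 \hat Z_{nk}$; concretely $\L^{1/2} N_0$ is a scalar multiple of the identity in the Zernike basis, so $N_0^{-1}$ is essentially $\L^{1/2}$.

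\textbf{Proof of (i).} To estimate $\|N_\Phi f\|_{\tilde H^1}$ I would characterise $\tilde H^1$ through $\L = (4\pi^2)^{-1}(\xi^*\xi + \eta^*\eta + 1)$ and bound $\|\xi N_\Phi f\|_{L^2}$ and $\|\eta N_\Phi f\|_{L^2}$ by differentiating under the integral. The degeneracy $(1-|x|^2)$ built into $\xi$ matches the behaviour of $M_\Phi$ at $\partial\DD$ and, combined with the mild $|x-y|^{-1}$ singularity in two dimensions, lets one trade one power of the diagonal singularity for one spatial derivative on $M_\Phi$. Iterating twice (to reach $\L$, which is second order) costs two derivatives on $M_\Phi$ and hence on $\Phi$, matching $\Phi \in C^2_K$. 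For Lipschitz dependence I would write
\begin{equation*}
(N_\Phi - N_{\Phi'}) f(x) \;=\; \int_\DD \frac{M_\Phi(x,y) - M_{\Phi'}(x,y)}{|x-y|}\, f(y)\, dy
\end{equation*}
and bound $\|M_\Phi - M_{\Phi'}\|_{C^2(\DD\times\DD)}$ linearly by $\|\Phi-\Phi'\|_{C^2(\bar\DD)}$ on bounded sets, a direct consequence of the integrating-factor Lipschitz bounds \eqref{horse1}--\eqref{horse2}. The same differentiation scheme transfers this bound to the operator norm $\B(L^2,\tilde H^1)$, proving (i).

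\textbf{Proof of (ii).} Having (i), I would establish (ii) by combining the smooth isomorphism theorem \cite[Thm.\,6.18]{MNP20} with compactness of the perturbation, followed by injectivity. Specifically, the amplitude $M_\Phi - M_0$ vanishes at $\Phi = 0$ and (modulo behaviour at $\partial_0 S\bar\DD$) is concentrated on chords meeting $\mathrm{supp}\,\Phi$; for $\Phi \in C^4_K$ it carries two further spatial derivatives, and the kernel analysis of (i), performed with two extra derivatives, upgrades the target space by $\delta > 0$ to give $N_\Phi - N_0 : L^2(\DD) \to \tilde H^{1+\delta}(\DD)$. The Rellich property of the Zernike scale \cite[Lem.\,6.14]{MNP20} then makes $N_\Phi - N_0 : L^2 \to \tilde H^1$ compact, so $N_\Phi = N_0 + (N_\Phi - N_0)$ is a compact perturbation of the isomorphism $N_0$, hence Fredholm of index zero. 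Injectivity of $N_\Phi$ is equivalent to injectivity of $I_\Phi$ on $L^2(\DD,\C^m)$, which for skew-Hermitian compactly supported $\Phi$ follows from the non-Abelian injectivity results of \cite{PSU12} via the pseudolinearisation identity \eqref{pseudolinearisation} (see also the treatment in \cite{MNP21}). Hence $N_\Phi$ is an isomorphism, and continuity of $\Phi \mapsto N_\Phi^{-1}$ on $C^4_K$ follows by combining (i) with the standard Neumann-series argument inside the open set $\B^{\times}(\tilde H^1, L^2)$.

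\textbf{Main obstacle.} The most demanding step is the kernel analysis in (i), which must balance three competing features simultaneously: the diagonal singularity of $|x-y|^{-1}$, the anisotropy of the Zernike scale near $\partial\DD$ encoded in the degenerate coefficient of $\xi$, and the nonlinear $\Phi$-dependence of $M_\Phi$ through the integrating factors $R_\Phi^{\pm 1}$. The four-derivative requirement on $\Phi$ in (ii) arises precisely because the differentiation scheme of (i) has to be carried out \emph{twice}: once to reach $\tilde H^1$, and once more to gain the additional $\tilde H^\delta$ regularity that drives the compact perturbation argument.
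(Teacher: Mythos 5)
Your overall architecture (kernel representation of $N_\Phi$, Lipschitz bounds through the integrating factors, $N_0$ as base isomorphism, compact perturbation, injectivity) parallels the paper, but there is a genuine gap at the injectivity step, and it is precisely the step where the $C^4$ hypothesis is really consumed. The identity $\ker N_\Phi=\ker I_\Phi$ on $L^2$ is fine, but the results you invoke do not give injectivity of $I_\Phi$ on all of $L^2(\DD,\C^m)$: the pseudolinearisation identity \eqref{pseudolinearisation} concerns injectivity/stability of the \emph{non-linear} map $\Phi\mapsto C_\Phi$, not of the linear transform $I_\Phi$ acting on rough inputs, and the stability estimate encapsulating the \cite{PSU12}-type injectivity, \cite[Thm.\,5.3]{MNP21}, controls $\Vert f\Vert_{L^2}$ by $\Vert I_\Phi f\Vert_{H^1(\partial_+S\bar\DD)}$ and is only available (after an approximation in $\Phi$) for $f$ regular enough, say $f\in H^1(\DD)$. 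This yields only $\ker N_\Phi\cap H^1(\DD)=0$, so your Fredholm index-zero argument cannot close: an index-zero operator with possibly non-trivial $L^2$-kernel need not be an isomorphism. The paper fills exactly this hole with a bootstrap that is absent from your proposal: if $N_\Phi f=0$ with $f\in L^2$, then $-f=\L^{1/2}K_\Phi f$ (since $\L^{1/2}N_0$ acts as the identity up to a constant in the Zernike basis), and the $C^4$-dependent mapping properties $\L K_\Phi,\,K_\Phi\L\in\B(\tilde H^s(\DD))$ for $0\le s\le 2$ show that $\L^{1/2}K_\Phi$ gains one Zernike derivative at each of the levels $L^2,\tilde H^1,\tilde H^2$; hence $f\in\tilde H^3(\DD)\subset H^1(\DD)$ by \eqref{zernikeembedding}, and the $H^1$-kernel result finishes. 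So the four derivatives are needed for this regularity bootstrap (and for the corresponding higher-order operator bounds), not merely for your $\tilde H^{1+\delta}$ compactness upgrade.

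Two secondary points. First, the reason $K_\Phi=N_\Phi-N_0$ is one order smoother than $N_\Phi$ is not the extra smoothness of its amplitude but the cancellation on the diagonal: in the paper's notation $S(x,v,0)=0$, i.e.\ the principal symbol of $K_\Phi$ vanishes, and only because of this can the derivative-shuffling lemma (Lemma \ref{lempi}) be applied twice; your phrase ``$M_\Phi-M_0$ vanishes at $\Phi=0$'' does not capture this and, without it, extra derivatives of the kernel would not upgrade the smoothing order. Second, for part \ref{isoprop1} you need not fight the degenerate fields $\xi,\eta$ directly: since $H^1(\DD)\subset\tilde H^1(\DD)$, it suffices to bound $\Vert\partial_{x_j}(N_\Phi-N_\Psi)f\Vert_{L^2}$, which is how the paper proceeds (shifting a single Euclidean derivative onto the kernel already costs two kernel derivatives, which is where $C^2_K$ comes from). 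Granting $L^2$-injectivity, your index-zero shortcut is a legitimate replacement for the paper's closed-range/dense-range argument, and the Neumann-series continuity of $\Phi\mapsto N_\Phi^{-1}$ agrees with the paper.
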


The preceding theorem is proved in three steps:
\begin{enumerate}[label=(\Alph*)]
\item For smooth $\Phi$, the operators $N_\Phi$ and  $K_\Phi=N_\Phi - N_0$ are pseudodifferential operators of order $-1$ and $-2$, respectively, admitting singular integral representation in terms of a smooth kernel $A(x,v,t)$.  For operators of this type we derive norm bounds in terms of the kernel $A(x,v,t)$. Using a Calder{\'o}n-Zygmund type argument,  we only require bounds of the $2$nd and $4$th order derivatives of $A$.
\item Using these norm bounds, we follow the localisation procedure from \cite[Lem.\,6.17]{MNP20} to obtain (local) Lipschitz estimates for 
$\Phi \mapsto N_\Phi$ and $\Phi \mapsto K_\Phi$, where the domain is equipped with $C^2$-norm and $C^4$-norm, respectively.
\item By means of the local Lipschitz estimates and an approximation argument, we obtain an analogue of \cite[Lem.\,6.17]{MNP20} for $\Phi$ of regularity $C^4$.  Additionally using stability estimate from \cite[Thm.\,5.3]{MNP21} to get injectivity of $N_\Phi$ and $N_\Phi^*$ for $\Phi \in C_K^4(\DD,\u(m))$, we can then imitate the proof of \cite[Thm.\,6.18]{MNP20}.
\end{enumerate} 

\medskip
The next ingredient in the proof of Theorem \ref{zernikestability} is a classical result of Heinz and L{\"o}wner on the `operator monotonicity' of  the square root $T\mapsto \sqrt{T}$ (cf.\,\cite[Satz 3]{Hei51} or \cite[\textsection V.4, Thm.\,4.12]{Kat52}), combined with a duality argument.  The result is purely functional analytic,  and we formulate it on an arbitrary Hilbert space $H$ and with respect to a scale
\begin{equation}
H_L^s=\dom(L^{s/2}), \quad (s\ge 0)
\end{equation}
where $L$ is a non-negative, self-adjoint operator in $H$ and $L^{s/2}$ is defined via the spectral theorem. We further assume that $L$ is injective, such that 
$
\Vert x \Vert_{H_L^s} = \Vert L^{s/2} x \Vert_H 
$ ($x\in H^s_L$)
defines a norm that turns $H^s_L$ into a Hilbert space in its own right.

\begin{lem} \label{heinz} Suppose that  $N\in \B(H)$ is a  self-adjoint, non-negative operator. Assume additionally that $N\in \B^\times(H,H^1_L)$ and set $\underline n = \Vert N^{-1} \Vert_{\B(H^1_L,H)}$.
\begin{enumerate}[label=(\roman*)]
\item \label{heinz1} Then $\sqrt N \in \B^\times(H,H^{1/2}_L)$ and $\Vert \sqrt N^{-1} \Vert_{\B(H^{1/2}_L,H)} = \sqrt {\underline n}$.
\item \label{heinz2} There is a stability estimate of the form
\begin{equation}
\Vert x \Vert^2_{(H^{1/2}_L)^*} \le \underline n \cdot \langle N x, x\rangle_H \quad \text{ for all } x\in H.
\end{equation} 
\end{enumerate}
\end{lem}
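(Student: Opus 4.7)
The plan is to derive (i) as a direct application of the classical Löwner--Heinz operator monotonicity theorem (Satz 3 of \cite{Hei51}, equivalently Thm V.4.12 of \cite{Kat52}) applied to a quadratic form inequality read off from the isomorphism hypothesis, and then to obtain (ii) by a short duality argument from (i).

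First I would translate the iso hypothesis into a form inequality between self-adjoint non-negative operators. Since $N$ is bounded, self-adjoint, non-negative and is a bijection $H\to H_L^1$, it is in particular injective on $H$, so the inverse $N^{-1}$ is a densely defined self-adjoint non-negative operator with $\dom(N^{-1}) = \mathrm{range}(N) = H_L^1$. The hypothesis $\|N^{-1}y\|_H \le \underline n\,\|L^{1/2}y\|_H$ for all $y\in H_L^1$ may then be recorded as the form inequality
\begin{equation*}
N^{-2} \le \underline n^2\,L \quad\text{on the common form domain}\quad \dom(L^{1/2}) = \dom(N^{-1}) = H_L^1.
\end{equation*}

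Next I would apply Löwner--Heinz with exponent $\alpha=1/2$, which is legitimate because both operators are non-negative self-adjoint; it yields $N^{-1}\le\underline n\,L^{1/2}$ in the quadratic form sense. Unpacking, the form domain on the right is $\dom(L^{1/4}) = H_L^{1/2}$ and must be contained in $\dom(N^{-1/2}) = \mathrm{range}(\sqrt N)$, i.e.\ $H_L^{1/2}\subseteq \mathrm{range}(\sqrt N)$; moreover, for every $y\in H_L^{1/2}$,
\begin{equation*}
\|(\sqrt N)^{-1}y\|_H^2 \le \underline n\,\|L^{1/4}y\|_H^2 = \underline n\,\|y\|_{H_L^{1/2}}^2,
\end{equation*}
which is the operator norm bound asserted in (i). For the complementary direction that $\sqrt N$ maps $H$ continuously into $H_L^{1/2}$, I would run the same argument on the reverse form inequality $L\le C^2 N^{-2}$ coming from the boundedness of $N\colon H\to H_L^1$ (with $C=\|N\|_{\mathcal B(H,H_L^1)}$); Löwner--Heinz at $\alpha=1/2$ then produces $L^{1/2}\le C\,N^{-1}$ in the form sense, which translates into $\sqrt N x\in H_L^{1/2}$ and $\|\sqrt N x\|_{H_L^{1/2}}\le\sqrt C\,\|x\|_H$ for every $x\in H$. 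Together these establish $\sqrt N\in\mathcal B^\times(H,H_L^{1/2})$.

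For (ii) I would argue by duality: any $y\in H_L^{1/2}$ equals $\sqrt N z$ for a unique $z\in H$ by (i), and self-adjointness of $\sqrt N$ on $H$ gives $\langle x,y\rangle_H = \langle \sqrt N x,z\rangle_H$. Cauchy--Schwarz together with the bound from (i) yields
\begin{equation*}
|\langle x,y\rangle_H|^2 \le \|\sqrt N x\|_H^2\cdot\|z\|_H^2 = \langle Nx,x\rangle_H\cdot \|(\sqrt N)^{-1}y\|_H^2 \le \underline n\,\langle Nx,x\rangle_H\,\|y\|_{H_L^{1/2}}^2,
\end{equation*}
and taking the supremum over $y\in H_L^{1/2}$ with $\|y\|_{H_L^{1/2}}\le 1$ gives (ii). The main technical obstacle is the careful bookkeeping of domains of unbounded self-adjoint operators: one must verify that the form inequalities are stated on the correct common form domains so that Löwner--Heinz applies, and that the conclusion at exponent $1/2$ genuinely delivers the domain inclusion $H_L^{1/2}\subseteq\mathrm{range}(\sqrt N)$ that supplies surjectivity of $\sqrt N$ as a map into $H_L^{1/2}$.
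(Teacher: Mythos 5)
Your proposal is correct and follows essentially the same route as the paper: both translate the isomorphism hypothesis into the two form inequalities $\underline n^{-2}N^{-2}\le L$ and $L\le \bar n^{2}N^{-2}$, apply the Heinz--L\"owner operator monotonicity of the square root to identify $\dom(N^{-1/2})=\range(\sqrt N)$ with $H_L^{1/2}$ together with the norm bound $\sqrt{\underline n}$, and then obtain (ii) by writing $\langle x,y\rangle_H=\langle \sqrt N x,(\sqrt N)^{-1}y\rangle_H$ and applying Cauchy--Schwarz over the unit ball of $H_L^{1/2}$. Your domain bookkeeping matches the paper's definition of the order $S\le T$, so no gap.
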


\begin{proof} Given non-negative, self-adjoint operators $S,T$ in $H$, let us write $S\le T$ if $\dom T^{1/2} \subset \dom S^{1/2}$ and $\Vert S^{1/2} x \Vert_H \le \Vert T^{1/2} x \Vert_H$ for all $x\in \dom T^{1/2}$.  Operator monotonicity of the square root is then the assertion that
\begin{equation}
0\le S \le T \quad \Longrightarrow\quad  0\le \sqrt S \le \sqrt T.
\end{equation}
We use this in the operator inequalities  $0 \le \underline n^{-2} N^{-2} \le L$ and $0\le L \le \bar n^2 N^{-2}$ (where $\bar n = \Vert N \Vert_{\B(H,H_L^1)}$), both of which follow from the asserted isomorphism property. Taking square roots we get $0\le \underline n^{-1} N^{-1} \le L^{1/2} \le \bar n N^{-1}$. In particular  $ \range N^{1/2} = \dom N^{-1/2} = H_L^{1/2}$, which implies $N^{1/2}\in \B^\times (H,H_A^{1/2})$ (injectivity is clear); further $\Vert N^{-1/2} \Vert_{\B(H_A^{1/2},H)} \le \underline{n}^{1/2}$, which proves part \ref{heinz1}.
For \ref{heinz2} write $B$ for the unit ball in $H_L^{1/2}$, then by Cauchy-Schwarz
\begin{equation*}
\Vert  x \Vert_{(H_L^{1/2})^*} = \sup_{y\in B} \langle x, y \rangle_H = \sup_{y\in B} \langle N^{1/2} x, N^{-1/2} y \rangle_H \le \sup_{y\in B} \Vert N^{-1/2} y \Vert_H \cdot \langle Nx,x\rangle_H^{1/2}
\end{equation*}
for all $x\in H$. By part \ref{heinz1},  we have $\sup_{y\in B} \Vert N^{-1/2} y \Vert_H \le \underline n$, as desired.
\end{proof}

Combining the previous two results we can prove Theorem \ref{zernikestability}.

\begin{proof}[Proof of Theorem \ref{zernikestability}]
Put $H=L^2(\DD,\C^m)$,  $L=\L$ and let $\Phi \in C_K^4(\DD,\u(m))$. Then by Theorem \ref{isoprop}, the normal operator $N_\Phi$ satisfies the requirements of Lemma \ref{heinz} and $\underline n(\Phi) = \Vert N^{-1}_\Phi \Vert_{\B(\tilde H^1(\DD),L^2(\DD))}$ depends continuously on $\Phi$ in the $C^4$-topology.  As $\tilde H^{-1/2}(\DD)$ is the dual space of $\tilde H^{1/2}(\DD)$,  we have
\begin{equation}
\Vert f \Vert_{\tilde H^{-1/2}(\DD)}^2 \le \underline n(\Phi) \langle N_\Phi f ,f \rangle_{L^2(\DD)} =  \underline n(\Phi) \Vert I_\Phi f \Vert_{L^2_\lambda(\partial_+S\bar \DD)}^2
\end{equation}
for all $f\in L^2(\DD,\C^m)$, which is the desired stability estimate.
\end{proof}

In the remaining section we carry out the program outlined above, ultimately leading to the proof of Theorem \ref{isoprop}.

\medskip 

{\bf (Part A) Norm bounds for certain integral operators.} For a smooth function $A:\DD\times S^1\times \R \rightarrow \C$ we denote with  $T_A$ the singular integral operator defined by
\begin{equation}\label{defta}
T_Af(x) = \int_{S^1} \int_\R A(x,v,t) f(x+tv) \d t \d v,\quad x\in \DD.
\end{equation}
Here it is understood that $f\in C_c^\infty(\DD)$ (a smooth function with compact support in $\DD$) is extended by zero outside of $\DD$. From \cite[Lemma B.1]{DPSU07} we know that  $T_A$ is a classical pseudodifferential operator on $\DD$ of order $-1$ with principal symbol
\begin{equation}
\sigma_{A}(x,\xi) = 2 \pi (A(x,\xi^\perp,0) + A(x,-\xi^\perp,0)),\quad x\in \DD,\xi\in S^1.
\end{equation}
The present section provides bounds on the operator norm of $T_A$ in different functional analytic settings in terms of the norms
\begin{equation}\label{defnorm}
\Vert A \Vert_{C^k} = \sup \{ \Vert \partial_{x_1}^{\alpha_1}\partial_{x_2}^{\alpha_2}\partial_v^{\alpha_3}\partial_t^{\alpha_4} A\Vert_{L^\infty(\DD\times S^1 \times \R)}: \alpha\in \N_0^4, \vert \alpha \vert \le k\}.
\end{equation}
Precisely,  we establish (writing $\ord$ for the order of a differential operator):

\begin{prop}\label{lemeco} 
Let $P,Q$ be differential operators on $\DD$ with coefficients in $C^\infty(\bar \DD)$ and put $m=\ord P + \ord Q$. Suppose that either $0\le m \le 1$ or that $\sigma_A\equiv 0$ and $0\le m \le 2$. Then
\begin{equation}
\Vert PT_A Q f \Vert_{H^s(\DD)} \le C \Vert A \Vert_{C^{2m}}\cdot \Vert f \Vert_{H^s(\DD)},\quad 0\le s \le m, f\in C_c^\infty(\DD)
\end{equation}
for a constant $C>0$ which only depends on $P$ and $Q$.
\end{prop}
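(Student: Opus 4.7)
The plan is to reduce $T_A$ to a singular integral operator on $\R^2$ by pushing the $(v,t)$-integration forward to $y = x+tv$. Using the Jacobian $|t|$ and symmetrising via $(v,t)\mapsto(-v,-t)$, one obtains the Schwartz kernel representation
\[
T_Af(x) = \int_{\R^2} K_A(x,y)\,f(y)\,dy,\qquad K_A(x,y) = \frac{A(x,u,r) + A(x,-u,-r)}{r},
\]
with $u=(y-x)/|y-x|$ and $r=|y-x|$. The base case $P=Q=\id$, $s=0$ then follows directly from Schur's test: $|K_A|\le 2\|A\|_{C^0}r^{-1}$, and $r^{-1}$ is locally integrable on $\R^2$ so that $\sup_x\int_{\bar\DD}|K_A(x,y)|\,dy$ and the symmetric quantity are both bounded by $C\|A\|_{C^0}$. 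This already covers $m=0$.

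For $m\ge 1$, my plan is to transfer derivatives between $A$ and $f$ using the commutator identity $\partial_{x_i}T_A = T_{\partial_{x_i}A} + T_A\partial_{y_i}$, which is an immediate consequence of the chain rule $\partial_{x_i}[f(x+tv)]=(\partial_i f)(x+tv)$. Iterating and absorbing the coefficients of $P,Q\in C^\infty(\bar\DD)$ gives a finite decomposition
\[
P T_A Q = \sum_{j} c_j(x)\, T_{\partial^{\beta_j}A}\, R_j,\qquad |\beta_j| + \ord R_j \le m.
\]
To obtain the full $L^2\to L^2$-bound for $m=1$, I would next transfer the remaining differential operator $R_j$ back to the kernel using the identity $\partial_{y_i}+\partial_{x_i}^{\mathrm{tot}}=\partial_{x_i}^{(1)}$ (where $\partial_x^{(1)}$ differentiates $A$ only in its first slot), which holds because $K_A$ depends on $x$ only through $x$-itself and through $y-x$. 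Each such transfer produces a kernel with one additional derivative of $A$ and one stronger power of $r^{-1}$; for $m=1$ the resulting kernels are Calder\'on--Zygmund of order zero, and the Hörmander conditions (angular mean-zero plus off-diagonal Lipschitz bounds) can be verified directly from the explicit form of $K_A$, at the cost of one further derivative of $A$. Thus altogether $2m=2$ derivatives of $A$ are spent: $m$ for the commutator transfer, $m$ more for the CZ-verification. Bounds on $H^s$ for $s=m$ follow analogously, and intermediate $0<s<m$ by real interpolation.

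The borderline case $m=2$ relies on the hypothesis $\sigma_A\equiv 0$, which reads $A(x,v,0)+A(x,-v,0)=0$. This forces the numerator of $K_A$ to vanish at $r=0$, so Taylor expansion in $r$ shows that $K_A$ is actually \emph{bounded} (the leading $r^{-1}$-singularity is removable), and one additional factor of $r$ is gained. Coupled with the decomposition above, this extra vanishing reduces the singularity of the kernel obtained after two derivatives from $r^{-3}$ (unbounded even in the CZ sense) to the borderline $r^{-2}$, which is again handled by Calder\'on--Zygmund theory; the Taylor expansion and the subsequent CZ-verification account for the remaining two derivatives of $A$, consistent with the $C^{2m}=C^4$ claim. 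The main obstacle throughout is the careful bookkeeping of the singularity order of the iterated kernels against the regularity of $A$ that is actually consumed, particularly in checking that the mean-zero angular structure required for CZ-boundedness is indeed produced by each commutator step. Without $\sigma_A\equiv 0$ the $m=2$ kernel is too singular to admit any $L^2$-bound, so this symbol-cancellation is essential and must be extracted directly at finite regularity rather than via the usual smooth microlocal machinery.
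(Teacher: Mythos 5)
Your proposal is essentially viable, but it takes a genuinely different route from the paper's proof. You pass to the planar Schwartz kernel $K_A(x,y)=r^{-1}\left[A(x,u,r)+A(x,-u,-r)\right]$, shift derivatives by the commutator identity and by integration by parts in $y$, and then control the resulting order-zero operators by Calder\'on--Zygmund theory for variable kernels. The paper instead stays in the $(v,t)$-parametrisation: it Taylor-expands $A=A_0+tA_1$ and uses the pointwise identity $t\partial_{x_1}\tilde f=tv_1\partial_t\tilde f-v_2\partial_v\tilde f$ to integrate by parts in $t$ and $v$ (Lemma \ref{lempi}), so that the entire singular contribution is carried by the directional Hilbert transform $H$, whose $L^2$-bound \eqref{hilbertbound} is elementary; when $\sigma_A\equiv0$ that Hilbert-transform term vanishes by parity ($\tilde A_0$ even in $v$, $Hf$ odd), which is what permits the second transfer needed for $m=2$. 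Your cancellation claims are correct and are the same parity mechanism in different clothing: the symmetrised kernel is even in $u$, so the leading angular parts created by each $y$-derivative are odd (or arise as derivatives of lower-order homogeneous kernels) and have zero mean on $S^1$, and with $\sigma_A\equiv0$ the numerator indeed vanishes to order $r$. What your sketch defers, and would have to be supplied to make this a proof: (i) the integration by parts against an $r^{-1}$-singular kernel must be performed in the principal-value sense, and the $\epsilon$-circle boundary terms survive in the limit as a bounded multiplication operator that has to be tracked (this is exactly the bookkeeping the paper does through its $I^1_\epsilon$, $I^2_\epsilon$ split); and (ii) the quantitative $L^2$-boundedness of variable-kernel Calder\'on--Zygmund operators, with operator norm controlled by finitely many $u$- and $x$-derivatives of the angular part uniformly in $x$, is a genuine classical input (Calder\'on--Zygmund, method of rotations) that you would need to cite or reprove at finite regularity of $A$ --- it is precisely the machinery the paper's Hilbert-transform reduction is designed to avoid. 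Granting these, your derivative budget ($m$ derivatives of $A$ for the transfers, $m$ more for the kernel estimates, hence $C^{2m}$) and the concluding interpolation step for non-integer $s$ agree with the statement.
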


The proposition remains true  if instead $A$ takes values in $\C^{m\times m}$ and $f\in C_c^\infty(\DD,\C^m)$; we confine ourselves to the scalar case for notational simplicity.

\smallskip

The strategy to prove Proposition \ref{lemeco} is to rewrite expressions like $\partial_{x_j} (T_A f)$  and $T_A(\partial_{x_j} f)$  in a form where all derivatives fall on $A$. To this end we 
introduce a type of Hilbert transform that features in the next Lemma and is witness to the singular nature of $T_A$.

\begin{definition}[Hilbert transform] For a function $f\in C_c^\infty(\DD)$ we define \begin{equation}
Hf(x,v)= \lim_{\epsilon\rightarrow 0} \frac 1 \pi \int_{\vert t \vert>\epsilon} f(x+tv) \frac{\d t }{t} \equiv \lim_{\epsilon\rightarrow 0} H_\epsilon f(x,v),\quad (x,v)\in \DD \times S^1. \label{hdef}
\end{equation}
\end{definition}
The transform $Hf(x,v)$ corresponds to the standard Hilbert transform (up to a sign in the argument) along rays parallel to the line $\{sv:s\in \R\}\subset \R^2$. The following facts are thus derived as in the standard case and will be given without proof: For $f\in C^\infty_c(\DD)$ the limit in \eqref{hdef} exists pointwise; further the transform $Hf(x,v)$ is continuous and satisfies
\begin{equation} \label{hilbertbound}
\sup_{v\in S^1}\Vert Hf(\cdot, v) \Vert_{L^2(\DD)} \le \Vert f \Vert_{L^2(\DD)} .
\end{equation}

\begin{lem} \label{lempi} Let $A:\DD\times S^1\times \R\rightarrow \C$ be smooth and $f\in C_c^\infty(\DD)$. Then 
\begin{equation}\label{lempi1}
T_A(\partial_{x_1}f)(x) =\pi  \int_{S^1} \tilde A_0(x,v) Hf(x,v) \d v + T_{\tilde A_1} f(x),\quad  x\in \DD
\end{equation}
where $\tilde A_0(x,v)$ and $\tilde A_1(x,v,t)$ are derived from a Taylor expansion $A(x,v,t) = A_0(x,v) + tA_1(x,v,t)$ via the formulas
\begin{equation}
\tilde A_0 =  \partial_v\left( v_2 A_0\right ) \quad \text{and} \quad \tilde A_1 = \partial_v(v_2 A_1)- \partial_t(tv_1 A_1).
\end{equation}
In particular we have $\Vert \tilde A_0 \Vert_{C^k} \lesssim_k \Vert A \Vert_{C^{k+1}}$ and $\Vert \tilde A_1 \Vert_{C^k} \lesssim_k \Vert A\Vert_{C^{k+2}}$ ($k\in \N_0$).
\end{lem}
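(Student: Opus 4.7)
The plan is to reduce $T_A(\partial_{x_1}f)$ to the identity in the lemma by combining a Taylor split of $A$ in the $t$-variable with two distinct integration-by-parts arguments: one in the rotation variable on $S^1$ and one in the line parameter $t$. The crucial idea is the pointwise chain-rule identity
\begin{equation*}
(\partial_{x_1}f)(x+tv) \;=\; v_1\,\partial_t\!\left[f(x+tv)\right] \;-\; \frac{v_2}{t}\,\partial_\theta\!\left[f(x+tv)\right],
\end{equation*}
which follows by expressing $\partial_t$ and $\partial_\theta$ (the angular derivative on $S^1$) in terms of $\partial_{x_1}f$ and $\partial_{x_2}f$ at $x+tv$ and inverting the resulting $2\times 2$ linear system. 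Note that $\partial_\theta[f(x+tv)]=t\,v^\perp\!\cdot\!\nabla f(x+tv)$, so the right-hand side is pointwise smooth, but the formal $1/t$ factor is exactly what will produce a Hilbert transform after integrating by parts in $\theta$ without first cancelling the $t$.

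Write $A(x,v,t)=A_0(x,v)+tA_1(x,v,t)$ with $A_0(x,v)=A(x,v,0)$ and $A_1(x,v,t)=\int_0^1(\partial_t A)(x,v,st)\,ds$, which splits the operator as $T_A(\partial_{x_1}f)=T_{A_0}(\partial_{x_1}f)+T_{tA_1}(\partial_{x_1}f)$. For the $A_1$-piece, multiply the identity above by $t$ to obtain $t(\partial_{x_1}f)(x+tv)=tv_1\partial_t[f(x+tv)]-v_2\partial_\theta[f(x+tv)]$, an expression free of $1/t$. Integrating by parts in $t\in\mathbb{R}$ (boundary terms vanish since $f$ has compact support) and in $\theta\in S^1$ (no boundary at all) moves both derivatives onto the weights; collecting the results gives exactly
\begin{equation*}
T_{tA_1}(\partial_{x_1}f) \;=\; \int_{S^1}\!\!\int_\mathbb{R}\!\bigl[\partial_v(v_2A_1)-\partial_t(tv_1A_1)\bigr]\,f(x+tv)\,dt\,dv \;=\; T_{\tilde A_1}f.
\end{equation*}

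For the $A_0$-piece, the first summand $v_1A_0\,\partial_t[f(x+tv)]$ integrates to zero over $t\in\mathbb{R}$ since $A_0$ is $t$-independent and $f$ is compactly supported. The second summand is $-A_0(x,v)v_2\,\partial_\theta[f(x+tv)]/t$. To expose the Hilbert transform one integrates by parts in $\theta$, but now the resulting integrand $\partial_v(v_2A_0)\,f(x+tv)/t$ has a genuine $1/t$ singularity at $t=0$ and Fubini must be applied to the $\theta$-integral \emph{before} the $t$-integral. The cleanest way is to insert a cutoff $\chi_\epsilon$ vanishing in a neighbourhood of $t=0$ and equal to $1$ for $|t|>\epsilon$, apply Fubini and IBP in $\theta$ freely (integrand smooth, $S^1$ boundaryless), then take $\epsilon\downarrow 0$. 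Since $\partial_v(v_2A_0)$ is continuous in $v$ and the principal-value integral $\mathrm{p.v.}\!\int_\mathbb{R} f(x+tv)/t\,dt=\pi Hf(x,v)$ is uniformly bounded in $v$ (standard $L^\infty$ estimate on Hilbert transforms of Schwartz functions), dominated convergence yields
\begin{equation*}
T_{A_0}(\partial_{x_1}f)(x) \;=\; \pi\int_{S^1}\partial_v(v_2A_0)(x,v)\,Hf(x,v)\,dv \;=\; \pi\int_{S^1}\tilde A_0(x,v)\,Hf(x,v)\,dv.
\end{equation*}
Adding the two pieces produces the announced identity.

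Finally, the norm bounds are immediate from the definitions: from the integral representation of $A_1$ one has $\|A_1\|_{C^k}\lesssim_k\|A\|_{C^{k+1}}$, so $\tilde A_1=\partial_v(v_2A_1)-\partial_t(tv_1A_1)$ costs one further derivative, giving $\|\tilde A_1\|_{C^k}\lesssim_k\|A\|_{C^{k+2}}$; similarly $\|\tilde A_0\|_{C^k}\le\|\partial_v(v_2A_0)\|_{C^k}\lesssim_k\|A_0\|_{C^{k+1}}\lesssim_k\|A\|_{C^{k+1}}$. The main obstacle is purely technical, namely justifying the exchange of limit, integration by parts and Fubini in the $A_0$-term where the $1/t$ appears; everything else is bookkeeping on the chain rule and Taylor remainder.
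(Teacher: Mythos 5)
Your proof is correct and follows essentially the same route as the paper: the same Taylor split $A=A_0+tA_1$, the same chain-rule identity $t\,\partial_{x_1}[f(x+tv)] = tv_1\,\partial_t[f(x+tv)] - v_2\,\partial_v[f(x+tv)]$, integration by parts in $t$ and in the angular variable for the $A_1$-piece, and an $\epsilon$-truncation producing $H_\epsilon f$ followed by a dominated-convergence limit for the $A_0$-piece. The only (immaterial) difference is that you dispose of the $v_1A_0\,\partial_t$-term by the fundamental theorem of calculus over all of $\R$, whereas the paper keeps it inside the truncation and obtains the vanishing term $f(x-\epsilon v)-f(x+\epsilon v)$.
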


Let us remark on a few aspects of the lemma. First, one has $A_0(x,v) = A(x,v,0)$ and $A_1(x,v,t) = \int_0^1 \partial_t A(x,v,st) \d s$ and thus all of $A_0,A_1,\tilde A_0,\tilde A_1$ are smooth functions. Further, if the principal symbol $\sigma_A\equiv 0$, then  $\tilde A_0(x,v)$ is an even function  in the sense that $\tilde A_0(x,v)=\tilde A_0(x,-v)$. As $Hf(x,v)$ is odd,  the first term in \eqref{lempi1} vanishes. This reflects the fact that the operator $T_A$ is then a pseudodifferential operator of order $-2$ and implies that the Lemma may be applied twice.
We tacitly use the Lemma also for $x_2$-derivatives, where it is proved analogously.


\begin{proof}[Proof of Lemma \ref{lempi}]  Using the expansion $A(x,v,t)=A_0(x,v) + tA_1(x,v,t)$, decompose
\begin{equation} \label{lempi2}
T_A(\partial_{x_1}f)(x) =  T_{A_0}(\partial_{x_1 }f)(x) +  T_{A_1}(t\partial_{x_1} f)(x).
\end{equation}
We investigate the two terms separately, starting with the second one.\ Writing $\tilde f (x,v,t) = f(x+tv)$, a straightforward calculation reveals that
\begin{equation}\label{lempi3}
t\partial_{x_1} \tilde f (x,v,t) =t  v_1 \partial_t \tilde f(x,v,t) - v_2\partial_v\tilde f(x,v,t)
\end{equation}
and thus
\begin{equation*}
\begin{split}
T_{A_1}(t\partial_{x_1} f)(x) = \int_{S^1}\int_\R tv_1 A_1(x,v,t)  \partial_t \tilde f (x,v,t)\d t \d v  - \int_{S^1}\int_\R v_2 A_1(x,v,t)  \partial_v \tilde f (x,v,t)\d t \d v.
\end{split}
\end{equation*}
We can now partially integrate with respect to $t$ and $v$ respectively without collecting boundary terms, as the integrand is compactly supported. This yields
\begin{equation*}
= \int_{S^1}\int_\R \left[\partial_v(v_2 A_1) - \partial_t(tv_1 A_1)\right](x,v,t) f(x+tv) \d t \d v = T_{\tilde A_1} f(x),
\end{equation*}
i.e. the second term in \eqref{lempi1}. To examine the first term in \eqref{lempi2}, we define
\begin{equation*}
I_\epsilon(x) = \int_{S^1}\int_{\vert t \vert>\epsilon } A_0(x,v) \partial_{x_1}\tilde f(x,v,t) \d t \d v,\quad x\in \DD,\epsilon >0
\end{equation*}
and note that $I_\epsilon(x)\rightarrow T_{A_0}(\partial_{x_1}f)(x)$ $(\epsilon \rightarrow 0$) for all $x\in \DD$ due to the dominated convergence theorem. Use \eqref{lempi3} once more to write
\begin{equation*}
\begin{split}
I_\epsilon(x) = \int_{S^1} v_1 A_0(x,v) \int_{\vert t \vert>\epsilon} \partial_t \tilde f(x,v,t) \d t \d v 
 +  \int_{S^1} \int_{\vert t \vert>\epsilon} \left[ -v_2 A_0(x,v)\right]  \partial_{v}\tilde f(x,v,t) \frac{\d t }{t} \d v
\end{split}
\end{equation*}
and call the two summands $I_\epsilon^1(x)$ and $I^2_\epsilon(x)$ respectively. By the fundamental theorem of calculus, the first summand also equals
\begin{equation*}
I_\epsilon^1(x) = \int_{S^1} v_1 A_0(x,v) [f(x-\epsilon v) - f(x+\epsilon v)] \d v 
\end{equation*}
which vanishes in the limit $\epsilon\rightarrow 0$ by the dominated convergence theorem. To rewrite the second summand, we integrate by parts in $v$ to obtain
\begin{equation*}
I^2_\epsilon(x) = \int_{S^1} \partial_v\left( v_2 A_0(x,v)\right )\left ( \int_{\vert t \vert >\epsilon} \frac{f(x+tv)}{t} \d t \right) \d v = \pi \int_{S^1} \tilde A_0(x,v)  H_\epsilon(x,v) \d v ,
\end{equation*}
which approaches the first term in \eqref{lempi1} as $\epsilon\rightarrow 0$. The Lemma is proved.
\end{proof}

\begin{proof}[Proof of Proposition \ref{lemeco}]
Let $f\in C_c^\infty(\DD)$. By Minkowski's integral inequality
\begin{equation*}
\Vert T_A f \Vert_{L^2(\DD)} \le \int_{S^1} \int_\R \Vert A(\cdot, v,t) f(\cdot + tv) \Vert_{L^2(\DD_x)} \d t \d v.
\end{equation*}
For $\vert t \vert>2$ (the diameter of $\DD$), the integrand vanishes. For $\vert t \vert \le 2$ we bound it from above by   $\Vert A \Vert_{L^\infty} \cdot \Vert f(\cdot + t v ) \Vert_{L^2(\DD_x)} \le \Vert A \Vert_{L^\infty} \cdot \Vert f \Vert_{L^2(\DD_x)}$. This yields 
\begin{equation} \label{pfeco1}
\Vert T_A f \Vert_{L^2(\DD)} \le 8 \pi  \cdot \Vert A \Vert_{L^\infty}\cdot \Vert f \Vert_{L^2(\DD)}
\end{equation}
and thus proves the Proposition for the case $0=s=m$. By means of the preceding Lemma  we will bootstrap this inequality to prove
\begin{equation}\label{pfeco2}
\Vert \partial_{x}^\alpha \partial_x^\beta T_A \partial^\gamma_x f \Vert_{L^2(\DD)} \lesssim_{\alpha,\beta,\gamma} \Vert A \Vert_{C^{2m}} \cdot \Vert f \Vert_{H^s(\DD)}
\end{equation}
for multi-indices $\alpha,\beta,\gamma\in \N_0^2$ with $\vert \alpha \vert \le s$ and $\vert \beta \vert + \vert \gamma \vert \le m$.  To prove \eqref{pfeco2}, we use the Leibniz rule to write
\begin{equation}
 \partial_{x}^\alpha \partial_x^\beta T_A \partial^\gamma_x f(x) = \sum {\alpha + \beta \choose \mu } T_{(\partial^\mu_x A )} (\partial_x^{\nu + \gamma}f)(x),
\end{equation}
where the sum runs over multi-indices $\mu,\nu$ with $\mu + \nu = \alpha + \beta$. We will estimate the summands separately, distinguishing the cases $ m \le \vert \mu \vert  \le 2m$ and $0\le \vert \mu \vert \le m-1$. In the first case we necessarily have $\vert \nu  + \gamma \vert \le s$ and thus, using \eqref{pfeco1}, the respective term can be estimated as
\begin{equation}\label{pfeco3}
\Vert T_{(\partial^\mu_x A )} \partial_x^{(\nu + \gamma)}f \Vert_{L^2(\DD)} \le 8\pi \Vert \partial_x^\mu A \Vert_{L^\infty} \cdot \Vert \partial_x^{\nu + \gamma} f \Vert_{L^2(\DD)} \le 8 \pi \Vert A \Vert_{C^{2m}} \Vert f \Vert_{H^s(\DD)}.
\end{equation}
From now on we assume that $0\le \vert \mu \vert \le m-1$; the remaining proof consists of a case study $m=1$ vs. $m=2$ (note that $m=0$ is already proved).\\
Suppose that $m=1$, then $\vert \mu \vert=0$ and $\vert \nu + \gamma \vert \le s+1$. We may assume that $\vert \nu \vert =1$ or $\vert \gamma \vert =1$ (if both are zero, then \eqref{pfeco1} suffices to estimate the respective term). If $\vert \nu \vert =1$, we can use Lemma \ref{lempi} to write
\begin{equation}
T_{A}\partial_{x}^{\nu  + \gamma}f(x) = \pi \int_{S^1} \tilde A_0(x,v) H(\partial^\gamma_xf) (x,v) \d v + T_{(\tilde A_1)} \partial_x^\gamma f(x)
\end{equation}
and \eqref{pfeco1} as well as \eqref{hilbertbound} to estimate
\begin{equation} 
\begin{split}
\Vert T_{A}\partial_{x}^{\nu  + \gamma}f \Vert_{L^2(\DD)} &\le 2\pi^2 \Vert \tilde A_0 \Vert_{L^\infty} \Vert \partial_x^\gamma f \Vert_{L^2(\DD)} + 8\pi \Vert \tilde A_1 \Vert_{L^\infty} \Vert \partial_x^\gamma f \Vert_{L^2(\DD)}\\
& \lesssim   \Vert A \Vert_{C^2} \cdot \Vert f \Vert_{H^s(\DD)}.
\end{split}
\end{equation}
If $\vert \gamma \vert = 1$ one can use the same method thus the case $m=1$ is proved.\\
Next suppose that $m=2$ and $\sigma_A\equiv 0$. If $ \vert \mu \vert =1$, then again we have $\vert \nu + \gamma \vert \le s+1$ and, similar to \eqref{pfeco3} we obtain
\begin{equation}
\Vert T_{(\partial_x^\mu A)} \partial^{\nu +\gamma} f \Vert_{L^2(\DD)} \lesssim \Vert \partial_x^\mu A \Vert_{C^2} \cdot \Vert f \Vert_{H^s(\DD)},
\end{equation}
which gives the desired bound as $\Vert \partial_x^\mu A \Vert_{C^2} \le \Vert A \Vert_{C^4}$. It remains to deal with the case $\vert \mu \vert =0$, where we merely have $\vert \nu + \gamma \vert \le s + 2$. Now {\it two} derivatives have to be shuffled over to $A$, but this is possible as $\sigma_A\equiv 0$ and thus {Lemma~\ref{lempi}} may be applied twice. Indeed, writing $\nu + \gamma = \nu'+\gamma' + \delta$ with $\vert \delta \vert\le 2$ and $\vert \nu ' + \gamma' \vert\le s$ we have
\begin{equation}
T_A\partial_x^{\nu + \gamma}f(x) = \pi \int_{S^1} \tilde B_0(x,v) H(\partial_x^{\nu'+\gamma'}f)(x,v) \d v + T_{\tilde B_1} (\partial_x^{\nu'+\gamma'}f)(x),
\end{equation}
where now $\Vert \tilde B_0 \Vert_{L^\infty} \lesssim \Vert A \Vert_{C^3}$ and $\Vert \tilde B_1\Vert_{L^\infty} \lesssim \Vert A \Vert_{C^4}$. Similar to above we  find
\begin{equation}
\Vert T_A \partial_x^{\nu + \gamma} f\Vert_{L^2(\DD)} \lesssim \Vert A \Vert_{C^4} \cdot \Vert \partial_x^{\nu'+\gamma'}f\Vert_{L^2(\DD)} \le \Vert A \Vert_{C^4} \cdot \Vert f\Vert_{H^s(\DD)}.
\end{equation}
Combining the preceding estimates, inequality \eqref{pfeco2} follows. As the coefficients of $P$ and $Q$ as well as their derivatives are easily  bounded, we get
\begin{equation}
\Vert \partial^\alpha PT_A Qf \Vert_{L^2(\DD)} \lesssim_{P,Q}\Vert A \Vert_{C^{2m}} \Vert f \Vert_{H^s(\DD)},\quad 0\le \vert \alpha \vert\le s \le m
\end{equation}
which proves the Proposition for integral $s$. General values for $0\le s \le m$ can be obtained by an interpolation argument and the proof is complete.
\end{proof}

\medskip 
{\bf (Part B) Local Lipschitz estimates.} Recall that we had defined  $N_\Phi = I_\Phi^*I_\Phi$ and $K_\Phi = N_\Phi - N_0$.  We bound their operator norms as follows:

\begin{prop}[Local Lipschitz estimates] \label{lipschitz} Let $K\subset \DD$ be compact. Then for all $\Phi,\Psi \in C_K^\infty(\DD,\u(m))$ and  $0\le s \le 2$ we have
\begin{eqnarray}\
\Vert N_\Phi - N_\Psi \Vert_{\B(L^2(\DD), \tilde H^1(\DD))} \le  c_1(\Phi,\Psi) \cdot \Vert \Phi - \Psi \Vert_{C_K^2(\DD)} \label{lip1} \\
\Vert \L  K_\Phi - \L K_\Psi \Vert_{\B(\tilde H^s(\DD))} \le  c_2(\Phi,\Psi) \cdot \Vert \Phi - \Psi \Vert_{C^4_K(\DD)} \label{lip2} \\
\Vert K_\Phi\L - K_\Psi\L \Vert_{\B(\tilde H^s(\DD))} \le  c_2(\Phi,\Psi) \cdot \Vert \Phi - \Psi \Vert_{C^4_K(\DD)} \label{lip3}
\end{eqnarray}
where 
$c_i(\Phi,\Psi) = C (1+ \Vert \Phi \Vert_{C^{2i}(S\bar \DD)} \vee \Vert \Psi \Vert_{C^{2i}(S\bar \DD)} )^{6i}$ for some $C=C(K)>0$ ($i=1,2$).
\end{prop}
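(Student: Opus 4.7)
I will realise both $N_\Phi$ and $K_\Phi = N_\Phi - N_0$ as singular integral operators of the form $T_A$ in \eqref{defta} and reduce the three estimates to Proposition \ref{lemeco} applied to kernel differences, supplemented by a localisation argument to bridge $H^s(\DD)$ and $\tilde H^s(\DD)$.

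\emph{Kernel representation and Lipschitz dependence.} Starting from $I_\Phi f = R_\Phi \cdot \I(R_\Phi^{-1} f)$ on $\partial_+ S \bar \DD$ and computing the $L^2_\lambda$-adjoint, a direct unravelling of $N_\Phi = I_\Phi^* I_\Phi$ produces a smooth kernel $A_\Phi(x,v,t)$ polynomially built from the restrictions of $R_\Phi^{\pm 1}$ to the line $\gamma_{x,v}$. The estimates \eqref{horse1}--\eqref{horse2} then yield, for every $k \in \N_0$,
\begin{equation*}
\|A_\Phi - A_\Psi\|_{C^k} \lesssim_{m, k, K} \bigl(1 + \|\Phi\|_{C^k(\bar \DD)} \vee \|\Psi\|_{C^k(\bar \DD)}\bigr)^{O(k)} \cdot \|\Phi - \Psi\|_{C^k(\bar \DD)},
\end{equation*}
with $C^k$ the norm in \eqref{defnorm}. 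Setting $B_\Phi = A_\Phi - A_0$, a direct inspection shows that the principal symbol
\[
\sigma_{B_\Phi}(x,\xi) = 2\pi\bigl(B_\Phi(x, \xi^\perp, 0) + B_\Phi(x, -\xi^\perp, 0)\bigr)
\]
vanishes identically, because at $t = 0$ the integrating factor contribution collapses to $R_\Phi(x) R_\Phi(x)^{-1} = \id$, which matches the $\Phi = 0$ value; equivalently, $N_\Phi$ and $N_0$ share their purely geometric principal symbol, so $K_\Phi$ is $-2$-order.

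\emph{Applying Proposition \ref{lemeco}.} For \eqref{lip1}, invoke the case $s = m = 1$, $P = Q = \id$:
\begin{equation*}
\|(N_\Phi - N_\Psi) f\|_{H^1(\DD)} \lesssim \|A_\Phi - A_\Psi\|_{C^2} \cdot \|f\|_{L^2(\DD)},
\end{equation*}
which combined with the kernel bound at $k = 2$ yields the desired estimate up to the scale change. For \eqref{lip2}--\eqref{lip3}, the vanishing principal symbol of $B_\Phi - B_\Psi$ puts us in the $m = 2$ regime, allowing a 2nd order differential operator $P$ (respectively $Q$) standing in for $\L$; applied with $s \in \{0, 1, 2\}$ this produces the $\tilde H^s \to \tilde H^s$ bound with prefactor $\|B_\Phi - B_\Psi\|_{C^4} = \|A_\Phi - A_\Psi\|_{C^4}$. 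Combining with the kernel bound at $k = 4$ gives the constant $c_2(\Phi, \Psi)$.

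\emph{Main obstacle.} The substantive difficulty is that Proposition \ref{lemeco} is formulated in the standard Sobolev scale $H^s(\DD)$ whereas the claims live in the Zernike scale $\tilde H^s(\DD)$, compounded by the fact that the coefficients of $\L$ (see \eqref{xieta}) are smooth only in the interior $\DD$. I plan to close this gap through the localisation procedure of \cite[Lem.\,6.17]{MNP20}: pick a smooth cutoff $\chi$ with $\supp \chi \subset \DD$ containing $K$, treat $\chi \cdot T_{A_\Phi - A_\Psi} f$ by the equivalence $H^s \cong \tilde H^s$ on compactly supported functions, and handle the complementary piece $(1 - \chi) \cdot T_{A_\Phi - A_\Psi} f$ using the explicit structure of the kernel near $\partial \DD$ together with $\supp(\Phi - \Psi) \subset K$ to absorb the boundary degeneracy of $\L$ without loss. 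Carrying out this localisation cleanly for both the $\L K_\Phi$ and the $K_\Phi \L$ version (the latter via integration by parts that moves $\L$-derivatives onto the kernel), while tracking the polynomial growth in $\|\Phi\|_{C^{2k}} \vee \|\Psi\|_{C^{2k}}$ to produce the constants $c_i(\Phi, \Psi)$, is where the bulk of the remaining technical work lies.
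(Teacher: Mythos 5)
Your overall strategy coincides with the paper's for the parts you carry out: represent the differences $N_\Phi-N_\Psi$ (equivalently $K_\Phi-K_\Psi$) as singular integral operators $T_A$ as in \eqref{defta} with kernels built from the integrating factors, control $\Vert A\Vert_{C^k}$ through \eqref{horse1}--\eqref{horse2}, observe that $\sigma_A\equiv 0$ for the differences, and feed this into Proposition \ref{lemeco}. The genuine gap sits exactly where you defer the work: the endpoint $s=2$ of \eqref{lip2}, i.e.\ the bound $\Vert \L^2 T_A f\Vert_{L^2(\DD)}\lesssim \Vert A\Vert_{C^4}\,\Vert \L f\Vert_{L^2(\DD)}$ for arbitrary $f\in\tilde H^2(\DD)$. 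The obstacle is not the one you name: the coefficients of $\L$ are smooth up to $\partial\DD$ (they merely degenerate there), which is precisely why $P=\L$, $Q=\id$ and $P=\id$, $Q=\L$ are admissible in Proposition \ref{lemeco}, settling the $s=0$ cases and, by $\Vert (N_\Phi-N_\Psi)\L f\Vert_{\tilde H^2}=\Vert \L (N_\Phi-N_\Psi)(\L f)\Vert_{L^2}$, also \eqref{lip3} at $s=2$ with no localisation at all. The real problem is on the input side: Proposition \ref{lemeco} measures $f$ in $H^s$, and for general $f\in\tilde H^2$ one cannot bound $\Vert f\Vert_{H^2}$ by $\Vert \L f\Vert_{L^2}$ near the boundary. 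Your single output-side cutoff ($\chi\, T_A f$ versus $(1-\chi)T_A f$) does not address this: for $x$ near $\partial\DD$ the kernel of the difference does not vanish whenever the line through $(x,v)$ meets $K$, so the complementary piece still sees the non-compactly-supported part of $f$.

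What is needed (and what the paper does) is a localisation in \emph{both} variables: write $N_\Phi-N_\Psi=\sum_{ij}\psi_i(N_\Phi-N_\Psi)\psi_j=\sum_{ij}T_{A_{ij}}$ for a partition of unity subordinate to a cover chosen so that (a) if $U_i\cap U_j\neq\emptyset$ and either patch touches $\partial\DD$, every line joining $U_i$ to $U_j$ avoids $K$ and hence $A_{ij}\equiv 0$; (b) if $\overline{U_i\cup U_j}\subset\DD$, a cutoff $\chi\equiv1$ on $\overline{U_i\cup U_j}$ reduces matters to Proposition \ref{lemeco} together with the equivalence of $H^s$ and $\tilde H^s$ on compactly supported functions and $\Vert\chi f\Vert_{H^2}\lesssim\Vert\L f\Vert_{L^2}$; (c) if $U_i\cap U_j=\emptyset$, the Schwartz kernel $k_{ij}(x,y)$ has no diagonal singularity, and both $T_{A_{ij}}\L$ and $[\L^2,T_{A_{ij}}]$ are bounded directly through $\L_y k_{ij}$ and $\L_x^2k_{ij}-\L_y^2k_{ij}$; this is how the derivatives carried by $\L f$ are moved onto the kernel for patches meeting the boundary. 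Without an argument of this type your plan does not close. Two smaller corrections: the $L^2\to H^1$ bound in \eqref{lip1} comes from Proposition \ref{lemeco} with $P=\partial_{x_j}$, $Q=\id$, $m=1$, $s=0$ (with $P=Q=\id$ the proposition only gives $L^2\to L^2$); and the kernel of $N_\Phi$ alone carries a factor that blows up at the glancing region, so it is only the \emph{differences}, whose kernels are supported on lines meeting $K$ where that factor is bounded, that fall into the class controlled by \eqref{defnorm} — your statement that $N_\Phi$ itself has a kernel with bounded $C^k$ norms should be phrased for the differences only.
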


Of course we have $K_\Phi - K_\Psi = N_\Phi - N_\Psi$; the formulation above points to the fact that $\L K_\Phi = \L K_\Phi - \L K_0$ itself lies in $\B(\tilde H^s(\DD))$, while $\L N_\Phi$ does not.


\begin{proof} Let $R_\Phi$ be an integrating factor for $\Phi$ as in \textsection \ref{boif}. Then $R_\Phi$ is smooth on $S\DD$ (in fact constant $\equiv \id$ near the `glancing' region $\partial_+S\bar \DD \cap \partial_-S\bar \DD)$) and allows the following integral representation of the normal operator (cf.\,display above (6.20) in \cite{MNP20}):
\begin{equation*}
N_\Phi f(x) = \int_{S^1}(\mu^\sharp(x,v))^{-1} R_\Phi(x,v) \int_{-\tau(x,-v)}^{\tau(x,v)} R_\Phi^{-1}(x+tv,v) f(x+tv) \d t \d v,
\end{equation*}
where $\mu(x,v) = - x\cdot v$ and $\mu^\sharp$ is an extension of $\mu \in C^\infty(\partial_+S\bar\DD)$ to all of $S\DD$,  constant along the geodesic vector field $X$.
Given $f\in C_c^\infty(\DD,\C^m)$ we can thus write 
\begin{equation}\label{defs}
\begin{split}
(N_\Phi  - N_\Psi)f(x) &= \int_{S^1}\int_\R (\mu^\sharp(x,v))^{-1} S(x,v,t) f(x+tv) \d t \d v,\\
\text{ with} ~~ S(x,v,t) &= R_\Phi(x,v)  R_\Phi^{-1}(x+tv,v) - R_\Psi(x,v)  R_\Psi^{-1}(x+tv,v),
\end{split}
\end{equation}
where $R_\Psi$ is defined analogously to $R_\Phi$. 

\smallskip 
 
Our first objective is to prove \eqref{lip1} and \eqref{lip2},\eqref{lip3} with right hand side replaced by (a positive multiple of)  $\Vert S\Vert_{C^{2i}}$ for $i=1$ and $i=2$ respectively. 
To this end we will make repeated use of Proposition \ref{lemeco} above, noting that $N_\Phi-N_\Psi$ is a singular integral operator as in  \eqref{defta} with `integral kernel'
\begin{equation}
A(x,v,t) = \mu^\sharp(x,v)^{-1} S(x,v,t),\quad (x,v,t)\in S\DD\times \R =\DD\times  S^1\times \R,
\end{equation}
which is smooth and has all of its derivative bounded. Indeed, if we choose $K'\subset S\DD$ compact, such that lines through $(x,v)\in S\DD\backslash K'$ avoid the set $K$, then $S(x,v,t)=0$ for $(x,v)\in S\DD\backslash K'$. But on $K'$, the function $\mu^\sharp(x,v)$ is bounded below by some $\mu_K>0$, which means that $A(x,v,t)$ is smooth and satisfies
\begin{equation}
\Vert A \Vert_{C^k} \lesssim_{k,K} \Vert S \Vert_{C^k},\quad k\in \N_0
\end{equation}
where the norm $\Vert \cdot \Vert_{C^k}$ is defined as in \eqref{defnorm}.\\
Proposition \ref{lemeco} (for $P=\partial_{x_1},\partial_{x_2}$ and $Q=\id$) implies for $f\in C_c^\infty(\DD)$ that
\begin{equation}
	\Vert \partial_{x_j }(N_\Phi -N_\Psi) f \Vert_{L^2(\DD)}= \Vert \partial_{x_j} T_A f\Vert_{L^2(\DD)}  \lesssim \Vert A \Vert_{C^2} \Vert f \Vert_{L^2(\DD)}\quad (j=1,2), 
\end{equation} thus $\Vert N_{\Phi} - N_\Psi \Vert_{\B(L^2(\DD),\tilde H^1(\DD))} \lesssim_K \Vert S \Vert_{C^2}$, as desired. Here we used that $H^1(\DD)\subset \tilde H^1(\DD)$.

\smallskip

Next, we consider \eqref{lip2} and \eqref{lip3}. It suffices to prove bounds for $s=0$ and $s=2$, as all intermediate values follow by interpolation. If $s=0$, then again Proposition \ref{lemeco} (this time for $P=\L,Q=\id$ and $P=\id,Q=\L$, noting that $\sigma_A\equiv 0$) yields
\begin{equation}
\Vert  \L  N_\Phi - \L N_\Psi \Vert_{\B(L^2(\DD))} \vee \Vert N_\Phi\L - N_\Psi\L \Vert_{\B(L^2(\DD))} \lesssim_K \Vert S \Vert_{C^4}.
\end{equation}
Inequality \eqref{lip3} for $s=2$ follows in fact from the previous display: For  $f\in \tilde H^2(\DD)$, 
\begin{equation}
\Vert (N_\Phi - N_\Psi) \L f \Vert_{\tilde H^2(\DD)} = \Vert \L(N_\Phi - N_\Psi) (\L f)\Vert_{L^2(\DD)} \lesssim_K\Vert S \Vert_{C^4}\cdot  \Vert \L f \Vert_{L^2(\DD)}
\end{equation}
and as $\Vert \L f \Vert_{L^2(\DD)} = \Vert f \Vert_{\tilde H^2(\DD)}$, this yields $\Vert N_\Phi \L - N_\Psi \L\Vert_{\L(\tilde H^2(\DD))} \lesssim_K \Vert S \Vert_{C^4}$, as desired.\\
In order to prove \eqref{lip2} for $s=2$, more work is needed, as $\L$ needs to be commuted with the normal operators.
To this end we localise $N_\Phi-N_\Psi$ as in \cite{MNP20}: Cover $\bar \DD$ by opens $U_1,\dots, U_n$ such that if $U_i\cap U_j\neq \emptyset$ and either set touches the boundary, then $U_i\cup U_j\subset B$ for a ball $B$ which is disjoint from $K$. Let $\psi_1,\dots,\psi_n$ be a subordinate smooth partition of unity, then $N_\Phi - N_\Psi = \sum_{ij} \psi_i(N_\Phi - N_\Psi) \psi_j = \sum_{ij} T_{A_{ij}}$, where 
\begin{equation}
A_{ij}(x,v,t) =  \psi_i(x) (\mu^\sharp(x,v))^{-1} S(x,v,t) \psi_j(x+tv).
\end{equation}
Note that if $U_i\cap U_j \neq \emptyset$ and either set touches the boundary, then  lines that connect points in $U_i$ and $U_j$ never enter $K$. But along such lines the integrating factors $R_\Phi$ and $R_\Psi$ both equal $\id$ which means that $S(x,v,t)$ vanishes and thus $A_{ij}\equiv 0$. The non-trivial contributions to $N_\Phi - N_\Psi$ thus stem from the following two cases: (I) $U_i \cap U_j \neq \emptyset$ and $\overline{U_i\cup U_j}\subset \DD$ or (II) $U_i\cap U_j  = \emptyset$.  We claim that in both cases
\begin{equation}
 \Vert \L^2 T_{A_{ij}} f \Vert_{L^2(\DD)} \lesssim_{K} \Vert A_{ij} \Vert_{C^4}\cdot  \Vert \L f \Vert_{L^2(\DD)},\quad f\in C^\infty(\bar \DD),
\end{equation}
which, when combined, yields \eqref{lip2}.\\
In case (I) we may choose a cut-off function $\chi \in C_c^\infty(\DD)$ with $\chi \equiv 1$ on $\overline{ U_i \cup U_j}$
and apply Proposition \ref{lemeco} (for $P= \L\chi,Q=\id$) to the effect that
\begin{equation}
\begin{split}
\Vert \L^2 T_{A_{ij}} f \Vert_{L^2(\DD)} &= \Vert \L^2 (\chi T_{A_{ij}} \chi f )\Vert_{L^2(\DD)}  \le \Vert \L (\chi  T_{A_{ij}} \chi f) \Vert_{H^2(\DD)}\\
& \lesssim \Vert A_{ij} \Vert_{C^4}\cdot \Vert \chi f \Vert_{H^2(\DD)},
\end{split}
\end{equation}
which gives the desired bound, as $\Vert \chi f \Vert_{H^2} \lesssim \Vert \L f \Vert_{L^2}$ (\cite[Lem.\,6.15]{MNP20}). In case (II),
\begin{equation}
\L^2 T_{A_{ij}}f(x) = T_{A_{ij}}\L (\L f)(x) + [\L^2,T_{A_{ij}}] f(x)
\end{equation}
The first term could be bounded with Proposition \ref{lemeco}, but in fact we treat both summands on the same footing by considering the relevant Schwartz kernels.  The Schwartz kernel of $T_{A_{ij}}$ equals (cf.\cite[(B.3)]{DPSU07})
\begin{equation}
k_{ij}(x,y) = 2 \vert x-y \vert^{-1}  A_{ij}\left(x,\frac{x-y}{\vert x- y \vert},\vert x- y \vert\right)
\end{equation}
which is in $C^\infty(\bar \DD\times \bar \DD)$ and satisfies $\Vert k_{ij} \Vert_{C^4(\bar \DD\times \bar \DD)} \lesssim \Vert A_{ij} \Vert_{C^4}$.  Now the Schwartz kernels of $T_{A_{ij}}\L$ and $[\L^2,T_{A_{ij}}]$ are given by
\begin{equation}
\L_y k_{ij}(x,y)\quad \text{ and } \quad \L_x^2 k_{ij}(x,y) - \L^2_yk_{ij}(x,y)
\end{equation}
respectively, where the subscripts indicate which slot $\L$ acts on. Hence
\begin{equation*}
\begin{split}
\Vert \L^2 T_{A_{ij}} f \Vert_{L^2(\DD)} &\le  \Vert \L_y k_{ij} \Vert_{L^\infty(\bar \DD\times \bar \DD)} \Vert \L f \Vert_{L^2(\DD)} +  \Vert \L_x^2 k_{ij} - \L^2_y k_{ij}\Vert_{L^\infty(\bar \DD\times \bar \DD)} \Vert f \Vert_{L^2(\DD)}\\
& \lesssim \Vert A_{ij} \Vert_{C^4} \cdot \Vert \L f \Vert_{L^2(\DD)},
\end{split}
\end{equation*}
and we have finished the first part of the proof.

\smallskip 

So far we have established \eqref{lip1} and \eqref{lip2}\eqref{lip3} with right hand side replaced by (a constant times)
$ \Vert S \Vert_{C^{2i}}$ for $ i=1,2$ respectively, where $S=S_{\Phi,\Psi}$ was defined in \eqref{defs}. We complete the proof by showing that
\begin{equation}\label{pflip}
\Vert S_{\Phi,\Psi} \Vert_{C^{k}} \lesssim \left(1+\Vert \Phi \Vert_{C^k_K(\DD)} \vee \Vert \Psi \Vert_{C_K^k(\DD)} \right)^{2k} \cdot \Vert \Phi - \Psi \Vert_{C_K^k(\DD)},\quad k\in \N_0.
\end{equation}
To this end put $\tilde R_\Phi(x,v,t) = R_\Phi(x+tv,v)$ (where we think of $R_\Phi$ as extended to a global solution $(X+\Phi)R_\Phi =0$ on $S\R^2=\R^2\times S^1$). Then $S_{\Phi,\Psi}$ can be rewritten as
\begin{equation}
S_{\Phi,\Psi} = (R_\Phi - R_\Psi)\cdot \tilde R_\Phi^{-1} + R_\Psi\cdot(\tilde R_\Phi^{-1} - \tilde R_{\Psi}^{-1}).
\end{equation}
and the result follows from estimate \eqref{horse2} above.
\end{proof}

\medskip

{\bf (Part C) Isomorphism property for non-smooth potentials.}  As final ingredient of the proof of Theorem \ref{isoprop} we record how the Lipschitz estimates above can be used to get mapping properties for non-smooth potentials $\Phi$.

\begin{lem} \label{ncor} Let $K\subset \DD$ be compact.
\begin{enumerate}[label=(\roman*)]
 \item \label{ncor1} For $\Phi \in C^2_K(\DD,\u(m))$ we have $N_\Phi \in \B(L^2(\DD),\tilde H^1(\DD))$
 \item \label{ncor2} For $\Phi \in C^4_K(\DD,\u(m))$ we have $K_\Phi \L,\L K_\Phi \in \B(\tilde H^s(\DD))$ for $0\le s\le 2$.
\end{enumerate}
\end{lem}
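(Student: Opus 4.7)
The plan is exactly the one announced in step (C) of the outline after Theorem~\ref{isoprop}: promote the Lipschitz bounds of Proposition~\ref{lipschitz} from the smooth to the finite-regularity setting by a mollification argument. Fix $\Phi\in C_K^2(\DD,\u(m))$ (respectively $C_K^4$) and choose mollified approximants $\Phi_n\in C_K^\infty(\DD,\u(m))$ with $\Phi_n\to \Phi$ in $C_K^2$ (respectively $C_K^4$). The constants $c_1(\Phi_n,\Phi_m)$ and $c_2(\Phi_n,\Phi_m)$ in Proposition~\ref{lipschitz} depend only on the $C^2$- (respectively $C^4$-) norms and stay uniformly bounded along the sequence, so \eqref{lip1} shows that $(N_{\Phi_n})$ is Cauchy in the Banach space $\B(L^2(\DD),\tilde H^1(\DD))$, while \eqref{lip2}--\eqref{lip3} give that $(\L K_{\Phi_n})$ and $(K_{\Phi_n}\L)$ are Cauchy in $\B(\tilde H^s(\DD))$ for every fixed $s\in [0,2]$. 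Call the norm limits $T$, $A_s$, $B_s$ respectively.

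The remaining step is to identify these limits with $N_\Phi$, $\L K_\Phi$, $K_\Phi\L$. The key ingredient is a weaker continuity statement: $\Phi \mapsto N_\Phi$ is continuous as a map $C(\bar\DD,\u(m)) \to \B(L^2(\DD))$. Indeed, writing $I_\Phi f = R_\Phi\cdot \I(R_\Phi^{-1}f)$, the $L^\infty$-stability \eqref{horse1} of integrating factors together with the $s=0$ case of Theorem~\ref{zernikeforward} gives $\|I_{\Phi_n} - I_\Phi\|_{\B(L^2,L^2_\lambda)} \lesssim \|\Phi_n - \Phi\|_{L^\infty}$ (and similarly for adjoints), so $N_{\Phi_n}\to N_\Phi$ in $\B(L^2)$. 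For part~\ref{ncor1}, the continuous embedding $\tilde H^1\hookrightarrow L^2$ then forces $T=N_\Phi$, proving $N_\Phi\in \B(L^2,\tilde H^1)$. For part~\ref{ncor2}, identification proceeds in two sub-steps. For $\L K_\Phi$: since $K_{\Phi_n}f\to K_\Phi f$ in $L^2$ for every $f\in L^2$, one has $\L K_{\Phi_n}f\to \L K_\Phi f$ as distributions, while simultaneously $\L K_{\Phi_n}f\to A_s f$ in $\tilde H^s\subset L^2$; uniqueness of distributional limits yields $A_s=\L K_\Phi$. For $K_\Phi \L$: test against $f\in \tilde H^2$ (which is dense in $\tilde H^s$), so that $\L f\in L^2$ and $K_{\Phi_n}(\L f)\to K_\Phi(\L f)$ in $L^2$ by the continuity just established; this must equal the $\tilde H^s$-limit $B_s f$, so $B_s$ is the (unique) continuous extension of $K_\Phi \L$ to all of $\tilde H^s$.

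The only mildly delicate point is the last identification: it requires knowing that $K_\Phi$ is bounded on $L^2$ \emph{before} the limits can be interpreted, but this is automatic from the $s=0$ case of Theorem~\ref{zernikeforward} (or, equivalently, from part~\ref{ncor1} applied to $\Phi$ itself). No interpolation of the $\tilde H^s$-scale is needed, because Proposition~\ref{lipschitz} already supplies Lipschitz bounds directly at every $s\in [0,2]$; the uniformity of the Lipschitz constants along the mollified sequence is what makes the whole argument go through.
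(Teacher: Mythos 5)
Your argument is essentially the paper's own proof: approximate $\Phi$ by smooth compactly supported fields, use the uniformly bounded Lipschitz constants from Proposition~\ref{lipschitz} to get Cauchy sequences in $\B(L^2,\tilde H^1)$ resp.\ $\B(\tilde H^s)$, and identify the limits via the weaker continuity of $\Phi\mapsto N_\Phi$ into $\B(L^2(\DD))$, which the paper also obtains from the integrating-factor representation, \eqref{horse1} and $L^2$-boundedness of $\I$ (Lemma~\ref{lem_phaseforward}). Your identification step for part~(ii) (closedness/distributional uniqueness for $\L K_\Phi$, density of $\tilde H^2$ for $K_\Phi\L$) is in fact spelled out in more detail than the paper's one-line justification, and is correct.
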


\begin{proof}
We first show that for  $\Phi \in C(\bar \DD,\u(m))$ we have $N_\Phi \in \B(L^2(\DD))$ and that  $\Phi \mapsto N_\Phi$ is continuous as map $C(\bar \DD,\u(m)) \rightarrow \B(L^2(\DD))$.  Indeed,  if $\Phi,\Psi \in C(\bar \DD,\u(m))$ and $R_\Phi$ and $R_\Psi$ are integrating factors as in \textsection \ref{boif}, then for $f\in L^2(\DD,\C^m)$ with $\Vert f \Vert_{L^2(\DD)} \le 1$ we have
\begin{equation*}
\begin{split}
 \Vert I_\Phi f - I_\Psi f \Vert_{L^2_\lambda(\partial_+S\bar \DD)} & =  \Vert\I\left((R_\Phi^{-1} - R_\Psi^{-1}) f\right)\Vert_{L^2_\lambda(\partial_+S\bar \DD)}\\
 & \lesssim \Vert R_\Phi^{-1} - R_\Psi^{-1} \Vert_{L^\infty(\DD)} \lesssim \Vert \Phi - \Psi \Vert_{L^\infty(\DD)},
 \end{split}
\end{equation*}
where we used boundedness of $\I:L^2(S\DD)\rightarrow L^2_\lambda(\partial_+S\bar \DD)$ (Lem.\,\ref{lem_phaseforward}) together with the estimate \eqref{horse1} from above. This implies that $\Phi \mapsto I_\Phi$ is a continuous map into the space $\B(L^2(\DD),L^2_\lambda(\partial_+S\bar \DD))$ and as taking adjoints and multplication of operators are continuous operations, also $\Phi\mapsto N_\Phi$ is continuous in the desired topologies.
\\
For part \ref{ncor1} approximate $\Phi \in C_K^2(\DD,\u(m))$ by a sequence $(\Phi_n)\subset C^\infty_K(\DD,\u(m))$, then Proposition \ref{lipschitz} implies that $(N_{\Phi_n})\subset \B(L^2(\DD),\tilde H^1(\DD))$ is Cauchy and thus has a limit $N_\infty \in  \B(L^2(\DD),\tilde H^1(\DD))$. At the same time $N_{\Phi_n} \rightarrow N_\Phi$ in $\B(L^2(\DD))$, as follows from the previously discussed continuity properties. Viewing $\B(L^2(\DD),\tilde H^1(\DD))$ as continuously embedded subspace of $\B(L^2(\DD))$, we must thus have $N_\Phi  = N_\infty \in \B(L^2(\DD),\tilde H^1(\DD))$. Part \ref{ncor2} is proved by the same argument, again viewing $\B(H^s(\DD))$ as continuously embedded subspace of $\B(L^2(\DD))$, which is possible as $\tilde H^s(\DD)\subset L^2(\DD)$ is dense.
\end{proof} 

\begin{proof}[Proof of Theorem \ref{isoprop}] Note that part \ref{isoprop1} follows  immediately from the local Lipschitz estimates in Proposition \ref{lipschitz} and the preceding lemma. For part \ref{isoprop2} we first prove injectivity and start with the observation that
\begin{equation}\label{iso1}
\ker N_\Phi \cap H^1(\DD) = 0\quad \text{ for } \Phi \in C^1_c(\DD,\u(m)),
\end{equation}
which follows from a recently established $L^2$-$H^1$-stability estimate \cite[Theorem 5.3]{MNP21} by means of an approximation argument.
In order to upgrade this to injectivity on $L^2(\DD)$ we employ a bootstrapping argument, which requires $\Phi \in C_c^4(\DD,\u(m))$. Indeed, for an attenuation of that regularity Lemma \ref{ncor} implies that $\L^{1/2}K_\Phi$ continuously maps $L^2(\DD)\rightarrow \tilde H^1(\DD)$, $\tilde H^1(\DD)\rightarrow \tilde H^2(\DD)$ and $\tilde H^2(\DD)\rightarrow \tilde H^3(\DD)$. Hence, if $f\in L^2(\DD)$ satisfies $N_\Phi f= 0$, then
\begin{equation}
- f= - \L^{1/2} N_0 f = \L^{1/2} K_\Phi f
\end{equation}
and we can bootstrap to obtain $f\in \tilde H^3(\DD)$. As $\tilde H^3(\DD)\subset H^1(\DD)$ by \eqref{zernikeembedding}, it follows from \eqref{iso1} that $f$ must be zero.\\
We proceed as in the proof of \cite[Theorem 6.18]{MNP20}, showing that the range of $N_\Phi$ is both closed and dense. Lemma \ref{ncor} implies that $K_\Phi:L^2(\DD)\rightarrow \tilde H^2(\DD)\subset \tilde H^1(\DD)$ is compact for $\Phi \in C_c^4(\DD,\u(m))$, thus turning $N_\Phi = N_ 0 +K_\Phi \in \B(L^2(\DD),\tilde H^1(\DD))$ into a Fredholm operator; in particular its range is closed. Next, consider $f\in (\range N_\Phi)^\perp=\ker N_\Phi^* \subset \tilde H^{-1}(\DD)$, where ${}^*$ denotes the adjoint in the dual pairing $\tilde H^{-1}$-$\tilde H^1$. Then similar to above we have $-f = \L^{1/2} K_\Phi^* f $; but  $K_\Phi \L^{1/2}$ maps $L^2(\DD)\rightarrow \tilde H^1(\DD)$ and thus 
$\L^{1/2}K_\Phi^* = (K_\Phi\L^{1/2})^*$ maps $\tilde H^{-1}(\DD)\rightarrow L^2(\DD)$, which implies $f\in L^2(\DD)$. Then $N_\Phi f = N_\Phi^* f = 0$ and $f$ must be zero by the previously established injectivity. This proves that the range is closed we have established the isomorphism property. \\ The continuity statement in \ref{isoprop2} follows directly from the one in part \ref{isoprop1} and the fact that taking inverses is a continuous operation. This completes the proof.
\end{proof}

\section{Proofs for Section \ref{genth}}

\subsection{Posterior Asymptotics}\label{posthorn}

We first establish both posterior contraction and convergence of any of its global maximisers towards the projection $\theta_{\true,D}$ onto $E_D$ of the ground truth, for forward maps $\mathscr G$ satisfying only the `global' Condition \ref{ganzwien}. The proofs follow ideas developed in \cite{MNP21, GN20, NW20, NVW18} for specific inverse problems and earlier work \cite{V00, vdVvZ08, GV17} for direct problems.

\begin{thm}\label{recycle}
Let $(e_n: n \in \mathbb N_0)$ and $\Theta$ be as in Condition \ref{linsp}. Let the posterior distribution $\Pi(\cdot|(Y_i, X_i)_{i=1}^N)$ from (\ref{postbus}) arise from the Gaussian process prior $\Pi$ in (\ref{gprior}) with $\alpha$ satisfying (\ref{towardsinfinity}) and data $(Y_i, X_i)_{i=1}^N \sim P_{\theta}^N$ from (\ref{model}). Suppose $\mathscr G$ from (\ref{fwdmap}) satisfies Condition \ref{ganzwien} for some $0 < \gamma \le 1$, that $D \le A_1 N^{d/(2\alpha+d)}$ and that $\theta_\true \in \tilde H^\alpha \cap \Theta$ satisfies (\ref{bias}). 

For every $c>0$ there exist $\bar C, \bar c$ large enough such that any maximiser $\hat \theta_\MAP$ of the posterior density $\pi(\cdot|(Y_i, X_i)_{i=1}^N)$ over $E_D$ (cf.~(\ref{maptheta}))  necessarily satisfies 
\begin{equation}\label{mapinvrate}
P_{\theta_\true}^N \left(\|\hat \theta_\MAP - \theta_{\true,D}\|_{L^2_\zeta} \le \bar C \delta_N^{\gamma}, ~\|\hat \theta_\MAP\|_{\tilde H^\alpha} \le \bar C  \right) \ge 1 - \bar c e^{-c N \delta_N^2}.
\end{equation}
Moreover, for every $b>0$ we can choose $\bar C$ large enough such that also
\begin{align}\label{postinvrate}
P_{\theta_\true}^N &\left(\big\{\Pi(\theta \in E_D: \|\theta-\theta_{\true,D}\|_{L^2_\zeta} \le \bar C \delta_N^\gamma, \|\theta\|_{\tilde H^{\alpha}} \le \bar C \big|(Y_i, X_i)_{i=1}^N) \ge 1- e^{-b N \delta_N^2}\big\}, \mathcal C_{N,K} \right) \\
&~~~~~~~~~~\ge 1- \bar c e^{-\bar b N \delta_N^2} \notag
\end{align}
for some $\bar b>0$, where, for some large enough $K$,
\begin{equation}\label{CN}
\mathcal C_{N, K} = \left\{\int_{E_D} e^{\ell_N(\theta)-\ell_N(\theta_\true)} d\Pi(\theta) \ge \Pi(B_N)  \exp\{-KN\delta_N^2\}\right\},
\end{equation} 
and where, for $u$ from Lemma \ref{getreal}, $B_N$ can be taken to be any measurable subset of $$ \big\{\theta \in E_D: \|\mathscr G(\theta)-\mathscr G(\theta_\true)\|_{L^2_\lambda} \le \delta_N/u \big\}.$$ 
\end{thm}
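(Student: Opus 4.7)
The plan is to follow the standard Ghosal--van der Vaart machinery for posterior contraction as adapted to non-linear inverse problems in the cited references \cite{MNP21, GN20, NW20, NVW18}. The argument splits into three layers: (a) contraction in the \emph{prediction} pseudo-norm $\|\mathscr G(\cdot)-\mathscr G(\theta_\true)\|_{L^2_\lambda}$; (b) a-priori $\tilde H^\alpha$-regularity of posterior draws; and (c) inversion to the $L^2_\zeta$ parameter norm via Condition~\ref{ganzwien}c). The MAP bound (\ref{mapinvrate}) is then obtained by a direct comparison of penalised likelihoods at $\hat\theta_\MAP$ and $\theta_{\true,D}$.

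First I would take $B_N=\{\theta\in E_D:\|\mathscr G(\theta)-\mathscr G(\theta_\true)\|_{L^2_\lambda}\le \delta_N/u\}$ and prove a small-ball lower bound $\Pi(B_N)\ge e^{-c_3 N\delta_N^2}$. By (\ref{bias}) and Condition~\ref{ganzwien}b), $B_N$ contains an $L^2_\zeta$-ball of radius $\simeq \delta_N$ around $\theta_{\true,D}$; Gaussian concentration-function estimates of \cite{vdVvZ08, GV17}, evaluated at an RKHS element approximating $\theta_\true\in \tilde H^\alpha$, yield the bound because the rescaling $N^{-d/(4\alpha+2d)}$ in (\ref{gprior}) is precisely calibrated so that the concentration function at scale $\delta_N$ is $O(N\delta_N^2)$. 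A Chebyshev-type estimate applied to $\int e^{\ell_N(\theta)-\ell_N(\theta_\true)}d\Pi(\theta)$ then shows that $\mathcal C_{N,K}$ itself has $P_{\theta_\true}^N$-probability $1-O(e^{-\bar b N\delta_N^2})$ for any $K>c_3$.

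On $\mathcal C_{N,K}$ the posterior mass of any Borel set $A_N$ is bounded by $\Pi(B_N)^{-1}e^{KN\delta_N^2}\int_{A_N}e^{\ell_N(\theta)-\ell_N(\theta_\true)}d\Pi(\theta)$. Taking $A_N=\{\|\mathscr G(\theta)-\mathscr G(\theta_\true)\|_{L^2_\lambda}>M\delta_N\}\cap\{\|\theta\|_{\tilde H^\alpha}\le R\}$, I would construct a standard Hellinger test $\Psi_N$ for the Gaussian-regression structure; the needed covering numbers $\log N(\eta,\{\mathscr G(\theta):\|\theta\|_{\tilde H^\alpha}\le R\},L^2_\lambda)\lesssim D\log(1/\eta)$ follow from Condition~\ref{ganzwien}b) together with a volumetric estimate on $\tilde H^\alpha$-balls, giving $E^N_{\theta_\true}\Psi_N+\sup_{A_N}E^N_\theta(1-\Psi_N)\le e^{-c_4 N M^2\delta_N^2}$ for $M$ large. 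The complementary tail $\{\|\theta\|_{\tilde H^\alpha}>R\}$ is handled by Borell--TIS applied to the prior, whose expected squared $\tilde H^\alpha$-norm is $\lesssim D/(N\delta_N^2)=O(1)$ under $D\le A_1 N^{d/(2\alpha+d)}$; dividing by $\Pi(B_N)$ transfers the exponential tail to the posterior. Condition~\ref{ganzwien}c), applied with constant $M'=R+\|\theta_{\true,D}\|_{\tilde H^\alpha}$, then upgrades prediction-norm contraction to the parameter rate $\delta_N^\gamma$, delivering (\ref{postinvrate}).

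For (\ref{mapinvrate}), the MAP-defining inequality $\ell_N(\hat\theta_\MAP)-\tfrac{N\delta_N^2}{2}\|\hat\theta_\MAP\|_{\tilde H^\alpha}^2\ge \ell_N(\theta_{\true,D})-\tfrac{N\delta_N^2}{2}\|\theta_{\true,D}\|_{\tilde H^\alpha}^2$, combined with $E_{\theta_\true}[\ell(\theta)-\ell(\theta_\true)]=-\tfrac12\|\mathscr G(\theta)-\mathscr G(\theta_\true)\|_{L^2_\lambda}^2$, gives a basic inequality of the form
\begin{equation*}
\tfrac{N}{2}\|\mathscr G(\hat\theta_\MAP)-\mathscr G(\theta_\true)\|_{L^2_\lambda}^2+\tfrac{N\delta_N^2}{2}\|\hat\theta_\MAP\|_{\tilde H^\alpha}^2 \le \tfrac{N\delta_N^2}{2}\|\theta_{\true,D}\|_{\tilde H^\alpha}^2+|R_N(\hat\theta_\MAP)|,
\end{equation*}
where the empirical-process remainder $R_N$ can be controlled by a peeling/chaining argument over the Lipschitz class from Condition~\ref{ganzwien}b). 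This simultaneously yields $\|\mathscr G(\hat\theta_\MAP)-\mathscr G(\theta_\true)\|_{L^2_\lambda}\lesssim \delta_N$ and $\|\hat\theta_\MAP\|_{\tilde H^\alpha}\le \bar C$ with $P_{\theta_\true}^N$-probability $1-\bar c e^{-cN\delta_N^2}$, and Condition~\ref{ganzwien}c) then inverts to the claimed $L^2_\zeta$-rate. The principal anticipated obstacle is the small-ball calibration in Step~1: the rescaling in (\ref{gprior}), the regularity exponents $\alpha>d$ and $\alpha\ge d(\tau+1/2)$ of (\ref{towardsinfinity}), and the $L^\infty$-bound (\ref{unifef}) on the basis must mesh so that the concentration function at scale $\delta_N$ is balanced precisely against $N\delta_N^2$ while absorbing the growing dimension $D$, and all later estimates must preserve this exponential budget.
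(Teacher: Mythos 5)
Your overall architecture coincides with the paper's: a basic inequality plus peeling/chaining for the MAP bound, and small ball $+$ sieve $+$ Hellinger tests $+$ inversion via Condition~\ref{ganzwien}c) for the posterior bound. However, there is a genuine gap in your treatment of the events $\mathcal C_{N,K}$ from (\ref{CN}): a ``Chebyshev-type estimate'' on the normalising integral (the classical Ghosal--van der Vaart evidence lower bound) only yields $P_{\theta_\true}^N(\mathcal C_{N,K}^c)\lesssim (N\delta_N^2)^{-1}$, a \emph{polynomial} bound, whereas the theorem asserts --- and its downstream use in Theorem~\ref{onetrickpony}C) requires --- probability $1-O(e^{-\bar b N\delta_N^2})$. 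The paper obtains the exponential bound in Lemma~\ref{expsmall} by first applying Jensen's inequality to reduce $\log\int_{B_N}e^{\ell_N(\theta)-\ell_N(\theta_\true)}d\nu(\theta)$ to a single empirical-process term $\int\int_{B_N}\log(p_\theta/p_{\theta_\true})\,d\nu\,d(P_N-P_{\theta_\true})$, and then invoking Bernstein's inequality; this is possible only because Condition~\ref{ganzwien}a) makes the relevant integrands uniformly bounded (envelope depending on $U_\mathscr G$) with variance $\lesssim\delta_N^2$ on $B_N$. A second-moment argument cannot deliver the exponential rate, and the same exponential control of the denominator is also what allows the posterior statement (\ref{postinvrate}) to hold with error $e^{-bN\delta_N^2}$ rather than a polynomial one.

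A secondary calibration problem concerns your entropy bound $\log N(\eta)\lesssim D\log(1/\eta)$ for the sieve: at $\eta\simeq\delta_N$ and $D\simeq N^{d/(2\alpha+d)}$ this is of order $N\delta_N^2\log N$, which exceeds the budget $C N\delta_N^2$ allowed in the testing step and, carried through, would force the inflated rate $\delta_N\sqrt{\log N}$ (hence $(\delta_N\sqrt{\log N})^{\gamma}$ after inversion), weaker than the statement. The paper avoids the logarithm by bounding the $L^2_\zeta$-entropy of $\tilde H^\alpha$-balls as $(AB/\epsilon)^{d/\alpha}$ (Proposition~\ref{mettop}), which at $\epsilon=\delta_N$ is exactly of order $N\delta_N^2$; combined with the Lipschitz property of Condition~\ref{ganzwien}b) and the Hellinger/prediction-norm equivalence of Proposition~\ref{lucienoldhand} (again relying on uniform boundedness), this feeds the general contraction theorem (Theorem~\ref{twotrickpony}). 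Finally, in the peeling step for the MAP bound, the sup-norm entropy controls in Lemma~\ref{mixchain} are exactly where the hypotheses $\alpha>d$ and $\alpha\ge d(\tau+\tfrac12)$, i.e.\ (\ref{unifef}), enter through $\|f\|_\infty\lesssim D^{\tau+1/2}\|f\|_{L^2_\zeta}$ on $E_D$; your remark that the exponents ``must mesh'' is precisely the content of the verifications (\ref{gaussiantail})--(\ref{exponentialtail}) and needs to be carried out rather than assumed.
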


\begin{rem}[Global consistency] \normalfont
In (\ref{mapinvrate}) and (\ref{postinvrate}) one can replace $\theta_{\true,D}$ by $\theta_\true$ if either $\|\theta_\true- \theta_{\true,D}\|_{L^2_\zeta} = O(\delta_N^\gamma)$ (true if $\theta_\true \in \tilde H^\alpha$ and $D \simeq N \delta_N^2$), or if Condition \ref{ganzwien}c) is strengthened to hold for all $\theta, \theta' \in \Theta$ rather than only $\theta, \theta' \in E_D$. This is natural for statistical consistency results but for the log-concave approximations from Theorem \ref{onetrickpony}, contraction near $\theta_{\true,D} \in E_D$ is relevant.
\end{rem}

\subsubsection{Convergence rate for the MAP estimate (proof of (\ref{mapinvrate}))}

For $(Y_i, X_i)_{i=1}^N$ arising from (\ref{model}) and forward map satisfying Condition \ref{ganzwien}, consider any maximiser
 \begin{equation}\label{maptheta}
\hat \theta_\MAP \in \arg \max_{\theta \in E_D} \left[-\frac{1}{2N}\sum_{i=1}^N|Y_i - \mathscr G(\theta)(X_i)|_V^2 - \frac{\delta_N^2}{2} \|\theta\|^2_{\tilde H^\alpha} \right],~~~\delta_N = N^{-\frac{\alpha}{2\alpha+d}},
\end{equation}
of the posterior density of the posterior measure (\ref{postbus}). [Existence of $\hat \theta_\MAP$ will be addressed later, see after (\ref{score}).] We will write, for $\Lambda_\alpha$ defined after (\ref{gprior}), $$I(\theta):=\frac{1}{2}\|\theta\|_{\tilde H^\alpha}^2 = \frac{1}{2} \theta^T \Lambda_\alpha \theta,~\theta \in E_D,$$ and notationally identify $\mathbb R^D \simeq E_D$ in what follows. We denote the empirical measure on $V \times \mathcal X$ induced by the $(Y_i, X_i)$'s as
\begin{equation} \label{empm}
P_N= \frac{1}{N}\sum_{i=1}^N \delta_{(Y_i, X_i)}, ~~\text{so that } \int h dP_N = \frac{1}{N} \sum_{i=1}^N h(Y_i, X_i)
\end{equation}
for any measurable map $h: V \times \mathcal X \to \mathbb R$. We denote the marginal $dy d\lambda(x)$-densities (for the Lebesgue measure $dy$ on $V \simeq \mathbb R^{d_V}$) of the laws $P_\theta$ generating the data in (\ref{model}) by
\begin{equation}\label{densities}
p_{\theta}(y,x) = (2\pi)^{-d_V/2} \exp\Big\{-\frac{1}{2} |y-\mathscr G(\theta)(x)|_V^2 \Big\},~~ y \in V, x \in \mathcal X.
\end{equation}
\begin{lem} \label{basiclemma}
Let $\hat \theta_\MAP$ be any maximiser in (\ref{maptheta}), and denote by $\theta_{\true,D}$ the $L^2_\zeta$-projection of $\theta_\true$ onto $E_D$. We have ($P^N_{\theta_\true}$-a.s.)
$$\frac{1}{2}\|\mathscr G(\hat \theta_\MAP)- \mathscr G(\theta_\true)\|_{L_\lambda^2}^2 +   \delta_N^2 I(\hat\theta_\MAP) \le  \int \log \frac{p_{\hat \theta_\MAP}}{p_{\theta_{\true,D}}} d(P_N-P_{\theta_\true}) +  \delta_N^2 I(\theta_{\true,D}) + \frac{1}{2}\|\mathscr G(\theta_{\true,D}) -\mathscr G(\theta_\true)\|_{L_\lambda^2}^2.$$
\end{lem}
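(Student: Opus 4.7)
The plan is to derive this as a standard basic inequality from M-estimation, exploiting the two facts that (i) $\hat\theta_\MAP$ maximises the penalised empirical log-likelihood by definition and (ii) the expected log-likelihood ratio under $P_{\theta_\true}$ can be evaluated in closed form thanks to the Gaussian noise structure.

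First I would use the maximising property of $\hat\theta_\MAP$ applied at the competitor $\theta_{\true, D} \in E_D$. From (\ref{maptheta}), replacing $|Y_i - \mathscr G(\theta)(X_i)|^2$ by the equivalent $-2\log p_\theta(Y_i,X_i)$ plus an additive constant that cancels, this gives
\[
\int \log p_{\hat\theta_\MAP}\, dP_N - \delta_N^2 I(\hat\theta_\MAP) \;\ge\; \int \log p_{\theta_{\true,D}}\, dP_N - \delta_N^2 I(\theta_{\true,D}),
\]
equivalently
\[
\delta_N^2 I(\hat\theta_\MAP) - \delta_N^2 I(\theta_{\true,D}) \;\le\; \int \log \frac{p_{\hat\theta_\MAP}}{p_{\theta_{\true,D}}}\, dP_N.
\]

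Next I would split the right-hand integral into its centred empirical part and its $P_{\theta_\true}$-expectation,
\[
\int \log \frac{p_{\hat\theta_\MAP}}{p_{\theta_{\true,D}}}\, dP_N \;=\; \int \log \frac{p_{\hat\theta_\MAP}}{p_{\theta_{\true,D}}}\, d(P_N-P_{\theta_\true}) \;+\; \int \log \frac{p_{\hat\theta_\MAP}}{p_{\theta_{\true,D}}}\, dP_{\theta_\true}.
\]
The main computation is the deterministic second term: for any fixed $\theta,\theta'\in\Theta$, the Gaussian densities in (\ref{densities}) and the regression identity $Y=\mathscr G(\theta_\true)(X)+\varepsilon$ with $\varepsilon\sim\mathcal N(0,I_V)$ give, after expanding squares and using $E\varepsilon=0$,
\[
\int \log \frac{p_{\theta}}{p_{\theta'}}\, dP_{\theta_\true} \;=\; \tfrac12\|\mathscr G(\theta')-\mathscr G(\theta_\true)\|_{L^2_\lambda}^2 \;-\; \tfrac12\|\mathscr G(\theta)-\mathscr G(\theta_\true)\|_{L^2_\lambda}^2.
\]
This identity, applied pointwise in $\omega$ with $\theta=\hat\theta_\MAP(\omega)$ and $\theta'=\theta_{\true,D}$ (the randomness of $\hat\theta_\MAP$ is harmless since the identity holds for every fixed argument), yields
\[
\int \log \frac{p_{\hat\theta_\MAP}}{p_{\theta_{\true,D}}}\, dP_{\theta_\true} \;=\; \tfrac12\|\mathscr G(\theta_{\true,D})-\mathscr G(\theta_\true)\|_{L^2_\lambda}^2 \;-\; \tfrac12\|\mathscr G(\hat\theta_\MAP)-\mathscr G(\theta_\true)\|_{L^2_\lambda}^2.
\]

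Substituting this back and moving the $-\tfrac12\|\mathscr G(\hat\theta_\MAP)-\mathscr G(\theta_\true)\|_{L^2_\lambda}^2$ term to the left-hand side gives exactly the claimed inequality. There is no real obstacle here: the only point to be careful about is that $\hat\theta_\MAP$ is a (measurable) function of the data, so the pointwise identity above must be read as holding for each realisation after conditioning on the data — this is legitimate because the identity is an algebraic consequence of the densities and the Fubini decomposition of $P_{\theta_\true}$, and requires no independence between $\hat\theta_\MAP$ and $(Y,X)$.
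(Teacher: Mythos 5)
Your proof is correct and follows essentially the same route as the paper, which simply defers to the elementary argument of Lemma 4.10 in \cite{NW20}: the maximising property of $\hat\theta_\MAP$ tested against the competitor $\theta_{\true,D}$, the decomposition $P_N=(P_N-P_{\theta_\true})+P_{\theta_\true}$, and the closed-form Gaussian identity $\int\log(p_\theta/p_{\theta'})\,dP_{\theta_\true}=\tfrac12\|\mathscr G(\theta')-\mathscr G(\theta_\true)\|_{L^2_\lambda}^2-\tfrac12\|\mathscr G(\theta)-\mathscr G(\theta_\true)\|_{L^2_\lambda}^2$ applied pointwise in the data. Your remark that the identity is applied with the random $\hat\theta_\MAP$ plugged in for each realisation is exactly the right reading of the statement.
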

\begin{proof}
The proof is elementary and the same as the one of Lemma 4.10 in \cite{NW20}.
\end{proof}

\begin{prop}\label{fwdrate}
Let $\alpha>d$. Suppose $\|\theta_\true\|_{\tilde H^\alpha} \le \bar c$ and that (\ref{bias}) holds. Then, for any $c \ge 1$ we can choose $C=C(c, \bar c,u)$ large enough so that every $\hat \theta_\MAP$ maximising (\ref{maptheta}) satisfies
\begin{equation}
P^N_{\theta_\true}\left(\frac{1}{2}\|\mathscr G(\hat \theta_\MAP)- \mathscr G(\theta_\true)\|_{L_\lambda^2}^2 +   \delta_N^2 I(\hat\theta_\MAP) > C\delta_N^2  \right) \lesssim  e^{-cN\delta_N^2}.
\end{equation}
\end{prop}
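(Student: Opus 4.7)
The plan is to combine the basic inequality from Lemma \ref{basiclemma} with a uniform bound on the centred empirical process
\begin{equation*}
V_N(\theta) := \int \log \frac{p_\theta}{p_{\theta_{\true,D}}}\,d(P_N - P_{\theta_\true}),
\end{equation*}
which, under $P_{\theta_\true}^N$, splits into a Gaussian linear-in-$\varepsilon_i$ part and a centred quadratic remainder (both with integrand uniformly bounded in terms of $U_{\mathscr G}$ by Condition \ref{ganzwien}a)). First I would note that $I(\theta_{\true,D}) \le I(\theta_\true) \le \bar c^2/2$, since projection onto $E_D$ along the basis $(e_n)$ contracts the $\tilde H^\alpha$-seminorm; together with (\ref{bias}) this reduces Lemma \ref{basiclemma} to
\begin{equation}\label{planbasic}
\frac12 \|\mathscr G(\hat\theta_\MAP) - \mathscr G(\theta_\true)\|_{L^2_\lambda}^2 + \delta_N^2 I(\hat\theta_\MAP) \le V_N(\hat\theta_\MAP) + K_0\delta_N^2,
\end{equation}
with $K_0 = K_0(\bar c, u)$.

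Next I would establish a uniform bound of the form: for every $c\ge 1$ there is $L = L(c, U_{\mathscr G}, C_{\mathscr G}, \bar c)$ such that, on an event of $P_{\theta_\true}^N$-probability at least $1 - e^{-cN\delta_N^2}$,
\begin{equation}\label{planeps}
V_N(\theta) \le \tfrac14 \|\mathscr G(\theta) - \mathscr G(\theta_\true)\|_{L^2_\lambda}^2 + \tfrac12 \delta_N^2 I(\theta) + L\delta_N^2 \qquad \text{for all } \theta \in E_D.
\end{equation}
This is the heart of the argument and would be obtained by a peeling argument: for $j,k \in \mathbb{N}_0$ set
\begin{equation*}
S_{j,k} = \bigl\{\theta \in E_D : \|\mathscr G(\theta) - \mathscr G(\theta_\true)\|_{L^2_\lambda} \in (2^j\delta_N, 2^{j+1}\delta_N],\ I(\theta) \in (2^{2k-1}, 2^{2k}]\bigr\}.
\end{equation*}
On each $S_{j,k}$ the supremum of $V_N$ is bounded by a concentration argument, using Borell--Sudakov--Tsirelson for the Gaussian-noise part and Bernstein for the quadratic part, the Dudley-type entropy integral being controlled via Condition \ref{ganzwien}b) (Lipschitz of $\mathscr G$) and the standard $\epsilon$-covering number bound of order $(R/\epsilon)^{d/\alpha}$ for the $\tilde H^\alpha$-ball of radius $R$ inside $L^2_\zeta$, which is available because $\alpha>d$ and (\ref{specdim}) hold. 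A union bound over $(j,k)$ -- with the excess deviation budget $e^{-c(j+k)N\delta_N^2}$ absorbed by the geometric growth of the slices -- yields (\ref{planeps}).

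Finally, plugging (\ref{planeps}) at $\theta = \hat\theta_\MAP$ into (\ref{planbasic}) and absorbing the first two terms on the right into the left-hand side gives
\begin{equation*}
\tfrac14 \|\mathscr G(\hat\theta_\MAP) - \mathscr G(\theta_\true)\|_{L^2_\lambda}^2 + \tfrac12\delta_N^2 I(\hat\theta_\MAP) \le (L+K_0)\delta_N^2,
\end{equation*}
which gives the claim with $C = 4(L+K_0)$. The main obstacle is the uniform empirical process bound (\ref{planeps}): I need to track the sharp functional shape (linear rather than quadratic in $\|\mathscr G(\theta) - \mathscr G(\theta_\true)\|_{L^2_\lambda}$ after Cauchy--Schwarz, with the right power of $I(\theta)^{1/2}$) so that the right-hand side can be absorbed into the quadratic and penalty terms on the left, \emph{uniformly} in the dimension $D$. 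The Sobolev-type embedding $\tilde H^\alpha \subset L^\infty$ guaranteed by $\alpha > d\tau + d/2$ in (\ref{towardsinfinity}) combined with (\ref{unifef}) is what keeps the integrands of $V_N$ (and hence the Bernstein constants) uniformly bounded along $\tilde H^\alpha$-balls, allowing the peeling argument to close at the $\delta_N^2$-rate.
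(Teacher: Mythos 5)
Your proposal is correct and follows essentially the same route as the paper's proof: the basic inequality of Lemma \ref{basiclemma} combined with $\|\theta_{\true,D}\|_{\tilde H^\alpha}\le\|\theta_\true\|_{\tilde H^\alpha}$ and (\ref{bias}), then a peeling/slicing argument in which the likelihood-ratio empirical process is split into a Gaussian multiplier part and a centred quadratic part and controlled on each slice via chaining/Bernstein-type concentration, with entropy bounds coming from the $\tilde H^\alpha$-ball covering numbers and the Lipschitz property of $\mathscr G$. The only differences are cosmetic (two-index slices in $(\|\mathscr G(\theta)-\mathscr G(\theta_\true)\|_{L^2_\lambda}, I(\theta))$ rather than the paper's single slicing via $\tau(\theta,\theta_\true)$, and invoking Borell--TIS/Bernstein/Dudley directly rather than the packaged Lemma \ref{mixchain}).
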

\begin{proof}
We define functionals $$\tau(\theta, \theta') = \frac{1}{2}\|\mathscr G(\theta)-\mathscr G(\theta')\|_{L^2_\lambda}^2 + \delta_N^2 I(\theta),~~\theta \in E_D, \theta' \in \tilde H^\alpha,$$ and empirical processes 
\begin{equation}
W_N(\theta) =  \int \log \frac{p_{\theta}}{p_{\theta_{\true,D}}} d(P_N-P_{\theta_\true}), ~W_{N,0}(\theta) =  \int \log \frac{p_{\theta}}{p_{\theta_\true}} d(P_N-P_{\theta_\true}), ~\theta \in E_D,
\end{equation}
 so that 
 \begin{equation} \label{jansquery}
 W_N(\theta)= W_{N,0}(\theta) - W_{N,0}(\theta_{\true,D}),~~~\theta \in E_D.
 \end{equation}
Using the previous lemma it suffices to bound
\begin{equation*}
P^N_{\theta_\true}\left(\tau(\hat \theta_\MAP, \theta_\true) > C\delta_N^2, W_N(\hat \theta_\MAP) \ge \tau(\hat \theta_\MAP, \theta_\true) -  \delta_N^2 I(\theta_{\true,D}) - \|\mathscr G(\theta_{\true,D}) -\mathscr G(\theta_\true)\|_{L^2}^2/2 \right).
\end{equation*} 
Since $$I(\theta_{\true,D}) =  \|\theta_{\true,D}\|_{\tilde H^\alpha}^2/2 \le \|\theta_\true\|_{\tilde H^\alpha}^2/2 \le \bar c^2/2 \text{ and }\|\mathscr G(\theta_{\true,D}) -\mathscr G(\theta_\true)\|_{L^2_\lambda}^2 \le \delta_N^2/4u^2$$ by hypothesis, using (\ref{jansquery}) we can choose $C$ large enough so that the last probability is bounded by 
\begin{align}
&P^N_{\theta_\true}\left(\tau(\hat \theta_\MAP, \theta_\true) > C\delta_N^2, |W_N(\hat \theta_\MAP)| \ge \tau(\hat \theta_\MAP, \theta_\true)/2 \right) \notag \\
& \le \sum_{s =1}^\infty P^N_{\theta_\true}\left(\sup_{\theta \in E_D: 2^{s-1}C \delta_N^2 \le \tau(\theta, \theta_\true) \le 2^sC \delta_N^2}|W_{N,0}(\theta)| \ge  2^{s}C \delta_N^2/8 \right) +P^N_{\theta_\true}\big(|W_{N,0}(\theta_{\true,D})| \ge  C \delta_N^2/8 \big) \notag \\
&  \le 2\sum_{s=1}^\infty P^N_{\theta_\true}\left(\sup_{\theta \in\Theta_s}|W_{N,0}(\theta)| \ge  2^{s}C \delta_N^2/8 \right),  \label{peel}
\end{align} 
where, for $s \in \mathbb N$ we have defined
\begin{align}
\Theta_s &:= \big\{\theta \in E_D: \tau(\theta, \theta_\true) \le 2^sC \delta_N^2 \big\} \\
&= \big\{\theta \in E_D: \|\mathscr G(\theta) - \mathscr G(\theta_\true)\|^2_{L^2_\lambda} + \delta_N^2 \|\theta\|_{\tilde H^\alpha}^2 \le 2^{s+1}C \delta_N^2\big\}, \notag
\end{align}
and where we have used that $\theta_{\true,D} \in \Theta_1$ (as $C>0$ is large). To proceed, notice that $$NW_{N,0}(\theta)=\ell_N(\theta) - \ell_N(\theta_\true) - E_{\theta_\true}[\ell_N(\theta) -  \ell_N(\theta_\true)]$$ and that,
for $(Y_i, X_i) \sim^\iid P_{\theta_\true}$,
\begin{align}\label{twoproc}
\ell_N(\theta) - \ell_N(\theta_\true) &= -\frac{1}{2}\sum_{i=1}^N\big[|\mathscr G(\theta_\true)(X_i)-\mathscr G(\theta)(X_i) + \varepsilon_i|_V^2 - |\varepsilon_i|_V^2 \big] \notag \\
&=- \sum_{i=1}^N \langle \mathscr G(\theta_\true)(X_i)-\mathscr G(\theta)(X_i)),\varepsilon_i\rangle_V -\frac{1}{2}\sum_{i=1}^N|\mathscr G(\theta_\true)(X_i)-\mathscr G(\theta)(X_i)|_V^2,
\end{align}
and we deal with these two empirical processes separately. For the `multiplier type' process, by a union bound,
\begin{align}\label{sum1}
\sum_{s =1}^\infty P^N_{\theta_\true}\left(\sup_{\theta \in \Theta_s}\Big|\frac{1}{\sqrt N}\sum_{j=1}^{d_V}\sum_{i=1}^N h_{\theta,j}(X_i)\varepsilon_{i,j}\Big| \ge  \sqrt N 2^{s}C \delta_N^2/16 \right) \notag \\
\le d_V \max_{1 \le j \le d_V}\sum_{s =1}^\infty P^N_{\theta_\true}\left(\sup_{\theta \in \Theta_s}|Z_{N,j}(\theta)| \ge  \sqrt N 2^{s}C \delta_N^2/16d_V \right)
\end{align}
where, for $j$ indexing the vector field entries in the basis of $V$ for which $\varepsilon_{ij} \sim^{\iid} \mathcal{N}(0,1)$, $$Z_{N,j}(\theta)=\frac{1}{\sqrt N}\sum_{i=1}^N h_{\theta,j}(X_i)\varepsilon_{i,j},~~ h_{\theta, j} = \mathscr G(\theta_\true)_j - \mathscr G(\theta)_j,~~\theta \in \Theta = \Theta_s, s \in \mathbb N,$$ is a process of the type considered in Lemma \ref{mixchain} with $\mathcal H =\{h_{\theta,j}: \theta \in \Theta_s\}$ for $j$ fixed. We will apply that lemma with $P^X=\lambda$ and bounds
\begin{equation}\label{wimpy}
E^Xh^2_{\theta,j}(X) = \int_\mathcal X [\mathscr G(\theta_\true)_j(x) - \mathscr G(\theta)_j(x)]^2d\lambda(x) \le \|\mathscr G(\theta)-\mathscr G(\theta_\true)\|_{L_\lambda^2}^2 \le 2^{s+1}C \delta_N^2 =: \sigma_s^2,
\end{equation}
and for envelope constant $U=2U_\mathscr G$ from Condition \ref{ganzwien},
\begin{equation}\label{envelope}
\|h_{\theta,j}\|_\infty \le 2\sup_\theta \|\mathscr G(\theta)\|_\infty \le U<\infty 
\end{equation}
uniformly in all $\theta \in \Theta_s$. To construct suitable bounds for the entropy functionals $J_2, J_\infty$ in Lemma \ref{mixchain}, we note that on each slice $\sup_{\theta \in \Theta_s}\|\theta\|_{\tilde H^{\alpha}} \le \sqrt 2 (C2^s)^{1/2}$, which by Proposition \ref{mettop} implies that for some constant $K$,
\begin{equation*}
\log N\big(\Theta_s, \|\cdot\|_{L^2_\zeta}, \rho \big) \le K\Big(\frac{\sqrt C 2^{s/2}}{\rho}\Big)^{d/\alpha},~~\rho>0,
\end{equation*}
as well as, for $\bar \tau$ from Proposition \ref{mettop},
\begin{equation*}
\log N\big(\Theta_s, \|\cdot\|_{\infty}, \rho \big) \le D^{\bar \tau} K\Big(\frac{\sqrt C 2^{s/2}}{\rho}\Big)^{d/\alpha},~~\rho>0.
\end{equation*}
Since the maps $\theta \mapsto \mathscr G(\theta)$ and then also $\theta \mapsto \mathscr G(\theta)_j, j =1, \dots, d_V$, are $C_\mathscr G$-Lipschitz for the $\|\cdot\|_{L^2}$ and $\|\cdot\|_\infty$-norms by Condition \ref{ganzwien}, the coverings of $\Theta_s$ induce respective coverings for the $h_{\theta,j}$'s with bounds
\begin{equation}
\log N\big(\{h_{\theta,j}=\mathscr G(\theta)_j-\mathscr G(\theta_\true)_j: \theta \in \Theta_s\}, \|\cdot\|_{L^2_\lambda}, \rho \big) \le K'\Big(\frac{\sqrt C 2^{s/2}}{\rho}\Big)^{d/\alpha},~~\rho>0,
\end{equation}
as well as
\begin{equation}
\log N\big(\{h_{\theta,j}=\mathscr G(\theta)_j-\mathscr G(\theta_\true)_j: \theta \in \Theta_s\}, \|\cdot\|_{\infty}, \rho \big) \le D^{\bar \tau} K'\Big(\frac{\sqrt C 2^{s/2}}{\rho}\Big)^{d/\alpha},~~\rho>0,
\end{equation}
for any $j$, where $K'=K'(K,C_\mathscr G)$. As a consequence, for $\alpha>d$ and $J_2(\mathcal H), J_\infty(\mathcal H)$ defined in Lemma \ref{mixchain},
\begin{align}\label{entbd}
&J_2(\mathcal H) \lesssim \int_0^{4\sigma_s}\Big(\frac{\sqrt C 2^{s/2}}{\rho}\Big)^{d/2\alpha}d\rho \lesssim (C2^s)^{d/4\alpha}\sigma_s^{1-\frac{d}{2\alpha}}, \\ \notag
& J_\infty(\mathcal H) \lesssim D^{\bar \tau} \int_0^{4U}\Big(\frac{\sqrt C 2^{s/2}}{\rho}\Big)^{d/\alpha}d\rho \lesssim D^{\bar \tau}(C2^s)^{d/2\alpha} U^{1-\frac{d}{\alpha}}.
\end{align}
The sum in (\ref{sum1}) can now be bounded by Lemma \ref{mixchain} with $x=cN2^s \delta_N^2$ and the choices of $\sigma_s, U$ in (\ref{wimpy}), (\ref{envelope}), for $C=C(c, d_V)>0$ large enough,
\begin{equation}\label{sum11}
\sum_{s \in \mathbb N} P^N_{\theta_\true}\left(\sup_{\theta \in \Theta_s}|Z_{N,j}(\theta)| \ge  \sqrt N \sigma_s^2/(32d_V) \right) \le 2 \sum_{s \in \mathbb N} e^{-c2^sN\delta_N^2} \lesssim e^{-c N\delta_N^2}
\end{equation}
since then, by definition of $\delta_N$, for $\alpha>d$, every $c>0$ and $C$ large enough, the  quantities 
\begin{equation}\label{gaussiantail}
\mathcal J_2(\mathcal H) \lesssim  (C2^s)^{d/4\alpha} (2^{s/2} \sqrt C\delta_N)^{1-\frac{d}{2\alpha}} \lesssim (1/\sqrt C) \sqrt N \sigma^2_s,~~\sigma_s \sqrt x  \le \frac{\sqrt c}{\sqrt C} \sqrt N \sigma_s^2,
\end{equation}
 and, using also the hypotheses $\bar \tau \le 1, D \le A_1 N\delta_N^2$,
 \begin{equation}\label{exponentialtail}
 \frac{1}{\sqrt N}\mathcal J_\infty(\mathcal H) \lesssim D^{\bar \tau} \frac{(C2^s)^{d/2\alpha}}{\sqrt N} \lesssim C^{(d/2\alpha)-1} \sqrt N \sigma^2_s,~~\frac{xU}{\sqrt N} = \frac{cU}{C} \sqrt N \sigma_s^2
 \end{equation}
can all be made smaller than any constant multiple of $\sqrt N \sigma_s^2$. As these upper bounds do not depend on $j$ they hold as well for the full expression in (\ref{sum1}) after adjusting the constants.

We now turn to the second term in (\ref{twoproc}), bounded by
\begin{equation}\label{sum2}
\sum_{s \in \mathbb N} P^N_{\theta_\true}\left(\sup_{\theta \in \Theta_s}|Z'_N(\theta)| \ge  \sqrt N 2^{s}C \delta_N^2/(32d_V) \right)
\end{equation}
where $Z'_N$ is now the centred empirical process 
$$Z'_N(\theta) = \frac{1}{\sqrt N} \sum_{i=1}^N(h_\theta-E^X h_\theta(X)\big),~~\text{ with }~  \mathcal H = \{ h_\theta=|\mathscr G(\theta)-\mathscr G(\theta_\true)|_V^2 : \theta \in \Theta_s\}$$
to which we will again apply Lemma \ref{mixchain}. Just as in (\ref{envelope}) the envelopes of this process are uniformly bounded by a fixed constant $U=U(U_\mathscr G, d_V)$, which implies in particular that the bounds (\ref{entbd}) also apply to $\mathcal H$ as then, for some constant $c_U>0$, $$ \|h_\theta- h_{\theta'}\|_\infty \leq c_U \|\mathscr G(\theta)-\mathscr G(\theta')\|_\infty$$ and similarly if $\|\cdot\|_\infty$ are replaced by $L^2_\lambda$-norms. Moreover, using (\ref{envelope}) on each slice $\Theta_s$ the weak variances are bounded by $$E^Xh_\theta^2(X) \le c'_U \|\G(\theta)-\G(\theta_\star)\|_{L^2_\lambda}^2 \le c_U' \sigma^2_s$$ with $\sigma_s$ as in (\ref{wimpy}) and some $c'_U>0$. We see that all bounds required to sum (\ref{sum1}) as in (\ref{sum11}) apply to the process $Z_N'$ as well, and hence the series in (\ref{peel}) is indeed bounded as required in the proposition, completing the proof.
\end{proof}

From the third part of Condition \ref{ganzwien} we now deduce (\ref{mapinvrate}). By the preceding Proposition \ref{fwdrate} we can restrict to events 
\begin{equation} \label{TN}
T_N := \big\{\|\mathscr G(\hat \theta_\MAP)- \mathscr G(\theta_\true)\|_{L^2_\lambda}^2 \le 2C \delta_N^2, \|\hat \theta_\MAP\|_{\tilde H^\alpha} \le \sqrt{2C}\big\}
\end{equation}
of sufficiently high $P^N_{\theta_\true}$-probability. Next by (\ref{bias}) and since $\theta_\true \in \tilde H^\alpha$, we have for large enough $M$ that 
$$\|\mathscr G(\hat \theta_\MAP)- \mathscr G(\theta_{\true,D})\|_{L^2_\lambda}^2 \le M \delta_N^2$$ as well as $$\|\hat \theta_\MAP\|_{\tilde H^\alpha} + \|\theta_{\true,D}\|_{\tilde H^\alpha} \le M.$$
We can thus invoke the third hypothesis of Condition \ref{ganzwien} with $\theta=\hat \theta_\MAP,\theta'=\theta_{\true,D}\in E_D$ to deduce the final conclusion (\ref{mapinvrate}).

\subsubsection{Posterior contraction (proof of (\ref{postinvrate}))}

We start with an application of Theorem \ref{twotrickpony} to the posterior measure (\ref{postbus}) arising from the Gaussian prior $$\Pi=\Pi_N=\mathcal L(\theta'/(\sqrt{N}\delta_N))= \mathcal{N}(0, (N \delta_N^2 )^{-1}\Lambda_\alpha)$$ on $E_D \subset \Theta$ from (\ref{gprior}). Verification of the small ball condition (\ref{smballgen}) will ultimately be reduced to the small deviation estimate 
\begin{equation}\label{uncent}
\mathrm{Pr} (\|\theta'\|_{E_D} < b' \sqrt N \delta_N^2) \ge e^{-bN \delta_N^2}
\end{equation}
for our choice of $\delta_N$, all $b'>0$ and some $b=b(b', \alpha, d)$, as follows from Theorem 1.2 in \cite{LL99} with Banach space $E=L^2_\zeta$, RKHS $\tilde H^\alpha \cap E_D \subset \tilde H^\alpha$ and the metric entropy bound  (\ref{mettopa}) below.

The last estimate implies that for $\theta_{\true,D}$ the $L^2_\zeta$-projection of $\theta_\true \in \tilde H^\alpha$ onto $E_D$ and any $d''>0$ there exists $d'=d'(\alpha, d, d'', \|\theta_\true\|_{\tilde H^\alpha})>0$ such that
\begin{align}\label{gensmb} 
\Pi_N(\theta \in E_D: \|\theta-\theta_{\true,D}\|_{E_D} <d''\delta_N) & \ge e^{-N\delta_N^2 \|\theta_{\true,D}\|^2_{\tilde H^\alpha}/2} \mathrm {Pr} (\|\theta'\|_{E_D} < d'' \sqrt N \delta_N^2)  \ge e^{-d' N\delta_N^2}
\end{align}
where we have used Corollary 2.6.18 in \cite{GN16}.  Now we use Lemma \ref{getreal}, (\ref{bias}), Condition \ref{ganzwien}b) and the preceding bound with $d''=1/(2u C_\mathscr G)$ to deduce that
\begin{align} \label{smball}
\Pi_N(\mathcal B_N) &\ge \Pi_N(\theta \in E_D: \|\mathscr G(\theta)-\mathscr G(\theta_\true)\|_{L^2_\zeta} < \delta_N/u)\notag \\
& \ge  \Pi_N(\theta \in E_D: \|\mathscr G(\theta)-\mathscr G(\theta_{\true,D})\|_{L^2_\lambda} <\delta_N/(2u)) \notag \\
&\ge \Pi_N(\theta \in E_D: \|\theta-\theta_{\true,D}\|_{E_D} <\delta_N/ (2u C_\mathscr G))  \ge  e^{-\bar d N\delta_N^2}
\end{align}
for the present choice of $\delta_N$ and some $\bar d = \bar d(u, C_\mathscr G, \alpha, d, \|\theta_\true\|_{\tilde H^\alpha})>0$, verifying (\ref{smballgen}) for our prior and $a=1, A=\bar d$.

\smallskip

We next construct suitable sets $\Theta_N \subset E_D \cap \Theta$ in Theorem \ref{twotrickpony}. From (\ref{uncent}) and Borell's Gaussian iso-perimetric inequality (in the form of Theorem 2.6.12 in \cite{GN16}) one shows that given $B>0$ we can find $m$ large enough (depending on $\bar d, B$) such that 
$$\Pi_N\big(\theta=\theta_1 + \theta_2 \in E_D: \|\theta_1\|_{E_D} \le m \delta_N, \|\theta_2\|_{\tilde H^\alpha} \le m\big) \ge 1- e^{-BN\delta_N^2}.$$ The proof proceeds, e.g., as in Lemma 5.17 in \cite{MNP21} (or Lemma 17 in \cite{GN20},with $\kappa=0$), and will not be repeated here.  Next (\ref{specdim}) and the hypothesis on $D \le A_1 N \delta_N^2$ imply that for $\underline C > m$ large enough we have
\begin{equation} \label{rkhstrick}
\|\theta_1\|_{\tilde H^\alpha} \lesssim D^{\alpha/d} \|\theta_1\|_{E_D} \leq (A_1 N \delta_N^2)^{\alpha/d}m \delta_N \le \underline C/2
\end{equation}
and then also
\begin{equation}
\Pi_N(\Theta_N^c) \le e^{-BN\delta_N^2} \text{ where }\Theta_N :=\{\theta \in E_D: \|\theta\|_{\tilde H^\alpha} \le \underline C\},
\end{equation}
so that (\ref{excess}) can be verified for any $B>0$ by choosing $m$ and then $\underline C$ large enough. To bound the Hellinger-complexity of $\Theta_N$ as required in (\ref{mettopq}), we use Conditions \ref{ganzwien}a), b), Proposition \ref{lucienoldhand} and (\ref{mettopa}) to the effect that
\begin{align*}
\log N(\{p_\theta: \theta \in \Theta_N\}, h, \delta_N) &\lesssim \log N(\{\mathscr G(\theta): \theta \in \Theta_N\}, \|\cdot\|_{L^2_\lambda}, \delta_N/c_1) \\
&\lesssim \log N(\{\Theta_N, \|\cdot\|_{L^2_\zeta}, \delta_N/(c_1C_\mathscr G)) \\
& \lesssim \log N(\{\theta: \|\theta\|_{\tilde H^\alpha} \le \underline C\}, \|\cdot\|_{L^2_\zeta}, \delta_N/(c_1 C_\mathscr G)) \le C N \delta_N^2
\end{align*}
for some $C=C(\underline C, c_1, b_1, C_\mathscr G, \alpha, d)$. Having verified the hypotheses of Theorem \ref{twotrickpony} we deduce from (\ref{hellcont}) and again Proposition \ref{lucienoldhand} that for every $b>0$ we can choose $L,  m,\underline C$ large enough so that as $N \to \infty$ and for some $\bar b>0$,
\begin{equation}\label{rate1}
P^N_{\theta_\true}\Big(\Pi(\{\theta: \|\mathscr G(\theta)-\mathscr G(\theta_\true)\|_{L_\lambda^2} \le c_0L \delta_N\} \cap \Theta_N|(Y_i, X_i)_{i=1}^N) \ge 1- e^{-bN\delta_N^2}\Big) =O(1-e^{-\bar b N\delta_N^2}).
\end{equation}
To complete the proof note that for $M = M(c_0L, \underline C, u)$ large enough and using (\ref{bias}), the sets $$\Theta_N := E_D \cap \{\theta: \|\mathscr G(\theta)-\mathscr G(\theta_{\true,D})\|_{L^2_\lambda} \le M \delta_N\} \cap \{\theta: \|\theta\|_{\tilde H^\alpha} + \|\theta_{\true,D}\|_{\tilde H^\alpha} \le M \}$$ contain the set featuring in (\ref{rate1}) and hence have at least as much probability. We can finally invoke the stability Condition \ref{ganzwien}c) to deduce that
\begin{equation}
\Theta_N \subset \{\theta: \|\theta-\theta_{\true,D}\|_{L^2_\zeta} \le \bar C \delta_N^\gamma, \|\theta\|_{h^\alpha} \le \bar C \},
\end{equation}
for some $\bar C>0$ large enough. We conclude that the posterior probability of the last event is  lower bounded by $1 - e^{-bN\delta_N^2},$ with the desired $P^N_{\theta_\true}$-probability, proving (\ref{postinvrate}) with the exception of the probability bounds for the events $\mathcal C_{N,K}$ covered by the next lemma.

\subsubsection{Lower bounds for the posterior normalising factor}

The following auxiliary result with centring $\vartheta=\theta_\true$ gives the required bound on the $\mathcal C_{N,K}$-sets in Theorem \ref{recycle}, and with centring $\vartheta=\hat \theta_\MAP$ will be used in the proof of Theorem \ref{onetrickpony} that follows next.
\begin{lem}\label{expsmall}
Let $\mathscr G$ be as in Theorem \ref{recycle} and let $\nu$ be a probability measure on some (measurable) set 
\begin{equation} \label{ischgl}
B_N \subseteq \big\{\theta \in \Theta: \|\mathscr G(\theta)-\mathscr G(\theta_\true)\|_{L^2}^2 \le \delta_N^2/u^2 \big\}, 
\end{equation}
where $u=2(U_\mathscr G +1)$. Let either $\vartheta=\theta_\true \in \Theta$ or $\vartheta=\hat \theta_\MAP$ from (\ref{maptheta}). Then for $\ell_N$ from (\ref{llemp}) we can find $K=K(u)>0$ large enough such that for some $\bar b>0$
\begin{equation}
P^N_{\theta_\true}\left(\int_{B_N} e^{\ell_N(\theta)-\ell_N(\vartheta)}d\nu(\theta) \le e^{-K N\delta_N^2} \right) \lesssim e^{-\bar b N \delta_N^2}.
\end{equation}
\end{lem}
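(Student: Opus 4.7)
The plan is to apply Jensen's inequality and reduce to a deviation bound for $\int_{B_N}[\ell_N(\theta)-\ell_N(\vartheta)]\,d\nu(\theta)$. Since $\nu$ is a probability measure,
\begin{equation*}
\log \int_{B_N} e^{\ell_N(\theta)-\ell_N(\vartheta)}\,d\nu(\theta) \;\ge\; \int_{B_N}[\ell_N(\theta)-\ell_N(\vartheta)]\,d\nu(\theta),
\end{equation*}
so it suffices to prove that the right-hand side exceeds $-KN\delta_N^2$ on an event of $P^N_{\theta_\true}$-probability at least $1-\bar c\,e^{-\bar b N\delta_N^2}$ for suitable constants.

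Consider first $\vartheta=\theta_\true$. Using the expansion (\ref{twoproc}) together with Fubini, the integral equals $A_1+A_2$ with
\begin{equation*}
A_1 = -\sum_{i=1}^N \langle h(X_i),\varepsilon_i\rangle_V, \qquad A_2 = -\tfrac12\sum_{i=1}^N H(X_i),
\end{equation*}
where $h(x)=\int_{B_N}[\G(\theta)(x)-\G(\theta_\true)(x)]\,d\nu(\theta)$ and $H(x)=\int_{B_N}|\G(\theta)(x)-\G(\theta_\true)(x)|_V^2\,d\nu(\theta)$. Jensen's inequality applied in $x$ together with the inclusion (\ref{ischgl}) gives $\|h\|_{L^2_\lambda}^2\le \delta_N^2/u^2$ and $\int H\,d\lambda \le \delta_N^2/u^2$, while $\|H\|_\infty\le 4U_\G^2\le u^2$ by Condition \ref{ganzwien}(a). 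A Bernstein inequality for bounded i.i.d.\ sums then concentrates $\sum_i H(X_i)$ and $\sum_i |h(X_i)|_V^2$ around their means with deviations of order $N\delta_N^2$ on an event of the desired probability; conditionally on the $X_i$'s, $A_1$ is a centred Gaussian of variance $\sum_i|h(X_i)|_V^2$, and a standard Gaussian tail then forces $|A_1|+|A_2|\le KN\delta_N^2$ on the intersection of these events.

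For $\vartheta=\hat\theta_\MAP$ I would split
\begin{equation*}
\int_{B_N}[\ell_N(\theta)-\ell_N(\hat\theta_\MAP)]\,d\nu(\theta) = \int_{B_N}[\ell_N(\theta)-\ell_N(\theta_\true)]\,d\nu(\theta) - [\ell_N(\hat\theta_\MAP)-\ell_N(\theta_\true)]
\end{equation*}
and handle the first summand as above. For the second, Proposition \ref{fwdrate} localises $\hat\theta_\MAP$ inside $\Theta_1=\{\theta\in E_D:\tau(\theta,\theta_\true)\le 2C\delta_N^2\}$ with probability $\ge 1-e^{-cN\delta_N^2}$, and the uniform empirical-process bounds carried out inside the proof of that proposition (the $s=1$ terms in (\ref{sum1})--(\ref{sum2})) yield $|\ell_N(\hat\theta_\MAP)-\ell_N(\theta_\true)-E_{\theta_\true}[\ell_N(\hat\theta_\MAP)-\ell_N(\theta_\true)]|\le C'N\delta_N^2$ with comparable probability. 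Combined with $E_{\theta_\true}[\ell_N(\hat\theta_\MAP)-\ell_N(\theta_\true)]\le 0$ (the expected log-likelihood is maximised at $\theta_\true$), this gives $\ell_N(\hat\theta_\MAP)-\ell_N(\theta_\true)\le C'N\delta_N^2$ and the claim follows. The principal difficulty is precisely this $\vartheta=\hat\theta_\MAP$ case, since the base point is itself random and strongly data-dependent, so no naive conditioning argument suffices; the resolution is to decouple via the high-probability localisation of $\hat\theta_\MAP$ together with the uniform empirical-process bounds already in hand from the proof of Proposition \ref{fwdrate}.
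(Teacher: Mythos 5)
Your proposal is correct and follows essentially the same route as the paper: Jensen's inequality to reduce to the $\nu$-averaged log-likelihood increment, concentration (Bernstein-type) of the averaged multiplier and quadratic empirical terms with variance bounds $\lesssim\delta_N^2/u^2$ obtained from (\ref{ischgl}) via Jensen, and for $\vartheta=\hat\theta_\MAP$ the same decoupling through $\theta_\true$, dropping the nonpositive deterministic term $-\tfrac N2\|\G(\hat\theta_\MAP)-\G(\theta_\true)\|^2_{L^2_\lambda}$ and controlling the fluctuation by the localisation from Proposition \ref{fwdrate} together with the $s=1$ slice bound from its proof. The only cosmetic difference is that you treat the multiplier term by conditioning on the $X_i$'s and using a Gaussian tail, whereas the paper invokes the Bernstein bound (\ref{bernie}) of Lemma \ref{mixchain} directly; both deliver the same estimate.
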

\begin{proof}
We proceed as in Lemma 7.3.2 in \cite{GN16}. From Jensen's inequality (applied to $\log$ and $\int (\cdot)d\nu$) and recalling (\ref{llemp}), (\ref{empm}), (\ref{densities}), the probability in question is bounded by
$$P^N_{\theta_\true}\left(\int \int_{B_N} \log \frac{p_\theta}{p_{\vartheta}}d\nu(\theta)d(P_N-P_{\theta_\true}) \le -K \delta_N^2 - \int \int_{B_N} \log  \frac{p_\theta}{p_{\vartheta}}d\nu(\theta)dP_{\theta_\true} \right).$$  Now a standard computation with likelihood ratios (e.g., Lemma 23 in \cite{GN20}) we see that 
\begin{align*}
-\int \log  \frac{p_\theta}{p_{\vartheta}}dP_{\theta_\true} &= -\int \log  \frac{p_\theta}{p_{\theta_\true}}dP_{\theta_\true} + \int \log  \frac{p_{\theta_\true}}{p_{\vartheta}}dP_{\theta_\true} \\
& = \frac{1}{2}\|\mathscr G(\theta)-\mathscr G(\theta_\true)\|_{L^2}^2 - \frac{1}{2} \|\mathscr G(\vartheta) - \mathscr G(\theta_\true)\|_{L_2}^2 \le  \delta_N^2/(2u^2)
\end{align*}
 so that for $K=K(u)$ large enough and using also Fubini's theorem, the last probability can be bounded by 
\begin{align*}
&P^N_{\theta_\true}\left(\sqrt N \int \int_{B_N} \log \frac{p_{\theta_\true}}{p_\theta}d\nu(\theta)d(P_N-P_{\theta_\true}) \ge K \sqrt N\delta_N^2/4\right) \\
& + P^N_{\theta_\true}\left(\sqrt N \int \log \frac{p_{\vartheta}}{p_{\theta_\true}}d(P_N-P_{\theta_\true}) \ge K \sqrt N \delta_N^2/4\right).
\end{align*}
For the first probability we use (\ref{densities}) and proceed as in and after (\ref{twoproc}) and need to consider $Z_N$ as in Lemma \ref{mixchain} for singleton $\mathcal H$ consisting of either $$h_{1,j}(x) = \int_{B_N} (\mathscr G(\theta)_j(x)- \mathscr G(\theta_\true)_j(x)) d\nu(\theta),~~j=1, \dots, d_V \text{ fixed},$$
in the multiplier case $Z_N= \sum_{i=1}^N h_{1,j}(X_i) \varepsilon_{i,j}$ from (\ref{multipl}), and
$$h_2(x) =  \int_{B_N} |\mathscr G(\theta)(x)- \mathscr G(\theta_\true)(x)|_V^2 d\nu(\theta)$$ in the centred case (\ref{centp}). We apply Bernstein's inequality (\ref{bernie}) with fixed envelopes $U=U(U_\mathscr G)>0$ in view of $\|\mathscr G(\theta)-\mathscr G(\theta_\true)\|_\infty \le 2U_\mathscr G$ from Condition \ref{ganzwien}, with variance estimates $$E^Xh^2_{1,j}(X) \le  \delta_N^2/u^2 \equiv \sigma_1^2$$ in the first case (using again Jensen's inequality) and $$E^Xh^2_2(X) \le 4U_\mathscr G^2 \int_{B_N}\|\mathscr G(\theta)-\mathscr G(\theta_\true)\|_{L^2}^2 d\nu(\theta) \le 4(U_\mathscr G/u)^2 \delta_N^2 \equiv \sigma_2^2$$ for the second case, as well as $x= N\sigma_i^2, =1,2$. In both cases we can choose $K=K(U, L)$ large enough such that $L(\sigma_i \sqrt x + Ux/\sqrt N) \le K \sqrt N \delta_N^2/4$ in (\ref{bernie}) and hence the first of the last two displayed probabilities is bounded as desired, completing the proof of $\vartheta=\theta_\true$.

When bounding the second probability with $\vartheta=\hat \theta_\MAP$ we can in view of Proposition \ref{fwdrate} restrict to bounding 
$$P^N_{\theta_\true}\left(\sup_{\theta \in E_D: \|\theta\|_{\tilde H^\alpha} \le  2C, \|\mathscr G(\theta)-\mathscr G(\theta_\true)\|^2_{L^2_\lambda} \le  2C \delta^2_N} \sqrt N\Big|\int \log \frac{p_\theta}{p_{\theta_\true}}d(P_N-P_{\theta_\true}) \Big| \ge K \sqrt N\delta_N^2/4\right).$$ This term corresponds to the empirical process bounded in and after (\ref{peel}) for $s=1$. Choosing $K$ large enough the proof there now applies directly, giving the desired exponential bound and completing the proof of the lemma.
\end{proof}

\subsection{Proof of Theorem \ref{onetrickpony}}

The construction of $\tilde \ell_N$ in (\ref{surrogate}) is such that
\begin{equation}\label{38}
\tilde \ell_N(\theta)=\ell_N(\theta) \text{ for any } \theta ~s.t.~\|\theta-\theta_{\true,D}\|_{E_D}\le \frac{3}{8}\eta.
\end{equation}
for any given $\eta>0$, so Part A) follows directly from the definition of $\tilde \pi(\cdot|(Y_i, X_i)_{i=1}^N)$ in (\ref{surro}). We also note that a maximiser $\hat \theta_\MAP$ exists under Condition \ref{ganzwien}: Indeed, the function $q_N$ to be maximised over $E_D$ in (\ref{maptheta}) then satisfies $q_N(\theta)<q_N(0)$ for all $\theta$ such that $\|\theta\|_{\tilde H^\alpha}$ exceeds some positive constant $k$ (and $(Y_i, X_i)_{i=1}^N$ fixed). Then on the compact set $M=\{\theta \in E_D: \|\theta\|_{\tilde H^\alpha} \le k\}$, $q_N$ is continuous, and hence attains its maximum at some $\hat \theta_M \in M$, which must be a global maximiser of $q_N$ since $q_N(\hat \theta_M) \ge q_N(0) > \inf_{\theta \in M^c} q_N(\theta)$.  [The maximiser can be taken to be measurable, cf.~Exercise 7.2.3 in \cite{GN16}).]

\subsubsection{Geometry of the surrogate posterior.}

\smallskip

We first note that by Lemma \ref{youtube} (and Condition \ref{bock} with $\mathcal B$ the set from (\ref{eq:B:ass})) the event $\mathcal S_N^{(i)} \subset (V \times \mathcal X)^N$ defined by
\begin{equation*}
	\begin{split}
		\mathcal S^{(i)}_N := & \Big\{\inf_{\theta \in \mathcal B} \lambda_\min(-\nabla^2 \ell_N(\theta)) \ge \frac{c_0}{2}ND^{-\kappa_0} \Big\}  \\
		& \cap  \Big\{\sup_{\theta\in \mathcal B}\Big[|\ell_N(\theta)|+\|\nabla \ell_N(\theta)\|_{E_D}+\|\nabla^2\ell_N(\theta)\|_\mathrm{op}\Big]\le N(5c_1D^{\kappa_1}+1) \Big\}
	\end{split}
\end{equation*}
satisfies $P_{\theta_\true}^N(\mathcal S_N^{(i)}) \ge 1 - O(e^{-CND^{-2\kappa_0 - 4 \kappa_2}})$. On $
\mathcal S_N^{(i)}$ and for our choice of $K$ we can invoke Proposition 3.6 (and Remark 3.11) in \cite{NW20} (with $M=Id$ and appropriate choices for $c_{min}, c_{max}$ from Condition \ref{goldfisch}) to verify the required gradient Lipschitz property in Part B) as well as
\begin{equation}\label{hesse}
\sup_{\theta, \vartheta \in \mathbb R^D, \|\vartheta\|_{\mathbb R^D} = 1} \vartheta^T [\nabla^2 \log \tilde \pi_N(\theta|(Y_i, X_i)_{i=1}^N)] \vartheta \le \sup_{\theta, \vartheta \in \mathbb R^D, \|\vartheta\|_{\mathbb R^D} = 1} \vartheta^T [\nabla^2 \tilde \ell_N(\theta)] \vartheta \le - \frac{c_0}{2} ND^{-\kappa_0}.
\end{equation}

The rest of the proof is concerned with part C), without loss of generality for $N$ large enough. This will be done on the event $$\mathcal S_N \equiv \mathcal S_N^{(i)} \cap \mathcal S_N^{(ii)} \subset (V \times \mathcal X)^N,$$ where, for $\bar C$ from Theorem \ref{recycle},
$$ \mathcal S_N^{(ii)} = \Big\{\text{any } \hat \theta_\MAP \text{ satisfies }~\|\hat \theta_\MAP - \theta_{\true,D}\|_{E_D} \le \bar C\delta_N^{\gamma} \Big\}.$$ Note that $\mathcal S_N$ has the desired $P_{\theta_\true}^N$-probability in view of the preceding bound for $P_{\theta_\true}^N(\mathcal S_N^{(i)})$ and (\ref{mapinvrate}). By (\ref{hesse}), Condition \ref{bock} and Part A), on $\mathcal S_N$ any maximiser $\hat \theta_\MAP$ satisfies
\begin{equation} \label{score}
0=\nabla \log \pi(\hat \theta_\MAP|(Y_i, X_i)_{i=1}^N) = \nabla \log \tilde \pi(\hat \theta_\MAP|(Y_i, X_i)_{i=1}^N),
\end{equation}
from which we conclude that $\hat \theta_\MAP$ necessarily equals the \textit{unique} global maximiser of the strongly concave function $\log \tilde \pi(\cdot|(Y_i, X_i)_{i=1}^N)$ (and then also of $\pi(\cdot|(Y_i, X_i)_{i=1}^N)$) over $E_D$. In the rest of the proof we fix this unique global maximiser $\hat \theta_\MAP$.

\smallskip

\subsubsection{Decomposition of the Wasserstein distance.}  Define Euclidean balls $$\hat {\mathcal B}(r) =\{\theta \in E_D: \|\theta - \hat \theta_\MAP\|_{E_D} \le r\}, \text{ of radius } r>0.$$ Then by Theorem 6.15 in \cite{V09} with $x_0=\hat \theta_\MAP$
\begin{align*}
& W_2^2(\tilde \Pi(\cdot|(Y_i, X_i)_{i=1}^N), \Pi(\cdot|(Y_i, X_i)_{i=1}^N)) \\
&\le 2 \int_{E_D} \|\theta-\hat \theta_\MAP\|_{E_D}^2 d|\tilde \Pi(\cdot|(Y_i, X_i)_{i=1}^N)-\Pi(\cdot|(Y_i, X_i)_{i=1}^N)|(\theta).
\end{align*}
This can be further bounded, for $m>0$ to be chosen and $\tilde \delta_N$ from Condition \ref{bock}, by
\begin{align*}
& \le  2m^2\tilde \delta_N^2~ \int_{\hat {\mathcal B}(m\tilde \delta_N)} d|\Pi(\cdot|(Y_i, X_i)_{i=1}^N)-\tilde \Pi(\cdot|(Y_i, X_i)_{i=1}^N)|(\theta)\\
&~~~~~+ 2\int_{\|\theta-\hat \theta_\MAP\|_{E_D} >  m\tilde \delta_N}\|\theta-\hat \theta_\MAP\|_{E_D}^2 d\tilde \Pi(\theta|(Y_i, X_i)_{i=1}^N) \\
&~~~~~+2\int_{\|\theta-\hat \theta_\MAP\|_{E_D} >  m\tilde \delta_N}\|\theta-\hat \theta_\MAP\|_{E_D}^2 d \Pi(\theta|(Y_i, X_i)_{i=1}^N)  \equiv  I +II + III,
\end{align*}
and we now bound $I,II, III$ in separate steps.

\smallskip

\textbf{Term II.} We write the surrogate posterior density (\ref{surro}) as $$\tilde \pi(\theta|(Y_i, X_i)_{i=1}^N) = \frac{e^{\tilde \ell_N(\theta)- \tilde \ell_N(\hat \theta_\MAP)}\pi(\theta)}{\int_{E_D}e^{\tilde \ell_N(\theta)- \tilde \ell_N(\hat \theta_\MAP)}\pi(\theta)d\theta},~~\theta \in E_D.$$ Since $\delta_N =o(\tilde \delta_N)$ we have for $\eta$ from Condition \ref{bock}, $u$ from Lemma \ref{getreal} and some $c=c(u, C_\mathscr G)>0$ small enough, the set inclusion $$B_N \equiv \{\|\theta - \theta_{\true,D}\|_{E_D} \le c \delta_N\}\subset \big\{\|\theta- \theta_{\true,D}\|_{E_D} \le 3\eta/8 \big\} \cap \big\{\|\mathscr G(\theta)-\mathscr G(\theta_\true)\|_{L^2_\lambda} \le \delta_N/u \big\},$$ using also (\ref{bias}) and that $\mathscr G: E_D \to L^2_\lambda$ is Lipschitz by Condition \ref{ganzwien}.  Then since $\ell_N(\theta)=\tilde \ell_N(\theta)$ on the last set (cf.~(\ref{38})) we can apply Lemma \ref{expsmall} with $\nu = \Pi(\cdot)/\Pi(B_N)$ as well as the small ball estimate (\ref{gensmb}) with $d''=c$ to deduce that for $\bar c = K+d'$
\begin{align*}
\int_{E_D} e^{\tilde \ell_N(\theta)-\tilde \ell_N(\hat \theta_\MAP)}d\Pi(\theta) &\ge \int_{B_N} e^{\tilde \ell_N(\theta)-\tilde \ell_N(\hat \theta_\MAP)} d\Pi(\theta) \\
& = \int_{B_N} e^{\ell_N(\theta)- \ell_N(\hat \theta_\MAP)} d\nu(\theta) \times \Pi(B_N)  \ge e^{-\bar c N \delta_N^2}
\end{align*}
with $P_{\theta_\true}^N$-probability of order at least $1-O(e^{-\bar bN\delta_N^2})$.

Recalling the prior (\ref{gprior}) we define scaling constants
\begin{equation} \label{abnormal}
V_N = (2\pi)^{-D/2}\sqrt{\det(N \delta_N^2 \Lambda^{-1}_\alpha)} \times e^{\bar cN\delta_N^2} \text{ and note that } V_N \leq e^{c' N\delta_N^2 \log N}
\end{equation} for some $c'>0$ by (\ref{specdim}) and since $D \leq A_1 N \delta_N^2$. Then on the preceding events the term II can be bounded, using a second order Taylor expansion of $\log \tilde \pi(\cdot|(Y_i, X_i)_{i=1}^N)$ around its maximum $\hat \theta_\MAP$ combined with (\ref{hesse}), (\ref{score}), as
\begin{align*}
 &\int_{\|\theta-\hat \theta_\MAP\|_{E_D} >  m\tilde \delta_N}\|\theta-\hat \theta_\MAP\|_{E_D}^2 \tilde \pi(\theta|(Y_i, X_i)_{i=1}^N)d\theta \\
 &\le e^{\bar cN\delta_N^2} \int_{\|\theta-\hat \theta_\MAP\|_{E_D} >  m\tilde \delta_N}\|\theta-\hat \theta_\MAP\|_{E_D}^2 e^{\tilde \ell_N(\theta)- \tilde \ell_N(\hat \theta_\MAP)}\pi(\theta)d\theta \\
 &\le V_N \times \int_{\|\theta-\hat \theta_\MAP\|_{E_D} >  m\tilde \delta_N}\|\theta-\hat \theta_\MAP\|_{E_D}^2 e^{\tilde \ell_N(\theta)-\frac{N\delta_N^2}{2}\|\theta\|^2_{\tilde H^{\alpha}} - \tilde \ell_N(\hat \theta_\MAP) + \frac{N\delta_N^2}{2}\|\hat \theta_\MAP\|^2_{\tilde H^{\alpha}}}d\theta  \\
 &= V_N \times \int_{\|\theta-\hat \theta_\MAP\|_{E_D} >  m\tilde \delta_N}\|\theta-\hat \theta_\MAP\|_{E_D}^2 e^{\log \tilde \pi(\theta|(Y_i, X_i)_{i=1}^N)-\log \tilde \pi(\hat \theta_\MAP|(Y_i, X_i)_{i=1}^N)}d\theta \\
 & \le V_N \times \int_{\|\theta-\hat \theta_\MAP\|_{E_D} >  m\tilde \delta_N}\|\theta-\hat \theta_\MAP\|_{E_D}^2 e^{-\frac{c_0}{4}ND^{-\kappa_0} \|\theta- \hat \theta_\MAP\|^2_{E_D}}d\theta \\
 &\leq 2 V_N \times \big(\frac{8\pi}{c_0 ND^{-\kappa_0}}\big)^{D/2}  \mathrm{Pr} \big(\|Z\|_{E_D}>m \tilde \delta_N \big) \le V_N \mathrm{Pr} \big(\|Z\|_{E_D}>m \tilde \delta_N \big)
\end{align*}
where we have used $x^2 e^{-cx^2} \le  2e^{-cx^2/2}$ for all $x \in \R$, $c \ge 1$, that $c_0 ND^{-\kappa_0} \ge c_0 N \delta_N^2 \to \infty$ by Condition \ref{bock} and $\eta 
\le 1$, and where $$Z \sim N_{E_D} \Big(0, \frac{4}{c_0 ND^{-\kappa_0}} I_{D \times D}\Big).$$ Now by (\ref{laboum}) and $D \le A_1 N \delta_N^2$ we have $$E\|Z\|_{E_D} \le \sqrt{E\|Z\|_{E_D}^2} \le \sqrt{4D/(c_0 N D^{-\kappa_0})} \leq 2 \sqrt {A_1} c_0^{-1/2} D^{\kappa_0/2} \delta_N \le (m/2) \tilde \delta_N$$ for $m$ large enough, so that for some $c=c(c_0)$,
$$\mathrm{Pr} \big(\|Z\|_{E_D}>m \tilde \delta_N \big)\le \mathrm{Pr} \big(\|Z\|_{E_D}-E\|Z\|_{E_D}>(m/2) \tilde \delta_N \big) \le e^{-m^2 c ND^{-\kappa_0} \tilde \delta_N^2}$$ by a concentration inequality for Lipschitz-functionals of $D$-dimensional Gaussian random vectors (e.g., Theorem 2.5.7 in \cite{GN16} applied to $(c_0 N D^{-\kappa_0}/4)^{1/2}Z \sim \mathcal{N}(0,I_{D \times D})$ and $F=\|\cdot\|_{\mathbb R^D}$). Using this bound, (\ref{abnormal}) and that (\ref{laboum}) implies $ND^{-\kappa_0} \tilde \delta_N^2 \ge (\log N)N \delta_N^2$ we conclude that for all $m$ large enough the term $II$ is bounded by
$$V_N  \times e^{-m^2 c ND^{-\kappa_0}  \tilde \delta_N^2} \leq e^{-m^2 ND^{-\kappa_0} \tilde \delta_N^2/32} \le \frac{1}{8}e^{-N\delta_N^2}$$ with sufficiently large $P_{\theta_\true}^N$-probability.  

\smallskip

\textbf{Term III:} We first note that  Theorem \ref{recycle} and (\ref{laboum}) imply that for every $a>0$ we can find $m$ large enough such that
\begin{align*}
\Pi(\|\theta-\hat \theta_\MAP\|_{E_D} >  m\tilde \delta_N |(Y_i, X_i)_{i=1}^N) &\le \Pi(\|\theta-\theta_{\true,D}\|_{E_D} >  m\delta_N^\gamma - \|\hat \theta_\MAP - \theta_{\true,D}\|_{E_D} |(Y_i, X_i)_{i=1}^N) \\
&\le \Pi(\|\theta- \theta_{\true,D}\|_{E_D} >  m\delta_N^\gamma/2 |(Y_i, X_i)_{i=1}^N) \le e^{-aN \delta_N^2} 
\end{align*}
on events $\mathcal S_N' \subset \mathcal S_N$ of sufficiently high $P_{\theta_\true}^N$-probability. Moreover, again by Theorem \ref{recycle}, we can further intersect with the event $\mathcal C_{N, K}$ from (\ref{CN}) for some $K>0$ and with $B_N=\{\theta \in E_D:\|\mathscr G(\theta)-\mathscr G(\theta_\true)\|_{L^2_\lambda}\le \delta_N/u\}$. Now using the Cauchy-Schwarz inequality, the small ball estimate for $\Pi$ in (\ref{smball}), the identity $E^N_{\theta_\true}e^{\ell_N(\theta) - \ell_N(\theta_\true)}=1$ and Markov's inequality, we have 
\begin{align*}
&P^N_{\theta_\true}\Big(\mathcal C_{N, K} \cap \mathcal S'_N, \int_{\|\theta-\hat \theta_\MAP\|_{E_D} >  m\tilde \delta_N}\|\theta-\hat \theta_\MAP\|_{E_D}^2 d \Pi(\cdot|(Y_i, X_i)_{i=1}^N)>e^{-N\delta_N^2}/8\Big) \le  \\
&P^N_{\theta_\true}\Big(\mathcal C_{N, K} \cap \mathcal S'_N, \Pi(\|\theta-\hat \theta_\MAP\|_{E_D} >  m\tilde \delta_N |(Y_i, X_i)_{i=1}^N) E^\Pi[\|\theta-\hat \theta_\MAP\|_{E_D}^4 |(Y_i, X_i)_{i=1}^N]>\frac{e^{-2N\delta_N^2}}{64}\Big) \\
& \le P_{\theta_\true}^N\Big(\mathcal S_N', e^{(K+\bar d+2-a)N\delta_N^2} \int_{E_D} \|\theta-\hat \theta_\MAP\|_{E_D}^4 e^{\ell_N(\theta) - \ell_N(\theta_\true)} d\Pi(\theta) >1/64\Big) \\
&\lesssim e^{(K+\bar d+2-a)N \delta_N^2} \int_{E_D} (1+\|\theta\|_{E_D}^4 )d\Pi(\theta) \le e^{-aN \delta_N^2/2}
\end{align*}
whenever $m$ and then $a$ are large enough, since $\Pi$ has uniformly bounded fourth moments and since $\|\hat \theta_\MAP\|_{E_D}$ is uniformly bounded by a constant depending only on $\|\theta_\true\|_{L^2_\zeta}$ on the events $\mathcal S_N$.

\smallskip

\textbf{Term I:} On the events $\mathcal S_N$ we have from Condition \ref{bock} that for fixed $m>0$ and all $N$ large enough $$\hat {\mathcal B}(m\tilde \delta_N)\subseteq \{\theta:\|\theta-\theta_{\true,D}\|_{E_D}\le 3 \eta/8 \}.$$ On the latter set, by (\ref{38}), the probability measures $\tilde \Pi(\cdot|(Y_i, X_i)_{i=1}^N)$ and $\Pi(\cdot|(Y_i, X_i)_{i=1}^N)$ coincide up to a normalising factor, and thus we can represent their Lebesgue densities as $$\tilde \pi(\theta|(Y_i, X_i)_{i=1}^N)=p_N \pi(\theta|(Y_i, X_i)_{i=1}^N),~~\theta \in \hat {\mathcal B}(m\tilde \delta_N),$$ for some $0<p_N<\infty$. Moreover, by the preceding estimates for terms II and III (which hold just as well without the integrating factors $\|\theta-\hat \theta_\MAP\|_{\mathbb R^D}^2$), we have both
$$ p_N \Pi(\hat {\mathcal B}(m\tilde \delta_N)|(Y_i, X_i)_{i=1}^N) = \tilde \Pi(\hat {\mathcal B}(m\tilde \delta_N)|(Y_i, X_i)_{i=1}^N)  \ge 1 - e^{-N\delta_N^2}/8~\Rightarrow~ 1 - e^{-N\delta_N^2}/8 \le p_N,$$
$$  p^{-1}_N \tilde \Pi(\hat {\mathcal B}(m\tilde \delta_N)|(Y_i, X_i)_{i=1}^N) = \Pi(\hat {\mathcal B}(m\tilde \delta_N)|(Y_i, X_i)_{i=1}^N) \ge 1 - e^{-N\delta_N^2}/8~\Rightarrow~ 1 - e^{-N\delta_N^2}/8 \le \frac{1}{p_N}$$ on events of sufficiently high $P_{\theta_\true}^N$-probability. On these events necessarily $p_N \in [1- \frac{e^{-N\delta_N^2}}{8},[1-\frac{e^{- N\delta_N^2}}{8}]^{-1}]$
and so for $N$ large enough
\begin{align*}
&\int_{\hat {\mathcal B}(m\tilde \delta_N)} d|\Pi(\cdot|(Y_i, X_i)_{i=1}^N)-\tilde \Pi(\cdot|(Y_i, X_i)_{i=1}^N)|(\theta) \\
& =|1-p_N|\int_{\hat {\mathcal B}(m\tilde \delta_N)}  \pi(\theta|(Y_i, X_i)_{i=1}^N) d\theta \le |1-p_N| \le e^{-N \delta_N^2}/4,
\end{align*} which is obvious for $p_N \le 1$ and follows from the mean value theorem applied to $f(x)=(1-x)^{-1}$ near $x=0$ also for $p_N>1$. Collecting all the bounds we have shown that $I + II + III \le  e^{-N\delta_N^2}$ with the desired $P_{\theta_\true}^N$-probability, completing also the proof of part C) of Theorem \ref{onetrickpony}.

\subsection{Auxiliary results}
	
\subsubsection{The curvature lemma from \cite{NW20}}

\begin{lem}\label{youtube}
	Let $\ell_N: \R^D \to \R$ be given by (\ref{llemp}). Suppose Assumptions \ref{u4}, \ref{goldfisch} are satisfied for $\mathcal B$ from (\ref{eq:B:ass}) and some $\eta \le 1$. There exists a constant $C=C(c_0, c_1, c_2)>0$ such that if
	\begin{equation}\label{simpler}
	\mathcal R_N:=CN D^{-2\kappa_0- 4\kappa_2},
	\end{equation}
	then for any $D,N\ge 1$ satisfying $D\le \mathcal R_N$, we have for constants $c,c'$ depending only on $d_V$	
	\begin{equation}\label{eq:emp:lb}
	\begin{split}
	P_{\theta_\true}^N\Big(\inf_{\theta\in \mathcal B}\lambda_\min\big[-\nabla^2\ell_N(\theta)\big]< \frac {c_0}{2}N D^{-\kappa_0}\Big) &\le ce^{-\mathcal R_N}, ~\text{ as well as}
	\end{split}
	\end{equation}
	\begin{equation}\label{eq:emp:ub}
	\begin{split}
	&P_{\theta_\true}^N\Big( \sup_{\theta\in \mathcal B}\Big[|\ell_N(\theta)|+\|\nabla \ell_N(\theta)\|_{\R^D}+\|\nabla^2\ell_N(\theta)\|_\mathrm{op}\Big]> N(5c_1D^{\kappa_1}+1)\Big) \le c' (e^{-\mathcal R_N} + e^{-N/8}).
	\end{split}
	\end{equation}
	\end{lem}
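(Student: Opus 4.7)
The strategy is to decompose $-\nabla^2\ell_N(\theta)/N$ into its mean under $P_{\theta_\true}$ and a centred empirical-process remainder, and to handle each separately. Condition \ref{goldfisch} directly controls the mean via $\lambda_\min(E_{\theta_\true}[-\nabla^2\ell(\theta,(Y,X))])\ge c_0 D^{-\kappa_0}$ on $\mathcal B$, so that \eqref{eq:emp:lb} will follow from a uniform operator-norm bound
\[
\sup_{\theta\in\mathcal B}\|\Delta_N(\theta)\|_{\mathrm{op}} \le \tfrac{c_0}{2}D^{-\kappa_0},\qquad \Delta_N(\theta):=\tfrac1N \sum_{i=1}^N \nabla^2\ell(\theta,(Y_i,X_i)) - E_{\theta_\true}[\nabla^2\ell(\theta,(Y,X))],
\]
holding with $P_{\theta_\true}^N$-probability at least $1-O(e^{-\mathcal R_N})$. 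Writing $Y=\G(\theta_\true)(X)+\varepsilon$, the Hessian admits the explicit decomposition
\[
-\nabla^2\ell(\theta,(Y,X))=\nabla\G(\theta)(X)^T\nabla\G(\theta)(X)-\sum_{k=1}^{d_V}\bigl(\varepsilon_k+\G_k(\theta_\true)(X)-\G_k(\theta)(X)\bigr)\nabla^2\G_k(\theta)(X),
\]
so $\Delta_N(\theta)$ splits into an $X$-measurable piece with envelope $\lesssim c_2^2 D^{2\kappa_2}$ in operator norm (via Conditions \ref{ganzwien}a) and \ref{u4}) and a noise-driven piece $(1/N)\sum_{i,k}\varepsilon_{i,k}\nabla^2\G_k(\theta)(X_i)$ which, conditional on the $X_i$'s, is a sum of independent centred sub-Gaussian matrices with sub-Gaussian parameter $\lesssim c_2 D^{\kappa_2}/\sqrt N$ per summand.

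At a fixed $\theta$, matrix Bernstein combined with standard Gaussian concentration yields $P_{\theta_\true}^N(\|\Delta_N(\theta)\|_{\mathrm{op}}\ge t)\lesssim D\exp(-cNt^2/D^{4\kappa_2})$; the choice $t=(c_0/2)D^{-\kappa_0}$ produces exactly the target exponent $\mathcal R_N=cND^{-2\kappa_0-4\kappa_2}$. To upgrade this pointwise bound to a uniform one over $\mathcal B$, the plan is to invoke the $C^{2,1}$-regularity in Condition \ref{u4} together with the standard Gaussian tail bound $P_{\theta_\true}^N(\max_i|\varepsilon_i|>\sqrt{c\log N})\lesssim e^{-N/8}$ (producing the extra $e^{-N/8}$ term in \eqref{eq:emp:ub}) to show that, on the corresponding good event, the map $\theta\mapsto (1/N)\sum_i\nabla^2\ell(\theta,(Y_i,X_i))$ is Lipschitz in operator norm with constant $L\lesssim c_2^2 D^{2\kappa_2}\sqrt{\log N}$. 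A $\delta$-net of $\mathcal B$ of cardinality $(3\eta/\delta)^D$ with $\delta\asymp D^{-\kappa_0-2\kappa_2}/\sqrt{\log N}$, together with a union bound over the net, then transfers pointwise concentration to uniform concentration; the hypothesis $D\le\mathcal R_N$ is tailored precisely so that the net-size contribution $D\log(\eta/\delta)$ is absorbed into $\mathcal R_N$.

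The analogous argument for \eqref{eq:emp:ub} is strictly easier: the means of $\ell(\theta)$, $\nabla\ell(\theta)$ and $\nabla^2\ell(\theta)$ are controlled directly by \eqref{eq:ass:ub} at level $c_1 D^{\kappa_1}$, and the remaining centred stochastic fluctuations are bounded by the same concentration-plus-net scheme, now with envelope and Lipschitz constants governed polynomially by $D$ via Condition \ref{u4} and the Gaussian noise envelope $\max_i|\varepsilon_i|\lesssim\sqrt{\log N}$. The main obstacle throughout is bookkeeping: every Gaussian-weighted term must be controlled on the same high-probability event so that all Lipschitz constants remain polynomial in $D$, and the net cardinality must be balanced against the exponential tail rate, with the hypothesis $D\le\mathcal R_N$ providing precisely the required slack (up to logarithmic factors). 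This program is carried out in detail, in essentially the generality stated here, as part of the analysis leading to Proposition 3.8 of \cite{NW20}, and the argument transfers to the present setting without change.
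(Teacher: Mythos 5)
Your high-level architecture — centre $-\nabla^2\ell_N$ at its $P_{\theta_\true}$-mean, use Condition \ref{goldfisch} for the mean, and prove uniform concentration of the fluctuation over $\mathcal B$ — is exactly the mechanism behind the result, and your closing deferral to \cite{NW20} is in spirit what the paper does: its proof consists of verifying Assumptions 3.2--3.3 of \cite{NW20} coordinatewise in $k=1,\dots,d_V$ (with $k_i=m_i=c_2D^{\kappa_2}$, $c_{\min}=c_0D^{-\kappa_0}$, $c_{\max}=c_1D^{\kappa_1}$), invoking Lemma 3.4 and Remark 3.11 of that paper for each fixed coordinate, and finishing with a union bound over the $d_V$ coordinates (whence $c,c'$ depend on $d_V$). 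Your citation should be to Lemma 3.4 of \cite{NW20}, not Proposition 3.8, but that is a minor slip.

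Two steps of your self-contained sketch, however, do not work as written. First, the tail bound you use for the noise envelope, $P(\max_{i\le N}|\varepsilon_i|>\sqrt{c\log N})\lesssim e^{-N/8}$, is false: the maximum of $N$ standard Gaussians exceeds $\sqrt{c\log N}$ with probability of order $N^{1-c/2}$, only polynomially small. The $e^{-N/8}$ term in \eqref{eq:emp:ub} actually comes from Bernstein/chi-square concentration of $\tfrac1N\sum_i|\varepsilon_i|^2$ around its mean, which is also the source of the additive $+N$ inside $N(5c_1D^{\kappa_1}+1)$; an $L^\infty$ bound on the noise with exponentially small failure probability forces a threshold growing polynomially in $N$, which then inflates your Lipschitz constants. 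Second, and more importantly, the $\delta$-net-plus-union-bound uniformization cannot reach the statement as given: the net contributes $D\log(3\eta/\delta)$ to the exponent with $\log(3\eta/\delta)\gtrsim\log D$ (and, after repairing the noise envelope, $\log N$) growing unboundedly, so you need $D\log(DN)\lesssim\mathcal R_N$, which near the boundary $D\asymp\mathcal R_N$ is strictly stronger than the hypothesis $D\le\mathcal R_N$ no matter how the constant $C$ is chosen — you concede this yourself ("up to logarithmic factors"), but then the lemma as stated is not proved. The argument in \cite{NW20} that the paper leans on avoids this loss by chaining through entropy integrals (the device recorded here as Lemma \ref{mixchain}), rather than a crude union bound over a net, and it is that refinement which delivers the clean condition $D\le\mathcal R_N$. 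So: right decomposition and the right source to cite, but the noise event must be replaced by the chi-square event and the net/union-bound step by chaining (or by a direct appeal to Lemma 3.4 of \cite{NW20}, as the paper does).
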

	\begin{proof}
	When bounding deviation of probabilities involving $$\ell_N(\theta) =-\frac{1}{2} \sum_{k=1}^{d_V}\sum_{i=1}^N (Y_{ik} - \mathscr G(\theta)_k(X_i))^2$$ and its gradient and Hessian after centring them at their expectations, we can follow the proof of Lemma 3.4 in \cite{NW20} for each fixed vector coordinate $k=1, \dots, d_V$ if we can verify Assumptions 3.2 and 3.3 in \cite{NW20} with $\mathcal G(\theta)$ there equal to our $\mathscr G(\theta)_k, k$ fixed (see also Remark 3.11 in \cite{NW20}). This is provided by Conditions \ref{u4}, \ref{goldfisch} with constants in \cite{NW20} chosen as $k_i=m_i = c_2 D^{\kappa_2}$ for $i=0,1, 2$; $c_\min = c_0D^{-\kappa_0}$ and $c_\max =c_1 D^{\kappa_1}$. These choices imply that we can take $C_\mathcal G,  C'_\mathcal G, C''_\mathcal G, C'''_\mathcal G$ can all be taken to equal a constant multiple of $5(c_2D^{\kappa_2})^2$ in that lemma, and the dominant term in the definition of $\mathcal R_N$ there then becomes $c_\min^2/C'_\mathcal G \simeq D^{-2\kappa_0-4\kappa_2}$, using also that $D^{\kappa_1} \gtrsim D^{-\kappa_0}$ and $\eta \le 1$. The bound then follows for fixed $k$ from Lemma 3.4 in \cite{NW20} and carries over to the sum over $k=1, \dots, d_V$ by a simple union bound.
\end{proof}
		
\subsubsection{A general purpose posterior contraction theorem in Hellinger distance}

We follow here general ideas from Bayesian non-parametric statistics \cite{GV17} as developed in \cite{MNP21}. Consider the setting from \textsection \ref{bayesset}, specifically data $(Y_i, X_i)_{i=1}^N$ from (\ref{model}) with model probability densities $\{p_\theta: \theta \in \Theta\}$ from (\ref{densities}) on $V \times \mathcal X$. We can define a \textit{Hellinger distance} between such densities (where $dy$ is Lebesgue measure on $V$) as 
$$h^2(p_\theta, p_{\theta'}) = \int_{V \times \mathcal X} \big[\sqrt{p_{\theta}(y,x)} - \sqrt{p_{\theta'}(y,x)} \big]^2 dy d\lambda(x).$$
By uniform boundedness of $\mathscr G$ one can show the following.

\begin{prop}\label{lucienoldhand}
Suppose $\mathscr G$ satisfies Condition \ref{ganzwien}a). Then there exist constants $c_0, c_1$ depending on $U_\mathscr G$ and $d_V$ such that
$$\frac{1}{c_0} \|\mathscr G(\theta)-\mathscr G(\theta')\|_{L^2_\lambda} \le h(p_\theta, p_{\theta'}) \le c_1\|\mathscr G(\theta)-\mathscr G(\theta')\|_{L^2_\lambda}.$$
\end{prop}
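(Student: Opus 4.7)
The plan is to reduce the Hellinger distance to an explicit integral of a Gaussian affinity and then compare this integral with the $L^2_\lambda$-norm of $\mathscr G(\theta)-\mathscr G(\theta')$ using the elementary inequality $1-e^{-t} \asymp t$ on bounded ranges of $t$.

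First, since $p_\theta(y,x)$ is a normalised Gaussian on $V$ with mean $\mathscr G(\theta)(x)$ and identity covariance, one computes the Hellinger affinity at fixed $x \in \mathcal X$ by completing the square:
\[
\int_V \sqrt{p_\theta(y,x) p_{\theta'}(y,x)}\, dy = \exp\Big\{-\tfrac{1}{8}\big|\mathscr G(\theta)(x) - \mathscr G(\theta')(x)\big|_V^2\Big\},
\]
using the identity $|y-a|_V^2+|y-b|_V^2 = 2|y-\tfrac{a+b}{2}|_V^2 + \tfrac{1}{2}|a-b|_V^2$ and the Gaussian integral $\int_V e^{-|y|_V^2/2}dy = (2\pi)^{d_V/2}$. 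Integrating over $\mathcal X$ and using $h^2=2(1-\rho)$ gives
\[
h^2(p_\theta, p_{\theta'}) = 2\int_{\mathcal X}\Big(1 - \exp\big\{-\tfrac{1}{8}|\mathscr G(\theta)(x)-\mathscr G(\theta')(x)|_V^2\big\}\Big)\,d\lambda(x).
\]

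Next, set $t(x) := \tfrac{1}{8}|\mathscr G(\theta)(x)-\mathscr G(\theta')(x)|_V^2$. By Condition \ref{ganzwien}a) we have the uniform bound $t(x) \le U_\mathscr G^2/2 =: T$. On $[0,T]$ the map $t \mapsto 1-e^{-t}$ satisfies $t e^{-T} \le 1-e^{-t} \le t$ (the upper bound is immediate; the lower bound follows from $1-e^{-t} = \int_0^t e^{-s}ds \ge te^{-t} \ge te^{-T}$). Integrating these two inequalities against $d\lambda(x)$ yields
\[
\tfrac{1}{4}e^{-U_\mathscr G^2/2}\,\|\mathscr G(\theta)-\mathscr G(\theta')\|_{L^2_\lambda}^2 \;\le\; h^2(p_\theta,p_{\theta'}) \;\le\; \tfrac{1}{4}\,\|\mathscr G(\theta)-\mathscr G(\theta')\|_{L^2_\lambda}^2,
\]
which gives the claim with constants $c_0 := 2e^{U_\mathscr G^2/4}$ and $c_1 := 1/2$ (taking square roots); the $d_V$-dependence in the statement allows for a looser formulation but none is needed here.

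There is no real obstacle: the only input beyond the Gaussian computation is the uniform sup-norm bound on $\mathscr G$, which ensures the exponent $t$ stays inside a compact interval where $1-e^{-t}$ is comparable to $t$. Without such a uniform bound the lower bound would fail (the Hellinger distance saturates at $\sqrt 2$) while the upper bound always holds.
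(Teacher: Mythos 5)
Your proof is correct: the explicit Gaussian Hellinger--affinity computation, combined with the two-sided comparison $te^{-T}\le 1-e^{-t}\le t$ on the compact range of exponents guaranteed by the uniform bound in Condition~\ref{ganzwien}a), gives exactly the claimed two-sided estimate with admissible constants. This is essentially the same argument as in the references (Lemma 5.14 in \cite{MNP21}, Lemma 22 in \cite{GN20}) to which the paper delegates the proof, so you have merely made the standard computation self-contained.
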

\begin{proof}
The result follows as in Lemma 5.14 in \cite{MNP21} or also Lemma 22 in \cite{GN20}.
\end{proof}

For any set $\mathcal A$ of probability densities on $V \times \mathcal X$, let $N(\eta;\mathcal A,h), \ \eta>0,$ be the minimal number of Hellinger balls of $h$-radius $\eta$ needed to cover $\mathcal A$.

\begin{thm}\label{twotrickpony}	Let $\Pi_N$ be a sequence of prior Borel probability measures on some Borel subset $\Theta \subset L^2_\zeta(\mathcal Z, W)$, and let $\Pi_N(\cdot |(Y_i, X_i)_{i=1}^N)$ be the resulting posterior distribution arising from observations in model \eqref{model} with forward map $\mathscr G: \Theta \to L^2_\lambda(\mathcal X, V)$ satisfying Condition \ref{ganzwien}a) and such that the maps $((y,x),\theta) \mapsto p_\theta(y,x)$ are jointly Borel-measurable from $(V \times \mathcal X) \times \Theta \to \R$. Assume that for some fixed $\theta_\true\in\Theta$, and a sequence $\delta_N\to0$ s.t.~$ \sqrt{N}\delta_N \to\infty$ as $N\to\infty$, the sets 
\begin{equation}
\label{Been}
	\mathcal B_N
	:=
		\Big\{\theta \in \Theta: \ E^1_{\theta_\true}\Big[\log\frac{p_{\theta_\true}(Y_1,X_1)}
		{p_\theta(Y_1,X_1)}\Big]\le \delta_N^2,
		\ E^1_{\theta_\true}\Big[\log\frac{p_{\theta_\true}(Y_1,X_1)}
		{p_\theta(Y_1,X_1)}\Big]^2\le \delta_N^2\Big\},
\end{equation}
satisfy for all $N$ large enough 
\begin{equation}
\label{smballgen}
	\Pi_N(\mathcal B_N )\ge a e^{-A N\delta_N^2},
	\quad \textrm{some $a,A>0$}.
\end{equation}
Further assume that there exists a sequence of Borel sets $\Theta_N\subset \Theta$  for which
\begin{equation}
\label{excess}
	\Pi_N(\Theta_N^c)\lesssim e^{-B N\delta_N^2},
	\quad \textrm{some $B>A+2$,}
\end{equation}
as well as
\begin{equation}
\label{mettopq}
	\log N(\{p_\theta: \theta \in \Theta_N\}, h, \delta_N)\le C N\delta_N^2,
	\quad
	\textrm{some $C>0$.}
\end{equation}
Then, for sufficiently large $L=L(B,C)$, all $0<b<B-A-2$ and some $c>0$ we have that as $N\to\infty$,
\begin{equation}
\label{hellcont}
	P_{\theta_\true}^N\big(\Pi_N(p_\theta, \theta \in \Theta_N, h(p_\theta,p_{\theta_\true})\le L \delta_N |(Y_i, X_i)_{i=1}^N)\ge 1-e^{- b N\delta_N^2 }\big) = O(1- e^{-cN\delta_N^2}).
\end{equation}
\end{thm}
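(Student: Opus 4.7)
The strategy is to adapt the standard Ghosal--Ghosh--van der Vaart contraction machinery (in the form of Thm.~7.3.1 in \cite{GN16}) to deliver exponential (rather than the customary polynomial) frequentist probability, mirroring the sharpening already carried out for the specific map $\mathscr G$ in the proof of (\ref{postinvrate}).

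\textbf{Step 1 (evidence lower bound).} Writing the posterior in ratio form,
\begin{equation*}
\Pi_N(A | (Y_i,X_i)_{i=1}^N) = \frac{\int_A e^{\ell_N(\theta) - \ell_N(\theta_\true)} d\Pi_N(\theta)}{\int_\Theta e^{\ell_N(\theta) - \ell_N(\theta_\true)} d\Pi_N(\theta)},
\end{equation*}
the first task is to lower bound the denominator. Restricting the integral to $\mathcal B_N$ and applying Jensen's inequality to $\Pi_N(\cdot)/\Pi_N(\mathcal B_N)$ reduces matters to a large deviation bound on $\sum_i \log(p_{\theta_\true}/p_\theta)(Y_i,X_i)$. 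The KL-type moments in (\ref{Been}) control expectation and variance, while the uniform boundedness in Condition \ref{ganzwien}a) combined with Gaussianity of $\varepsilon_i$ delivers the sub-exponential tails needed to apply Bernstein's inequality (exactly as in the preceding Lemma \ref{expsmall}). Combined with (\ref{smballgen}), this yields an event $\mathcal D_N$ with $P_{\theta_\true}^N(\mathcal D_N) \geq 1 - e^{-c_1 N\delta_N^2}$ on which the denominator is at least $e^{-(A+2)N\delta_N^2}$.

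\textbf{Step 2 (tests on the sieve).} By (\ref{mettopq}), the class $\{p_\theta : \theta \in \Theta_N\}$ admits an $(L\delta_N/2)$-Hellinger net of cardinality at most $e^{CN\delta_N^2}$. Birg\'e's basic testing lemma for Hellinger-separated laws (Prop.~D.8 and Lem.~D.16 in \cite{GV17}) combined with a union bound over the net produces tests $\Psi_N \in \{0,1\}$ satisfying
\begin{equation*}
E_{\theta_\true}^N \Psi_N \leq e^{(C - c_2 L^2)N\delta_N^2}, \qquad \sup_{\theta \in \Theta_N, \, h(p_\theta,p_{\theta_\true}) \geq L\delta_N} E_\theta^N (1 - \Psi_N) \leq e^{-c_2 L^2 N\delta_N^2},
\end{equation*}
for a universal $c_2 > 0$. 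Choosing $L = L(B,C)$ sufficiently large makes both type-I and type-II errors of order $e^{-c_3 L^2 N\delta_N^2}$.

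\textbf{Step 3 (assembly).} Let $U_N := \{\theta \in \Theta_N : h(p_\theta,p_{\theta_\true}) > L\delta_N\} \cup \Theta_N^c$. Writing $\Pi_N(U_N \mid \cdot) \leq \Psi_N + (1 - \Psi_N)\Pi_N(U_N \mid \cdot)$ and bounding the second summand on $\mathcal D_N$ by $e^{(A+2)N\delta_N^2}\int_{U_N} e^{\ell_N(\theta) - \ell_N(\theta_\true)}(1 - \Psi_N) d\Pi_N(\theta)$, Fubini combined with Step 2 and the excess mass bound (\ref{excess}) gives
\begin{equation*}
E_{\theta_\true}^N\!\left[\mathbf 1_{\mathcal D_N}(1 - \Psi_N) \Pi_N(U_N \mid (Y_i,X_i)_{i=1}^N)\right] \lesssim e^{-(c_3 L^2 - A - 2)N\delta_N^2} + e^{-(B - A - 2)N\delta_N^2}.
\end{equation*}
A final application of Markov's inequality converts this into the desired exponential frequentist probability bound: for any $0 < b < \min(c_3 L^2, B) - A - 2$ and some $c > 0$, the posterior mass on $U_N$ exceeds $e^{-bN\delta_N^2}$ with probability at most $e^{-c N\delta_N^2}$, which combined with the complement of $\mathcal D_N$ and the expectation bound on $\Psi_N$ itself produces (\ref{hellcont}).

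\textbf{Main obstacle.} The classical difficulty is Step 1: a Chebyshev argument on the KL-moments in (\ref{Been}) only yields polynomial frequentist probability $1 - O(1/(N\delta_N^2))$, which is incompatible with the exponential conclusion (\ref{hellcont}). The passage to Bernstein is made legitimate precisely by the uniform boundedness of $\mathscr G$ assumed in Condition \ref{ganzwien}a), which controls the envelope of the log-likelihood ratios up to a mild Gaussian tail in $\varepsilon$; without this hypothesis the sharpening would fail.
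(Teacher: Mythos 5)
Your proposal is correct and follows essentially the same route as the paper, whose proof simply invokes the standard Ghosal--Ghosh--van der Vaart contraction argument (as in Theorem 13 of \cite{GN20} / Theorem 5.13 of \cite{MNP21}) and notes that the exponential frequentist probability comes from (i) the Bernstein-type evidence lower bound of Lemma \ref{expsmall} and (ii) exponential Hellinger-testing error bounds obtained from the entropy condition (\ref{mettopq}) via Theorem 7.1.4 in \cite{GN16}. Your three steps (denominator bound on $\mathcal B_N$ via Bernstein using Condition \ref{ganzwien}a), tests from the $\delta_N$-net, and the Markov/Fubini assembly respecting $B>A+2$) are exactly these ingredients spelled out.
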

\begin{proof}
This is proved as in Theorem 13 in \cite{GN20} or Theorem 5.13 in \cite{MNP21}, except that we need to quantify the convergence rate to zero on the r.h.s.~of (\ref{hellcont}). But this follows from tracking the bounds in the preceding proof and i) the exponential bound proved in Lemma \ref{expsmall} as well as ii) the exponential bounds for the relevant testing errors via Theorem 7.1.4 in \cite{GN16}.
\end{proof}

The next basic lemma is helpful to verify (\ref{smballgen}).

\begin{lem}\label{getreal}
Let $\mathcal B_N$ be as in (\ref{Been}) and suppose $\mathscr G$ verifies Condition \ref{ganzwien}a). Then for $u=2(U_\mathscr G^2 +1)$ we have $$\{\theta \in \Theta: \|\mathscr G(\theta)-\mathscr G(\theta_\true)\|^2_{L^2_\lambda} \le \delta^2_N/u\} \subset \mathcal B_N.$$
\end{lem}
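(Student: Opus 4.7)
The plan is direct computation: expand the log-likelihood ratio using the explicit Gaussian form (\ref{densities}), then bound each of the two moments in the definition (\ref{Been}) by a constant multiple of $\|\mathscr G(\theta)-\mathscr G(\theta_\true)\|_{L^2_\lambda}^2$, with constants chosen so that the factor $u=2(U_\mathscr G^2+1)$ absorbs them.

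First I would write, for $(Y,X) \sim P_{\theta_\true}$ with $Y = \mathscr G(\theta_\true)(X)+\varepsilon$, $\varepsilon \sim \mathcal N(0,I_V)$ independent of $X$,
\begin{equation*}
\log \frac{p_{\theta_\true}(Y,X)}{p_\theta(Y,X)} = \tfrac{1}{2}|\mathscr G(\theta_\true)(X)-\mathscr G(\theta)(X)|_V^2 + \langle \mathscr G(\theta_\true)(X)-\mathscr G(\theta)(X),\varepsilon\rangle_V,
\end{equation*}
which follows from expanding the squared norms in the definition of $p_\theta, p_{\theta_\true}$ and rearranging. Taking expectation and using $E[\varepsilon \mid X]=0$ gives $E_{\theta_\true}^1 \log(p_{\theta_\true}/p_\theta) = \tfrac{1}{2}\|\mathscr G(\theta)-\mathscr G(\theta_\true)\|_{L^2_\lambda}^2$, which is $\le \delta_N^2/(2u) \le \delta_N^2$ under our hypothesis since $u\ge 2$.

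For the second moment bound I would apply $(a+b)^2 \le 2a^2+2b^2$ to the two terms above. The first term is controlled using the uniform bound from Condition \ref{ganzwien}a): $|\mathscr G(\theta_\true)(X)-\mathscr G(\theta)(X)|_V^2 \le 4U_\mathscr G^2$ pointwise, which turns the fourth moment $E|\cdot|_V^4$ into at most $4U_\mathscr G^2 \|\mathscr G(\theta)-\mathscr G(\theta_\true)\|_{L^2_\lambda}^2$ (and a factor $\tfrac14$ from the square). For the second term, conditioning on $X$ and using $E_{\varepsilon}\langle v,\varepsilon\rangle_V^2 = |v|_V^2$ (because $\varepsilon \sim \mathcal N(0,I_V)$) gives exactly $\|\mathscr G(\theta)-\mathscr G(\theta_\true)\|_{L^2_\lambda}^2$. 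Summing yields
\begin{equation*}
E_{\theta_\true}^1\Big[\log \frac{p_{\theta_\true}}{p_\theta}\Big]^2 \le 2(U_\mathscr G^2+1)\|\mathscr G(\theta)-\mathscr G(\theta_\true)\|_{L^2_\lambda}^2 = u\|\mathscr G(\theta)-\mathscr G(\theta_\true)\|_{L^2_\lambda}^2 \le \delta_N^2,
\end{equation*}
which is precisely the second bound defining $\mathcal B_N$.

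There is no real obstacle here — the result is a standard Gaussian-regression KL computation and the lemma essentially amounts to choosing $u$ large enough to dominate the two constants ($1/2$ for the KL and $2(U_\mathscr G^2+1)$ for the second moment). The only small point worth being careful about is the uniform $L^\infty$ bound on $\mathscr G$ entering the fourth-moment estimate; this is exactly Condition \ref{ganzwien}a) and is why $U_\mathscr G$ appears in the definition of $u$.
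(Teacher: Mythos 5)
Your computation is correct and is exactly the argument behind the paper's proof, which simply cites Lemma 23 of \cite{GN20}: that lemma performs the same Gaussian log-likelihood-ratio expansion and bounds the Kullback--Leibler term by $\tfrac12\|\mathscr G(\theta)-\mathscr G(\theta_\true)\|_{L^2_\lambda}^2$ and the second moment by $2(U_\mathscr G^2+1)\|\mathscr G(\theta)-\mathscr G(\theta_\true)\|_{L^2_\lambda}^2$ using the uniform bound of Condition \ref{ganzwien}a) and $E_\varepsilon\langle v,\varepsilon\rangle_V^2=|v|_V^2$. Your constants match the choice $u=2(U_\mathscr G^2+1)$ in the statement, so nothing is missing.
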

\begin{proof}
See Lemma 23 in \cite{GN20} (with obvious modifications pertaining to the $V$-valued case).
\end{proof}

\subsubsection{Metric entropy estimates}

The following standard result controls metric entropies of balls in the spaces $\tilde H^\alpha$, relevant in our proofs. We include a proof for completeness.

\begin{prop}\label{mettop}
Let $\mathcal H(\alpha, B)$ be a norm-ball of radius $B$ in $\tilde H^\alpha(\mathcal Z, W), \alpha > 0$. Suppose that (\ref{specdim}) holds for some $d \in \mathbb N$. Then the $\epsilon$-log-covering numbers of $\mathcal H(\alpha, B)$ for the $L^2_\zeta(\mathcal Z, W)$-distance satisfy
\begin{equation}\label{mettopa}
\log N(\mathcal H(\alpha,B), \|\cdot\|_{L_\zeta^2}, \epsilon) \le \Big(\frac{A B}{\epsilon} \Big)^{d/\alpha},~~0 < \epsilon < AB,
\end{equation}
where $A=A(d,\alpha, b_1, d_W)<\infty$ is a fixed constant. Moreover, assuming also (\ref{unifef}), we have for all $D \in \mathbb N, \bar \tau = d(\tau+(1/2))/\alpha$ and some $A'=A'(A)<\infty$
\begin{equation}\label{mettopb}
\log N(\mathcal H(\alpha,B) \cap E_D, \|\cdot\|_{\infty}, \epsilon) \le D^{\bar \tau}\Big(\frac{A' B}{\epsilon} \Big)^{d/\alpha},~~0 < \epsilon < A'B.
\end{equation}
\end{prop}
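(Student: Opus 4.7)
The plan is to prove Proposition \ref{mettop} in two steps: first establish the infinite-dimensional bound (\ref{mettopa}) by reducing to a standard entropy estimate for compact diagonal operators on $\ell^2$, and then deduce the finite-dimensional $L^\infty$-version (\ref{mettopb}) from (\ref{mettopa}) combined with a Nikolskii-type inequality.

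For part (\ref{mettopa}), I would identify $\mathcal H(\alpha, B)$, via Parseval's theorem, with the ellipsoid
\begin{equation*}
\mathcal E(\alpha, B) = \Big\{(c_{n,i}) \in \ell^2: \sum_{n, i} \lambda_n^\alpha |c_{n,i}|^2 \le B^2\Big\},
\end{equation*}
i.e.\ the image of the $\ell^2$-ball of radius $B$ under the diagonal operator $T$ with singular values $\sigma_n = \lambda_n^{-\alpha/2}$ (each repeated $d_W$ times). By (\ref{specdim}) these satisfy $\sigma_n \lesssim n^{-\alpha/d}$. A classical theorem on entropy numbers of such compact diagonal Hilbert-space operators (see, e.g., the books of Carl--Stephani or Edmunds--Triebel) then yields $e_k(T) \lesssim k^{-\alpha/d}$, which by the definition of the entropy number translates immediately into
\begin{equation*}
\log N\bigl(\mathcal E(\alpha, B), \|\cdot\|_{\ell^2}, \epsilon\bigr) \lesssim (B/\epsilon)^{d/\alpha}
\end{equation*}
for all $0 < \epsilon < AB$, and (\ref{mettopa}) follows after absorbing constants (depending on $b_1, \alpha, d, d_W$) into $A$.

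For part (\ref{mettopb}) the plan is to first establish the Nikolskii-type inequality
\begin{equation*}
\|f\|_\infty \lesssim D^{\tau + 1/2} \|f\|_{L^2_\zeta}, \quad f \in E_D,
\end{equation*}
and then compose with part (\ref{mettopa}). This inequality follows by expanding $f = \sum_{n \le D_W, i \le d_W} f_{n,i} e_{n,i}$, using the pointwise bound $\|e_{n,i}\|_\infty \lesssim D^\tau$ from (\ref{unifef}), and applying Cauchy--Schwarz on the at most $D$ summands:
\begin{equation*}
\|f\|_\infty \lesssim D^\tau \sum_{n,i} |f_{n,i}| \le D^\tau \sqrt{D} \Big(\sum_{n,i} |f_{n,i}|^2\Big)^{1/2} = D^{\tau + 1/2} \|f\|_{L^2_\zeta}.
\end{equation*}
Consequently any $L^2_\zeta$-covering of $\mathcal H(\alpha, B)$ of radius $\epsilon_0 \simeq \epsilon D^{-(\tau+1/2)}$ yields an $L^\infty$-covering of $\mathcal H(\alpha, B) \cap E_D$ of radius $\epsilon$. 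Plugging this radius into (\ref{mettopa}) gives
\begin{equation*}
\log N\bigl(\mathcal H(\alpha, B) \cap E_D, \|\cdot\|_\infty, \epsilon\bigr) \le \Bigl(\frac{A B D^{\tau+1/2}}{\epsilon}\Bigr)^{d/\alpha} = D^{\bar \tau}\Bigl(\frac{A' B}{\epsilon}\Bigr)^{d/\alpha}
\end{equation*}
with $\bar \tau = d(\tau + 1/2)/\alpha$ and $A'$ proportional to $A$, as required.

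The main obstacle is the sharp exponent in (\ref{mettopa}): a direct truncation-plus-volumetric-covering argument on the first $N_\epsilon \simeq (B/\epsilon)^{d/\alpha}$ modes produces an unwanted factor $\log(B/\epsilon)$, and its removal requires either invoking the Pietsch/Carl--Stephani entropy-number calculus for diagonal operators or executing a carefully balanced dyadic construction in which both the approximation error budget and the cover granularity shrink geometrically along dyadic blocks of Fourier modes. Once (\ref{mettopa}) is in hand, part (\ref{mettopb}) is a routine consequence of (\ref{unifef}).
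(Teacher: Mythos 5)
Your proposal is correct, and part (\ref{mettopb}) is essentially identical to the paper's argument: the same Nikolskii-type bound $\|f-g\|_\infty \lesssim D^{\tau+1/2}\|f-g\|_{L^2_\zeta}$ on $E_D$, obtained from (\ref{unifef}), Parseval and Cauchy--Schwarz over the at most $D$ active modes, followed by rescaling the radius in (\ref{mettopa}). For part (\ref{mettopa}) you take a genuinely different route: you identify $\mathcal H(\alpha,B)$ with the ellipsoid which is the image of the $\ell^2$-ball under the diagonal operator with singular values $\lambda_n^{-\alpha/2}\lesssim n^{-\alpha/d}$ (each repeated $d_W$ times) and quote the classical entropy-number asymptotics $e_k \asymp k^{-\alpha/d}$ for such operators (Carl, Carl--Stephani, Edmunds--Triebel), whereas the paper proves the bound from scratch by a balanced dyadic construction: truncate at frequency $2^{\ell_0}$ with $2^{\ell_0}\simeq (B/\epsilon)^{d/\alpha}$, split the low frequencies into dyadic blocks $N_l$, cover each rescaled block of coefficients by a Euclidean $\epsilon'_l$-net with geometrically varying radii, and sum the block entropies --- exactly the ``carefully balanced dyadic construction'' you mention as the alternative to the operator calculus. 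Both arguments are sound; your citation-based route is shorter, while the paper's self-contained version has the advantage of exhibiting explicitly that the constant depends only on $(d,\alpha,b_1,d_W)$, which is part of the statement, so if you go the operator-theoretic way you should note that the constant in $e_k(T)\lesssim k^{-\alpha/d}$ inherits exactly this dependence and that the $d_W$-fold multiplicity of the singular values and the passage from entropy numbers (covering by $2^{k-1}$ balls) to log-covering numbers only change $A$ by fixed factors. You are also right that a naive truncation-plus-volumetric count loses a logarithmic factor; identifying and avoiding that loss is the only real subtlety in this proposition.
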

\begin{proof}
We set $d_W=1$, the general case is the same up to adjusting constants. Also a standard scaling argument for norms allows to restrict to $B=1$. Let us write $f_n=\langle f, e_n \rangle_{L^2_\zeta}$ for the `Fourier' coefficients in the basis $e_n$ throughout the proof. We first prove (\ref{mettopa}):  For any $f$ contained in $\mathcal H(\alpha,1) =\big\{f: \sum_{n} \lambda_{n}^{\alpha} f_n^2 \le 1 \big\},$ using (\ref{specdim}) and Parseval's identity,  we can estimate the error of $L^2_\zeta$-approximation $P_{2^{\ell_0}}f$ of $f$ from all frequencies up to the largest integer $n < 2^{\ell_0}$, as
\begin{align}
\|f-P_{2^{\ell_0}}f\|_{L^2_\zeta}^2 &= \sum_{n \ge 2^{\ell_0}} f_n^2 \lambda_n^{\alpha} \lambda_n^{-\alpha}  \le b_1^{-\alpha} 2^{-2\alpha \ell_0/d} \le (\epsilon/4)^2
\end{align}
where we have chosen $\ell_0$ as 
\begin{equation}\label{ell0}
\ell_0 =  \frac{d}{\alpha}\log_2\Big( \frac{4 \cdot 2^{(\alpha/d)+1} b_1^{-\alpha/2}}{\epsilon}\Big)
\end{equation}
The remaining indices $n:0 \le n < 2^{\ell_0}$ are now decomposed into dyadic brackets $$N_l = \{n: 2^{l-1}\le n \le 2^l-1\}, ~l=1, \dots, ;~ |N_l|= 2^{l-1},$$ where by convention we set $N_0=\{0\}$. Thus using Parseval's identity, (\ref{specdim}) and the preceding estimate we can bound, for any $f,g \in \mathcal H(\alpha,1)$, 
\begin{align*}
\|f-g\|_{L^2_\zeta} &\le \sqrt{\sum_{0 \le l \le \ell_0+1} \sum_{n \in N_l} \lambda_n^{-\alpha} \lambda_n^{\alpha}(f_{n}-g_{n})^2} + 2\epsilon/4 \\
&\leq 2^{\alpha/d} b_1^{-\alpha/2} \sum_{0 \le l \le \ell_0+1} 2^{-l\alpha/d} \sqrt{\sum_{n \in N_l}  (\lambda_n^{\alpha/2} f_{n}- \lambda_n^{\alpha/2} g_{n})^2} + \epsilon/2.
\end{align*}
Now by definition of $\mathcal H(\alpha, 1)$, for every $l \in \mathbb N \cup \{0\}$ the vectors $\{\lambda_n^{\alpha/2} f_{n}: n \in N_l\},~\{\lambda_n^{\alpha/2} g_{n}: n \in N_l\}$ lie in the unit ball of a Euclidean space of dimension $|N_l| \leq  2^l$. The Euclidean $\epsilon'$-covering numbers of such a unit ball $B_l$ for the standard Euclidean norm are $$N(l)\equiv N(B_l, \|\cdot\|_{\mathbb R^{|N_l|}}, \epsilon') \le (3/\epsilon')^{2^l},~0<\epsilon'<1,$$ (see, e.g., Proposition 4.3.34 in \cite{GN16}). By choosing $\epsilon'_l = 2^{l ((\alpha/d)+1)} 2^{-\ell_0 ((\alpha/d)+1)}$-coverings of $B_l$ for each $l$ centred at points $(f_{n,l,i}: n \in N_l, l \le \ell_0+1)$, and setting $f_{n,l,i} =0$ for $n \ge 2^{\ell_0+1}$, we obtain a covering $$\Big ( \bar f^{(i)}= \lambda_n^{-\alpha/2}f_{n,l,i}: i=1, \dots, \prod _{0 \le l \le \ell_0+1}N(l)\Big)$$ of $\mathcal H(\alpha,1)$ of radius bounded by
\begin{align*}
\|f-\bar f^{(i)}\|_{L^2_\zeta} &\le \frac{\epsilon}{2} + 2^{\alpha/d} b_1^{-\alpha/2} \sum_{0 \le l \le \ell_0+1} 2^{-l\alpha/d}2^{l ((\alpha/d)+1)} 2^{-\ell_0 ((\alpha/d)+1)} \\
& \le \frac{\epsilon}{2} + 2^{(\alpha/d)+2} b_1^{-\alpha/2} 2^{-\alpha \ell_0/d}  \le \epsilon,
\end{align*}
by choice of $\ell_0$. We thus obtain, for some $c'=c'(\alpha, d)>0$,
\begin{align*}
\log_2 N(\mathcal H(\alpha,1), \|\cdot\|_{L^2_\zeta}, \epsilon) &\le \sum_{0 \le l \le \ell_0+1} \log_2 N(l) \leq  \sum_{0 \le l \le \ell_0+1} 2^{l}\Big[ \log_2 3 + (\ell_0-l)\big(\frac{\alpha}{d}+1\big)\Big] \\
&\le c' 2^{\ell_0} \le (A/\epsilon)^{d/\alpha}
\end{align*}
so that (\ref{mettopa}) follows. To prove (\ref{mettopb}) we estimate, for $f, g \in E_D$, and using Parseval's identity as well as (\ref{unifef}),
$$\|f-g\|_\infty = \sup_z \left|\sum_{n \le D} (f_{n}-g_{n}) e_{n}(z) \right| \le \|f-g\|_{L_\zeta^2} \|e_{n}\|_\infty  \sqrt{\sum_{n \le D}1} \lesssim D^{\tau+1/2} \|f-g\|_{L_\zeta^2},$$ so that the result follows from (\ref{mettopa}) after adjusting the constant $A$.
\end{proof}

\subsubsection{A concentration inequality for empirical processes}

The following is Lemma 3.12 from \cite{NW20}. The result is formulated for countable $\Theta$; when applied in the context of this article, the relevant suprema can all be shown to be attained as countable suprema.

\begin{lem}\label{mixchain}
	Let $\Theta$ be countable. Suppose a class of real-valued measurable functions $$\mathcal H=\{h_\theta: \mathcal X \to \mathbb R, \theta \in \Theta\}$$ defined on a probability space $(\mathcal X, \mathscr X, P^X)$ is uniformly bounded by $U\ge \sup_\theta \|h_\theta\|_\infty$ and has variance envelope $\sigma^2 \ge \sup_\theta E^Xh_\theta^2(X)$ where $X \sim P^X$. Define metric entropy integrals $$J_2(\mathcal H) = \int_0^{4\sigma} \sqrt{\log N(\mathcal H, d_2,\rho)}d\rho,~~d_2(\theta,\theta'):=\sqrt{E^X[h_\theta(X)-h_{\theta'}(X)]^2},$$
	$$J_\infty(\mathcal H) = \int_0^{4U} \log N(\mathcal H, d_\infty,\rho) d\rho,~~d_\infty(\theta,\theta'):=\|h_\theta-h_{\theta'}\|_{\infty}.$$
	For $N \in \mathbb N$ and $X_1, \dots, X_N$ drawn i.i.d.~from $P^X$ and $\varepsilon_i \sim^{\iid} \mathcal{N}(0,1)$ independent of all the $X_i$'s, consider empirical processes arising either as 
	\begin{equation}\label{multipl}
	Z_{N}(\theta)=\frac{1}{\sqrt N}\sum_{i=1}^N h_\theta(X_i)\varepsilon_i,~~\theta \in \Theta,~\text{ or as }
	\end{equation}
	 \begin{equation} \label{centp}
	 Z_N(\theta)=\frac{1}{\sqrt N}\sum_{i=1}^N (h_\theta(X_i)-Eh_\theta(X)), ~~\theta \in \Theta.
	 \end{equation}
	  We then have for some universal constant $L>0$ and all $x\ge 1$,
	$$\mathrm{Pr} \left(\sup_{\theta \in \Theta}|Z_{N}(\theta)| \ge L \Big[J_2(\mathcal H) + \sigma \sqrt{x} + (J_\infty(\mathcal H) +Ux)/\sqrt N   \Big] \right) \le 2e^{-x}.$$ We also record here the `pointwise' bound for singleton $\mathcal H = \{h_\theta\}$, and $Z_N\equiv Z_N(\theta)$,
	\begin{equation} \label{bernie}
	\mathrm{Pr} \left(|Z_{N}| \ge L \sigma \sqrt{x} + LUx/\sqrt N   \right) \le 2e^{-x}.
	\end{equation}
	\end{lem}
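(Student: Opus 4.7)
The plan is to split the result into two standard pieces: (i) a concentration inequality that controls $\sup_\theta |Z_N(\theta)|$ around its mean with both subgaussian and Bernstein corrections, and (ii) an upper bound on the expected supremum by a Bernstein-type generic chaining estimate. Both the multiplier form (\ref{multipl}) and the centered empirical form (\ref{centp}) admit the same two-step treatment, differing only in the source of the concentration statement. The pointwise bound (\ref{bernie}) drops out immediately from Bernstein's inequality applied to the independent sum $Z_N$, whose summands have variance at most $\sigma^2$ and envelope $\lesssim U$; it also serves as the building block for the chaining step.

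\textbf{Step 1 (concentration around the mean).} For the centered case (\ref{centp}) I would invoke Talagrand's inequality in Bousquet's form (e.g.~Theorem 3.3.9 of \cite{GN16}) applied to the uniformly bounded, mean-zero class $\{h_\theta - E h_\theta\}$, giving
\begin{equation*}
\P\Big(\sup_{\theta\in\Theta} Z_N(\theta) \ge E\sup_{\theta\in\Theta} Z_N(\theta) + c_1\sigma\sqrt{x} + c_1 U x / \sqrt{N}\Big) \le e^{-x},
\end{equation*}
and analogously for $-Z_N$. For the multiplier case (\ref{multipl}) I would instead condition on $(X_i)_{i=1}^N$, apply Borell's Gaussian concentration inequality to the (conditionally) centered Gaussian process with parameter $\hat\sigma_N := \sup_\theta \bigl(N^{-1}\sum_i h_\theta^2(X_i)\bigr)^{1/2}$, and then run a second Talagrand bound on the bounded empirical process $\{h_\theta^2\}$ to replace $\hat\sigma_N$ by $\sigma$ up to a correction of the same Bernstein shape. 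Either route produces a tail bound of the advertised form around the expected supremum.

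\textbf{Step 2 (expected supremum).} Symmetrization and the contraction principle reduce both cases to controlling a Rademacher process $N^{-1/2}\sum_i \epsilon_i h_\theta(X_i)$, whose increments between $\theta$ and $\theta'$ enjoy a mixed Bernstein-type tail with a subgaussian part of scale $d_2(\theta,\theta')$ and a subexponential part of scale $d_\infty(\theta,\theta')/\sqrt{N}$. A Bernstein-type generic chaining estimate (Dirksen's inequality, or equivalently the Adamczak/van de Geer bound reproduced in \textsection 3.5 of \cite{GN16}) then yields
\begin{equation*}
E\sup_{\theta\in\Theta} |Z_N(\theta)| \le c_2 J_2(\mathcal H) + c_2 J_\infty(\mathcal H)/\sqrt{N}.
\end{equation*}
Combining Steps 1 and 2, applying the resulting bound to $\pm Z_N$, and absorbing absolute constants into $L$ then delivers the stated inequality.

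\textbf{Main obstacle.} The delicate point is the chaining estimate of Step 2: Dudley's classical integral with respect to $d_2$ alone would miss the $J_\infty/\sqrt{N}$ term, while chaining in $d_\infty$ alone would lose the sharp $\sigma$-dependence. The correct argument splits the chain at the crossover scale $U/\sqrt{N}$ (where the subgaussian and Bernstein tails of the increments balance), performing Bernstein chaining with $d_2$-balls above and $d_\infty$-balls below, and verifying that the two partial sums recombine precisely to $J_2 + J_\infty/\sqrt{N}$. The remaining ingredients (Talagrand/Bousquet, Borell, symmetrization, contraction) are used as black-box tools.
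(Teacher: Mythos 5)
Your plan is sound, but note first that the paper does not actually prove this lemma: it is quoted verbatim as Lemma 3.12 of \cite{NW20}, so the only ``proof'' given here is a citation, and your sketch should be judged as a reconstruction of that external argument. As such it is essentially correct and follows the standard modern route: the pointwise bound \eqref{bernie} is plain Bernstein, the centred case \eqref{centp} follows from Bousquet's form of Talagrand's inequality around the mean together with an expected-supremum bound of the form $E\sup_\theta|Z_N(\theta)|\lesssim J_2(\mathcal H)+J_\infty(\mathcal H)/\sqrt N$, and the latter is exactly a mixed-tail (Bernstein-type) chaining bound, since the increments have a sub-Gaussian component at scale $d_2(\theta,\theta')$ and a sub-exponential component at scale $d_\infty(\theta,\theta')/\sqrt N$; your identification of the crossover scale as the delicate point is accurate.

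Two caveats on the multiplier case \eqref{multipl}, where your Step 1 is the least developed part. First, conditional Borell concentration centres the supremum at the \emph{random} quantity $E_\varepsilon\sup_\theta|Z_N(\theta)|$, not at the deterministic expectation bounded in Step 2; the conditional Dudley integral is taken with respect to the empirical $L^2$ metric $d_{2,N}$, and converting it into $J_2(\mathcal H)+J_\infty(\mathcal H)/\sqrt N$ requires the same crossover/random-entropy argument you defer to Step 2 (in addition to the Talagrand bound replacing $\hat\sigma_N$ by $\sigma$), so this needs to be spelled out rather than treated as automatic. Second, ``symmetrization and the contraction principle'' do not reduce a Gaussian-multiplier process to a Rademacher one --- the comparison goes in the other direction; one should either chain the Gaussian-multiplier process directly or invoke a multiplier inequality (Gin\'e--Zinn type, as in \cite{GN16}). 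The cleanest repair, which also subsumes your Step 1 entirely, is to verify the two-metric Bernstein tail for the increments of \emph{both} processes (for \eqref{multipl} by integrating the conditional Gaussian tail against a Bernstein bound for the random variance) and then apply a generic-chaining tail bound for mixed-tail processes (Dirksen's theorem, which you already name): this yields the deviation terms $\sigma\sqrt x+Ux/\sqrt N$ and the entropy terms in one stroke, with the diameters controlled by $2\sigma$ and $2U$, and gives \eqref{bernie} as the one-point case.
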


\section{Proofs for Section \ref{naxray}} \label{finalsection}

Based on Theorem \ref{zernikestability}, \ref{zernikeforward} (and also the global stability estimate from \cite{MNP21}), and loosely following the corresponding work in 
 \cite[\textsection 4]{NW20} for the Schr{\"o}dinger equation, 
we verify in this section that the non-Abelian X-ray transform with identifications from (\ref{thetaxray}) satisfies Conditions \ref{linsp} up to  \ref{bock}  with
\begin{equation}
d=2,\quad \tau= 1/4,\quad \alpha> 5,\quad \gamma = 1-1/\alpha,
\end{equation}
as well as
\begin{equation} \label{stepsizex}
\eta = D^{-2}/\log N, \quad  \kappa_0 =1/2,\quad \kappa_1=0 \quad \text{ and } \quad \kappa_2=3/2,
\end{equation}
for all $N$ large enough, see also Remark \ref{ponytamer}.

\begin{proof}[Proof of Theorems \ref{xraypony} and \ref{xraypony2}] Granted Conditions \ref{linsp} up to  \ref{bock} are verified, the general theory from \textsection \ref{genth} and \cite[\textsection 3, Remark 3.11]{NW20} becomes available.  Specifically Theorem \ref{xraypony} follows directly from Theorem \ref{onetrickpony},  when using $\theta_\init=\theta_{\true,D}$ as base point.
Further,  Theorem \ref{xraypony2} follows from Theorem 3.7 in \cite{NW20} in conjunction with Lemma \ref{youtube} and again Theorem \ref{onetrickpony} -- now with base point $\theta_\init$ satisfying the hypothesis of Theorem \ref{xraypony2} -- and with stepsizes $\gamma$ chosen as in (\ref{stepsize}) for corresponding choices of $\bar m$ and $\Lambda$.
\end{proof}

\subsection{Real-valued setting and approximation spaces}  \label{realval} 

An orthonormal basis of $L^2(\DD,\R)$ can be obtained by taking real and imaginary parts of Zernike polynomials, noting that \eqref{zsymmetry} reveals redundant polynomials that can be dropped; precisely we put
\begin{equation}
W_{nk}=\begin{cases}
\sqrt 2 \Re Z_{nk} & n-2k> 0\\
Z_{nk} & n-2k=0\\
\sqrt 2 \Im Z_{n,n-k} & n-2k<0
\end{cases},\quad \hat W_{nk} = \sqrt{\frac{n+1}{\pi}}\cdot W_{nk} ,
\end{equation}
for indices $n\in \N_0$ and $k\in \Z$ with $0\le k \le n$.  
Ordering the double indices as $(0,0),(1,0),(1,1),(2,0),\dots,(n_\ell,k_\ell),\dots$ (i.e.\,rearranged in a pyramid scheme)  we can pass to a single index $\ell\in \N_0$ for the basis.  Then the pair
\begin{equation}\label{singularvalues}
e_\ell = \hat W_{n_\ell,k_\ell}\quad \text{ and } \quad \lambda_\ell = \left(\frac{1+n_\ell}{4\pi}\right)^2 \qquad (\ell\in\N_0)
\end{equation}
obeys
\begin{equation}\label{singleindexbound}
\sup_{0\le \ell \le D} \Vert e_\ell \Vert_{L^\infty(\DD)} \lesssim D^{1/4}\quad\text{ and } \quad   \ell \lesssim \lambda_\ell\lesssim \ell,
\end{equation}
such that the asymptotic conditions \eqref{unifef} and \eqref{specdim} in Conditions \ref{linsp} are satisfied with parameters $\tau=1/4$ and $d=2$ respectively. Indeed,  \eqref{singleindexbound} follows from \eqref{znorms}, \eqref{zsymmetry} and the asymptotic equality $n_\ell \sim \ell^{1/2}$. 

\smallskip

In order to define the finite dimensional approximation spaces $E_D$, we write $d_m =  \dim \so(m)$  $\equiv m(m-1)/2$ and $D_m = \lfloor D/d_m \rfloor$ and  fix a (Frobenius-)orthonormal basis $(A_i:1\le i \le d_m)$ of $\so(m)$. Then 
\begin{equation}\label{eddef2}
E_D=E_D(\DD,\so(m)) =\left \{\Phi = \sum_{\ell =0}^{D_m}\sum_{i=1}^{d_m}  \phi_{\ell,i}  e_\ell  A_i: \phi_{\ell,i} \in \R \right\}\subset C^\infty(\bar \DD,\so(m))
\end{equation}
is a subspace of dimension $D_md_m\approx D$ as in \eqref{eddef3}. 
If $D =  d_m \cdot (D'+1)(D'+2)/2$ for some $D'\in \N_0$, then $\mathrm{span} \{e_\ell: \ell \le D_m\}\subset C^\infty(\DD,\R)$ consists of polynomials of degree $\le D'$, and the earlier definition in \eqref{eddef} can be seen to coincide with the present one by comparing dimensions.

\smallskip

Note that the scale $\tilde H^s(\DD,\R)$ from \eqref{scale} coincides with the intersection $\tilde H^s(\DD,\C)\cap L^2(\DD,\R)$, where the complex Zernike scale is as in \eqref{zscaledef}. Similarly the analogous scale of $\so(m)$-valued maps is obtained as $\tilde H^s(\DD,\so(m))=\tilde H^s(\DD,\u(m))\cap L^2(\DD,\so(m))$. 

\smallskip

On $E_D$ the norms of all function spaces are comparable and we record here some inequalities that are frequently used below. Recall from \textsection 3.2 that $\Vert \cdot \Vert_{E_D}$ is just the $L^2(\DD)$-norm, restricted to $E_D$.

\begin{lem} \label{normcomparison} For $h\in E_D$ ($D\in \N$) and $\alpha \ge 0$ we have:
\begin{align}
\Vert h \Vert_{L^\infty(\DD)} &\lesssim D^{3/4} \cdot \Vert h \Vert_{E_D}\\
\Vert h \Vert_{\tilde H^\alpha(\DD)} &\lesssim D^{\alpha/2} \cdot \Vert h \Vert_{E_D} \\
\Vert h \Vert_{E_D} &\lesssim 
D^{\alpha/2}\cdot \Vert h \Vert_{\tilde H^{-\alpha}(\DD)} 
\end{align}
\end{lem}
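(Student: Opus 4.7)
The plan is to reduce all three inequalities to linear-algebraic computations on the Fourier coefficients $\phi_{\ell,i}$ of $h = \sum_{\ell \le D_m, 1\le i \le d_m} \phi_{\ell,i}\, e_\ell A_i \in E_D$. Because $(A_i)$ is Frobenius-orthonormal in $\so(m)$ and $(e_\ell)$ is orthonormal in $L^2(\DD,\R)$, the Sobolev norm $\Vert \cdot \Vert_{\tilde H^s(\DD,\so(m))}$ decouples into a single spectral sum
\begin{equation*}
\Vert h \Vert_{\tilde H^s(\DD)}^2 = \sum_{\ell\le D_m,\, i\le d_m} \lambda_\ell^s\, \phi_{\ell,i}^2, \qquad \Vert h \Vert_{E_D}^2 = \sum_{\ell,i}\phi_{\ell,i}^2,
\end{equation*}
valid for $s\in \R$ (the $s<0$ case uses that $h\in L^2$ and that the duality definition of $\tilde H^s$ coincides with the spectral sum on $L^2$-elements, by the spectral theorem for the self-adjoint operator $\L$). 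The cutoff $\ell \le D_m \lesssim D$ together with the two-sided asymptotic $\lambda_\ell \asymp 1+\ell$ from \eqref{singleindexbound} is the only spectral input needed for (ii) and (iii).

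For inequality (ii), I would bound $\lambda_\ell^\alpha \lesssim (1+\ell)^\alpha \lesssim D^\alpha$ uniformly for $\ell \le D_m$ and factor this out of the sum to get $\Vert h\Vert_{\tilde H^\alpha}^2 \lesssim D^\alpha \Vert h \Vert_{E_D}^2$. Inequality (iii) is the dual statement: $\lambda_\ell^{-\alpha} \gtrsim D^{-\alpha}$ uniformly for $\ell \le D_m$ yields $\Vert h \Vert_{\tilde H^{-\alpha}}^2 \gtrsim D^{-\alpha}\Vert h \Vert_{E_D}^2$, which rearranges to the claimed estimate.

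For inequality (i), I would combine pointwise Cauchy--Schwarz with the uniform Bernstein-type bound $\max_{\ell \le D_m}\Vert e_\ell\Vert_{L^\infty(\DD)} \lesssim D^{1/4}$ from \eqref{singleindexbound}. Frobenius-orthonormality of $(A_i)$ gives
\begin{equation*}
|h(x)|_F^2 = \sum_{i=1}^{d_m}\Bigl(\sum_{\ell\le D_m} \phi_{\ell,i}\, e_\ell(x)\Bigr)^2 \le D_m \Bigl(\max_{\ell}\Vert e_\ell \Vert_\infty^2\Bigr) \sum_{\ell, i}\phi_{\ell,i}^2 \lesssim D \cdot D^{1/2}\cdot \Vert h\Vert_{E_D}^2,
\end{equation*}
producing the factor $D^{3/2}$ whose square root is $D^{3/4}$. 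Taking the supremum over $x\in\DD$ finishes the argument.

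There is no real obstacle here: every ingredient is already recorded in \eqref{singleindexbound} and in the definition of $E_D$ in \eqref{eddef2}. The only point requiring mild care is the justification, in (iii), that the dual-space definition of $\tilde H^{-\alpha}$ reduces to the spectral sum on $L^2$-elements such as $h\in E_D$; this is standard and follows from $\L$ being self-adjoint with eigenbasis $(e_\ell)$.
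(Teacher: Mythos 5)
Your proposal is correct and follows essentially the same route as the paper: expand $h$ in the orthonormal system $(e_\ell A_i)$, use $\lambda_\ell \lesssim \ell \lesssim D$ (and its reciprocal) for the two spectral inequalities, and combine Cauchy--Schwarz with the bound $\Vert e_\ell\Vert_{L^\infty} \lesssim D^{1/4}$ and $D_m\lesssim D$ to obtain the $D^{3/4}$ factor in the sup-norm estimate. The only cosmetic difference is that you apply Cauchy--Schwarz coordinatewise in $i$ while the paper applies the triangle inequality to the $\so(m)$-valued coefficients $h_\ell$ before Cauchy--Schwarz in $\ell$, which is equivalent.
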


\begin{proof} 
Let $h_{\ell,i}:=\langle h, e_\ell A_i\rangle_{L^2(\DD)}\in \R$ 
be the coefficients of $h$ and write $h_\ell = \sum_{i=1}^{d_m} h_{i\ell} A_i \in \so(m)$. Then for 
the first inequality we use Cauchy-Schwarz and \eqref{singleindexbound} to obtain
\begin{equation*}
\Vert h \Vert_{L^\infty} \le \sum_{0\le \ell \le D_m} \vert h_\ell\vert_F \cdot \Vert e_\ell \Vert_{L^\infty} \le \left( \sum_{\ell=0}^{D_m} \Vert e_\ell \Vert^2_{L^\infty(\DD)}\right)^{1/2} \cdot \Vert h \Vert_{E_D} \lesssim D^{3/4} \Vert h \Vert_{E_D},
\end{equation*}
noting that $D_m\lesssim D$.
For the second inequality use the upper bound on the eigenvalue $\lambda_\ell$ in \eqref{singleindexbound} to obtain
\begin{equation}
\Vert h \Vert_{\tilde H^\alpha(\DD)}^2 = \sum_{\ell=0}^{D_m} \lambda_\ell^\alpha \vert h_\ell \vert_{F}^2 \lesssim D^\alpha \Vert h \Vert_{E_D}^2,
\end{equation}
which implies the result. The last inequality follows similarly.
\end{proof}

\subsection{Global conditions for log-concave approximation}
We now show that the non-Abelian X-ray transform $C:\Theta\rightarrow L^2_\lambda(\partial_+S\bar \DD)$ (given in \eqref{yetagain}, with $\Theta$ as in \eqref{thetaxray}) satisfies the `global' Condition \ref{ganzwien}. 
\smallskip
\begin{enumerate}[label=\alph*)]
\item The {\it uniform boundedness property} is a direct consequence of $C_\Phi$ taking values in the compact group $SO(m)$; precisely we have
\begin{equation} \label{globalbound}
\sup_{\Phi \in \Theta} \Vert C_\Phi \Vert_{L^\infty(\partial_+S\bar \DD)} \le \sqrt m.
\end{equation}
\item The {\it global Lipschitz property} is a special case of Theorem 2.2 in \cite{MNP21}, again the involved constants only depend on $m$.
\end{enumerate}

\smallskip 

Combining the stability estimate in \cite{MNP21} with our forward estimate in the Zernike scale (Theorem \ref{zernikeforward}) we also have\smallskip {\rm
\begin{itemize}
\item[c)] The {\it inverse continuity modulus property}  from Condition \ref{ganzwien} holds on $E_D$ for any $D\in \N_0$ and for any pair $(\alpha,\gamma)$ with  $\alpha>5/2$ and $\gamma \in (0,1-1/\alpha]$,
\end{itemize} }

\smallskip

by virtue of the following lemma.

\begin{lem} \label{inversecm} Suppose  $\Phi,\Psi\in C^\infty(\bar \DD,\so(m))$. Then for all $s>5/2$,
\begin{equation*}
\Vert \Phi - \Psi \Vert_{L^2(\DD)} \le \omega_s(\Vert \Phi \Vert_{\tilde H^s(\DD)} \vee \Vert \Psi \Vert_{\tilde H^s(\DD)}) \cdot \Vert C_\Phi - C_\Psi \Vert_{L^2(\DD)}^{1-1/s}
\end{equation*}
for a non-decreasing function $\omega_s:[0,\infty)\rightarrow [0,\infty)$ (of exponential growth), which only depends on $m$ and $s$. 
\end{lem}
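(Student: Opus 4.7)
The plan is to reduce to a linear estimate via the pseudolinearisation identity \eqref{pseudolinearisation}, invoke the $L^2$--$H^1$ stability of the linearised attenuated X-ray transform from \cite{MNP21}, and then bridge between the $L^2$- and $H^1$-bounds on $I_\Xi(\Phi-\Psi)$ by interpolation controlled via Theorem \ref{zernikeforward}. The non-linearity of $\Phi\mapsto C_\Phi$ is handled once and for all in the very first step.

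First I would apply \eqref{pseudolinearisation} to write $C_\Phi - C_\Psi = I_{\Xi}(\Phi-\Psi)\cdot C_\Psi$ with $\Xi = \Xi(\Phi,\Psi)$. Since $C_\Psi$ takes values in $SO(m)$, $\Vert C_\Psi\Vert_{L^\infty}\le \sqrt m$, and multiplying through by $C_\Psi^{-1}$ gives
$$\Vert I_\Xi(\Phi-\Psi)\Vert_{L^2_\lambda(\partial_+S\bar\DD)} \lesssim_m \Vert C_\Phi - C_\Psi\Vert_{L^2_\lambda(\partial_+S\bar\DD)}.$$
Next, the $L^2$--$H^1$ stability estimate \cite[Thm.~5.3]{MNP21}, applied one level higher to the commutator attenuation $\Xi \in \u(m^2)$, yields a bound of the form
$$\Vert \Phi-\Psi\Vert_{L^2(\DD)} \le c_1(1+\Vert \Xi\Vert_{C^1})e^{c_2\Vert\Xi\Vert_{C^1}}\cdot \Vert I_\Xi(\Phi-\Psi)\Vert_{H^1(\partial_+S\bar\DD)}.$$

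To close the gap between the $L^2_\lambda$-bound furnished by pseudolinearisation and the $H^1$-bound required by the stability estimate, I would interpolate: for $s > 1$,
$$\Vert I_\Xi(\Phi-\Psi)\Vert_{H^1(\partial_+S\bar\DD)} \le \Vert I_\Xi(\Phi-\Psi)\Vert_{L^2_\lambda(\partial_+S\bar\DD)}^{1-1/s}\cdot \Vert I_\Xi(\Phi-\Psi)\Vert_{H^s(\partial_+S\bar\DD)}^{1/s}.$$
The high-regularity factor is now exactly of the form controlled by the Zernike forward estimate Theorem \ref{zernikeforward} with $k=2\lceil s/2\rceil$ (which equals $4$ for $s>5/2$), giving
$$\Vert I_\Xi(\Phi-\Psi)\Vert_{H^s(\partial_+S\bar\DD)}\lesssim (1+\Vert\Xi\Vert_{C^k(\bar\DD)}^2)^k\,\Vert\Phi-\Psi\Vert_{\tilde H^s(\DD)}.$$
Composing the three estimates yields exactly the claimed H\"older bound with exponent $1-1/s$, and $\omega_s$ absorbs the combined polynomial and exponential dependence on $\Vert\Xi\Vert_{C^k}\lesssim \Vert\Phi\Vert_{C^k}+\Vert\Psi\Vert_{C^k}$ together with the factor $\Vert\Phi-\Psi\Vert_{\tilde H^s}^{1/s}\lesssim (\Vert\Phi\Vert_{\tilde H^s}+\Vert\Psi\Vert_{\tilde H^s})^{1/s}$.

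The main technical obstacle will be making the map $\Vert \Phi\Vert_{C^k}+\Vert\Psi\Vert_{C^k}\mapsto \omega_s(\Vert\Phi\Vert_{\tilde H^s}\vee\Vert\Psi\Vert_{\tilde H^s})$ rigorous, i.e.\ replacing the $C^1$- and $C^4$-norms that enter the stability factor and the forward estimate by a single, monotone and at most exponentially growing function of the Zernike norms alone. This requires a Sobolev-type embedding of the Zernike scale into the classical $C^k$ scale on $\bar\DD$, obtained by combining \eqref{zernikeembedding} with the standard two-dimensional embedding $H^{s'}\hookrightarrow C^k$ for $s'>k+1$; the threshold $s>5/2$ in the hypothesis is precisely what is needed to make this chain work once the forward estimate is applied with $k=4$. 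A secondary point is that \cite[Thm.~5.3]{MNP21} is formulated for $\u(m)$-valued attenuations, whereas $\Xi=[\Phi,\cdot]-[\cdot,\Psi]$ is valued in $\u(m^2)$; this extension is formal, cf.\ the remark at the end of \textsection\ref{boif}, but should be verified in passing.
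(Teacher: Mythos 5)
Your argument has the same skeleton as the paper's proof: the pseudolinearisation identity \eqref{pseudolinearisation} to identify $\Vert I_\Xi(\Phi-\Psi)\Vert_{L^2_\lambda(\partial_+S\bar\DD)}$ with $\Vert C_\Phi-C_\Psi\Vert_{L^2_\lambda(\partial_+S\bar\DD)}$ (in fact these are equal, since pointwise right multiplication by the orthogonal matrix $C_\Psi$ preserves the Frobenius norm), the $L^2$--$H^1$ stability estimate of \cite[Thm.~5.3]{MNP21} applied to the commutator attenuation one level higher, interpolation of $H^1(\partial_+S\bar\DD)$ between $L^2_\lambda$ and $H^s$, and Theorem \ref{zernikeforward} for the $H^s$-factor. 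Up to that point the two arguments coincide.

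The gap is in your final constant-tracking step, which is exactly the step the lemma's statement hinges on (the constant must be a function of $\Vert\Phi\Vert_{\tilde H^s}\vee\Vert\Psi\Vert_{\tilde H^s}$ alone). The chain you propose --- \eqref{zernikeembedding} combined with the classical embedding $H^{s'}(\DD)\hookrightarrow C^k(\bar\DD)$ for $s'>k+1$ --- loses a factor of two in regularity: it yields $\tilde H^s\hookrightarrow C^1$ only for $s>4$ and $\tilde H^s\hookrightarrow C^4$ only for $s>10$, so it covers neither the range $s\in(5/2,4]$ in which the lemma is asserted, nor the $C^4$-norms of $\Xi$ demanded by Theorem \ref{zernikeforward}. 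What the paper actually invokes is the sharper embedding $\tilde H^s(\DD)\hookrightarrow C^1(\bar\DD)$ for $s>5/2$ from \cite[Lem.~13, 14]{Mon20}, and the threshold $5/2$ is tied solely to the $C^1$-dependence of the stability constant in \cite[Thm.~5.3]{MNP21}, not --- as you claim --- to the regularity $k=4$ required by the forward estimate. The $C^4$-norms of $\Phi,\Psi$ are not dominated by their $\tilde H^s$-norms for any $s$ in the admissible range, so your assertion that ``$s>5/2$ is precisely what is needed once the forward estimate is applied with $k=4$'' is incorrect; in the paper's proof this higher-order dependence enters only through the $H^s$-factor raised to the power $1/s$, and the recorded constant is exponential in the $C^1$-norms times $(\Vert\Phi\Vert_{\tilde H^s}\vee\Vert\Psi\Vert_{\tilde H^s})^{1/s}$ --- it is not obtained by the embedding route you sketch. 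You need either to restrict/justify the treatment of that factor separately or to weaken the stated dependence of the constant; the embedding argument as written does not close this step.
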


\begin{proof}
Starting with the stability estimate from \cite[Thm.\,5.3]{MNP21} we have
\begin{equation*}
\Vert \Phi - \Psi \Vert_{L^2(\DD)} \le C_0(\Phi,\Psi) \cdot \Vert I_\Xi(\Phi - \Psi ) \Vert_{H^1(\partial_+S\bar \DD)},
\end{equation*}
where $\Xi:\DD\rightarrow \so(m^2)\subset \End(\R^{m\times m})$ is defined pointwise by $\Xi (A) = \Phi A - A \Psi$ (as below \eqref{pseudolinearisation}) and the constant $C_0(\Phi,\Psi)$ grows at most exponentially in the $C^1$-norms of $\Phi$ and $\Psi$. By the interpolation inequality for Sobolev spaces (see e.g. \cite[Lemma 7.4]{Boh20}), we can bound the right hand side in the previous display by
\begin{equation*}
\lesssim C_0(\Phi,\Psi) \cdot \Vert I_\Xi(\Phi - \Psi)  \Vert_{L^2_\lambda(\partial_+S\bar \DD)}^{1-1/s} \cdot \Vert I_\Xi(\Phi - \Psi)  \Vert_{H^s(\partial_+S\bar \DD)}^{1/s},
\end{equation*}
where $1<s<\infty$ can be chosen freely.
Now the first norm equals $\Vert C_\Phi C_\Psi^{-1} - I \Vert_{L^2(\DD)}= \Vert C_\Phi - C_\Psi \Vert_{L^2(\DD)}$ by the pseudo-linearisation identity (see \eqref{pseudolinearisation}) and the second norm can be estimated by means of Theorem \ref{zernikeforward} such that we have
\begin{equation*}
\Vert \Phi - \Psi \Vert_{L^2(\DD)} \le C_1(\Phi,\Psi,s) \cdot \Vert C_\Phi - C_\Psi \Vert_{L^2(\DD)}^{1-1/s},
\end{equation*}
with constant obeying
\begin{equation*}
C_1(\Phi,\Psi,s) \lesssim e^{c_1 \left( \Vert \Phi \Vert_{C^1(\bar \DD) } 
\vee \Vert \Psi \Vert_{C^1(\bar \DD)} \right) 
}
\cdot \left(\Vert \Phi \Vert_{\tilde H^s(\DD)} \vee \Vert \Psi \Vert_{\tilde H^s(\DD)}\right)^{1/s}.
\end{equation*} 
Choosing $s>5/2$,  the Zernike space $\tilde H^s(\DD)$ embeds into $C^1(\DD)$ \cite[Lem.\,13,14]{Mon20} and thus its norm dominates the $C^1$-norm. This concludes the result.
\end{proof}

\subsection{Local conditions for log-concave approximation} Next, we consider the non-Abelian X-ray transform on finite-dimensional Euclidean  balls 
\begin{equation}
\B(\eta,D)=\{\Phi \in E_D(\DD,\so(m)): \Vert \Phi  - \Phi_{\truep,D}\Vert_{E_D} \le \eta \},  \quad (D\in \N_0, \eta>0)
\end{equation}
where $\Phi_{\truep,D}$ is the $L^2$-orthogonal projection of $\Phi_\truep \in L^2(\DD,\so(m))$ onto $E_D$.

\smallskip

The first result concerns local regularity on $\B(\eta,D)$ and follows from our computations in Lemma \ref{derivatives} and standard forward estimates.  As a matter of fact, the bounds below are uniform in $\eta$ and can thus be formulated as global results.  To this end, we use the notation $C^{2,1}(E_D,\R^{m\times m})$ for the space of all twice differentiable maps $g:E_D\rightarrow \R^{m\times m}$ with finite norm
\begin{equation*}
\begin{split}
\Vert g \Vert_{C^{2,1}(E_D,\R^{m\times m})} =& \sup_{i=1,\dots,m^2}
\Big \{\Vert g^i \Vert_{L^\infty(E_D,\R)} + \Vert \nabla g^i \Vert_{L^\infty(E_D,\R^D,\vert \cdot \vert_{\R^D})}   \\
&+ \Vert \nabla^2 g^i \Vert_{L^\infty(E_D,\R^{D\times D},\vert \cdot \vert_{\mathrm{op}})} + \sup_{\Phi, \Psi \in E_D, \Phi\neq \Psi} \frac{\vert \nabla^2 g^i(\Phi) -\nabla^2 g^i(\Psi) \vert_{\mathrm{op}}}{\vert \Phi -\Psi\vert_{\R^D}} \Big\},
\end{split}
\end{equation*}
where $\nabla$ and $\nabla^2$ denote gradient and Hessian on $E_D$ and $g=(g^1,\dots,g^{m^2})$ in an orthonormal basis of $(\R^{m\times m},\vert \cdot \vert_F)$ (cf.\,\eqref{c2lip}).


\begin{lem}[Local regularity] 
Let $D\in \N_0$.  For fixed $(x,v)\in \partial_+S\bar \DD$, the map  $\Phi \mapsto C_\Phi(x,v)\in SO(m)$ belongs to the space $C^{2,1}(E_D,\R^{m\times m})$ and has norm 
\begin{equation}\label{locreg}
\sup_{(x,v)\in \partial_+S\bar \DD} \Vert C_\bullet (x,v) \Vert_{C^{2,1}(E_D,\R^{m\times m})} \le c_2 D^{3/2}
\end{equation}
for some constant $c_2=c_2(m)>0$. In particular Condition \ref{u4} is satisfied with $\kappa_2 =3/2$.
\end{lem}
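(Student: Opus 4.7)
The plan is to derive all four ingredients of the $C^{2,1}$-norm from the explicit formulas for the first two Gateaux derivatives of $\Phi \mapsto C_\Phi$ given in Lemma~\ref{derivatives}, together with the $L^\infty$-bounds \eqref{horse1}--\eqref{horse3} for integrating factors and solutions of transport equations, and then to convert $L^\infty(\DD)$-estimates into $E_D$-estimates via the Bernstein-type inequalities of Lemma~\ref{normcomparison}. All the auxiliary transport bounds are applied \emph{one level higher}, i.e.\ for the commutator attenuation $\Xi = \Xi(\Phi)=[\Phi,\cdot]\in \u(m^2)$ acting on $\C^{m\times m}$, as permitted by the remark after \eqref{horse3}.

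The three easier contributions to $\Vert C_\bullet(x,v)\Vert_{C^{2,1}}$ are handled as follows. The $L^\infty$-bound $\vert C_\Phi(x,v)\vert_F \le \sqrt m$ is immediate from $C_\Phi(x,v)\in SO(m)$. For the gradient, Lemma~\ref{derivatives} gives $\dot C_\Phi(x,v)[h] = (I_\Xi h)(x,v)\,C_\Phi(x,v)$, and the elementary forward estimate $\vert I_\Xi h(x,v)\vert_F \lesssim_m \Vert h \Vert_{L^\infty(\DD)}$ (obtained directly from the representation $I_\Xi h = R_\Xi \I(R_\Xi^{-1}h)$ and \eqref{horse1}) combined with $\Vert h\Vert_{L^\infty} \lesssim D^{3/4}\Vert h\Vert_{E_D}$ yields $\vert \nabla C_\Phi^i(x,v)\vert_{\R^D}\lesssim D^{3/4}$. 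For the Hessian, Lemma~\ref{derivatives} provides $\ddot C_\Phi(x,v)[h] = I_\Xi[hV_h](x,v)\,C_\Phi(x,v)$ with $V_h$ the unique solution of $(X+\Xi)V_h = -2h$, $V_h\vert_{\partial_-S\bar\DD}=0$; then \eqref{horse3} gives $\Vert V_h\Vert_{L^\infty}\lesssim_m \Vert h\Vert_{L^\infty}$, whence $\vert \ddot C_\Phi(x,v)[h]\vert_F \lesssim \Vert h\Vert_{L^\infty}^2 \lesssim D^{3/2}\Vert h\Vert_{E_D}^2$, and polarising the quadratic form gives $\vert \nabla^2 C_\Phi^i(x,v)\vert_\mathrm{op} \lesssim D^{3/2}$.

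The Lipschitz bound on the Hessian is the main technical step. Writing the polarised bilinear form
\[
\ddot C_\Phi[h,h'] = \tfrac12\big(I_{\Xi_\Phi}[h V_{h'}^\Phi] + I_{\Xi_\Phi}[h' V_h^\Phi]\big)\,C_\Phi,
\]
with $\Xi_\Phi=[\Phi,\cdot]$, we telescope $\ddot C_\Phi[h,h'] - \ddot C_\Psi[h,h']$ into a sum in which only one of the three $\Phi$-dependent factors is perturbed at a time. Lipschitz control on the outer factor $C_\Phi$ follows from Condition \ref{ganzwien}b); for the attenuated transform $I_{\Xi_\Phi}$ it follows from the $L^\infty$-Lipschitz dependence of the integrating factor $R_{\Xi_\Phi}$ on $\Phi$ via \eqref{horse1}; and for the inner transport solution one uses the identity $(X+\Xi_\Psi)(V_h^\Phi - V_h^\Psi) = [\Phi-\Psi, V_h^\Phi]$ together with \eqref{horse3} to obtain $\Vert V_h^\Phi - V_h^\Psi\Vert_{L^\infty}\lesssim_m \Vert \Phi-\Psi\Vert_{L^\infty}\Vert h\Vert_{L^\infty}$. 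Combining these three bounds yields $\vert \ddot C_\Phi[h,h'] - \ddot C_\Psi[h,h']\vert_F \lesssim \Vert\Phi-\Psi\Vert_{L^\infty}\Vert h\Vert_{L^\infty}\Vert h'\Vert_{L^\infty}$, and a final application of Lemma~\ref{normcomparison} converts this into the desired polynomial-in-$D$ Lipschitz constant on $\vert \nabla^2 C_\Phi^i(x,v)\vert_\mathrm{op}$.

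The principal obstacle is the careful bookkeeping in this last step: one must ensure that each telescoping summand is Lipschitz in a single factor only and that all implicit constants are uniform in $(x,v)\in\partial_+S\bar\DD$ and in $\Phi\in E_D$. The latter uniformity is precisely the content of \eqref{horse1}--\eqref{horse3} (together with \eqref{globalbound}), which makes the proof ultimately reduce to elementary calculus along the line segments $\gamma_{x,v}$ of length $\le 2$.
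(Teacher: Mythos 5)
Your proposal is correct and takes essentially the same route as the paper: the derivative formulas of Lemma \ref{derivatives}, the uniform bounds \eqref{globalbound} and \eqref{horse1}--\eqref{horse3} applied one level higher to the commutator attenuation, a three-term perturbation of the factors $C_\Phi$, $V_h$ and $R_\Xi$ (precisely the paper's terms $(i)$--$(iii)$ in the proof of \eqref{locreg4}), and the conversion to $E_D$-norms via Lemma \ref{normcomparison}. One shared caveat: both your telescoping and the paper's displayed estimates yield the Hessian--Lipschitz bound in terms of $\Vert \Phi-\Psi\Vert_{L^\infty(\DD)}$, and dividing by $\Vert\Phi-\Psi\Vert_{E_D}$ as \eqref{c2lip} requires then costs an extra $D^{3/4}$ on that single term, so your unquantified ``polynomial in $D$'' conclusion is safe, while the sharp constant $c_2D^{3/2}$ (rather than $D^{9/4}$) rests on the $L^2$-form of \eqref{locreg4} as stated in the paper.
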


\begin{proof} From Lemma \ref{derivatives} we already know that the map $\Phi \mapsto C_\Phi(x,v)$ is twice differentiable on $E_D$; we will prove for $\Phi,\Psi,h\in E_D$ and $(x,v)\in \partial_+S\bar \DD$ that
\begin{align}
\vert C_\Phi(x,v) \vert_F &\lesssim 1 \label{locreg1}\\
\vert \dot C_\Phi[h](x,v) \vert_F & \lesssim \Vert h \Vert_{L^\infty(\DD)} \label{locreg2}\\
\vert \ddot C_\Phi[h](x,v) \vert_F & \lesssim \Vert h \Vert_{L^\infty(\DD)}^2 \label{locreg3}\\
\vert \ddot C_\Phi[h](x,v) - \ddot C_\Psi[h](x,v) \vert_F &\lesssim \Vert h \Vert_{L^\infty(\DD)}^2 \cdot \Vert \Phi - \Psi \Vert_{L^2(\DD)} \label{locreg4},
\end{align}
with implicit constants only depending on $m$. Let us first verify that this is sufficient for \eqref{locreg}: Fix an orthonormal basis $A^1,\dots,A^{m^2}$ of $(\R^{m\times m},\vert\cdot \vert_F)$ and write $g^i(\Phi)=\langle C_\Phi(x,v),A^i\rangle_F$, then
\begin{equation}
\vert g^i(\Phi) \vert \le \vert C_\Phi(x,v) \vert_F,\quad \vert \nabla g^i(\Phi) \vert_F \le \sup_{\Vert h \Vert_{E_D}\le 1} \vert \dot C_\Phi[h](x,v) \vert_F.
\end{equation}
Further,  noting that the operator norm of a symmetric matrix equals the absolute value of its largest eigenvalue, we have
\begin{equation}
\vert \nabla^2 g^i(\Phi) \vert_{\mathrm{op}} \le \sup_{\Vert h \Vert_{E_D}\le 1} \vert \ddot C_\Phi[h](x,v) \vert_F,
\end{equation}
with similar bounds for the difference $\nabla^2 g^i(\Phi) - \nabla^2 g^i(\Psi)$. Overall this implies \begin{equation}
 \Vert C_\bullet (x,v) \Vert_{C^{2,1}(E_D,\R^{m\times m})} \lesssim \sup_{\Vert h \Vert_{E_D}\le 1}\left( 1 + \Vert h \Vert_{L^\infty(\DD)} +   2 \Vert h \Vert_{L^\infty(\DD)}^2 \right) \lesssim D^{3/2},
\end{equation}where the $L^\infty$-norms of $h\in E_D$ have been bounded by means of Lemma \ref{normcomparison}.

\smallskip

On to the proof of \eqref{locreg1} - \eqref{locreg3}. The first inequality was already stated in \eqref{globalbound} and the second follows from noting that $ \vert \dot C_\Phi[h](x,v) \vert_F = \vert I_\Xi h (x,v)\vert_F $ (with $\Xi$ as in Lemma \ref{derivatives}) and the fact that $I_\Xi$ is $L^\infty$-$L^\infty$ bounded (with operator norm uniform in $\Xi$, see e.g. \cite[Lem.\,5.2]{MNP21}). To prove \eqref{locreg3} and \eqref{locreg4}, note that 
\begin{equation}
\ddot C_\Phi[h]=  I_\Xi(hV_h) \cdot C_\Phi =  I(R_\Xi^{-1} hV_h) \cdot C_\Phi ,
\end{equation}
where $\Xi$ and $V_h$ are as in Lemma \ref{derivatives} and $R_\Xi:S\bar \DD\rightarrow SO(m)$ satisfies  
$(X+\Xi)R_\Xi = 0$ on $S\DD$ and $R_\Xi = \id $ on $\partial_-S\bar \DD$.  Using $L^\infty$-$L^\infty$-boundedness of $I_\Xi$ we obtain
\begin{equation}
\Vert \ddot C_\Phi[h] \Vert_{L^\infty(\partial_+S\bar \DD)} \lesssim \Vert h V_h \Vert_{L^\infty(S\DD)} \le \Vert h \Vert_{L^\infty(\DD)} \cdot \Vert V_h \Vert_{L^\infty(\DD)} \lesssim \Vert h \Vert_{L^\infty(\DD)}^2
\end{equation}
where the last inequality follows from \eqref{horse3} -- this proves \eqref{locreg3}. For \eqref{locreg4} we estimate, using the notation $\Theta$ and $W_h$ for $\Psi$ in place of $\Xi$ and $V_h$ for $\Phi$,
\begin{equation}
\begin{split}
\Vert \ddot C_\Phi[h] - \ddot C_\Psi[h]  \Vert_{L^\infty(\partial_+S\bar \DD)} \le & ~ ~\Vert  I_\Xi( hV_h) \cdot \left ( C_\Phi - C_\Psi \right)  \Vert_{L^\infty(\partial_+S\bar \DD)}\\
& +  \Vert I_\Xi(hV_h- hW_h) \cdot C_\Psi \Vert_{L^\infty(\partial_+S\bar \DD)}\\
& + \Vert I\left((R_\Xi^{-1} - R_\Theta^{-1} )(hW_h) \right)\cdot C_\Psi \Vert_{L^\infty(\partial_+S\bar\DD)}\\
 = & (i) + (ii) + (iii)
\end{split}
\end{equation}
The terms $(i)$-$(iii)$ are bounded seperately; for the first one note that
\begin{equation}
	(i) \le \Vert I_\Xi(hV_h) \Vert_{L^\infty(\partial_+ S\bar \DD)} \cdot \Vert C_\Phi - C_\Psi \Vert_{L^\infty(\partial_+S\bar \DD)} \lesssim \Vert h \Vert_{L^\infty(\DD)}^2\cdot \Vert \Phi - \Psi \Vert_{L^\infty(\DD)},
\end{equation}
 where the first factor has been estimates as in the argument leading to \eqref{locreg3} and the second one by the $L^\infty$-$L^\infty$-Lipschitz estimate from \cite[Thm.\,2.2]{MNP21}. Similarly,
 \begin{equation}
 (ii) \le \Vert h \Vert_{L^\infty(\DD)} \cdot \Vert V_h - W_h \Vert_{L^\infty(S\DD)} \lesssim \Vert h \Vert_{L^\infty(\DD)}^2 \cdot \Vert \Phi - \Psi \Vert_{L^\infty(\DD)}
 \end{equation}
 by estimate \eqref{horse3} above. Finally we have
 \begin{equation}
 (iii) \le \Vert R_\Xi^{-1} - R_\Theta^{-1} \Vert_{L^\infty(S\DD)}\cdot \Vert h W_ h \Vert_{L^\infty(S \DD)} \le \Vert h \Vert_{L^\infty(\DD)}^2 \cdot \Vert \Phi - \Psi \Vert_{L^\infty(\DD)}
 \end{equation}
 by \eqref{horse1} and \eqref{horse3}. The proof is complete.
\end{proof}

Next we verify `local curvature' Condition \ref{goldfisch}, which is stated in terms of the generic log-likelihood function (cf.\,\eqref{llgen})
\begin{equation}
\ell(\Phi) = \ell(\Phi, (X,V,Y)) = -\frac12 \vert Y - C_\Phi(X,V) \vert_F^2\quad \text{where } (X,V,Y)\in \partial_+S\bar \DD\times \R^{m\times m}
\end{equation}
and its expectation $E_{\Phi_\truep}$ under the assumption that $(X,V)$ is drawn uniformly at random from $\partial_+S\bar \DD$ and $Y$ follows the regression model $Y = C_{\Phi_\truep}(X,V)+\varepsilon$, where $\varepsilon \in \R^{m\times m}$ is independent Gaussian noise (cf.\,\eqref{model0}).  Recalling the discussion preceding Condition \ref{goldfisch} (and the identification of $\Phi\in E_D$ with its coefficients $\theta\in \R^D$) we use the notation 
\begin{equation}
\nabla \ell(\Phi,(X,V,Y)) \in \R^D\quad \text{ and }\quad  \nabla^2 \ell(\Phi,(X,V,Y)) \in \R^{D\times D}
\end{equation}
for gradient and Hessian of the map $\Phi \mapsto \ell(\Phi,(X,V,Y))$ on $E_D$ (with dimension $D$ suppressed, by abuse of notation.)

\begin{lem}[Local curvature]  \label{xraylocalcurvature} Given a compact set $K\subset \DD$ and numbers $M>0,\alpha>5$ there are constants
$\epsilon,c_0,c_1>0$ (depending on $M,K,\alpha$)
with the following property: Suppose $\Phi_\truep:\DD\rightarrow \so(m)$ is of regularity $H^\alpha(\DD)$ and obeys
\begin{equation} \label{phi0conditions}
\supp \Phi_\truep \subset K,\quad \Vert \Phi_\truep \Vert_{H^\alpha(\DD)}\le M\quad \text{and} \quad  \Vert C_{\Phi_\truep} -C_{\Phi_{\truep,D}} \Vert_{L^2_\lambda(\partial_+S\bar \DD)} \le \epsilon D^{-2}
\end{equation}
for all $D\in \N$.  Then on balls $\B(\eta,D)=\{\Phi \in E_D: \Vert \Phi_{\truep,D} - \Phi \Vert_{E_D} < \eta \}$ of radius
$
0 < \eta \le \epsilon D^{-2}
$ 
the averaged log-likelihood satisfies
\begin{eqnarray}
\inf_{\Phi \in \B(\eta,D)}\lambda_\min\left(-E_{\Phi_\truep} \nabla^2 \ell(\Phi) \right)&\ge& c_0 D^{-1/2}\label{zlocalcurvature1}\\
\sup_{\Phi \in \B(\eta,D)} \left[\vert E_{\Phi_\truep} \ell(\Phi) \vert  + \vert E_{\Phi_\truep} \nabla \ell(\Phi) \vert_{\R^D} +  \vert E_{\Phi_\truep} \nabla^2 \ell(\Phi) \vert_{\mathrm{op}}\right]  &\le& c_1. \label{zupperbound}
\end{eqnarray} 
\end{lem}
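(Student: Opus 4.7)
The starting point is to compute first and second moments of $\ell(\Phi,(X,V,Y))$ under the regression model \eqref{model0}. Using $Y = C_{\Phi_\truep}(X,V) + \varepsilon$ with zero-mean Gaussian $\varepsilon$, direct differentiation combined with Lemma \ref{derivatives} yields
\begin{equation}\label{planquad}
\langle -E_{\Phi_\truep}\nabla^2\ell(\Phi)\, h, h\rangle = \Vert \dot C_\Phi[h] \Vert_{L^2_\lambda}^2 - \langle C_{\Phi_\truep}-C_\Phi,\, \ddot C_\Phi[h,h]\rangle_{L^2_\lambda},
\end{equation}
together with $E_{\Phi_\truep}\ell(\Phi) = -\tfrac12\Vert C_{\Phi_\truep} - C_\Phi \Vert_{L^2_\lambda}^2 + \mathrm{const}$ and $E_{\Phi_\truep}\nabla\ell(\Phi)[h] = \langle C_{\Phi_\truep} - C_\Phi, \dot C_\Phi[h]\rangle_{L^2_\lambda}$. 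For any $\Phi \in \B(\eta,D)$ with $\eta \le \epsilon D^{-2}$, the triangle inequality together with Condition \ref{ganzwien}b) and the bias hypothesis in \eqref{phi0conditions} gives $\Vert C_{\Phi_\truep}-C_\Phi \Vert_{L^2_\lambda} \lesssim D^{-2}$.

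Given these formulae, the upper bound \eqref{zupperbound} is straightforward: $|E_{\Phi_\truep}\ell(\Phi)|$ and $\Vert E_{\Phi_\truep}\nabla\ell(\Phi)\Vert_{E_D}$ are of order unity by the uniform bound \eqref{globalbound} and \eqref{locreg2}, while for the Hessian Cauchy--Schwarz with \eqref{locreg3} and Lemma \ref{normcomparison} give $\Vert \ddot C_\Phi[h,h']\Vert_{L^2_\lambda} \lesssim D^{3/2}\Vert h\Vert_{E_D}\Vert h'\Vert_{E_D}$; the cross-summand in $E_{\Phi_\truep}\nabla^2\ell(\Phi)[h,h']$ is damped by the factor $\Vert C_{\Phi_\truep}-C_\Phi\Vert_{L^2_\lambda} \lesssim D^{-2}$, producing an overall $O(1)$ operator-norm bound.

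The heart of the argument is the lower bound \eqref{zlocalcurvature1}. By Lemma \ref{derivatives}, $\dot C_\Phi[h] = (I_\Xi h)\cdot C_\Phi$ with $\Xi = [\Phi,\cdot]$, and the orthogonality of $C_\Phi \in SO(m)$ gives $\Vert \dot C_\Phi[h]\Vert_{L^2_\lambda}^2 = \Vert I_\Xi h\Vert_{L^2_\lambda}^2$. Since $\Phi_\truep$ (though not $\Phi \in E_D$) satisfies the hypotheses of Theorem \ref{findimstability}, that result applied to the commutator potential $\Xi_\truep$ yields the baseline
\begin{equation*}
\Vert I_{\Xi_\truep} h\Vert_{L^2_\lambda}^2 \ge c_\truep D^{-1/2}\Vert h\Vert_{E_D}^2 \qquad (h\in E_D),
\end{equation*}
with $c_\truep = c_\truep(m,K,M) > 0$. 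I then transfer this to arbitrary $\Phi \in \B(\eta,D)$ by a transport-theoretic perturbation: letting $u, u_\truep$ denote the transport solutions defining $I_\Xi h, I_{\Xi_\truep} h$, their difference $w = u - u_\truep$ satisfies $(X+\Xi)w = -(\Xi-\Xi_\truep)u_\truep$ on $S\bar \DD$ with $w|_{\partial_-S\bar \DD}=0$. Standard forward estimates (as in \cite[Lem.\,5.2]{MNP21}) give $\Vert I_\Xi h - I_{\Xi_\truep}h\Vert_{L^2_\lambda} \lesssim \Vert \Phi - \Phi_\truep\Vert_{L^\infty(\DD)}\Vert h\Vert_{L^2(\DD)}$. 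Combining Lemma \ref{normcomparison} (which yields $\Vert \Phi - \Phi_{\truep,D}\Vert_{L^\infty} \lesssim D^{3/4}\eta \lesssim D^{-5/4}$) with a Zernike approximation estimate controlling $\Vert \Phi_{\truep,D} - \Phi_\truep\Vert_{L^\infty} = O(D^{-3/2})$ for $\Phi_\truep \in \tilde H^\alpha$, $\alpha > 5$ (via \eqref{zernikeembedding} and Sobolev embedding), I obtain $\Vert \Phi - \Phi_\truep\Vert_{L^\infty} \lesssim D^{-5/4}$. Hence $\Vert I_\Xi h - I_{\Xi_\truep}h\Vert_{L^2_\lambda}^2 \lesssim D^{-5/2}\Vert h\Vert_{E_D}^2$, which is negligible compared with the baseline $D^{-1/2}\Vert h\Vert_{E_D}^2$, and an AM-GM step gives $\Vert I_\Xi h\Vert_{L^2_\lambda}^2 \ge \tfrac{c_\truep}{2} D^{-1/2}\Vert h\Vert_{E_D}^2$ for all sufficiently large $D$. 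The cross term in \eqref{planquad} is of size $\epsilon D^{-2}\cdot D^{3/2}\Vert h\Vert_{E_D}^2 = \epsilon D^{-1/2}\Vert h\Vert_{E_D}^2$ and is absorbed by choosing $\epsilon$ small; finitely many small $D$ are handled by a compactness argument, possibly at the cost of shrinking $c_0$.

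The main obstacle is that neither Theorem \ref{findimstability} nor Theorem \ref{zernikestability} applies directly to $\Phi \in \B(\eta,D)$, since elements of $E_D$ are polynomials with no compact support in $\DD$ and with $H^\alpha$-norm blowing up as $D\to\infty$. The perturbation scheme above bypasses this by invoking the isomorphism/stability property only at the smooth, compactly supported $\Phi_\truep$. The crucial quantitative balance is that the $L^\infty$-distance from $\Phi$ to $\Phi_\truep$ decays like $D^{-5/4}$, much faster than the $D^{-1/4}$ scale of $\Vert I_{\Xi_\truep} h\Vert_{L^2_\lambda}$ in \eqref{weylz}, so that the baseline stability propagates uniformly over $\B(\eta, D)$ once $\epsilon$ and a threshold $D_0$ are fixed appropriately.
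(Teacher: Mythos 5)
Your proposal follows the paper's own argument in all essentials: the same expectation formulas for $\ell,\nabla\ell,\nabla^2\ell$, the same use of $\vert\dot C_\Phi[h]\vert_F=\vert I_\Xi h\vert_F$ and of Theorem \ref{findimstability} at the compactly supported $\Phi_\truep$ as the curvature baseline, the same integrating-factor perturbation bound $\Vert I_\Xi h-I_{\Xi_\truep}h\Vert_{L^2_\lambda}\lesssim \Vert\Phi-\Phi_\truep\Vert_{L^\infty(\DD)}\Vert h\Vert_{E_D}$, and the same $\epsilon D^{-2}\cdot D^{3/2}$ treatment of the $\ddot C_\Phi$ cross term (for the upper bound \eqref{zupperbound} you should also say explicitly that the leading Hessian term is $O(1)$ because $I_\Xi$ is $L^2$--$L^2$ bounded uniformly in $\Xi$; this is the only piece not covered by \eqref{globalbound}, \eqref{locreg2}, \eqref{locreg3}). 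The one genuine divergence is how $\Vert\Phi_\truep-\Phi_{\truep,D}\Vert_{L^\infty}$ is controlled. The paper bounds it through interpolation between $L^2$ and $\tilde H^5$ combined with the inverse continuity modulus (Lemma \ref{inversecm}) and the bias hypothesis, which yields a bound of the form $\epsilon^{1/2}D^{-1}$; since this carries the tunable factor $\epsilon^{1/2}$, the error can be absorbed into $\delta D^{-1/2}$ by shrinking $\epsilon$ uniformly over \emph{all} $D\in\N$, so no dimension threshold ever appears. Your direct Zernike-tail/Sobolev-embedding bound $O_M(D^{-3/2})$ is simpler and gives a smaller error for large $D$, but its constant depends on $M$ and cannot be tuned, so your argument only closes for $D\ge D_0(M,K,\alpha)$ and you must patch the finitely many small $D$ separately. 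That patch is the under-specified step: "compactness, possibly shrinking $c_0$" needs (a) pointwise positivity, i.e.\ injectivity of $I_{[\Phi,\cdot]}$ restricted to $E_D$ for $\Phi$ smooth up to the boundary but \emph{not} compactly supported — Theorem \ref{findimstability}/\ref{zernikestability} do not apply here, so you must invoke the $L^2$--$H^1$ stability estimate of \cite[Thm.\,5.3]{MNP21} (as in Lemma \ref{inversecm} or Remark \ref{nonsharp2}); (b) continuity of $(\Phi_\truep,\Phi)\mapsto \inf_{\Vert h\Vert_{E_D}=1}\Vert I_{[\Phi,\cdot]}h\Vert^2_{L^2_\lambda}$ over the $C^4$-compact family $\{\supp\subset K,\ \Vert\cdot\Vert_{H^\alpha}\le M\}$ times the closed ball; and (c) a final choice of $\epsilon$ small enough (for the finitely many $D<D_0$) that the cross term is dominated by the resulting positive infimum. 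With these details supplied your route delivers the lemma as stated, since $D_0$ and the small-$D$ constants depend only on $(M,K,\alpha)$; but be aware that the paper's handling of the bias term is precisely what makes such a two-regime argument unnecessary.
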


\begin{rem}\label{toolate} The requirement that $\Phi_\truep$ has Sobolev-regularity $H^\alpha(\DD)$ can be replaced by $\Phi_\truep \in F$ for any normed function space $F$ that embeds compactly into $C^4(\bar \DD,\so(m))$,  another valid choice being H{\"o}lder-space $F = C^{4+\epsilon}(\bar \DD)$ ($\epsilon>0$). Due to compactness, the constant $C(\Phi_\truep,K)$ from Theorem \ref{zernikestability} achieves its infimum over $\{\Phi_\truep'\in F: \supp \Phi_\truep' \subset K, \Vert \Phi_\truep' \Vert_{F} \le M \}$; in particular Theorem \ref{findimstability} follows from Theorem \ref{zernikestability} and Lemma \ref{normcomparison}.
\end{rem}

\begin{rem}[Non-sharp version of Theorem \ref{findimstability}]\label{nonsharp2}
We briefly sketch a proof of the non-sharp version of Theorem  \ref{findimstability}, as discussed in Remark \ref{nonsharp}.
Let $\eta>0$, then for $h\in E_D$ and $\Phi\in C^1(\bar \DD,\so(m))$ with $C^1$-norm $\le M$ we have
\begin{equation}
\Vert \mathbb I_\Phi h \Vert_{L^2(\partial_+S\bar \DD)} \gtrsim_\eta \frac{\Vert \mathbb I_\Phi h \Vert^{1+1/\eta}_{H^1(\partial_+S\bar \DD)}}{\Vert \mathbb I_\Phi h \Vert^{1/\eta}_{H^{1+\eta}(\partial_+S\bar \DD)}} \gtrsim_{M,\eta} \left[ \frac{\Vert h \Vert_{E_D}}{\Vert h \Vert_{\tilde H^{1+\eta}(\DD)}}\right]^{1/\eta} \cdot \Vert h \Vert_{E_D},
\end{equation}
where we first interpolated $H^1$ between $L^2$ and $H^{1+\eta}$ and then used the stability estimate
\cite[Theorem 5.3]{MNP21} and the forward estimate from Theorem 4.2. Now, using Lemma \ref{normcomparison}, the bracket on the right hand side can be lower bounded by $\gtrsim_\eta D^{-\frac 12[1+1/\eta]}$ and the result follows by taking $\eta$ sufficiently large.
\end{rem}

\begin{rem}\label{ponytamer}
Given $K$ and $M,\alpha$ as in the preceding Lemma, as well as some $A_1>0$, there is a threshold $N_\min(M,K,\alpha,A_1)\in \N$ as follows: Suppose a pair of integers $(D,N)\in \N^2$ satisfies $D\le A_1 N^{1/(\alpha+1)}$ and $N\ge N_\min$; then, setting
\begin{equation}
\delta_N = N^{-\frac{\alpha}{2\alpha +2}}\quad \text{ and } \quad \tilde \delta_{N} = \log N \cdot \max\left(\delta_N^\gamma, \delta_N D^{\kappa_0/2} \right)
\end{equation}
as in Condition \ref{bock}, where $\gamma = 1-1/\alpha$ and $\kappa_0 =1/2$,  the prerequisites of Theorem \ref{onetrickpony} are satisfied for 
$
\eta = D^{-2}/\log N
$
and
every $\Phi_\truep\in H^\alpha(\DD,\so(m))$, obeying
\begin{equation}\label{phi0conditions2}
\supp \Phi_\truep \subset K,\quad \Vert \Phi_\truep \Vert_{H^\alpha(\DD)} \le M,\quad \Vert C_{\Phi_\truep} -C_{\Phi_{\truep,D}} \Vert_{L^2_\lambda(\partial_+S\bar \DD)} \le \frac{\delta_N}{2 \sqrt{m}+ 1}.
\end{equation}
Specifically,  on the ball $\B=\{\Phi \in E_D: \Vert \Phi_{\truep,D} - \Phi \Vert_{E_D} < \eta \}$ Condition \ref{goldfisch} is satisfied for exponents $\kappa_0=1/2$ and $\kappa_1=0$ and constants $c_0,c_1$ only depending on $K$ and $M,\alpha$.
To see this, let $\epsilon=\epsilon(M,K,\alpha)$ be as in Lemma \ref{xraylocalcurvature}. 
Since $\alpha>5$, choosing $N$ sufficiently large implies
\begin{align*}
&D^2 \delta_N \le A_1^2 N^{\frac{2}{\alpha+1} } \cdot N^{- \frac{\alpha }{2\alpha +2}} \le (2\sqrt m + 1) \epsilon, \\
&D^2( \log N)^3\delta_N^\gamma \le A_1^{2} N^{\frac{2}{\alpha+1}}\cdot (\log N)^3 N^{-\frac{\alpha-1}{2\alpha+2}} = A_1^{2 } (\log N)^3 N^{-\frac{\alpha-5}{2\alpha+2}} \le 1\\
&D^2 (\log N)^3 \delta_N {D^{\kappa_0/2}}
\le A_1^{\frac{9}4}  \cdot (\log N)^3 N^{-\frac{2\alpha -9}{4\alpha +4}} \le 1
\end{align*}
Hence, for  $N_\min(M,K,A_1,\alpha)\in \N$ large enough, we have 
\begin{equation}
\frac{\delta_N}{2\sqrt m + 1} \le \epsilon D^{-2} \quad \text{ and } \quad \log N\cdot  \tilde \delta_{N} \le \eta \le \epsilon D^{-2} \quad (N\ge N_\min),
\end{equation}
so that Lemma \ref{xraylocalcurvature} applies. In summary we have verified the hypotheses of Theorem \ref{onetrickpony}.
\end{rem}

\begin{proof}[Proof of Lemma \ref{xraylocalcurvature}] 
We start with computing the expected values of the log-likelihood and its derivatives. For a fixed direction $h\in E_D$ and in view of the regularity stated in Lemma \ref{derivatives}, we have
\begin{align}
\ell(\Phi,(X,V,Y)) &= -  \vert C_\Phi(X,V) - Y \vert_F^2/2\\
h^T \nabla \ell(\Phi,(X,V,Y)) & = - \langle C_{\Phi}(X,V) - Y , \dot C_\Phi[h](X,V)\rangle_{F}\\
h^T \nabla^2 \ell(\Phi,(X,V,Y)) h  & = - \vert \dot C_\Phi[h](X,V) \vert_F^2 + \langle C_\Phi(X,V) - Y , \ddot C_\Phi[h](X,V)\rangle_F.
\end{align}
Now  $C_\Phi(X,V) - Y = C_\Phi(X,V) - C_{\Phi_\truep}(X,V) - \varepsilon$, where $\varepsilon$ and $(X,V)$ are independent; in particular $E_{\Phi_\truep}\langle \varepsilon, \varphi(X,V)\rangle_F=0$ for any measurable function $\varphi(X,V)$ and thus we have the following expected values:
\begin{align}
- E_{\Phi_\truep} \ell(\Phi) & =\frac 12 \left( \Vert C_\Phi - C_{\Phi_\truep} \Vert_{L^2_\lambda(\partial_+S\bar \DD)}^2 + 1 \right)\\
- E_{\Phi_\truep } h^T\nabla \ell(\Phi) & = \langle C_\Phi - C_{\Phi_\truep},\dot C_\Phi[h] \rangle_{L^2_\lambda(\partial_+S\bar \DD)}\\
- E_{\Phi_\truep} h^T \nabla^2\ell(\Phi) h  &= \Vert \dot C_\Phi[h] \Vert_{L^2_\lambda(\partial_+S\bar \DD)}^2 - \langle C_{\Phi_\truep} - C_\Phi , \ddot C_\Phi[h]\rangle_{L^2_\lambda(\partial_+\bar \DD)}. \label{seealsothis}
\end{align}

 {\it Curvature bound.} Our first goal is to prove the curvature bound in \eqref{zlocalcurvature1}.  To this end note that the first derivative is given by $\dot C_\Phi[h] = I_{\Xi} h \cdot C_\Phi$ (Lemma \ref{derivatives}) and admits a lower bound, at least at the compactly supported $\Phi_\truep$, due to Theorem \ref{findimstability}. 
To make use of this, we apply standard inequalities (and that $C_\Phi^T C_\Phi=\id$) to the effect that
\begin{equation}\label{zlc2}
\begin{split}
- E_{\Phi_\truep} h^T \nabla^2\ell(\Phi) h  \ge &~\frac 12 \Vert I_{\Xi_\truep} h \Vert_{L^2_\lambda(\partial_+S\bar \DD)}^2\\ & -  \Vert I_{\Xi_\truep} h  - I_{\Xi}h\Vert_{L^2_\lambda(\partial_+S\bar \DD)}^2 - \langle C_{\Phi_\truep}-C_\Phi,\ddot C_\Phi[h] \rangle_{L^2_\lambda(\partial_+S\bar \DD)}
\end{split}
\end{equation}
By Theorem \ref{findimstability}, the first term satisfies $\frac12 \Vert I_{\Xi_\truep} h \Vert_{L^2_\lambda(\partial_+S\bar \DD)}^2 \ge \delta  D^{-1/2}\Vert h \Vert_{E_D}^2$ for some $\delta=\delta(M,K,\alpha)>0$.
Assuming w.l.o.g. that $\Vert h \Vert_{E_D}= 1$ and decreasing  $\delta$ if necessary, we have
\begin{equation}\label{zlc3}
\begin{split}
- E_{\Phi_\truep} h^T \nabla^2 \ell(\Phi) h  \gtrsim_{M,K} &~ \delta D^{-1/2} \cdot \Vert  h \Vert_{E_D}^2\\
 & -\left[\underbrace{\Vert \Phi_\truep - \Phi_{\truep,D} \Vert_{L^\infty(\DD)}^2}_{(i)} + \underbrace{\Vert C_{\Phi_\truep} - C_{\Phi_{\truep,D}} \Vert_{L^2(\DD)} \Vert h \Vert_{L^\infty(\DD)}^2}_{(ii)} \right]\\
& -  \left[\underbrace{\Vert \Phi_{\truep,D} - \Phi \Vert_{L^\infty(\DD)}^2}_{(iii)}  +\underbrace{ \Vert \Phi_{\truep,D} - \Phi \Vert_{L^2(\DD)} \cdot \Vert h \Vert_{L^\infty(\DD)}^2}_{(iv)} \right],
\end{split}
\end{equation}
where the terms $(i)$-$(iv)$ arise from bounding the error terms in \eqref{zlc2} by means of standard forward estimates; we will give a detailed proof below, but first continue with the main line of argument. \\
In order to bound $(i)$,  we use the chain of embeddings $\tilde H^t(\DD)\subset H^{1+\gamma}(\DD) \subset L^\infty(\DD)$ (valid for $t>2$ and some $\gamma=\gamma(t)>0$, see \eqref{zernikeembedding}) and an interpolation argument to estimate the difference $\Psi = \Phi_\truep - \Phi_{\truep,D} $ as follows:
\begin{equation}
\Vert \Psi\Vert_{L^\infty(\DD)} \lesssim_t \Vert\Psi  \Vert_{\tilde H^t(\DD)} \lesssim_t \Vert\Psi \Vert_{L^2(\DD)}^{1-t/5} \cdot \Vert \Psi \Vert_{\tilde H^5(\DD)}^{t/5},\quad  2< t \le 5.
\end{equation}
The $\tilde H^5$-norm can be estimated by $\Vert \Psi \Vert_{\tilde H^5(\DD)} \le \Vert \Phi_\truep \Vert_{\tilde H^5(\DD)} +  \Vert \Phi_{\truep,D} \Vert_{\tilde H^5(\DD)}\le 2\Vert \Phi_\truep \Vert_{\tilde H^5(\DD)}\le 2 M$. To bound the $L^2$-norm we use Lemma \ref{inversecm} for $s=5$. Choosing $t$ such that $(1-t/5)(1-1/5) = 1/4$,  we get
\begin{equation}
(i) \lesssim_{M} \Vert C_{\Phi_\truep} - C_{\Phi_{\truep,D}} \Vert_{L^2_\lambda(\partial_+S\bar \DD)}^{1/2} \le \epsilon ^{1/2} D^{-1},
\end{equation}
where we used the bias estimate in \eqref{phi0conditions}, for $\epsilon>0$ to be chosen.\\
To bound $(ii)$ we use \eqref{phi0conditions} and the fact $\Vert h \Vert_{L^\infty(\DD)}^2 \lesssim D^{3/2}$ (Lemma \ref{normcomparison}); then
\begin{equation}
(ii) \le \epsilon D^{-1/2}
\end{equation}
Similarly, for $\eta>0$ to be chosen, we have
\begin{equation}
(iii) \lesssim \eta^2 D^{3/2}\quad \text{ and } \quad (iv) \lesssim \eta D^{3/2}, 
\end{equation}
where we used Lemma \ref{normcomparison} both for $h$ and for $h'=\Phi_{\truep,D}-\Phi\in E_D$. \\
Combining the previous displays, we obtain (again for $\delta>0$ decreased if necessary to compensate for all implicit constants) the following inequality:
\begin{equation}
- E_{\Phi_\truep} h^T \nabla^2\ell(\Phi) h\gtrsim_{M,K} \delta D^{-1/2} -\left(\epsilon^{1/2} D^{-1} + \epsilon D^{-1/2} + \eta^2 D^{3/2} + \eta D^{3/2} \right),
\end{equation}
valid for $h\in E_D$ with $\Vert h \Vert_{E_D}=1$.  Taking the infimum over $h$, we obtain the lowest eigenvalue of $-E_{\Phi_\truep} \nabla^2 \ell(\Phi)$; overall we have
\begin{equation}\label{zlc4}
\lambda_{\min} \left( - E_{\Phi_\truep}  \nabla^2\ell(\Phi)  \right) \ge CD^{-1/2} \cdot \left[ \delta - \left(\epsilon^{1/2} D^{-1/2} + \epsilon + \eta^2 D^2 + \eta D^2 \right)  \right]
\end{equation}
for constants $C(M,K,\alpha)>0$, $0<\delta(M,K,\alpha)<1 $ and with $\epsilon,\eta>0$ still free to choose.  Under the assumption that
\begin{equation}
\epsilon \le \epsilon^{1/2} \le \delta/5\quad \text{ and } \quad \eta^2 \le \eta \le D^{-2} \delta/5 
\end{equation}
we have
\begin{equation}
\lambda_{\min} \left( - E_{\Phi_\truep}   \nabla^2\ell(\Phi)\right) \ge \frac{C\delta}{5 } D^{-1/2}.
\end{equation}
In particular  \eqref{zlocalcurvature1} follows for $c_0(M,K,\alpha)=C(M,K,\alpha)\delta(M,K,\alpha)/5$ and $\epsilon(M,K,\alpha)= (\delta(M,K,\alpha)/5)^2$.\\
Finally, we demonstrate how to derive \eqref{zlc3} from \eqref{zlc2}; this consists of proving
\begin{align}
\Vert I_{\Xi_\truep} h - I_\Xi h \Vert_{L^2_\lambda(\partial_+S\bar \DD)}^2 &\lesssim_{M,K} (i) + (iii) \label{zlc5}\\
\vert \langle C_{\Phi_\truep}-C_\Phi,\ddot C_\Phi[h] \rangle_{L^2_\lambda(\partial_+S\bar \DD)} \vert &\lesssim_{M,K} (ii) +(iv). \label{zlc6}
\end{align}
For the first inequality we note that $I_\Xi  h= I(R_\Xi^{-1} h)$,  and $R_\Xi \in C(S\bar\DD,\so(m)))$ is an integrating factor for $\Xi$, i.e. a solution to $(X+\Xi)R_\Xi=0$ with $S\bar\DD$ and $R_\Xi =\id$ on $\partial_-S\bar \DD$.  Using a similar notation for $\Xi_\truep$, we have
\begin{equation*}
\begin{split}
 \Vert I_{\Xi_\truep} h  - I_{\Xi}h\Vert_{L^2_\lambda(\partial_+S\bar \DD)}^2 &= \Vert I\left((R_{\Xi_\truep}^{-1} - R_\Xi^{-1}) h )\right)  \Vert_{L^2_\lambda(\partial_+S\bar \DD)}^2\\
&\le \Vert R^{-1}_{\Xi_\truep}  -  R_\Xi^{-1} \Vert_{L^\infty(S\DD)}^2 ,
\end{split}
\end{equation*}
where we used boundedness of $I:L^2(S\DD)\rightarrow L^2_\lambda(\partial_+S\bar \DD)$ \cite[Thm.\,4.2.1]{Sha94}. By \eqref{horse1} we have $\Vert R^{-1}_{\Xi_\truep}  -  R_\Xi^{-1} \Vert_{L^\infty(S\DD)} \lesssim \Vert \Xi_\truep - \Xi \Vert_{L^\infty(\DD)} \lesssim  \Vert \Phi_\truep - \Phi \Vert_{L^\infty(\DD)}$, such that \eqref{zlc6} follows from the triangle inequality.\\
Next, for any $g\in L^2_\lambda(\partial_+S\bar \DD)$ we have (using H{\"o}lder's inequality)
\begin{equation*}
\vert \langle g, \ddot C_\Phi[h]\rangle_{L^2_\lambda(\partial_+S\bar \DD)} \vert =\vert \langle g\cdot C_\Phi,  I_\Xi[hV_h]\rangle_{L^2_\lambda(\partial_+S\bar \DD)}  \vert \le \Vert g \Vert_{L^2_\lambda(\partial_+S\bar\DD)} \cdot \Vert I_\Xi[hV_h] \Vert_{L^\infty(\partial_+S\bar\DD)},
\end{equation*}
where $V_h$ is as in Lemma \ref{derivatives} and we made use of the fact that $C_\Phi$ takes values in $SO(m)$. Using the representation of $I_\Xi$ via integrating factors from above, one sees that it is bounded $L^\infty(S\DD)\rightarrow L^\infty(\partial_+S\bar \DD)$ with operator norm independent of $\Xi$. The previous display can thus be continued with  
\begin{equation}
\lesssim \Vert g \Vert_{L^2_\lambda(\partial_+S\bar\DD)} \cdot \Vert h V_h \Vert_{L^\infty(S\DD)}  
\lesssim \Vert g \Vert_{L^2_\lambda(\partial_+S\bar\DD)} \cdot \Vert h\Vert_{L^\infty(\DD)}^2,
\end{equation}
where we made use of \eqref{horse3}. Setting $g=C_{\Phi_\truep} - C_\Phi$ we get
\begin{equation}
\vert \langle C_{\Phi_\truep} - C_\Phi, \ddot C_\Phi[h]\rangle_{L^2_\lambda(\partial_+S\bar \DD)} \vert \lesssim_m \Vert C_{\Phi_\truep} - C_\Phi \Vert_{L^2_\lambda(\partial_+S\bar\DD)} \cdot \Vert h \Vert_{L^\infty(\DD)}^2. 
\end{equation}
Now the $L^2$-norm on the right hand side can be estimated with the triangle inequality in terms of $C_{\Phi_\truep} - C_{\Phi_{\truep,D}}$, which gives term $(ii)$, and $C_{\Phi_{\truep,D}} - C_\Phi$, which gives term $(iv)$ after applying the global $L^2$-Lipschitz estimate for $\Phi \mapsto C_\Phi$. Together, we have also proved \eqref{zlc6}.\\

{\it Upper bounds.} Finally, we prove the upper bounds \eqref{zupperbound}. Using the triangle inequality and the $L^2$-Lipschitz estimate for $\Phi \mapsto C_\Phi$ we obtain
\begin{equation}\label{zlc7}
\Vert C_\Phi - C_{\Phi_\truep} \Vert_{L^2_\lambda(\partial_+S\bar \DD)} \lesssim \Vert \Phi - \Phi_{\truep,D} \Vert_{L^2_\lambda(\partial_+S\bar \DD)}  + \Vert C_{\Phi_\truep,D} - C_{\Phi_\truep}\Vert_{L^2_\lambda(\partial_+S\bar \DD)} \le 2  \epsilon D^{-2}
\end{equation}
for $\Phi \in \B(\eta,D)$ with $\eta \le \epsilon D^{-2}$. In particular,  assuming without loss of generality that $\epsilon <1$, we have 
\begin{equation}
\vert E_{\Phi_\truep} \ell(\Phi)  \vert \lesssim 1 + \epsilon^2 D^{-4} \lesssim 1.
\end{equation}
Next, for $h\in E_D$ with $\Vert h \Vert_{E_D}\le1$ we have
\begin{equation}
\vert E_{\Phi_\truep} h^T\nabla \ell(\Phi) \vert \le \Vert C_\Phi - C_{\Phi_\truep} \Vert_{L^2_\lambda(\partial_+S\bar \DD)} \cdot \Vert I_\Xi h  \Vert_{L^2_\lambda(\partial_+S\bar \DD)} \lesssim \epsilon D^{-2} \Vert h \Vert_{L^2(\DD)} \lesssim 1,
\end{equation}
where we again used \eqref{zlc7}, as well as $L^2$-boundedness of $I_\Xi$ (with operator norm bounded uniformly in $\Xi$). Finally, using additionally \eqref{zlc6}, we have
\begin{equation*}
\begin{split}
\vert E_{\Phi_\truep}h^T \nabla^2 \ell(\Phi) h  \vert  &\le \Vert I_\Theta h \Vert_{L^2_\lambda(\partial_+S\bar \DD)}^2 + \vert \langle C_{\Phi_\truep}-C_\Phi,\ddot C_\Phi[h] \rangle_{L^2_\lambda(\partial_+S\bar \DD)} \\
&\lesssim_{M,K} \Vert h \Vert_{L^2(\DD)}^2 + \Vert C_{\Phi_\truep} - C_{\Phi_{\truep,D}} \Vert_{L^2_\lambda(\partial_+S\bar\DD)} \Vert h \Vert_{L^\infty(\DD)}^2 +  \Vert \Phi_{\truep,D} - \Phi \Vert_{L^2(\DD)} \cdot \Vert h \Vert_{L^\infty(\DD)}^2\\
 &\lesssim 1 + \epsilon D^{-2} \cdot \Vert h \Vert_{L^\infty(\DD)}^2.
\end{split}
\end{equation*}
As above, we have $\Vert h \Vert_{L^\infty(\DD)}^2\lesssim D^{3/2}$, which is tempered by $D^{-2}$.  Combining the previous displays we obtain, for $\Phi \in \B(\eta,D)$ and $h\in E_D$ with $\Vert h \Vert_{E_D} \le 1$
\begin{equation}
\vert E_{\Phi_\truep} \ell(\Phi) \vert + \vert E_{\Phi_\truep} h^T \nabla \ell(\Phi) \vert + \vert E_{\Phi_\truep} h^T \nabla^2 \ell(\Phi) h  \vert \le c_2, 
\end{equation}
where $c_2=c_2(M,K)$ incorporates all implicit constants from above. Finally,
\begin{equation*}
\sup_{\Vert h \Vert_{E_D}\le 1} \vert E_{\Phi_\truep} h^T \nabla \ell(\Phi) \vert = \vert E_{\Phi_\truep} h^T \nabla \ell(\Phi) \vert_{\R^D},\quad  \sup_{\Vert h \Vert_{E_D}\le 1} \vert E_{\Phi_\truep} h^T \nabla^2 \ell(\Phi) h \vert = \vert E_{\Phi_\truep} h^T \nabla^2 \ell(\Phi) h  \vert_{\mathrm{op}}
\end{equation*}
and the proof is complete.
\end{proof}

\textbf{Acknowledgements} JB would like to thank François Monard for fruitful discussions leading to the proof of Theorem \ref{zernikeforward}. Both authors would like to thank Gabriel Paternain and Sven Wang and the two anonymous referees for various helpful remarks.

This article was part of the Doctoral thesis of the first author, who was supported by the EPSRC Centre for Doctoral Training and the Munro-Greaves Bursary of Queens' College Cambridge. 
The second author was supported by ERC grant No.647812 (UQMSI). \\

\bibliographystyle{plain}
\bibliography{revision07_22.bbl}

\end{document}